\newcommand*{\Gref}[1]{\hyperref[mainSDE]{SDE($#1$)}}
\newcommand*{\Gtag}[1]{\tag{SDE($#1$)}}
\newcommand*{\Sref}[1]{\hyperref[SPDE]{SHE($#1$)}}
\newcommand*{\Stag}[1]{\tag{SHE($#1$)}}
\DeclareMathOperator{\1}{\mathbbm{1}}
\DeclareMathOperator{\Leb}{Leb}
\DeclareMathOperator{\law}{Law}%
\def\E{\hskip.15ex\mathsf{E}\hskip.10ex}
\def\P{\mathsf{P}}
\DeclareMathOperator{\Var}{Var}
\def\eps{\varepsilon}
\def\phi{\varphi}
\newcommand{\dtv}{d_{TV}}
\newtheorem{theorem}{Theorem}[section]
\newtheorem{lemma}[theorem]{Lemma}
\newtheorem{proposition}[theorem]{Proposition}
\newtheorem{corollary}[theorem]{Corollary}
\newtheorem{convention}[theorem]{Convention}
\theoremstyle{definition}
\theoremstyle{definition}\newtheorem{remark}[theorem]{Remark}
\theoremstyle{definition}\newtheorem{definition}[theorem]{Definition}
\crefname{Lemma}{Lemma}{Lemmas}
\crefname{Theorem}{Theorem}{Theorems}
\numberwithin{equation}{section}
\renewcommand{\ge}{\geqslant}
\renewcommand{\le}{\leqslant}
\newcommand{\nn}{\nonumber}
\newcommand{\wt}{\widetilde}
\newcommand{\wh}{\widehat}
\renewcommand{\d}{\partial}
\newcommand{\Di}[1]{|#1|}
\newcommand{\per}{{per}}
\newcommand{\neu}{{Neu}}
\newcommand{\A}{\mathcal{A}}
\newcommand{\C}{\mathcal{C}}
\newcommand{\D}{\mathbf{D}}
\newcommand{\F}{\mathcal{F}}
\newcommand{\FF}{\mathbb{F}}
\newcommand{\G}{\mathcal{G}}
\newcommand{\M}{\mathcal{M}}
\newcommand{\N}{\mathbb{N}}
\newcommand{\PP}{\mathcal{P}}
\newcommand{\R}{\mathbb{R}}
\newcommand{\W}{W}
\newcommand{\Z}{\mathbb{Z}}
\newcommand{\V}{\mathcal{V}}
\newcommand{\Vcl}{\V(\frac{1+H}{2})}
\newcommand{\VV}{\mathcal{V}_{SHE}}
\newcommand{\Vscl}{\VV(5/8)}
\newcommand{\bes}{\mathcal{B}}
\newcommand{\var}{\textrm{var}}
\newcommand{\BB}{{\mathbf{B}}}
\newcommand{\Ctimespace}[3]{{\C^{#1,0}L_{#2}({#3})}}
\newcommand{\Ctimespacezerom}[1]{\Ctimespace{0}{m}{#1}}
\newcommand{\Ctimespaced}[3]{{\C_{\D}^{#1,0}L_{#2}({#3})}}
\newcommand{\Ctimespacedzerom}[1]{\Ctimespaced{0}{m}{#1}}
\newcommand{\lm}{L_m(\Omega)}
\newcommand{\Lip}{\operatorname{Lip}}
\newcommand{\uu}{\textbf{u}}
\newcommand{\vv}{\wt{\textbf{v}}}
\begin{document}
	
\title{Weak uniqueness for singular stochastic equations}
	
	\author{
		Oleg Butkovsky%
		\thanks{Weierstrass Institute, Mohrenstrasse 39, 10117 Berlin, FRG. Email: \texttt{oleg.butkovskiy@gmail.com}}$^{\,\,\,,\!\!\!\!\!}$
		\setcounter{footnote}{3}
		\thanks{Institut für Mathematik, Humboldt-Universität zu Berlin, Rudower Chaussee 25,
			12489, Berlin,	FRG.}
		\and
		Leonid Mytnik%
		\thanks{Technion --- Israel Institute of Technology,
			Faculty of Data and Decision Sciences,
			Haifa, 3200003, Israel.  Email: \texttt{leonid@ie.technion.ac.il}
		}
	}
	
	\maketitle

\begin{abstract}
	We put forward a new method for proving weak uniqueness of 
stochastic equations with singular drifts driven by a non-Markov or infinite-dimensional noise.
We apply our method to study stochastic heat equation (SHE) driven by Gaussian space-time white noise
	$$
	\frac{\d}{\d t} u_t(x)=\frac12 \frac{\d^2}{\d x^2}u_t(x)+b(u_t(x))+\dot{W}_{t}(x),    \quad t>0,\, x\in D\subset\R,
	$$
	and  multidimensional stochastic differential equation (SDE) driven by fractional Brownian motion with the Hurst index  $H\in(0,1/2)$
	$$
	d X_t=b(X_t) dt +d B_t^H,\quad t>0.
	$$
	In both cases $b$ is a generalized function in the Besov space $\bes^\alpha_{\infty,\infty}$, $\alpha<0$. 
	Well-known pathwise uniqueness results for these equations do not cover the entire range of the parameter $\alpha$, for which weak existence holds. What happens in the range where weak existence holds but pathwise uniqueness is unknown has been an open problem.
	We settle this problem and show that 	 for SHE weak uniqueness holds for $\alpha>-3/2$, and for SDE  it  holds for $\alpha>1/2-1/(2H)$; thus, in both cases, it holds in the entire desired range of values of  $\alpha$. 		 This extends seminal results of Catellier and Gubinelli (2016) and  Gy\"ongy and Pardoux (1993) to the weak well-posedness setting.  
 To establish these results, we develop a new strategy, combining  ideas from ergodic theory (generalized couplings of Hairer-Mattingly-Kulik-Scheutzow) with  stochastic sewing  of L{\^e}.
\end{abstract}

\tableofcontents

\section{Introduction}
Strong regularization by noise for stochastic equations has been quite well understood by now for a large class of random driving noises. On the other hand, very few results are available about weak regularization by noise when the driving noise is non-Markovian or infinite-dimensional. The goal of this article is to develop techniques and methods to establish \textit{weak} regularization by noise theory for stochastic equations with irregular drift. More precisely, we consider a stochastic heat equation (SHE) with a drift  driven by Gaussian space-time white noise
\begin{align}\label{SPDE}\Stag{u_0;b}
	&\partial_t u_t(x)=\frac12 \partial^2_{xx}u_t(x) +b(u_t(x))+\dot{W}_{t}(x),\quad t\in(0,T],\, x\in D,\\
	&u(0,x)=u_0(x),\nn
\end{align}
where the domain $D=[0,1]$ and $u_0$ is a measurable bounded function $D\to\R$. Our second example is  a $d$-dimensional stochastic differential equation (SDE) driven by fractional Brownian motion (fBM) with the Hurst index $H\in(0,1/2]$
\begin{align}\label{mainSDE}	\Gtag{x;b}  
	&dX_t=b(X_t)dt + dB_t^H,\quad t\in[0,T]\\
	&X_0=x\in\R^d\nn.
\end{align}
In both equations,  $b$ is a generalized function in the Besov space $\C^\alpha:=\bes^\alpha_{\infty,\infty}$, $\alpha<0$. We show  weak uniqueness of solutions to \ref{SPDE} if $\alpha>-3/2$ (\cref{t:swu}) and weak uniqueness of solutions to \ref{mainSDE} if $\alpha>1/2-1/(2H)$ (\cref{t:wu}). In both cases, weak uniqueness is obtained for the same range  of values of $\alpha$ for which  weak existence is known.   Moreover, for $d=1$ we also obtain
{\it strong} existence and uniqueness of solutions to \ref{mainSDE} for a certain range of $\alpha$ (\cref{t:su}) improving upon the Catellier-Gubinelli  condition \cite{CG16}. 

Thus, the main contribution of this paper is the extension of the seminal results of Catellier and Gubinelli \cite{CG16} and Gy\"ongy and Pardoux \cite{GP93a, GP93b} to the weak well-posedness setting. Below, we compare our results with the current state of the art.

To appreciate the difficulty of proving \textit{weak} well-posedness for \ref{SPDE}, note that methods used to study the weak well-posedness of SDEs driven by Brownian motion rely on a good It\^o formula. This tool is not available for \ref{SPDE} under the above assumptions.
Recently, a number of strong well-posedness results for \ref{SPDE} or \ref{mainSDE} have been obtained using the sewing lemma or the stochastic sewing lemma \cite{CG16,D24,DGmult24,HP20}. However, this approach does not provide weak uniqueness results for \ref{SPDE} beyond the strong uniqueness regime. Indeed, using stochastic or deterministic sewing, one can show that the function
\begin{equation}\label{smoothfunpde}
	\widetilde b\colon x\mapsto \int_0^1 b(V_r(0)+x)\,dr,\quad x\in \R, 
\end{equation}
where $V$ solves the linear SHE $\partial_t V_t(x)=\frac12 \partial^2_{xx}V_t(x) +\dot{W}_{t}(x)$ (the stochastic convolution), is \textit{Lipschitz} for $b\in\C^\alpha$, $\alpha>-1$. 
Hence one can get pathwise uniqueness by a Gronwall-type argument. However, if $b\in\C^\alpha$, $\alpha<-1$, then $\wt b$ is in general no longer Lipschitz, and therefore, Gronwall-type arguments combined with sewing or stochastic sewing are not helpful for proving uniqueness.  Consequently, showing weak uniqueness of solutions to \ref{SPDE}  is a challenging problem. 

To overcome this problem we propose a new strategy that combines coupling and stochastic
sewing techniques.

{
	\begin{table}
		\centering
		\renewcommand{\arraystretch}{1.5}	 
		\begin{tabularx}{\textwidth}{l>{\raggedright\arraybackslash}X>{\raggedright\arraybackslash}X>{\raggedright\arraybackslash}X} 
			\toprule
			&\ref{SPDE} &\ref{mainSDE}, $H = \frac{1}{2}$ & \ref{mainSDE}, $H \in(0,\frac{1}{2})$\!\!\!\!\\
			\midrule
			Strong uniqueness\,\,\phantom{1}&\cite{ABLM}: $\alpha>-1$, critical stochastic sewing& \cite{ver80}: $\alpha\ge 0$, Zvonkin's transform &\cite{CG16,LeSSL}: $\alpha>1-\frac1{2H}$, sewing, stochastic sewing \\
			Weak uniqueness & No results beyond strong uniqueness& \cite{bib:zz17,FIR17}: $\alpha>-1/2$, Zvonkin's transform & No results beyond strong uniqueness
			\\
			\bottomrule
		\end{tabularx}
		\caption{State of the art. Main results on weak and strong uniqueness for  \Gref{x;b}, where $b\in\C^\alpha$.} 
		\label{tab:my_table}
	\end{table}
}

\subsection[Comments on the background, proof strategy, and literature]{Further comments on the background of the problem, proof strategy, and related literature}
It was observed long ago that an ill-posed differential equation can become well-posed when perturbed by random noise. Indeed, consider the reaction-diffusion equation
$$\d_t u_t(x)=\frac12\d^2_{xx} u_t(x)+b(u_t(x)),\quad t\in(0,T],\, x\in D.
$$
If the drift $b$ is not Lipschitz, then this equation might be ill-posed (it may have no solutions or more than one solution). On the other hand, Gy\"ongy and Pardoux \cite{GP93a,GP93b} showed that the corresponding stochastic equation \ref{SPDE} has a unique strong solution when the drift $b$ is bounded or locally integrable. This is known as \textit{regularization by noise}. 
This phenomenon has also been observed in other models: 
within the framework of rough path theory \cite{Tindel}; and in dispersive equations \cite{choukplus}.
An overview of the topic is given by Gess in \cite{bengess}, and a detailed exposition of regularization by Brownian noise for ODEs has been written by Flandoli \cite{F11,flandoli2013topics}.

Equations with drifts less regular than those in \cite{GP93a,GP93b} were studied by Bounebache and Zambotti \cite{BZ14}. Motivated by problems arising in the study of random interface models, they considered \ref{SPDE} with a measure-valued drift $b$ and established the existence of a weak solution. However, uniqueness was left open. 

Thus, one can pose a natural question: how irregular can the drift $b$ be so that well-posedness of \ref{SPDE} still holds? For example, does \ref{SPDE} remain well-posed when $b$ is a measure or a Schwartz distribution?
The proof strategy in \cite{GP93a,GP93b} relied on Girsanov’s theorem and the comparison principle. \cite{BZ14} used Dirichlet form methods to establish weak existence. However, it is not clear whether these arguments can be adapted to obtain the uniqueness of \ref{SPDE} when $b$ is a measure or a Schwartz distribution.

Using stochastic sewing, \cite{ABLM} extended the Gy\"ongy-Pardoux  results to distributional drifts and showed strong uniqueness of solutions to \ref{SPDE} for $b\in\C^\alpha$, $\alpha>-1$. The threshold $\alpha>-1$ is due to the Lipschitz property of $\wt b$ as discussed above after equation  \eqref{smoothfunpde}.
Strong uniqueness in the same regime $\alpha>-1$ for the multiplicative noise was obtained in \cite{D24}.

Note that weak existence for solutions to \ref{SPDE} is known for less regular $b$, namely for $b\in\C^{\alpha}$, $\alpha>-3/2$ \cite{ABLM}. However, there are no known uniqueness results (neither weak nor strong) for the SHE with drifts of regularity $\alpha\in(-\frac32,-1)$. In this region of $\alpha$, the function $\wt b$ from \eqref{smoothfunpde} is still more regular than $b$, but it is not clear at all how to exploit this gain in regularity.  This is discussed further in \cref{s:oview}.

It is worth mentioning that there is another technique, the so-called duality method, used for proving weak uniqueness for stochastic equations and the corresponding martingale problems. This method has been successfully implemented for proving weak uniqueness for certain SPDEs driven by multiplicative noise in the presence of irregular function-valued drift (see, e.g., \cite{AT00}, \cite{BMS24}). However, applying the duality method for SPDEs usually requires that the drift and diffusion coefficients have a very particular form. For example, in \cite{AT00} and \cite{BMS24}, the drift is supposed to be given via a power series with specific conditions on the coefficients. It seems extremely challenging to apply the duality method when the drift $b$ is an arbitrary generalized function in $\C^\alpha$ with $\alpha < 0$, and we are not aware of any results in this direction.

%
%
%
%

A very similar open problem related to weak well-posedness also appears in regularization by noise for SDEs driven by a fractional Brownian motion. Recall that if $H=\frac12$, then strong well-posedness of \Gref{x;b} holds  for bounded or  locally integrable drifts  \cite{zvonkin74,ver80,kr_rock05}.  
Furthermore, it turns out that weak well-posedness holds even for the drifts which are not functions but rather Schwartz distributions.   Zhang and Zhao \cite{bib:zz17} (see also the work of Flandoli, Issoglio, Russo \cite{FIR17}), showed that \ref{mainSDE} with $b\in \C^\alpha$, $\alpha>-1/2$,  $H=1/2$ has a unique weak solution. We refer also to the works of Gr\"afner, Kremp, and Perkowski for further refinements \cite{kremp2023,grafner2024}.

On the other hand, if $H\neq\frac12$, then only strong well-posedness theory has been developed. Using sewing and stochastic sewing arguments, Catellier and Gubinelli
\cite{CG16} and Le \cite{LeSSL}  showed that \ref{mainSDE} has a unique strong (and even path-by-path unique) solution for $b\in\C^\alpha$, $\alpha>1-1/(2H)$, 
$d\geq 1$. One can see that this condition exactly matches (up to an arbitrarily small $\eps$) the Zvonkin-Veretennikov condition $\alpha\ge0$ for $H=1/2$. Time-dependent drifts are treated in \cite{GG22} and an extension to the case of infinitely regularizing noises is due to Harang and Perkowski \cite{HP20}. 
However, for reasons similar to what have been discussed for SHE, 
there are no results on  weak uniqueness for \Gref{x;b} for  $\alpha<1-1/(2H)$.  


The current state of the art  for uniqueness for  \ref{mainSDE} and \ref{SPDE}  is summarized in \cref{tab:my_table}.
As we mentioned above, the only available results on weak uniqueness for \ref{SPDE} and \ref{mainSDE} with $H \in(0,\frac12)$ are in the regime where strong uniqueness is known, while weak existence for these equations has been verified for a wider set of parameters. This article fills this gap. We  show weak uniqueness for \ref{SPDE} for $b\in\C^\alpha$, $\alpha>-3/2$, which  corresponds to the entire regime where weak existence was known.  We also obtain weak uniqueness for  \ref{mainSDE} for $b\in\C^\alpha$, $\alpha>1/2-1/(2H)$, again in the same regime where weak existence of solutions to \ref{mainSDE} is known. For $H=1/2$, this exactly matches the Zhang-Zhao condition $\alpha>-1/2$ and provides an alternative way to prove the weak uniqueness results of \cite{bib:zz17}.   One can see that the condition $\alpha>-3/2$ for SHE corresponds to the condition $\alpha>1/2-1/(2H)$ for $H=1/4$. In dimension $1$, we combine these results with a new extension of the comparison principle for distributional drifts and show that strong existence and uniqueness hold for \ref{mainSDE} beyond the Catellier-Gubinelli condition $\alpha>1-1/(2H)$.

To show weak uniqueness we develop a new approach which combines stochastic sewing with certain ideas from ergodic theory (generalized couplings). First, let us recall that a powerful tool to establish the unique ergodicity of a Markov process is the coupling method pioneered by Doeblin in the 1930s \cite{doeblin}; see also \cite{lig} for a review of Doeblin's contributions and their subsequent development. Assume that we are given a Markov transition kernel $(P_t)_{t\ge0}$ and would like to show convergence of transition probabilities $P_t(x,\cdot)$ and $P_t(y,\cdot)$, where $x$ and $y$ are in the state space of the process. The main idea is to construct two copies of the Markov process, $(X_t)_{t\ge0}$ and $(Y_t)_{t\ge0}$, starting from $x$ and $y$, respectively, and such that $\law(X_t)=P_t(x,\cdot)$, $\law(Y_t)=P_t(y,\cdot)$. Then, clearly,
\begin{equation*}
d_{TV}(P_t(x,\cdot),P_t(y,\cdot))\le \P(X_t\neq Y_t),
\end{equation*}
where $d_{TV}$ stands for the total variation distance. Thus, if we can construct processes $X$ and $Y$ in such a way that the probability  they do not couple by time $t$ tends to $0$ as $t\to\infty$, we get weak convergence of the transition probabilities and we can even bound the convergence rate. Of course, the main challenge now is how to construct processes $X$ and $Y$ to make $\P(X_t\neq Y_t)$ small. For SDEs driven by Brownian motion, this strategy was successfully implemented in  \cite{ver88}, where exponential ergodicity  was established. We refer to \cite{Eberle} for recent advancements.

While this strategy works quite well for finite-dimensional Markov processes, it typically fails when the state space is infinite-dimensional. The main reason is that the transition probabilities $P_t(x,\cdot)$ and $P_t(y,\cdot)$ are often orthogonal, resulting in zero probability for the processes $X$ and $Y$ to meet for any coupling. This phenomenon is discussed further in \cite[Section~1]{HMS} and \cite[Section~4.1]{Kulikbook} and specific examples of processes with mutually orthogonal transition probabilities are provided also in  \cite{ScheutzowCE} (stochastic delay equation) and \cite[Theorem~4.8]{BScmp} (stochastic heat equation).

To study such processes, Hairer, Mattingly, Scheutzow, and Bakhtin in a series of works in the 2000s \cite{H02,M02,Yura,HMS} developed a new approach called the generalized coupling method. The main idea is to ``help'' the process $Y$ to become closer to the process $X$. Therefore, an intermediate process  $\wt Y$ is introduced, which starts at the same initial point as $Y$ and receives an additional push towards $X$, for example, in the form $\lambda (X-\wt{Y})$ for large $\lambda>0$. Consequently, the distance between $\wt{Y}$ and $X$ rapidly decreases. Of course, the pair $(X,\widetilde{Y})$ is not a true coupling of $P_t(x,\cdot)$ and $P_t(y,\cdot)$, hence the name ``generalized coupling''. Nevertheless, it is often possible to show (usually with the help of Girsanov's theorem) that the process $\wt{Y}$ is not too far from $Y$ in total variation distance, allowing one to conclude
\begin{equation*}
	W_{d}(P_t(x,\cdot),P_t(y,\cdot))\le d_{TV}(\law(Y_t),\law(\wt Y_t))+\E|\wt Y_t-X_t|,
\end{equation*}
where $W_d$ denotes an appropriate Wasserstein distance. We address an interested reader to \cite[Section~1.1]{H02}, where the generalized coupling method is illustrated on a toy model. The latest developments of the method can be found in \cite{GMR,BKS18}.

Kulik and Scheutzow \cite{Ksch} adapted this method to show weak uniqueness for stochastic delay equations with irregular drift. They take as $X$ a weak solution to a stochastic equation with irregular drift and as $Y^n$ a solution to a well-posed approximating equation. They introduce an intermediate process $\wt Y^n$, which serves a similar role as $\wt Y$ above: $\wt Y^n$  receives a kick towards $X$ and becomes close to $X$ for each $\omega$, yet it remains close to $Y^n$ in law. A similar triangular inequality as above implies
\begin{equation*}
	W_{d}(\law(X_t),\law(Y^n_t))\le d_{TV}(\law(Y^n_t),\law(\wt Y^n_t))+\E|\wt Y^n_t-X_t|.
\end{equation*}
Since both terms on the right-hand side are small, we see that the process $Y^n_t$ converges weakly to $X_t$. Since $X$ was an arbitrary weak solution, weak uniqueness holds. \cite{Ksch} coins this approach  ``Control and Reimburse strategy''.

Han \cite{Han,Hanwave} uses the Kulik-Scheutzow method to get weak uniqueness for solutions to SHE with H\"older drift and diffusion coefficients, as well as for the stochastic wave equation with H\"older drift.

However let us stress that the strategy of \cite{Ksch,Han}  requires the drift to be a H\"older continuous function. The direct application of their method does not work  if the drift is a  Schwartz distribution or even a bounded function. 

Thus, we see that neither the application of stochastic sewing alone nor the generalized coupling technique alone allows  to obtain  weak uniqueness results beyond the known pathwise uniqueness results for equations with distributional drift. The main novelty of this paper is that we show that a combination of these techniques allows  to break this barrier and obtain weak uniqueness in the same regime where weak existence is known. In 
\cref{s:oview}, we review the ``Control and Reimburse'' approach of \cite{Ksch}, explain where exactly it breaks down if applied directly to equations with singular drift, and show how we proceed with a combination of this approach with stochastic sewing.

For simplicity and to highlight our arguments, we treat \ref{SPDE} and \ref{mainSDE} with  time-homogeneous drifts belonging to the Besov space  $\bes^\alpha_{p,\infty}$  with $p=\infty$. We also consider \ref{mainSDE} only with  $H\le1/2$. It is absolutely clear that the theory can be extended to cover the cases  $p<\infty$ and $H\in(1/2,1)$ and time-inhomogeneous drifts. It is also clear that a similar approach would work to show weak well-posedness of rough SDEs introduced by Friz, Hocquet, and L{\^e} in \cite{friz2021}. We leave this generalization for future work.

The rest of the paper is organized as follows. In \cref{s:mrsde,s:mrspde}, we present the main results concerning weak uniqueness for SHE, as well as weak and strong uniqueness for SDEs. \cref{s:oview} provides a detailed overview of the proofs, offering heuristic insights into the main ideas of our proof strategy. Key technical bounds essential for the subsequent proofs are placed in  \cref{s:auxbound}. In \cref{s:sdeproofs}, we present the proofs of the main results for SDEs and in \cref{s:she} for SHE. Finally, \cref{a:app} contains the proofs of additional technical results that may be known and are provided for the sake of completeness.

\textbf{Convention on constants}.  Throughout the paper $C$ denotes a positive constant whose value may change from line to line; its dependence is always specified in the corresponding statement. For brevity, we will not explicitly state the dependencies of the constants on the parameters $H$, $d$, $\alpha$ which are considered to be fixed.

\textbf{Convention on integrals}. In this paper, all integrals with respect to the deterministic measure are understood in the Lebesgue sense.

\textbf{Acknowledgements}.  OB is  funded by the Deutsche Forschungsgemeinschaft (DFG, German Research Foundation) under Germany's Excellence Strategy --- The Berlin Mathematics Research Center MATH+ (EXC-2046/1, project ID: 390685689, sub-project EF1-22) and DFG CRC/TRR 388 ``Rough
Analysis, Stochastic Dynamics and Related Fields", Project B08. LM is supported in part by ISF grant No. ISF 1985/22. 

\section{Main results}\label{s:mr}
Let us introduce the main notation and recall the basic definitions. Let $d\in \N$. For a set $Q\subset\R^k$, $k\in\N$,  the spaces of all continuous (respectively bounded measurable) functions $Q\to\R^d$ equipped with the supremum norm are denoted by $\C(Q;\R^d)$ (respectively  $\BB(Q;\R^d)$). For $\beta\in\R$ we denote by $\C^\beta=\C^\beta(\R^d)=\bes^\beta_{\infty,\infty}(\R^d)$ the Besov space of regularity $\beta$ and integrability parameters $\infty$ and $\infty$. We recall that if $\beta\in(0,1)$, then the space $\bes^\beta_{\infty,\infty}$ is the space of all H\"older continuous functions with the exponent $\beta$. The space $\C^\infty(\R^d,\R^d)$ denotes the space of bounded continuous functions $\R^d\to\R^d$ having bounded continuous derivatives  
of any order.

We denote the set of all signed finite Radon measures on $\R$ by $\M(\R)$ and the set of all non-negative finite Radon measures on $\R$ by $\M_+(\R)\subset \M(\R)$.

If $(E,\rho)$ is a metric space, then the space of all probability measures on $E$ equipped with the Borel $\sigma$-algebra $\mathscr{B}(E)$ is denoted by $\PP(E)$. For two probability measures  $\mu,\nu\in\PP(E)$ we define the Wasserstein (Kantorovich) distance between them as
\begin{equation}\label{wrho}
\W_\rho(\mu,\nu):=\inf_{\lambda\in\mathscr{C}(\mu,\nu)}\int_{E\times E} \rho(x,y)\,\lambda(dx,dy),
\end{equation}
where the infimum is taken over all couplings $\mathscr{C}(\mu,\nu)$, that is, all probability measures on $(E\times E, \mathscr{B}(E\times E))$ with marginals $\mu$ and $\nu$. The choice  $\rho(x,y)=\1(x\neq y)$, $x,y\in E$, leads to  the total variation distance $d_{TV}$, which is given by
$$
d_{TV}(\mu,\nu):=\inf_{\lambda\in\mathscr{C}(\mu,\nu)}\int_{E\times E} \1(x\neq y)\,\lambda(dx,dy)=\sup_{A\in\mathscr{B}(E)}|\mu(A)-\nu(A)|.
$$
It is well known that if  $(E,\rho)$ is Polish and $\rho$ is bounded, then weak convergence of measures is equivalent to convergence in $\W_\rho$, see, e.g., \cite[Corollary~6.13]{Villani}.

Let $\Gamma^d_t$, $t>0$, be the density of a $d$-dimensional vector with independent Gaussian components each of mean zero and variance $t$:
$$
\Gamma^d(t,x) =(2\pi t)^{-d/2}e^{-\frac{|x|^2}{2t}},\quad  x\in\R^d,
$$
and let $G^d_t$ be the corresponding Gaussian semigroup. In the case $d=1$, this index will be dropped, and the density and semigroup will be denoted simply by $\Gamma_t$ and $G_t$, respectively.

First, we present our results regarding  well-posedness of  \ref{SPDE}, and then we move on to the results concerning \ref{mainSDE}.

\subsection{Weak uniqueness for stochastic heat equation with distributional drift}\label{s:mrspde}
Throughout this subsection we assume $d=1$. 
Let  $p_t^{\per}$ and $p_t^{\neu}$ be the  the heat kernels on $[0,1]$ with the periodic and Neumann boundary conditions, respectively. That is,
\begin{align*}
	&p_t^\per(x,y):=\sum_{n\in\Z} \Gamma(t,x-y+n),\quad t>0,\,x,y\in [0,1];\\
	&p_t^\neu(x,y):=\sum_{n\in\Z} (\Gamma(t,x-y+2n)+\Gamma(t,x+y+2n)),\quad t>0,\,x,y\in [0,1].
\end{align*}


We consider  \Sref{u_0;b} on the interval $[0,1]$ in two possible setups:  with periodic boundary conditions and  with Neumann boundary conditions. To simplify the presentation of our results we introduce the following notational convention.

\begin{convention}\label{c:conv}
	
	The pair  $(D,p)$ stands for one of the two options: $([0,1],p^\per)$, or $([0,1],p^\neu)$. The corresponding semigroup will be denoted by $P$.
\end{convention}

We fix the time interval $T>0$. Let $(\Omega,\F,\P)$ be a probability space. If $(\F_t)$ is a complete filtration on this space, then we recall that a Gaussian process $W\colon L_2(D)\times[0,T]\times\Omega\to\R$ is called \textit{$(\F_t)$-space-time white noise} if for any $\phi,\psi\in L_2(D)$ the process $(W_t(\phi))_{t\in[0,T]}$ is an $(\F_t)$--Brownian motion and $\E W_s(\psi)W_t(\phi)=(s\wedge t)\int_D\phi(x)\psi(x)\,dx$. 

Now let us define what exactly we mean by a solution to \Sref{u_0;b} where $b$ is a distribution. 
Similar to the discussion above, we note that the term $b(u)$ is not well-defined but one can make sense of its convolution with space-time heat kernel. 

\begin{definition}
	We say that a sequence of functions $f^n\colon\R^d\to\R^d$, $n\in\Z_+$, converges to a function $f$ in $\C^{\beta-}$, $\beta\in\R$,  if $\sup_n \|f_n\|_{\C^{\beta}}<\infty$ and  for any $\beta'<\beta$ we have $\|f_n-f\|_{\C^{\beta'}}\to0$ as $n\to\infty$.
\end{definition}

\begin{definition}[{\cite[Definition~2.3]{ABLM}}] \label{def:ssol}
	Let $b\in\C^\beta$, $\beta\in\R$. We say that a continuous process $u\colon(0,T]\times D\times\Omega\to\R$ is a mild solution to \Sref{u_0;b}  with the initial condition $u_0\in \BB(D,\R)$ 	if there exists a continuous process $K\colon[0,T]\times D\times\Omega\to\R$ such that
	\begin{enumerate}[(i)]
		\item $u_t(x)=P_tu_0(x)+K_t(x)+\int_0^t \int_D p_{t-r}(x,y)W(dr,dy)$, $x\in D$, $t\in(0,T]$ a.s.;
		\item for any sequence  $(b^n)_{n\in\Z_+}$ of $\C^\infty(\R,\R)$ functions  converging to $b$ in $\C^{\beta-}$
		we have for any $N>0$
		$$
		\sup_{t\in[0,T]}\sup_{x\in D}\Bigl|\int_0^t\int_D p_{t-r}(x,y) b^n(u_r(y))\,dy\,dr-K_t(x)\Bigr|\to0\quad \text{in probability as $n\to\infty$}.
		$$
	\end{enumerate}
\end{definition}
Here the stochastic integral is the Wiener integral, see, e.g., \cite[Section~1.2.4]{Marta}. We note that given $b\in\C^{\beta}$ such approximating sequence  $(b^n)_{n\in\Z_+}$ of smooth functions converging to $b$ in $\C^{\beta-}$ always exists: one can just take $b_n:=G^d_{1/n}b$,  see, e.g., \cite[Lemma~A.3]{ABLM}. If $b\in\C^{\beta}$ and $\beta>0$, then $b(u_t(x))$ is well-defined, and it is immediate that this notion of a solution coincides with the standard notion of a solution. This is also the case if $b$ is a bounded measurable function (under a certain technical condition), see \cite{ABLMmw}.

\begin{remark}
	In this article, we restrict ourselves to solutions that are mild in the PDE sense. Note that one can give a similar definition of a PDE-weak solution with the distributional drift. Under certain technical conditions  these notions are equivalent as shown in \cite{ABLMmw}. 
\end{remark}

A (probabilistically) weak (PDE) mild solution to \Sref{u_0;b} is a couple $(u,W)$ on a complete filtered probability space $(\Omega, \F, \P,\FF= (\F_t)_{t\in[0,T]})$ such that $W$ is an $\FF$-space-time white noise, $u$ is adapted to $\FF$, and $u$ is a mild solution to \Sref{u_0;b} in the sense  of \cref{def:ssol}. We say that \textit{weak uniqueness} holds for \Sref{u_0;b} if whenever    $(u,W)$ and $(\overline u,\overline W)$ are two  weak solutions of this equation (not necessarily defined on the same probability space), then   $\law(u)=\law(\overline u)$ on the path space  $\C([0,T],\C(D;\R))$.

As it is standard in the analysis of SDEs or SPDEs with distributional drift (see, e.g., \cite[Definition~2.1(iii)]{BC}, \cite[Definition~3.1 and Corollary~5.3]{bib:zz17}, \cite[Theorem~1.2, condition (1.6)]{hao2023sdes}, \cite[Definition~2]{perkowski2018energy}),  we do not consider all solutions of \ref{SPDE} but rather restrict ourselves to solutions having certain regularity. We will consider the following  class 
of solutions.

\begin{definition}[{\cite[Definition 2.4]{ABLM}}] Let $\kappa\in[0,1]$. We say that a solution $(u,W)$ to  \Sref{u_0;b} belongs to the
	class $\VV(\kappa)$ if for any $m\ge2$ 
	denoting  $K_t(x):=u_t(x)-\int_0^t \int_D p_{t-r}(x,y)W(dr,dy)$, $t\in[0,1]$, $x\in D$, we have
	\begin{equation*}
		\sup_{(t,x)\in(0,T]\times D}\|u_t(x)\|_{L_m}<\infty,\qquad \sup_{0< s\le t\le T}\sup_{x\in D}\frac{\|K_t(x)- P_{t-s} K_s(x)\|_{L_m(\Omega)}}{|t-s|^\kappa}<\infty.
	\end{equation*}	
\end{definition}

If $u_0\in \BB(D)$,  $b\in\C^\alpha$, where
\begin{equation}\label{spdecond}\tag{S}
	\alpha>-\frac32,
\end{equation}	
then \cite[Theorem~2.6]{ABLM} established weak existence of solution to \Sref{u_0;b} in class $\VV(1+\alpha/4)\subset\Vscl$. Our main result concerning SPDEs shows that weak uniqueness of solutions to \Sref{u_0;b} holds in exactly the same regime. 

\begin{theorem}\label{t:swu}
	Let  $\alpha\in\R$,  $u_0\in \BB(D)$, $b\in\C^\alpha$. Suppose that \eqref{spdecond} holds.
	Then 
	\begin{enumerate}[(i)]
		\item  weak uniqueness holds for solutions to equation \Sref{u_0;b}  in the class $\Vscl$;  
		\item let $\{b_n, n\in\Z_+\}$ be a sequence of $\C^\infty(\R^d,\R^d)$ functions converging to $b$ in $\C^{\alpha-}$ as $n\to\infty$. Let $u^n$ 
		be a strong solution to \Sref{u_0;b_n}. Then the sequence $(u^n)_{n\in\Z_+}$
		weakly converges in $\C([0,T],\C(D;\R))$ and its limit is a unique weak solution to  \Sref{u_0;b} in class $\Vscl$.
	\end{enumerate}
\end{theorem}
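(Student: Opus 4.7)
The plan is to adapt the Kulik-Scheutzow ``Control and Reimburse'' strategy to the singular SPDE setting by combining it with stochastic sewing estimates in the spirit of \cref{s:auxbound}. I would prove parts (i) and (ii) of \cref{t:swu} simultaneously. Let $(u,W)$ be an arbitrary weak mild solution to \Sref{u_0;b} in the class $\Vscl$, let $b_n:=G_{1/n}b$ be the standard mollification, so that $b_n\to b$ in $\C^{\alpha-}$, and let $u^n$ be the unique strong solution to \Sref{u_0;b_n} driven by the same noise $W$. The goal is to show that $u^n$ converges to $u$ in the Wasserstein distance $\W_{\|\cdot\|\wedge1}$, and hence weakly in $\C([0,T],\C(D;\R))$. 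Since the limit law does not depend on the choice of the weak solution $u$, this yields both stability and weak uniqueness.

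To implement this, I would introduce, on the same filtered probability space as $u$, an auxiliary controlled process $\wt u^n$ satisfying
\begin{equation*}
	\d_t \wt u^n_t(x) = \tfrac12 \d_{xx}^2 \wt u^n_t(x) + b_n(\wt u^n_t(x)) + \lambda_n\bigl(u_t(x)-\wt u^n_t(x)\bigr) + \dot W_t(x),\qquad \wt u^n_0=u_0,
\end{equation*}
for a sequence $\lambda_n\to\infty$ to be chosen. The triangle inequality then gives
\begin{equation*}
	\W_{\|\cdot\|\wedge1}(\law(u),\law(u^n)) \le d_{TV}(\law(\wt u^n),\law(u^n)) + \E\bigl[\|u-\wt u^n\|_{\C([0,T],\C(D))}\wedge 1\bigr].
\end{equation*}
For the pathwise term, the difference $v^n:=u-\wt u^n$ solves a linear heat equation with damping rate $\lambda_n$ forced by $b(u)-b_n(\wt u^n)$, so one expects $v^n\to 0$ as $\lambda_n\to\infty$. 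For the Girsanov term, viewing $\wt u^n$ as a strong solution of \Sref{u_0;b_n} under a measure that absorbs the drift $\lambda_n(u-\wt u^n)$, Pinsker's inequality bounds $d_{TV}$ by the $L_2$ norm of $\lambda_n(u-\wt u^n)$ over $[0,T]\times D$; thus the pathwise estimate feeds directly into the total-variation estimate, and tuning $\lambda_n$ to grow slowly enough makes both contributions vanish.

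The main obstacle, and the essential novelty, is bounding the forcing $b(u)-b_n(\wt u^n)$ when $b$ is a Schwartz distribution of negative regularity: pointwise evaluation is meaningless and the standard Kulik-Scheutzow-Han argument, which uses H\"older continuity of the drift to get $|b(u)-b(\wt u)|\lesssim|u-\wt u|^\beta$, breaks down. My plan is to never estimate pointwise differences of $b$ but rather to work with space-time averaged quantities of the form
\begin{equation*}
	\int_s^t\!\!\int_D p_{t-r}(x,y)\bigl(b(u_r(y))-b_n(\wt u^n_r(y))\bigr)\,dy\,dr
\end{equation*}
through L\^e's stochastic sewing lemma. The regularization gain comes from the stochastic convolution: conditionally on $\F_s$, for fixed $y\in D$ the process $r\mapsto u_r(y)$ behaves, up to a remainder of higher H\"older regularity captured by membership in $\Vscl$, like an increment of fractional Brownian motion with Hurst index $H=1/4$. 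Consequently, the sewing threshold $\alpha>1/2-1/(2H)$ translates into exactly the assumed range $\alpha>-3/2$ of \eqref{spdecond}. The most delicate technical point will be to verify that these stochastic sewing bounds remain uniform under the coupling: $\wt u^n$ is not the solution of a clean SHE but carries the additional non-adapted forcing $\lambda_n(u-\wt u^n)$, and showing that $\wt u^n$ still inherits the Gaussian-type smoothing required for sewing --- uniformly in $n$ and $\lambda_n$ --- is the genuine synthesis step that makes the combination of generalized coupling and stochastic sewing work in this distributional regime.
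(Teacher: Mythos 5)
Your overall strategy is exactly the paper's: run a Kulik--Scheutzow generalized coupling with control $\lambda(u-\wt u^n)$, split the Wasserstein distance via the triangle inequality into a Girsanov--Pinsker total-variation piece and a pathwise piece, and estimate the pathwise piece through stochastic sewing applied to space-time averaged quantities rather than pointwise differences of $b$. You have also correctly identified $H=1/4$ as the effective Hurst index and the translation of the sewing threshold into $\alpha>-3/2$. So the skeleton is right. There is, however, a concrete gap in the step where you reduce the Wasserstein bound to $\E\bigl[\|u-\wt u^n\|_{\C([0,T],\C(D))}\wedge 1\bigr]$: the stochastic sewing machinery does not deliver an estimate on $\|(u_t(x)-\wt u^n_t(x))-(u_s(y)-\wt u^n_s(y))\|_{L_m}$ in $|t-s|^\eps+|x-y|^\eps$, which is what Kolmogorov's theorem would require to pass the supremum inside the expectation. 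What it gives is only a mild-form increment bound of the type $\|(u_t-\wt u^n_t)(x)-P_{t-s}(u_s-\wt u^n_s)(x)\|_{L_m}\lesssim|t-s|^\eps$. This is strictly weaker, and the paper explicitly flags that the naive sup bound is out of reach. The fix is to work with the weighted norm $\|f\|_w=\sup_{t\in[0,1]}\sup_{x\in D}|P_{2-t}f(x)|$: the extra smoothing $P_{2-t}$ converts the mild-form bound into a genuine joint H\"older modulus for the processes $P_{2-t}u_t$ and $P_{2-t}\wt u^n_t$ (with the spatial modulus coming from semigroup estimates, cf.\ \cref{p:spacess}), so Kolmogorov applies. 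One then needs backward uniqueness for the heat equation to conclude that convergence in $\W_{\|\cdot\|_w\wedge 1}$ still implies weak convergence in $\C([0,T],\C(D))$. Without this weighted-norm device, your pathwise step would stall.

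Secondly, a smaller point of framing: you worry that $\wt u^n$ carries a ``non-adapted forcing'' and that one must check $\wt u^n$ ``inherits the Gaussian-type smoothing required for sewing.'' Both concerns are misdirected. The feedback term $\lambda_n(u-\wt u^n)$ is adapted, because both $u$ and $\wt u^n$ are adapted to the common filtration. And the Gaussian smoothing that powers the sewing estimate is a property of the stochastic convolution $V$ alone (condition \eqref{Ycond} with $\kappa=1/4$, verified in \cref{l:spde}); $u$ and $\wt u^n$ enter only through the adapted shifts $\phi=u-V$ and $\psi=\wt u^n-V$, for which one needs H\"older-type $L_m$ a priori bounds, not Gaussianity. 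The genuine uniformity issue is instead to show that $[\psi]_{\Ctimespace{5/8}{m}{[0,1]}}$ grows at most linearly in $\lambda_n\|u-\wt u^n\|_{\Ctimespacezerom{[0,1]}}$, so that the extra factor of $\lambda$ it produces can still be absorbed; that a priori estimate (cf.\ \cref{l:sapriori}) is the actual bookkeeping you need to carry out, rather than any Gaussianity check.
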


We remind that \cref{t:swu} establishes weak uniqueness in 
two settings, see \cref{c:conv}.

\subsection{Weak and strong well-posedness for SDEs driven by fBM}\label{s:mrsde}

Now let us present our results regarding  \ref{mainSDE}.

Let $T>0$. Let $(\Omega,\F,\P)$ be a probability space equipped with a complete filtration $\FF:=(\F_t)_{t\in[0,T]}$. Let $(B^H_t)_{t\in[0,T]}$ be a $d$-dimensional fractional Brownian motion of Hurst index $H\in(0,1)$ defined on this space. It is well-known (see, e.g., \cite[Section~2.2]{OuNu} and \cite[formula (5.8) and Proposition~5.1.3]{Nu})
that one can construct on the same probability space a standard $d$-dimensional Brownian motion $W$ such that
\begin{equation}\label{bhrepr}
B_t^H=\int_0^t K_H(t,s) dW_s,\quad 0\le s \le t\le T,
\end{equation}	
where the kernel $K_H$ is given by 
\begin{align*}
	&K_H(t,s):=Cs^{\frac12-H}\int_s^t (r-s)^{H-\frac32}r^{H-\frac12}\,dr\quad\text{when}\quad H>1/2,\\
	&K_H(t,s):=C\Bigl(t^{H-\frac12}s^{\frac12-H}(t-s)^{H-\frac12}
	\nonumber\\&\quad\quad \quad \quad \quad+(\frac12-H)s^{\frac12-H}\int_s^t (r-s)^{H-\frac12}r^{H-\frac32}\,dr\Bigr)\quad\text{when}\quad H<1/2,
\end{align*}
where $0\le s \le t \le T$, and $C$ is a certain positive constant. Following \cite[Definition~1]{OuNu}, we say that $B^H$ is an $\FF$-fractional Brownian motion, if 
there exists an $\FF$-Brownian motion $W$ so that \eqref{bhrepr} holds. Denoting by $\FF^{B^H}$ the natural filtration of $B^H$, we note that $B^H$ is always an $\FF^{B^H}$--fractional Brownian motion, because the natural filtrations generated by $W$ from \eqref{bhrepr} and $B^H$ coincide, see \cite[Section 5.1.3]{Nu}.

Now let us define precisely what we mean by a solution to equation \Gref{x;b}.  
As explained above, when $b$ is a Schwartz distribution, the drift term $b(X_t)dt$ in this equation is not well-defined. As suggested initially by Bass and Chen \cite[Definition~2.1]{BC}, \cite[Definition~2.5]{BC03} (see also \cite[Definition~3.9]{bib:zz17}, \cite[Definition~2.1]{ABM2020}) and became standard by now, one does not define $b(X_t)dt$ directly but rather works with the integral of this term, $\int b(X_t) dt$, which is understood as the limit of the corresponding approximations.

\begin{definition}\label{D:sol}
	Let $b\in\C^{\beta}$, $\beta\in\R$. We say that a continuous   process $(X_t)_{t\in[0,T]}$ taking values in $\R^d$ is a solution to \Gref{x;b} with the initial condition $x\in\R^d$, if there exists a continuous process $(\psi_t)_{t\in[0,T]}$ taking values in $\R^d$ such that:
	\begin{enumerate}[(i)]
		\item $X_t=x+\psi_t+B_t^H$, $t\in[0,T]$ a.s.;
		\item\label{cond2md} for \textit{any} sequence $(b^n)_{n\in\Z_+}$ of $\C^\infty(\R^d,\R^d)$ functions converging to $b$ in $\C^{\beta-}$ we have
		\begin{equation*}
			\lim_{n\to\infty}\sup_{t\in[0,T]}\Big|\int_0^t b^n(X_r)\,dr-\psi_t\Big|= 0\,\,\text{in probability}.
		\end{equation*}
	\end{enumerate}
\end{definition}
We note as above that in case $\beta>0$ \cref{D:sol} coincides with the usual definition of a  solution to \ref{mainSDE}.

As in \cite[Section~3.2]{OuNu}, we define a \textit{weak solution} to \Gref{x;b} as a couple $(X,B^H)$ on a complete filtered probability space $(\Omega, \F, \P,\FF= (\F_t)_{t\in[0,T]})$ such that $B^H$ is an $\FF$-fractional Brownian motion, $X$ is adapted to $\FF$, and $X$ is a solution to \eqref{mainSDE} in the sense  of \cref{D:sol}. A weak solution $(X,B^H)$ is called a \textit{strong solution}  if $X$ is adapted to $\FF^{B^H}$. We say that \textit{weak uniqueness} holds for \Gref{x;b} if whenever    $(X,B^H)$ and $(\overline X,\overline B^H)$ are two  weak solutions of this equation (not necessarily defined on the same probability space), then   $\law(X)=\law(\overline X)$ on the path space  $\C([0,T];\R^{d})$.  We say that \textit{pathwise uniqueness} holds for \Gref{x;b} if for any two weak solutions of \Gref{x;b}  $(X,B^H)$ and $(\overline X,B^H)$  with common noise $B^H$ on a common probability space (w.r.t. possibly different filtrations),  one has $\P(X_t=\overline X_t \text{ for all $t\in[0,T]$})=1$. 

We consider the following class of solutions.

\begin{definition} Let $\kappa\in[0,1]$. We say that a solution $(X,B^H)$ to \Gref{x;b} belongs to the
class $\V(\kappa)$ if for any $m\ge2$ we have for $\psi:=X-B^H$
\begin{equation*}
\sup_{0\le s \le t\le T} \frac{\|\psi_t-\psi_s\|_{L_m(\Omega)}}{|t-s|^\kappa}<\infty.
\end{equation*}	
\end{definition}


Clearly, if $b$ is a non-negative measure, then the process $X-B^H$ is nondecreasing, and thus automatically of finite variation.

Now we are ready to present our next main result. We recall that if $b\in\C^\alpha$, $x\in\R^d$, and $\alpha$ satisfies 
\begin{equation}\label{alphacond}\tag{A}
	\alpha>\frac12-\frac1{2H},
\end{equation}	
then  \cite[Theorem~8.2, Lemma 8.4]{GG22} implies that  \Gref{x;b} has a weak solution, and this solution lies in the class $\V(1+\alpha H)\subset \Vcl$. We are able to show that in the entire regime \eqref{alphacond} where weak existence holds, weak uniqueness also holds.

\begin{theorem}\label{t:wu}
Let  $\alpha\in\R$, $x\in\R^d$, $b\in\C^\alpha$, $H\in(0,\frac12]$. Suppose that \eqref{alphacond} holds.
Then 
\begin{enumerate}[(i)]
	\item  equation \Gref{x;b} has a unique weak solution  in the class $\Vcl$;  
	\item let $\{b_n, n\in\Z_+\}$ be a sequence of $\C^\infty(\R^d,\R^d)$ functions converging to $b$ in $\C^{\alpha-}$ as $n\to\infty$. Assume that the sequence $\{x_n, n\in\Z_+\}$, where $x_n\in\R^d$, converges to $x$ as $n\to\infty$. Let $X_n$ 
	be a strong solution to \Gref{x_n;b_n}. Then the sequence $(X_n,B^H)$
	weakly converges in $\C([0, T], \R^{2d})$ and its limit is a unique weak solution to \Gref{x;b} in class $\Vcl$.
\end{enumerate}
\end{theorem}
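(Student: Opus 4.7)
The plan is to deduce both parts of \cref{t:wu} from one stability statement: for \emph{any} weak solution $X$ to \Gref{x;b} in the class $\Vcl$, any sequence $b_n\in\C^\infty(\R^d,\R^d)$ with $b_n\to b$ in $\C^{\alpha-}$, and any $x_n\to x$, the unique strong solution $X_n$ to \Gref{x_n;b_n} (which exists by classical theory for smooth drifts) satisfies $(X_n,B^H)\to(X,B^H)$ in law in $\C([0,T];\R^{2d})$. This is exactly (ii), and (i) follows because the limit does not depend on the choice of $X$.

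To implement the ``Control and Reimburse'' scheme on the probability space carrying $X$, I fix a large parameter $\lambda>0$ and build an $\FF$-adapted auxiliary process $\wt X_n$ satisfying
\[
d\wt X_n(t)=b_n(\wt X_n(t))\,dt+\lambda(X_t-\wt X_n(t))\,dt+dB^H_t,\qquad \wt X_n(0)=x_n.
\]
The $\lambda$-term forces $\wt X_n$ to contract toward $X$ pathwise. Using the representation \eqref{bhrepr}, Girsanov's theorem supplies a measure $\mathsf{Q}\sim\P$ under which $\wt X_n$ solves \Gref{x_n;b_n} with an $\FF$-fractional Brownian motion; by well-posedness of the latter for smooth drifts, the $\mathsf{Q}$-law of $\wt X_n$ equals $\law(X_n)$. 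For any bounded Polish metric $\rho$ on $\C([0,T];\R^d)$ the triangle inequality yields
\[
\W_\rho(\law(X),\law(X_n))\le d_{TV}(\law(X_n),\law(\wt X_n))+\E\bigl[\rho(\wt X_n,X)\bigr],
\]
and a Pinsker-type bound controls the TV term by the $L_2$-norm of the compensating drift pushed through the inverse fBM kernel $K_H^{-1}$, i.e., by a quantity built from $\lambda(X-\wt X_n)$ and $b_n(\wt X_n)-b_n(X)$.

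The main obstacle, and the reason the H\"older strategy of \cite{Ksch,Han} does not apply, is that the pathwise contraction error and the Cameron--Martin norm both involve time integrals $\int_0^t[b_n(\wt X_n(r))-b(X_r)]\,dr$ with $b\in\C^\alpha$, $\alpha<0$, which are not classical integrals. I would control them via L\^e's stochastic sewing lemma applied to germs such as $A_{s,t}=\E^{\F_s}\int_s^t(b-b_n)(X_r)\,dr$. Because $X-B^H\in\Vcl$, conditionally on $\F_s$ the law of $X_r$ is a Gaussian of variance $\asymp(r-s)^{2H}$ shifted by a smoother adapted perturbation, and the resulting heat-kernel smoothing gives a Besov gain of order $(r-s)^{H\alpha}$ that is integrable in the sewing sense precisely in the range \eqref{alphacond}. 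The hard point is to propagate this estimate from $X$ to $\wt X_n$: this requires a priori $\Vcl$-type bounds on $\wt X_n$ uniform in $n$ and $\lambda$ (available because $X-\wt X_n$ is itself small, so the $\lambda$-drift does not destroy the regularity), followed by a Gr\"onwall-type closure comparing $\wt X_n$ with $X$ in an appropriate norm. Once this is in place, one chooses $\lambda=\lambda_n\to\infty$ at the rate matching $\|b_n-b\|_{\C^{\alpha'}}\to 0$ so that the pathwise term vanishes by contraction while the TV term vanishes via the sewing estimate. Tightness of $\{(X_n,B^H)\}$ in $\C([0,T];\R^{2d})$ follows from the uniform $\Vcl$-bounds on $X_n$, so every subsequential limit coincides in law with $X$, completing both parts.
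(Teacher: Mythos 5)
Your proposal follows essentially the same route as the paper's proof: the same auxiliary process $\wt X_n$ with the $\lambda(X-\wt X_n)$ push toward $X$, the same Girsanov/Pinsker estimate (via the inverse fBM kernel) for the total-variation term, the same use of stochastic sewing together with $\Vcl$-type a priori bounds to quantify the contraction $\wt X_n\approx X$, and the same choice of $\lambda$ as a function of $\|b-b_n\|$ to balance the two terms, with both (i) and (ii) then following at once. One minor slip worth noting: since $\wt X_n$ already carries the drift $b_n(\wt X_n)$, the Cameron--Martin drift that Girsanov needs to absorb when comparing $\wt X_n$ to $X_n$ is only $\lambda(X-\wt X_n)$, not additionally $b_n(\wt X_n)-b_n(X)$; this does not affect the argument.
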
	

\begin{remark}
For $H=1/2$, \cref{t:wu} establishes weak uniqueness and stability for solutions to SDEs driven by standard Brownian motion in the regime $\alpha > -1/2$. This provides an alternative proof of the results of Zhang and Zhao \cite{bib:zz17}, where weak uniqueness was established in the same range of the parameter $\alpha$. We note that our arguments are very different: we use generalized coupling methods and stochastic sewing, whilst \cite{bib:zz17} uses PDE techniques and the Zvonkin transformation.
\end{remark}

We also obtained the following stability result showing H\"older continuous dependence of the law of the solution to \Gref{x;b} on the initial condition $x$ and on the drift $b$. Hereafter, $\W_{\|\cdot\|\wedge1}$ denotes the Wasserstein distance,   as introduced above in \eqref{wrho}, on a metric space $\C([0,1],\R^{2d})$ equipped with the distance $\|\cdot\|_{\C([0,1],\R^{2d})}\wedge1$. Bounding a natural distance of the metric space by $1$ is standard in ergodic literature, see, e.g., \cite[Proposition~3.12]{HMAOM}, \cite[equation~(5.1) and Definition~4.6]{HMS}, \cite[p.~555, definition of $\rho$]{Yura} and so on. The main reason is that convergence in $\W_{\|\cdot\|}$ is not equivalent to weak convergence in $\C([0,1],\R^{2d})$ (consider $\mu_n:=(1-\frac1n)\delta_0+\frac1n\delta_n$), so this metric is somehow too strong. On the other hand,  convergence in $\W_{\|\cdot\|\wedge1}$ is equivalent to the  weak convergence in $\C([0,1],\R^{2d})$, cf.  \cite[Theorem~6.9]{Villani} vs \cite[Corollary~6.13]{Villani}.

\begin{theorem}\label{t:sb}
Let  $\alpha\in\R$, $x_1,x_2\in\R^d$, $b_1,b_2\in\C^\alpha$, $H\in(0,\frac12]$. Suppose that \eqref{alphacond} holds. Let $(X^i,B^{H,i})$ be a weak  solution to \Gref{x_i,b_i}, $i=1,2$. Then
there exists $\eps>0$ such that
\begin{equation}\label{mmmssde}
	\W_{\|\cdot\|\wedge1}(\law(X^1, B^{H,1}),\law(X^2,B^{H,2}))\le C \Gamma (\|b_1-b_2\|_{\C^{\alpha}}^\eps+|x_1-x_2|^\eps).
\end{equation}
for 
$$
\Gamma:=(1+\|b_1\|_{\C^\alpha}^{\frac{40}{\eps}}+\|b_2\|_{\C^\alpha}^{\frac{40}{\eps}})
(1+|x_1|+|x_2|).
$$
\end{theorem}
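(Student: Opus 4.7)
The plan is to make the Control-and-Reimburse strategy underlying \cref{t:wu} quantitative, carefully tracking the dependence on the drifts and initial points. As a first step I would mollify: set $b_i^n:=G^d_{1/n}b_i$, so that $\|b_i^n\|_{\C^\alpha}\le\|b_i\|_{\C^\alpha}$ and $b_i^n\to b_i$ in $\C^{\alpha-}$. \cref{t:wu}(ii) and the lower semicontinuity of $\W_{\|\cdot\|\wedge 1}$ under weak convergence then reduce \eqref{mmmssde} to proving the same bound, with an $n$-independent constant, for the smooth drifts $b_i^n$. In that regime strong existence and uniqueness hold, so I may realize both solutions $X^1_n,X^2_n$ on the \emph{same} probability space driven by a common $\FF$-fractional Brownian motion $B^H$.

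Next, I would introduce an auxiliary process $\wt Y_n$, started at $x_2$ and driven by $B^H$, whose drift is $b_1^n(\wt Y_n)+v_t$, with $v$ a feedback of magnitude controlled by a parameter $\lambda>0$ pushing $\wt Y_n$ towards $X^1_n$. Following the heuristic sketched in \cref{s:oview}, $v$ is chosen so that the corresponding shift of the underlying Brownian motion $W$, obtained via the Volterra kernel $K_H$ in \eqref{bhrepr}, lies in the Cameron--Martin space of $B^H$. The triangle inequality then gives
\[
\W_{\|\cdot\|\wedge 1}\bigl(\law(X^1_n,B^H),\law(X^2_n,B^H)\bigr)\le \dtv\bigl(\law(\wt Y_n,B^H),\law(X^2_n,B^H)\bigr)+\E\bigl[\|\wt Y_n-X^1_n\|_{\C([0,T];\R^d)}\wedge 1\bigr].
\]
The total variation term is bounded via Girsanov's theorem applied to the drift mismatch $v+(b_1^n-b_2^n)(\wt Y_n)$; the stochastic sewing bounds of \cref{s:auxbound} (applied to each summand after the action of the inverse Volterra operator $K_H^{-1}$) yield a contribution of order $\|b_1-b_2\|_{\C^\alpha}^{\eps}$ together with a residual scaling with $\lambda$. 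The contraction term is bounded by expanding the equation for $\wt Y_n-X^1_n$ and using stochastic sewing on the singular increment $b_1^n(\wt Y_n)-b_1^n(X^1_n)$ as in the contractive step of the proof of \cref{t:wu}; combined with the dissipative feedback, this yields a bound of order $|x_1-x_2|^{\eps}$ once $\lambda$ is large enough.

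The main obstacle is the simultaneous optimization of $\lambda$: the Girsanov cost grows with $\lambda$, while the contraction error decays with $\lambda$, and all constants must remain at most polynomial in $\|b_i\|_{\C^\alpha}$ and affine in $1+|x_i|$. Balancing these two competing dependences determines the value of $\eps$ and forces the explicit exponent $40/\eps$ that appears in $\Gamma$. A secondary but nontrivial technical point is uniform-in-$n$ control of the exponential moments of the Girsanov density, which I expect to establish via a Khasminskii/John--Nirenberg-type argument combined with the a priori moment bounds inherent in the class $\Vcl$; this is also what produces the affine factor $(1+|x_1|+|x_2|)$ in $\Gamma$.
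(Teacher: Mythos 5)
Your blueprint places the drift mismatch $(b_1^n-b_2^n)$ in the Girsanov shift, and that step breaks in the regime of interest. In your setup $\wt Y_n$ carries drift $b_1^n$ plus feedback, so matching its law to that of $X^2_n$ (drift $b_2^n$) requires Girsanov to absorb $\int_0^\cdot(b_1^n-b_2^n)(\wt Y_n(r))\,dr$ on top of the feedback. After applying $K_H^{-1}$, the Pinsker bound involves the $L_2$-norm of this shift, and the sewing estimates give the path $t\mapsto\int_0^t(b_1^n-b_2^n)(\wt Y_n(r))\,dr$ H\"older regularity no better than $1+\alpha H$. That exceeds the Cameron--Martin threshold $H+\tfrac12$ for $B^H$ only when $\alpha>1-\tfrac1{2H}$, i.e.\ in the Catellier--Gubinelli strong-uniqueness regime. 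Throughout the remaining range $\tfrac12-\tfrac1{2H}<\alpha<1-\tfrac1{2H}$ the relative entropy diverges as $n\to\infty$: the only uniform-in-$\omega$ bound available is $\|b_1^n-b_2^n\|_{\C^0}\lesssim n^{-\alpha/2}\|b_1-b_2\|_{\C^\alpha}$, and the singular kernel $(t-s)^{-H-\frac12}$ in $K_H^{-1}$ swallows the gain $(r-s)^{(\alpha-\eps)H}$ from \eqref{Ycond}, so the local exponent falls below the $\tfrac12$ threshold of \cref{T:SSLst}. No Khasminskii/John--Nirenberg argument can repair this; it is exactly the Cameron--Martin obstruction the introduction highlights as the reason Girsanov alone cannot remove distributional drifts below the Catellier--Gubinelli line.

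The fix is to move the drift mismatch into the contraction estimate rather than the Girsanov shift: give $\wt Y_n$ the Lipschitz drift $b_2^n$ (plus feedback towards $X^1_n$), so that Girsanov compares two processes sharing the same drift and absorbs only the bounded feedback $\lambda(X^1_n-\wt Y_n)$, while the difference $b_1^n-b_2^n$ enters the contraction $\E\|X^1_n-\wt Y_n\|_{\C([0,1])}$ through the sewing term $I_{2,n}$ of \cref{L:ks2}, measured harmlessly in $\C^{\alpha-\eps}$. This is exactly what \cref{l:ml} packages, and the paper then proves \cref{t:sb} in a few lines: set $b^{n,1}:=G^d_{1/n}b_1$, let $X^{n,1}$ be the strong solution of the SDE with drift $b^{n,1}$ and initial condition $x_1$, apply \cref{l:ml} once to $(X^1,B^{H,1})$ versus $(X^{n,1},W^H)$ and once to $(X^2,B^{H,2})$ versus $(X^{n,1},W^H)$, combine via the triangle inequality for $\W_{\|\cdot\|\wedge1}$, and let $n\to\infty$. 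Your step ``realize both on the same probability space'' is also neither needed nor directly available for the two original weak solutions, which a priori live on different spaces; what makes the two-sided application of \cref{l:ml} work is that the pivot $X^{n,1}$, being a strong solution, can be constructed on each of those spaces separately.
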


Up until now dimension $d$ was an arbitrary natural number. We recall however that the case $d=1$ is special. Here, one typically gets better results on the well-posedness of SDEs than if $d\ge2$.  Indeed, for SDEs driven by Brownian motion, strong well-posedness holds in $d=1$ for drift $b\in\C^\alpha$, $\alpha>-1/2$ \cite[Theorem~2.6]{BC}, while if $d\ge2$, then strong well-posedness is known only for $b$ being a bounded (or integrable) function \cite{ver80,kr_rock05} but not a general distribution.  Similarly, for SDEs driven by $\beta$-stable L\'evy processes with drift $b\in\C^{\alpha}$, strong well-posedness holds in $d=1$ for $\alpha>\frac12-\frac\gamma2$ \cite[Theorem~2.3]{ABM2020}, and $d\ge2$ for $\alpha>1-\frac\gamma2$ \cite[Theorem~1.1]{Pr12}.

However, to the best of our knowledge,  until now, no such results were available for SDEs driven by fractional Brownian motion. The best results on strong well-posedness for \Gref{x;b} with $b\in\C^\alpha$ are due to Catellier and Gubinelli \cite[Theorem~1.13]{CG16}, who showed that \Gref{x;b} has a unique strong solution whenever $\alpha>1-\frac{1}{2H}$. The next theorem shows that for $d=1$, this result can be improved.

Before stating the theorem let us just recall that there is an alternative way to define a solution to \Gref{x;b}, where $b$ is a distribution, via non-linear Young integrals \cite[Section~2]{CG16}, \cite{LucioNLY}. Namely, if \eqref{alphacond} is satisfied  and additionally one of the following holds:
\begin{align}\label{bcond1}\tag{B1}
&(1+\alpha H)(\alpha+\frac1{2H})>\frac12\qquad \text{or}\\
&b\in \M_+(\R),\tag{B2}\label{bcond2}
\end{align}
then  the drift term $\int b(X_r)\,dr$  can be defined directly without the need to consider approximating sequence $(b^n)$ and the new definition of solution coincides with the old one \cite[Remark~8.5]{GG22}. Our next result states that in $d=1$ equation \Gref{x;b} has a unique strong solution in exactly the same regime \eqref{bcond1} or \eqref{bcond2} where its solution can be written as a non-linear Young integral. We note that the regime  \eqref{bcond1} improves the condition $\alpha>1-1/(2H)$ from  \cite{CG16}  for every $H\in(0,1/2]$.

\begin{theorem}\label{t:su}
	Let $d=1$, $\alpha\in\R$, $x\in\R$, $b\in\C^\alpha$, $H\in(0,\frac12]$. Suppose that \eqref{alphacond} holds.  Then 
if \eqref{bcond1} or \eqref{bcond2} is satisfied, then  equation \Gref{x;b} has a   strong solution $X\in\Vcl$ and pathwise uniqueness holds in the class $\Vcl$.
\end{theorem}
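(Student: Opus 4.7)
The plan is to derive \cref{t:su} by combining three ingredients: weak existence of a solution in $\Vcl$, which holds throughout the range \eqref{alphacond} by \cite[Theorem~8.2 and Lemma~8.4]{GG22}; weak uniqueness in $\Vcl$, which is \cref{t:wu}; and a new pathwise uniqueness statement in $\Vcl$, valid in the narrower regimes \eqref{bcond1} or \eqref{bcond2}. Once pathwise uniqueness is in hand, the Yamada--Watanabe--Kurtz principle upgrades weak existence into strong existence in the class $\Vcl$. Thus the only genuinely new work is pathwise uniqueness, and the two alternative hypotheses \eqref{bcond1} and \eqref{bcond2} enter through different mechanisms.

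Under \eqref{bcond1} I would use the nonlinear Young integral representation of the drift. For any solution $X\in\Vcl$, \cite[Remark~8.5]{GG22} shows that $\psi_{t}=\int_{0}^{t}b(X_{r})\,dr$ coincides with the nonlinear Young integral of $b$ along $X$. Taking two solutions $X^{1},X^{2}\in\Vcl$ driven by the same $B^{H}$ and starting at the same point, I would apply a stochastic-sewing-type bound to the difference $\int_{s}^{t}\bigl(b(X^{1}_{r})-b(X^{2}_{r})\bigr)\,dr$ in $L_{m}(\Omega)$. The hypothesis $(1+\alpha H)(\alpha+\tfrac{1}{2H})>\tfrac12$ is precisely the summability exponent needed so that this bound takes the form $C(t-s)^{(1+H)/2+\eta}\sup_{r\in[s,t]}\|X^{1}_{r}-X^{2}_{r}\|_{L_{m}(\Omega)}$ for some $\eta>0$. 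Substituting into the difference equation $X^{1}-X^{2}=\psi^{1}-\psi^{2}$ and iterating on small time intervals then yields $X^{1}\equiv X^{2}$.

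Under \eqref{bcond2} the drift $b\in\M_{+}(\R)$ is a non-negative measure, so $\psi^{1}$ and $\psi^{2}$ are both non-decreasing continuous processes in $\Vcl$. My plan here is a one-dimensional comparison argument for distributional drifts. Choose smooth sub- and super-approximations $\underline b^{n},\overline b^{n}\in\M_{+}(\R)$ with $\underline b^{n}\le b\le \overline b^{n}$ in the sense of measures and $\underline b^{n},\overline b^{n}\to b$ in $\C^{\alpha-}$; classical comparison for the resulting smooth SDEs driven by the common $B^{H}$ gives $\underline X^{n}_{t}\le \overline X^{n}_{t}$. The new step is to show that \emph{any} weak solution $X\in\Vcl$ of \Gref{x;b} is sandwiched, $\underline X^{n}_{t}\le X_{t}\le \overline X^{n}_{t}$ almost surely; this uses the monotonicity of $b\mapsto\psi$ on $\M_{+}(\R)$ together with the intrinsic representation of $\psi$ as the integral of $b$ against the occupation measure of $X$. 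Combined with a stability estimate in the spirit of \cref{t:sb} applied under the common noise $B^{H}$, which forces $\overline X^{n}-\underline X^{n}\to 0$ in probability, this sandwich implies $X^{1}=X^{2}$ a.s.

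The hard part differs in the two regimes. In the \eqref{bcond1} case the obstacle is quantitative: the stochastic sewing estimate for $b(X^{1})-b(X^{2})$ has to be sharp enough to cover the full range \eqref{bcond1}, which is strictly wider than the Catellier--Gubinelli regime $\alpha>1-\tfrac{1}{2H}$ in which a deterministic contraction closes. In the \eqref{bcond2} case the obstacle is structural: classical one-dimensional comparison proofs rely either on pointwise evaluation of the drift through $(X^{1}-X^{2})^{+}$ or on local-time identities, neither of which is meaningful for a distributional $b$; our substitute is the approximation argument above, where the monotonicity of $\psi$ in $b$ on $\M_{+}(\R)$, combined with the occupation-time representation of the nonlinear Young integral, replaces the pointwise differential inequality.
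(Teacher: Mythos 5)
Your proposal diverges from the paper's strategy in an essential way, and the divergence in the \eqref{bcond1} case is a genuine gap: the direct stochastic-sewing contraction you describe does not close in this regime. To get a bound of the form $C(t-s)^{\gamma}\sup_{r}\|X^1_r-X^2_r\|_{L_m}$ with exponent \emph{one} on the $L_m$-distance, one must invoke the second sewing estimate (here \cref{l:finfin}/\cref{c:sde}) with parameter $\mu=1$; but the associated singularity condition \eqref{paramcond2} then reads $\alpha>1-\tfrac{1}{2H}$, which is exactly the Catellier--Gubinelli threshold. In the regime \eqref{bcond1} one only has $\alpha>\tfrac12-\tfrac{1}{2H}$, so the largest admissible $\mu$ is strictly below $1$ and stochastic sewing returns $\|X^1-X^2\|^{\mu}$ with $\mu<1$ --- a sublinear bound that cannot be iterated to yield $X^1\equiv X^2$. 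The paper points this out explicitly in \cref{s:oview} around \eqref{directbound:s}; it is precisely the obstacle that motivates the whole ``generalized-coupling + sewing'' machinery and the two-stage structure (weak uniqueness first, then pathwise uniqueness as a corollary). The condition \eqref{bcond1} is not a summability threshold for a linear contraction; it guarantees instead that $X^i-B^H$ is a.s.\ of finite $p$-variation for $p=\tfrac{1}{1+\alpha H-\eps}\le 2$, which is what the \emph{deterministic} sewing estimate \cref{l:noreg} needs for non-adapted integrands.

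The paper's actual proof of pathwise uniqueness is a comparison-via-minimum argument: for two solutions $X^1,X^2$ with the same noise one shows that $Y:=X^1\wedge X^2$ is again a solution in $\Vcl$, and then weak uniqueness (\cref{t:wu}) forces $\law(Y)=\law(X^1)$, hence $X^1\wedge X^2=X^1$ a.s.\ and symmetrically $=X^2$. The delicate point, addressed in \cref{l:sud1}, is that verifying \cref{D:sol}(ii) for $Y$ must be done against a distributional $b$: one writes the open set $\{X^1>X^2\}$ as a countable union of intervals, works with the truncation $Y^K$ (which is \emph{not} adapted), and controls the tails via the deterministic nonlinear-Young-type bound \cref{l:noreg} with exponent $p$. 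This is where \eqref{bcond1} (via Hölder-to-$p$-variation with $p<2$) or \eqref{bcond2} (via monotonicity of $X^i-B^H$, giving $p=1$) enters. Your \eqref{bcond2} argument via sub/super approximations and occupation-measure monotonicity is a plausible alternative in spirit, but it is left at the level of intent and would need the same care about non-adapted sandwich processes; the paper instead gets both cases from the one lemma. Finally, the paper upgrades to strong existence via Gy\"ongy--Krylov (\cref{L:GP}) rather than Yamada--Watanabe--Kurtz; that substitution is fine and equivalent in spirit.
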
	




\subsection{Overview of the proofs of main results}\label{s:oview}

\textbf{Weak uniqueness}. For the convenience of the reader we demonstrate the heuristics of our proof strategy on the following simplified example. Then  we highlight the necessary changes which are needed in order to pass from this partially informal explanation to rigorous proofs of \cref{t:wu,t:swu}. We also explain the novelty of our ideas compared with \cite{Ksch}. To make our exposition clearer, in this subsection we skip many technical details and, in particular, omit the arbitrarily small exponents.

Thus, we would like to show weak uniqueness for solutions to SDE
\begin{equation}\label{toyeq}
d X_t= b(X_t) dt +dB_t^H,\quad	X_0=0,
\end{equation}
where $b\in \C^\alpha$, $\alpha<0$. Assume for simplicity that $b$ is a function. We fix a sequence $(b^n)_{n\in\Z_+}$  of smooth functions converging to $b$ in $\C^{\alpha-}$. Let  $X^n$ be the strong solution to \Gref{0;b^n}.

Now consider \textbf{any} weak solution $(X,B^H)$ to \eqref{toyeq}. 
Our goal is to show that the sequence $(X^n,B^H)_{n\in\Z_+}$ weakly converges to this solution $(X,B^H)$. Since  $(X,B^H)$ was an arbitrary solution to \eqref{toyeq}, this would imply that the sequence $(X^n,B^H)$ converges in law to any weak solution of  \eqref{toyeq}, and thus all of them have the same distribution, that is, weak uniqueness holds.

 A  naive direct approach, which fails here, would be simply to write 
\begin{equation*}
\|X_t-X^n_t\|_{L_2(\Omega)}= \Bigl\|\int_0^t (b(X_r)-b^n(X^n_r)) \,dr \Bigr\|_{L_2(\Omega)},
\end{equation*}
and then use stochastic sewing. Under the condition \eqref{alphacond}, one would get (see \cref{l:finfin})
\begin{equation}\label{directbound:s}
	\sup_{t\in[0,t_0]}\|X_t-X^n_t\|_{L_2(\Omega)}\le C t_0^{\frac12}\sup_{t\in[0,t_0]}\|X_t-X^n_t\|_{L_2(\Omega)}^{(\alpha+\frac1{2H})\wedge1}+\text{good small terms}.
\end{equation}
If $\alpha+\frac1{2H}>1$, one can remove the bad term $\sup_{t\in[0,t_0]}\|X_t-X^n_t\|_{L_2(\Omega)}$ on the right-hand side for small $t_0>0$, and even get strong uniqueness of solutions to SDE \eqref{toyeq}, see \cite[Theorem~1.4]{GG22}, \cite[Theorem~1.13]{CG16}. However, we are interested in the regime where weak existence of \eqref{toyeq} holds, that is,  $\alpha>\frac12-\frac1{2H}$ \cite[Theorem~8.2]{GG22}. In this regime, the exponent $\alpha+\frac1{2H}$ is less than $1$ and \eqref{directbound:s} does not imply that $\sup_{t\in[0,t_0]}\|X_t-X^n_t\|_{L_2(\Omega)}$ is small. Therefore this direct approach does not work. 
Nevertheless, as we demonstrate below, bound \eqref{directbound:s} is not entirely useless.

Therefore, we avoid comparing the solutions $X$ and $X^n$ directly. Instead, in the spirit of the ``Control-and-Reimburse''  strategy of \cite{Ksch}, we introduce now an auxiliary process $\wt Y^n$, which is defined on the same space as $X$ and solves SDE
\begin{equation*}
d \wt Y^n_t =b^n(\wt Y^n_t) dt+\lambda(X_t-\wt Y^n_t)dt+d B_t^H,
\end{equation*}
where the parameter $\lambda>1$ will be fixed later. The main idea now is to show that the process $\wt Y^n$ is close  to $X^n$ in law for $\lambda$ not too large, and close to $X$ in distance for  $\lambda$ not too small. Then by picking the best $\lambda$ from these two opposite requirements, we derive that $X^n$ and $X$ are close to each other in law. By the triangle inequality, 
\begin{equation}\label{key}
\W_{\|\cdot\|\wedge1}(\law(X^n),\law(X))\le d_{TV}(\law(X^n),\law(\wt Y^n))+\E \|\wt Y^n-X\|_{\C([0,1])}.
\end{equation}
We bound the first term of \eqref{key} in the same spirit as in \cite{Ksch}. The key difference between our approach   and that in \cite{Ksch} lies  in treating the second term of \eqref{key}. This is precisely where stochastic sewing plays a key role.

Introduce a process $\wt B_t^H:=\int_0^t\lambda(X_r-\wt Y^n_r)dr+ B_t^H$. Then $\wt Y^n$ satisfies 
\begin{equation*}
	d \wt Y^n_t =b^n(\wt Y^n_t) dt+d\wt  B_t^H.
\end{equation*}
Thus $(\wt Y^n,\wt B^H)$ is a solution to \Gref{0;b^n}. 
Since $b^n$ is a smooth function, weak uniqueness holds for this equation. Therefore, Girsanov's theorem and Pinsker's inequality yield (recall that $H\le\frac12$)
\begin{equation}\label{tvbound:s}
d_{TV}(\law(X^n),\law(\wt Y^n))\le d_{TV}(\wt B^H,B^H)\le  C \lambda\bigl\|\|X-\wt Y^n\|_{\C([0,1])}\bigr\|_{L_2(\Omega)}.
\end{equation}

Next, to compare $\wt Y^n$ and $X$, we benefit from the control term $\lambda(X-\wt Y^n)$ which pushes $\wt Y^n$ towards $X$. After an easy calculation, we deduce for $t\in[0,1]$
\begin{equation}\label{keyint:s}
|X_t-\wt Y^n_t|\le \Bigl|\int_0^t e^{-\lambda(t-r)}(b(X_r)-b_n(\wt Y_r^n))\,dr\Bigr|.
\end{equation}	
Carefully bounding the right-hand side of the above inequality is absolutely crucial for the entire weak uniqueness proof. 
First, let us review how \cite{Ksch} deals with bounds of a similar type, and then we explain how we improve  their approach. 

Assume additionally for a moment that $\alpha>0$. Taking supremum over $t\in[0,1]$ in the above bound, estimating $|b(x)-b^n(y)|$ as $\|b\|_{\C^\alpha}|x-y|^\alpha+\|b-b^n\|_{\C(\R)}$, $x,y\in\R^d$, and using Young's inequality, one gets
for arbitrary $\eps>0$
\begin{align}\label{demoestimate}
	\|X-\wt Y^n\|_{\C([0,1])}&\le C \lambda^{-1}\|b-b^n\|_{\C(\R)}+C \lambda^{-1}\|b\|_{\C^\alpha}\|X-\wt Y^n\|_{\C([0,1])}^\alpha \\
	&\le C \lambda^{-1}\|b-b^n\|_{\C(\R)}+C \lambda^{-\eps}\|b\|_{\C^\alpha}\|X-\wt Y^n\|_{\C([0,1])}+C\lambda^{-\frac{1}{1-\alpha}}\|b\|_{\C^\alpha}.\nn
\end{align}	
Hence, for $\lambda$ large enough the following inequaltiy holds:
\begin{equation}\label{fa:s}
	\|X-\wt Y^n\|_{\C([0,1])}\le  C \lambda^{-1}\|b-b^n\|_{\C(\R)}+C\lambda^{-\frac{1}{1-\alpha}}\|b\|_{\C^\alpha}.
\end{equation}
Combining this with the total variation bound \eqref{tvbound:s} and substituting this into \eqref{key}, one finally derives
\begin{equation*}
	\W_{\|\cdot\|\wedge1}(\law(X^n),\law(X))\le C \|b-b^n\|_{\C(\R)}+C\lambda^{-\frac{\alpha}{1-\alpha}}\|b\|_{\C^\alpha}.
\end{equation*}
By taking the limit as $\lambda\to\infty$, one gets the desired weak convergence of $X^n$ to an arbitrary weak solution $X$ and thus weak uniqueness.

We see, however, that this approach does not work for $\alpha < 0$, as neither of the terms on the right-hand side of the last inequality converges as $n, \lambda \to \infty$. Our main insight is to utilize stochastic sewing to bound \eqref{keyint:s} more carefully. Applying a modification of \eqref{directbound:s}, we obtain (see \cref{c:finfin})
\begin{equation*}
\sup_{t\in[0,1]}\|X_t-\wt Y_t^n\|_{L_2(\Omega)}\le C \lambda^{-1-(\alpha-\frac12)H}
\sup_{t\in[0,1]} \|X_t-\wt Y^n_t \|_{L_2(\Omega)}^{\frac12}+\text{good small terms}.
\end{equation*}
Of course, similar to \eqref{directbound:s}, the term $\|X_t - \widetilde{Y}^n_t \|_{L_2(\Omega)}$ appears with an exponent less than $1$, and we cannot eliminate it directly. However, this is not a problem anymore due to the presence of the additional factor $\lambda^{-1-(\alpha-\frac{1}{2})H}$, which we can make very small.
Comparing this with the way \cite{Ksch} treats the right-hand side of \eqref{keyint:s}  (see \eqref{demoestimate} above), we see that a certain tradeoff takes place. The term $\|X_t-\wt Y_t^n\|$ now has a much bigger exponent: $1/2$ instead of $\alpha$ (recall that $\alpha$ is negative now). The price to pay is that the parameter $\lambda^{-1}$ now has a smaller exponent $1+(\alpha+\frac12)H$ instead of $1$. An additional disadvantage is that we are now unable to obtain a direct almost sure bound, but rather a weaker $L_2(\Omega)$ bound. Nevertheless, this tradeoff is sufficient for us to close the buckling loop. By Young's inequality
\begin{equation*}
	\sup_{t\in[0,1]}\|X_t-\wt Y_t^n\|_{L_2(\Omega)}\le C \lambda^{-2-(2\alpha-1)H} +C \lambda^{-\eps}
	\sup_{t\in[0,1]} \|X_t-\wt Y^n_t \|_{L_2(\Omega)}+\text{good small terms},
\end{equation*}
which implies 
\begin{equation}\label{onemore:s}
	\sup_{t\in[0,1]}\|X_t-\wt Y_t^n\|_{L_2(\Omega)}\le C \lambda^{-2-(2\alpha-1)H} +\text{good small terms}.
\end{equation}
Recall that the Girsanov bound \eqref{tvbound:s}  includes an additional  factor $\lambda$; therefore, in order to succeed, we must impose 
$$
\lambda \lambda^{-2-(2\alpha-1)H}\to0,\qquad \text{as $\lambda\to\infty$},
$$
which implies exactly our condition \eqref{alphacond}. Thus, substituting \eqref{onemore:s} into  \eqref{tvbound:s} and  \eqref{key} and taking care of ``good small terms'', we get the  bound 
\begin{equation*}
	\W_{\|\cdot\|\wedge1}(\law(X^n),\law(X))\le C    \lambda^{-\frac12-H(\alpha-\frac12)}
	+C \lambda^2  \|b-b^n\|_{\C^{\alpha-\eps}},
\end{equation*}
which significantly improves \cite{Ksch}-type bound \eqref{fa:s}. We see that in our regime  \eqref{alphacond}, the exponent of $\lambda$, that is, $-\frac12-H(\alpha-\frac12)$, is negative. Therefore we can pick $\lambda:=\|b-b^n\|_{\C^{\alpha-\eps}}^{-1/3}$ to get 
\begin{equation*}
	\W_{\|\cdot\|\wedge1}(\law(X^n),\law(X))\le C   \|b-b^n\|_{\C^{\alpha-\eps}}^\rho
\end{equation*}
for certain  $\rho=\rho(\alpha,H)>0$. This yields the desired weak convergence of the fixed sequence $(X^n)_{n\in\Z_+}$ to any weak solution $X$, establishing weak uniqueness. This completes the proof.

Now it remains to make this informal description of our proof strategy rigorous. This is the subject of the remaining part of the paper. The derivation of an analogue of \eqref{keyint:s} is done in \cref{L:ks2}. Bound \eqref{tvbound:s} is justified in \cref{l:ks1}, and they are combined in \cref{l:ml}. It is important to note that in our setting $b$ is not function but rather a Schwartz distribution belonging to $\C^\alpha$. As a result, the integral in the right-hand side of \eqref{keyint:s} is not well-defined, and even \eqref{keyint:s} does not hold. Therefore, one must consider an appropriate approximation. 

The proof of weak uniqueness for SHE goes along the same lines; however, some modifications due to the infinite-dimensional nature of the problem are necessary. Most importantly, instead of the $\sup$ distance $\|\cdot\|_{\C([0,1])}$, we have to work with the weighted in time and space norm defined in \eqref{stwd} and rely on the backward uniqueness-type results. This is the subject of \cref{s:she}.

\vskip2ex
\textbf{Strong uniqueness for SDEs in case $d=1$}. As before, we first explain the heuristics for the case when $b$ is a function, and then proceed to the general case.

Let $d=1$, and let $(X^1,B^H)$, $(X^2,B^H)$ be two weak solutions to SDE \eqref{toyeq} defined on the same probability space with common noise. Consider a process $Y_t:=X^1_t\wedge X^2_t$. If we show that $Y$ is also a weak solution to this equation, then by weak uniqueness we get for any bounded continuous strictly increasing function $f\colon\R\to\R$
$$
\E f(X^1_t\wedge X^2_t)=\E f(X^1_t),\quad t\ge0,
$$ 
which implies $X^1_t\wedge X^2_t=X^1_t$ a.s. Similarly, $X^1_t\wedge X^2_t=X^2_t$ a.s., and thus strong uniqueness holds. 

To show that $Y$ solves \eqref{toyeq}, we fix $\omega\in\Omega$.  Let $A:=\{s\in[0,1]: X^{1}_s(\omega)>X^{2}_s(\omega)\}$. We see that $A$ is an open set, and therefore it is a union of countably many mutually disjoint intervals $A=\cup (s_k,t_k)$. Further, for any $k\in\N$, by the continuity of $X^1$ and $X^2$,
\begin{align*}
\int_{[s_k,t_k]}b(X_r^1)\,dr&=X^1_{t_k}-X^1_{s_k}-(B^H_{t_k}-B^H_{s_k})\\
&=X^2_{t_k}-X^2_{s_k}-(B^H_{t_k}-B^H_{s_k})=\int_{[s_k,t_k]}b(X_r^2)\,dr.
\end{align*}
This implies  
\begin{equation}\label{idd:s}
	\int_{A }b(X_r^1)\,dr=\int_{A}b(X_r^2)\,dr.
\end{equation}
Let $t\in[0,1]$. Suppose without loss of generality that  $X^1_t(\omega)\le X^2_t(\omega)$.
Using \eqref{idd:s} and that $t\notin A$ by definition of $A$, we get
\begin{align*}
Y_t-B_t^H&=X^1_t-B_t^H=\int_0^t b(X_r^1)\,dr=
\int_A b(X_r^1)\,dr+\int_{[0,t]\setminus A} b(X_r^1)\,dr\\
&=\int_A b(X_r^2)\,dr+\int_{[0,t]\setminus A} b(X_r^1)\,dr\\
&=\int_{[0,t]}b(Y_r)\,dr,
\end{align*}	
and thus $Y$ solves \eqref{toyeq}.

Unfortunately, this argument becomes inapplicable  when $b$ is a distribution. Indeed, let $(b^n)_{n\in\Z_+}$ be an arbitrary sequence of functions converging to $b$ in $\C^{\alpha-}$. Then it is easy to show that for a certain subsequence $(n_m)_{m\in\Z_+}$
$$
\lim_{m\to \infty }\int_{[s_k,t_k]} b^{n_m}(X_r^1)\,dr=
\lim_{m\to \infty }\int_{[s_k,t_k]} b^{n_m}(X_r^2)\,dr.
$$
However, this does not imply that $
\lim_{m\to \infty }\int_{A} b^{n_m}(X_r^1)\,dr$ equals 
$\lim_{m\to \infty }\int_{A} b^{n_m}(X_r^2)\,dr$. Indeed, a sum of countably many numbers each converging to $0$ does not necessarily converge to $0$.

Therefore, we fix additionally $K\in\N$ and consider a set $A^K:=\cup_{k\le K}(s_k,t_k)\subset A$. We introduce a process $Y^K_t:=X^1_t\1(t\in [0,1]\setminus A^K)+X^2_t\1(t\in A^K)$. Since $Y^K$ differs from $X^1$ (and hence from $X^2$) only on finitely many intervals, the above argument remains valid, and one can show that $Y^K$ solves \eqref{toyeq}. Note, though, that $Y^K$ is not necessarily adapted; therefore, we cannot conclude that $Y^K$ is a weak solution to \eqref{toyeq}. Nevertheless, we have  for any fixed $K\in\Z_+$
\begin{equation}\label{conv:s} 
\int_0^t b^{n_m}(Y_r^K)\,dr\to Y^K_t-B_t^H  \quad \text{as $m\to\infty$}.
\end{equation}
By definition we have $Y^K_t\to Y_t$ a.s. as $K\to\infty$. Since for any fixed $m$ we have also 
 \begin{equation*}
\int_0^t b^{n_m}(Y_r^K)\,dr\to \int_0^t b^{n_m}(Y_r)\,dr  \quad \text{as $K\to\infty$},
\end{equation*}
we can conclude that 
 \begin{equation*}
\int_0^t b^{n_m}(Y_r)\,dr\to Y_t-B_t^H  \quad \text{as $m\to\infty$},
\end{equation*}
as long as we can show that the convergence in \eqref{conv:s} is uniform over $K$. This is not entirely trivial as the process $Y^K$ is not adapted, and thus the stochastic sewing lemma is not applicable. Therefore, we rely here on the deterministic sewing lemma, which allows us to treat non-adaptive processes and nonlinear Young integral-type bounds. The key bound is established in \cref{l:noreg}, and this strategy is formalized in \cref{l:sud1}.

\section{Preliminaries and auxiliary bounds}\label{s:auxbound}

We  introduce further necessary notation which will be used in the article.  We denote by $\Lip(\R^d,\R^d)$ the space of all Lipschitz functions $\R^d\to\R^d$ equipped with the usual norm and seminorm 
\begin{equation*}
	[f]_{\Lip(\R^d)}:=\sup_{x,y\in\R^d}\frac{|f(x)-f(y)|}{|x-y|};\qquad 
	\|f\|_{\Lip(\R^d)}:=\|f\|_{\C(\R^d)}+[f]_{\Lip(\R^d)}.
\end{equation*}	

 For $0\le S< T$ let  $\Delta_{[S,T]}$ be the simplex
\begin{equation*}
	\Delta_{[S,T]}:=\{(s,t)\in[S,T]^2\colon s\le t\}.
\end{equation*}

If $A_{\cdot,\cdot}$ is a function $\Delta_{[S,T]}\to\R^d$, where $0\le S\le T$, then we put as standard
\begin{equation*}
	\delta A_{s,u,t}:= A_{s,t}-A_{s,u}-A_{u,t},\quad  S\le s \le u \le t \le T. 
\end{equation*}

If $f$ is a function $[S,T]\to\R^d$, where $0\le S\le T$, then its $\ell$-variation, $\ell\in[1,\infty)$, is denoted as usual  by
$$
[f]_{\ell-\var;[S,T]}:=\bigl(\sup_\Pi\sum_{i=0}^{n-1} |f(t_{i+1})-f(t_i)|^\ell\bigr)^{\frac1\ell},
$$
where the supremum is taken over all partitions $\Pi=\{t_0=S,t_1,\dots, t_n=T\}$ of the interval $[S,T]$. The space of functions $f\colon [S,T]\to\R^d$ with finite $\ell\in[1,\infty)$ variation will be denoted by $\C^{\ell-\var}([S,T],\R^d)$. For a partition $\Pi$ as above we denote its diameter by $|\Pi|:=\max_i |t_{i+1}-t_i|$.

A continuous function $w\colon \Delta_{[S,T]}\to\R_+$ is called a \textit{control} if $w(s,s)=0$ for any $s\in[S,T]$ and for any $(s,t)\in\Delta_{[S,T]}$, $u\in[s,t]$ one has $w(s,u)+w(u,t)\le w(s,t)$, see \cite[Definition~1.6]{FV2010}. It is easy to see that the functions $w(s,t):=t-s$ and $w(s,t):=[f]_{\ell-\var;[s,t]}^\ell$, where $f\in\C^{\ell-\var}([S,T])$, are examples of controls.

If a filtration $(\F_t)_{t\in[0,T]}$ is given, then the conditional expectation given $\F_t$, $t\in[0,T]$, will be denoted by $\E^t[\cdot]:=\E[\cdot|\F_t]$.

As in \cite{BDG,ABLM}, we introduce the following family of norms and seminorms to analyze solutions of \Gref{x;b}. If $f\colon[S,T]\times\Omega\to\R^d$ is a measurable function, then for 
$\gamma\in(0,1]$, $m\ge1$ we define
\begin{align}\label{fullsnormsde}
&\|f\|_{\C^0L_m([S,T])}:=\sup_{s\in[S,T]}\|f_s\|_{\lm};\qquad 
[f]_{\C^\gamma L_m([S,T])}:=\sup_{(s,t)\in\Delta_{[S,T]}}\frac{\|f_t-f_s\|_{L_m(\Omega)}}{|t-s|^\gamma};\\
&\|f\|_{\C^\gamma L_m([S,T])}=\|f\|_{\C^0 L_m([S,T])}+[f]_{\C^\gamma L_m([S,T])}.\nn
\end{align}	

The space-time convolution of the heat kernel with the white noise will be denoted by~$V$:
\begin{equation}\label{def.V}
	V_t(x):=\int_0^t\int_{D}p_{t-r}(x,y)W(dr,dy),\quad t\ge0,\,x\in D.
\end{equation}

The following norms and seminorms will be important for the analysis of \Sref{u_0;b}. For a measurable function $f\colon[S,T]\times D\times \Omega\to\R$ and $\gamma\in(0,1]$, 
$m\ge1$ we write
\begin{align}\begin{split}
	&\|f\|_{\Ctimespacezerom{[S,T]}}:=\sup_{t\in[S,T]}\sup_{x\in D}\|f(t,x)\|_{L_m(\Omega)};\label{fullsnorm}\\
	&[f]_{\Ctimespace{\gamma}{m}{[S,T]}}:=\sup_{(s,t)\in\Delta_{[S,T]}}\sup_{x\in D} \frac{\|f_t(x)-P_{t-s}f_s(x)\|_{L_m(\Omega)}}{|t-s|^\gamma};\\	
	&\|f\|_{\Ctimespace{\gamma}{m}{[S,T]}}:=\|f\|_{\Ctimespacezerom{[S,T]}}+[f]_{\Ctimespace{\gamma}{m}{[S,T]}}.
	\end{split}
\end{align}

In the proofs we often use a standard heat kernel estimate
\begin{equation}\label{gaussian}
	\|G_t^d f\|_{\C(\R^d)}\le C t^{\frac{\beta}2}\|f\|_{\C^\beta},
\end{equation}
where $\beta<0$, $f\in\C^\beta$ and $C=C(\beta)$, see, e.g., \cite[Proposition~3.7(i)]{BDG}.

\subsection{Sewing lemmas}

As mentioned earlier, our proofs frequently utilize both the deterministic and stochastic sewing lemmas. For the convenience of the reader, we recall them here. 

We begin with the deterministic sewing lemma with controls.	
\begin{proposition}[{Sewing lemma, \cite[Theorem~2.2]{FZ18}}]\label{t:dSL}
	Let  $0\le S\le T$.  Let  $A$ be a function $\Delta_{[S,T]}\to\R^d$. Assume that there exist a control $w$ on the same simplex and a constant $\eps>0$ such that for every $(s,t)\in\Delta_{[S,T]}$, $u\in[s,t]$ one has
	\begin{equation}	
		| \delta A_{s,u,t}|\le w(s,t)^{1+\eps}.\label{con:ds2}
	\end{equation}
	
	Further, suppose that there exists a process $\A\colon [S,T]\to\R^d$ such that for any $t\in[S,T]$ and any sequence of partitions $\Pi_N:=\{S=t^N_0,...,t^N_{k(N)}=t\}$ of $[S,t]$ with $\lim_{N\to\infty}\Di{\Pi_N}\to0$  one has
	\begin{equation}\label{conssl:ds3}
		\sum_{i=0}^{k(N)-1} A_{t^N_i,t_{i+1}^N}\to  \A_{t}-\A_{S}\quad\text{ as $N\to\infty$}.
	\end{equation}
	
	Then there exists a constant $C=C(\eps)$ independent of $S,T$, $w$ such that for every $(s,t)\in\Delta_{S,T}$  we have 
	\begin{equation}
		|\A_{t}-\A_{s}| \le |A_{s,t}|+ 	Cw(s,t)^{1+\eps}.
		\label{est:dssl1}
	\end{equation}
\end{proposition}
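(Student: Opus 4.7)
The proof proceeds by the classical "remove-a-point" argument of Feyel-de La Pradelle/Gubinelli adapted to the control setting of Friz-Zhang. Fix $(s,t)\in\Delta_{[S,T]}$. For any finite partition $\Pi=\{s=t_0<t_1<\cdots<t_n=t\}$ of $[s,t]$, define the Riemann sum
\begin{equation*}
A^{\Pi}_{s,t}:=\sum_{i=0}^{n-1}A_{t_i,t_{i+1}}.
\end{equation*}
For the trivial partition $\Pi_0=\{s,t\}$ we have $A^{\Pi_0}_{s,t}=A_{s,t}$, and by hypothesis \eqref{conssl:ds3} we know $A^{\Pi_N}_{s,t}\to \A_t-\A_s$ along any sequence of partitions of $[s,t]$ with mesh going to zero. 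Hence it suffices to establish the uniform-in-$\Pi$ bound $|A^{\Pi}_{s,t}-A_{s,t}|\le Cw(s,t)^{1+\eps}$.

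The key step is the following pigeonhole observation. Suppose $n\ge 2$. Super-additivity of the control $w$ yields
\begin{equation*}
\sum_{i=1}^{n-1}w(t_{i-1},t_{i+1})\le 2\sum_{i=0}^{n-1}w(t_i,t_{i+1})\le 2w(s,t),
\end{equation*}
so there exists an index $i_\star\in\{1,\dots,n-1\}$ with $w(t_{i_\star-1},t_{i_\star+1})\le 2w(s,t)/(n-1)$. Let $\Pi'$ be the partition obtained from $\Pi$ by deleting $t_{i_\star}$. Then
\begin{equation*}
A^{\Pi}_{s,t}-A^{\Pi'}_{s,t}=A_{t_{i_\star-1},t_{i_\star}}+A_{t_{i_\star},t_{i_\star+1}}-A_{t_{i_\star-1},t_{i_\star+1}}=-\delta A_{t_{i_\star-1},t_{i_\star},t_{i_\star+1}},
\end{equation*}
and the hypothesis \eqref{con:ds2} gives $|A^{\Pi}_{s,t}-A^{\Pi'}_{s,t}|\le \bigl(2w(s,t)/(n-1)\bigr)^{1+\eps}$.

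Iterating this procedure, one removes points from $\Pi$ one by one until only $\{s,t\}$ remains. Summing the errors,
\begin{equation*}
|A^{\Pi}_{s,t}-A_{s,t}|\le \sum_{k=1}^{n-1}\Bigl(\frac{2w(s,t)}{k}\Bigr)^{1+\eps}\le C(\eps)\,w(s,t)^{1+\eps},
\end{equation*}
with $C(\eps)=2^{1+\eps}\sum_{k\ge 1}k^{-(1+\eps)}<\infty$ since $\eps>0$. This is the uniform bound we sought.

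Finally, pick any sequence of partitions $\Pi_N$ of $[s,t]$ with $|\Pi_N|\to 0$. The uniform bound above applied to each $\Pi_N$ yields $|A^{\Pi_N}_{s,t}-A_{s,t}|\le C w(s,t)^{1+\eps}$, and passing to the limit using \eqref{conssl:ds3} gives $|\A_t-\A_s-A_{s,t}|\le C w(s,t)^{1+\eps}$, which is \eqref{est:dssl1}. The main technical point is the pigeonhole step, which is the only place where super-additivity of the control is used; everything else is purely algebraic. No obstacle beyond carefully setting up the induction and checking that the convergence of the $k^{-(1+\eps)}$ series is the reason we need the strictly positive exponent $\eps$.
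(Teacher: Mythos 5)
The paper states this result purely as a citation to Friz--Zhang (Theorem 2.2 of \cite{FZ18}) and gives no proof of its own, so your task was to reconstruct the argument. The strategy you use---Riemann sums $A^{\Pi}$, pigeonhole removal of a mesh point exploiting the (super)additivity of $w$, summation of the $\sum_k k^{-(1+\eps)}$ series, and a limiting passage along refining partitions---is exactly the standard proof of the sewing lemma with controls. Two steps in your write-up, however, do not hold as written and need repair.

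The displayed chain $\sum_{i=1}^{n-1}w(t_{i-1},t_{i+1})\le 2\sum_{i=0}^{n-1}w(t_i,t_{i+1})\le 2w(s,t)$ is wrong in its first inequality: a control is \emph{super}additive, so $w(t_{i-1},t_{i+1})\ge w(t_{i-1},t_i)+w(t_i,t_{i+1})$, which runs the other way. Concretely, $w(s,t)=(t-s)^2$ on a uniform partition of $[s,t]$ into $n$ intervals of length $h$ gives $\sum_{i=1}^{n-1}w(t_{i-1},t_{i+1})=4(n-1)h^2>2nh^2=2\sum_i w(t_i,t_{i+1})$ as soon as $n\ge3$. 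The conclusion $\sum_{i=1}^{n-1}w(t_{i-1},t_{i+1})\le 2w(s,t)$ that you actually use is nonetheless correct; the right derivation splits the sum over odd and even $i$, notes that in each half the intervals $[t_{i-1},t_{i+1}]$ form a non-overlapping chain inside $[s,t]$, and applies superadditivity together with the induced monotonicity of $w$ to bound each sub-sum by $w(s,t)$. Second, your limiting step invokes \eqref{conssl:ds3} ``along any sequence of partitions of $[s,t]$,'' but the hypothesis only asserts convergence for partitions of $[S,t]$ anchored at the fixed left endpoint $S$. To obtain $A^{\Pi_N}_{s,t}\to\A_t-\A_s$ for a general $s\in(S,T]$ you should extend each $\Pi_N$ to a partition $\wt\Pi_N$ of $[S,t]$ by adjoining a fine partition of $[S,s]$, observe that the extended Riemann sum splits as $A^{\wt\Pi_N}_{S,t}=A^{\wt\Pi_N\cap[S,s]}_{S,s}+A^{\Pi_N}_{s,t}$, and apply \eqref{conssl:ds3} to both $[S,t]$ and $[S,s]$ to identify the two convergent pieces. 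Both fixes are short, but as written these two steps do not stand.
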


Next we recall the stochastic version of the above proposition.	
\begin{proposition}[{Stochastic sewing lemma, \cite[Theorem~2.1]{LeSSL}}]\label{T:SSLst}
	Let $m\in[2,\infty)$, $0\le S\le T$. Let $(\F_t)_{t\in[S,T]}$ be a filtration.  Let  $A$ be a function $\Delta_{[S,T]}\to L_m(\Omega,\R^d)$ and assume additionally that $A_{s,t}$ is $\F_t$--measurable. 	Suppose that there exist  constants $\Gamma_1, \Gamma_2\ge0$, $\eps_1, \eps_2 >0$ such that the following conditions hold for every $(s,t)\in\Delta_{[S,T]}$ and $u\in[s,t]$
	\begin{align}	
		&\| A_{s,t}\|_{L_m(\Omega)}\le \Gamma_1|t-s|^{\frac12+\eps_1},\label{con:s1}	\\
		&\|\E^s[ \delta A_{s,u,t}]\|_{L_m(\Omega)}\le \Gamma_2|t-s|^{1+\eps_2}.\label{con:s2}
	\end{align}
	
	Further, suppose that there exists a process $\A\colon\Omega\times[S,T]\to\R^d$ such that for any $t\in[S,T]$ and any sequence of partitions $\Pi_N:=\{S=t^N_0,...,t^N_{k(N)}=t\}$ of $[S,t]$ with $\lim_{N\to\infty}\Di{\Pi_N}\to0$  one has
	\begin{equation}\label{con:s3}
		\sum_{i=0}^{k(N)-1} A_{t^N_i,t_{i+1}^N}\to  \A_{t}-\A_{S}\quad\text{in probability as $N\to\infty$}.
	\end{equation}
	
	Then there exists a constant $C=C(\eps_1,\eps_2,m)$ independent of $S,T$, $\Gamma_i$ such that for every $(s,t)\in\Delta_{S,T}$  we have 
	\begin{equation}
		\|\A_{t}-\A_{s}\|_{L_m(\Omega)} \le C \Gamma_1 |t-s|^{\frac12+\eps_1}+
		C \Gamma_2 |t-s|^{1+\eps_2}.
		\label{est:ssl1}
	\end{equation}
\end{proposition}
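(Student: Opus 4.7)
The plan is to prove the estimate by constructing the dyadic Riemann sums, showing they are Cauchy in $L_m(\Omega)$, and identifying their limit with $\A_t - \A_s$ via hypothesis \eqref{con:s3}. Fix $(s,t)\in\Delta_{[S,T]}$ and for $n\ge 0$ let $t^n_i := s + i(t-s)/2^n$, $i=0,\ldots,2^n$, with midpoints $u^n_i := (t^n_i + t^n_{i+1})/2$. Set
$$
J^n_{s,t} := \sum_{i=0}^{2^n - 1} A_{t^n_i,t^n_{i+1}}.
$$
The refinement identity $J^{n+1}_{s,t} - J^n_{s,t} = -\sum_{i=0}^{2^n-1} \delta A_{t^n_i, u^n_i, t^n_{i+1}}$ reduces the analysis to controlling the triangle increments $\delta A$. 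I would split each $\delta A_{t^n_i,u^n_i,t^n_{i+1}}$ as $\E^{t^n_i}[\delta A_{t^n_i,u^n_i,t^n_{i+1}}]$ plus the centered piece $M^n_i := \delta A_{t^n_i,u^n_i,t^n_{i+1}} - \E^{t^n_i}[\delta A_{t^n_i,u^n_i,t^n_{i+1}}]$.

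For the conditional-expectation part, hypothesis \eqref{con:s2} applied to each of the $2^n$ subintervals of length $(t-s)2^{-n}$ yields
$$
\Big\|\sum_{i=0}^{2^n-1}\E^{t^n_i}[\delta A_{t^n_i,u^n_i,t^n_{i+1}}]\Big\|_{L_m(\Omega)} \le 2^n\, \Gamma_2 \bigl((t-s)2^{-n}\bigr)^{1+\eps_2} = \Gamma_2|t-s|^{1+\eps_2}\,2^{-n\eps_2}.
$$
For the centered part, $M^n_i$ is $\F_{t^n_{i+1}}$-measurable with $\E^{t^n_i} M^n_i = 0$, so $(M^n_i)_i$ is a martingale-difference array. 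Here I would invoke the discrete Burkholder-Davis-Gundy inequality, valid for $m\ge 2$, to get
$$
\Big\|\sum_{i=0}^{2^n-1} M^n_i\Big\|_{L_m(\Omega)} \le C_m \Big(\sum_{i=0}^{2^n-1} \|M^n_i\|_{L_m(\Omega)}^2\Big)^{1/2}.
$$
Bounding $\|\delta A_{t^n_i,u^n_i,t^n_{i+1}}\|_{L_m(\Omega)}\le 3\Gamma_1((t-s)2^{-n})^{1/2+\eps_1}$ via \eqref{con:s1} (conditional expectation being an $L_m$-contraction handles $M^n_i$) and summing gives $\|\sum_i M^n_i\|_{L_m(\Omega)}\le C_m\Gamma_1|t-s|^{1/2+\eps_1} 2^{-n\eps_1}$.

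Both increments $\|J^{n+1}_{s,t} - J^n_{s,t}\|_{L_m(\Omega)}$ are therefore geometrically summable in $n$, so $(J^n_{s,t})_n$ is Cauchy in $L_m(\Omega)$ with some limit $J^\infty_{s,t}$. Telescoping from $J^0_{s,t}=A_{s,t}$ and summing the geometric series produces
$$
\|J^\infty_{s,t} - A_{s,t}\|_{L_m(\Omega)} \le C\,\Gamma_1|t-s|^{1/2+\eps_1} + C\,\Gamma_2|t-s|^{1+\eps_2},
$$
with $C$ depending only on $\eps_1,\eps_2,m$. To identify $J^\infty_{s,t}$ with $\A_t - \A_s$, I would apply hypothesis \eqref{con:s3} to the dyadic partitions $\Pi_n=\{t^n_i\}$, whose mesh $|t-s|2^{-n}$ tends to $0$: by that hypothesis $J^n_{s,t}\to \A_t - \A_s$ in probability, while the $L_m$ limit gives $J^n_{s,t}\to J^\infty_{s,t}$ in probability too; uniqueness of limits yields $J^\infty_{s,t} = \A_t - \A_s$ a.s., which delivers \eqref{est:ssl1}.

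The main technical obstacle is the BDG step: I must ensure the constant $C_m$ is independent of the dyadic level $n$ and of the number of terms, which is exactly the content of the discrete BDG inequality on $L_m$ for $m\ge 2$ and explains the standing assumption on $m$. A secondary delicate point is verifying the applicability of BDG in our setting: the $M^n_i$ must be shown to form a genuine $(\F_{t^n_i})_i$-martingale difference sequence, which follows from $\F_{t^n_i}\subset\F_{u^n_i}\subset\F_{t^n_{i+1}}$ since the filtration $(\F_t)$ is given on $[S,T]$. Everything else — the refinement identity, summation of geometric series, and the probability/$L_m$ limit identification — is routine once the martingale decomposition and BDG are in place.
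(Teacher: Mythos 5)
The paper does not reprove this proposition; it cites it directly from L\^e's work \cite{LeSSL}, so there is no ``paper's own proof'' to compare against. Your argument is the standard dyadic-refinement proof and is essentially the proof given in L\^e's paper: the refinement identity, the decomposition of $\delta A$ into the $\F_{t^n_i}$-conditional mean and the centered remainder, the discrete BDG--plus--Minkowski estimate $\bigl\|\sum_i M^n_i\bigr\|_{L_m}\le C_m\bigl(\sum_i\|M^n_i\|_{L_m}^2\bigr)^{1/2}$ (which is precisely where $m\ge2$ enters, as you correctly note), the geometric summability of $\|J^{n+1}_{s,t}-J^n_{s,t}\|_{L_m}$, and the final telescoping. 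All of this is correct, and the constant bookkeeping works out.

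One point you state too quickly is the identification $J^\infty_{s,t}=\A_t-\A_s$. Hypothesis~\eqref{con:s3} is phrased only for partitions of intervals $[S,t']$ anchored at the left endpoint $S$, while your $J^n_{s,t}$ is a Riemann sum over a partition of $[s,t]$, which is not directly covered. To close the gap you should form dyadic partitions $\Pi_n'$ of $[S,t]$ that contain $s$ as a partition point, split the associated Riemann sum as a sum over $[S,s]$ plus $J^n_{s,t}$, apply \eqref{con:s3} to both $[S,t]$ and $[S,s]$ (the latter with $t:=s$), and subtract to obtain $J^n_{s,t}\to\A_t-\A_s$ in probability. This is routine but is not a direct application of \eqref{con:s3} as written, so it deserves a sentence. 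With that supplied, the argument is complete.
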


\subsection{Bounds for integrals of a general stochastic process}\label{s:gen}
In this subsection, we collect a number of technical estimates for integral functionals of 
a general stochastic process $Y$ satisfying certain smoothing conditions. Later we apply these results in order 
to study functionals of fractional Brownian motion and of a convolution of space-time white noise with the Gaussian kernel. The bounds presented here are obtained using stochastic sewing lemma and the calculations are similar in spirit to the bounds found in \cite{CG16}, \cite{LeSSL}, \cite{GG22}, \cite{BDG}, \cite{ABLM} and so on. Since we were not able to find these exact statements in the literature, we provide short proofs.

We fix $d\in\N$ and the length of the time interval $T>0$. Let the space domain $\D\subset\R$ be either a single point, closed interval, or $\R$. We fix the convolution  kernel $s\colon [0,T]\times\D\times\D\to\R_+$ which for any $t>0$ is bounded, continuous, and satisfies
\begin{equation}\label{convprop}
	\int_\D s_t(x,y)\,dy =1,\qquad 	\int_\D s_r(x,y)s_{t-r}(y,z)\,dy =s_{t}(x,z),
\end{equation}
where $(r,t)\in\Delta_{[0,T]}$, $x,z\in\D$. 

\begin{convention}\label{c:convdelta}
In case when $\D$ is a single point $x$, $dy$ stands for the Dirac delta measure concentrated at $x$.
\end{convention}	

We introduce  the corresponding semigroup $S_t\phi(x):=\int_\D s_t(x,y) \phi(y)\,dy$, where $\phi$ is a measurable function $\D\to\R^d$. Later we will take $\D=\{0\}$, $s_t(0,0)=1$ (see \cref{c:convdelta}) for the fractional Brownian motion case and $\D=D$, $s_t(x,y)=p_t(x,y)$ (see \cref{c:conv}) for the space-time white noise case.

Fix a smoothing parameter $\kappa\in(0,1)$.  
Let $Y\colon[0,1]\times\D\times\Omega\to\R^d$ be a stochastic process adapted to the given filtration 
$(\F_t)_{t\in[0,T]}$ (that is, $\sigma\bigl(Y(s,x), s\le t, x\in\D\bigr)\subset \F_t$ for $t\in[0,T]$). In the whole subsection we suppose that $Y$ satisfies the following condition: for any $\beta<0$, $\mu\in(0,1]$ there exists a constant $C=C(\beta,\mu)$ such that for any bounded continuous function 
$f\colon\R^d\to\R$, $(s,t)\in\Delta_{[0,T]}$, $x\in \D$, $\F_s$-measurable random vectors $\xi$, $\eta$ one has a.s.
\begin{align}\label{Ycond}\tag{$\mathbb{Y}_\kappa$}\begin{split}
		&|\E^s f(Y_t(x)+\xi)|\le C \|f\|_{\C^\beta}(t-s)^{\beta \kappa};\\
		&	\bigl|\E^s [f(Y_t(x)+\xi)-f(Y_t(x)+\eta)]\bigr|\le C \|f\|_{\C^\beta}|\xi-\eta|^{\mu}(t-s)^{(\beta-\mu)\kappa}.	
\end{split}\end{align}

In this subsection we use the following norms and seminorms which extend both \eqref{fullsnormsde} and \eqref{fullsnorm}.
For $0\le S<T$, a measurable function $f\colon[S,T]\times \D\times \Omega\to\R$ and $\gamma\in(0,1]$ we define
\begin{align*}
	&\|f\|_{\Ctimespacedzerom{[S,T]}}:=\sup_{t\in[S,T]}\sup_{x\in \D}\|f(t,x)\|_{L_m(\Omega)};\\
	&[f]_{\Ctimespaced{\gamma}{m}{[S,T]}}:=\sup_{(s,t)\in\Delta_{[S,T]}}\sup_{x\in \D} \frac{\|f_t(x)-S_{t-s}f_s(x)\|_{L_m(\Omega)}}{|t-s|^\gamma}.
\end{align*}

First, let us present a simple condition which ensures that \eqref{Ycond} holds.
\begin{lemma}\label{l:gycond}
	Assume that for any $(s,t)\in\Delta_{[0,T]}$, $x\in\D$ we have
	\begin{equation}\label{ind}
		Y_t(x)-\E^s Y_t(x) \text{ is Gaussian and independent of $\F_s$}.
	\end{equation}
	If additionally all $d$ components of $Y_t(x)-\E^s Y_t(x)=(Y^i_t(x)-\E^s Y^i_t(x), i=1,\hdots,d)$ are i.i.d with 
	\begin{equation}\label{sigmaY}
		\sigma^2(s,t,x):=\Var (Y_t^{i}(x)-\E^s Y_t^{i}(x))\ge C(t-s)^{2\kappa},\quad i=1,...,d,
	\end{equation}	
	for some $\kappa>0$, then \eqref{Ycond} holds with this $\kappa$.
\end{lemma}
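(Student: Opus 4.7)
The plan is to reduce both inequalities in \eqref{Ycond} to the Gaussian heat kernel regularization estimate \eqref{gaussian} by exploiting the conditional Gaussianity in \eqref{ind}. First I would use \eqref{ind} to write, for any $\F_s$-measurable random vector $\xi$,
\begin{equation*}
	Y_t(x)+\xi = \bigl(\E^s Y_t(x)+\xi\bigr) + Z,\qquad Z:=Y_t(x)-\E^s Y_t(x),
\end{equation*}
where, conditionally on $\F_s$, the vector $Z$ is a mean-zero Gaussian with i.i.d.\ components of variance $\sigma^2:=\sigma^2(s,t,x)$, independent of $\F_s$. Since $\E^s Y_t(x)+\xi$ is $\F_s$-measurable, this gives the pointwise identity
\begin{equation*}
	\E^s f(Y_t(x)+\xi)=G^d_{\sigma^2} f\bigl(\E^s Y_t(x)+\xi\bigr)\qquad\text{a.s.},
\end{equation*}
where $G^d_{\sigma^2}$ is the Gaussian semigroup introduced in \cref{s:mr}.

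Having obtained this identity, the first bound in \eqref{Ycond} follows immediately by taking the sup norm and applying \eqref{gaussian}: $\|G^d_{\sigma^2}f\|_{\C(\R^d)}\le C\sigma^{\beta}\|f\|_{\C^\beta}$, and then invoking the lower bound \eqref{sigmaY} together with $\beta<0$ to get $\sigma^{\beta}\le C(t-s)^{\beta\kappa}$.

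For the second bound in \eqref{Ycond} I would write, by the same conditioning,
\begin{equation*}
	\E^s[f(Y_t(x)+\xi)-f(Y_t(x)+\eta)]=G^d_{\sigma^2} f\bigl(\E^s Y_t(x)+\xi\bigr)-G^d_{\sigma^2} f\bigl(\E^s Y_t(x)+\eta\bigr),
\end{equation*}
and control the difference by the $\C^\mu$ seminorm of $G^d_{\sigma^2}f$. The key step is the H\"older upgrade of \eqref{gaussian}: for $\beta<0$ and $\mu\in(0,1]$,
\begin{equation*}
	[G^d_{\sigma^2}f]_{\C^\mu(\R^d)}\le C\sigma^{\beta-\mu}\|f\|_{\C^\beta},
\end{equation*}
which is a standard consequence of differentiating the Gaussian density once and applying \eqref{gaussian} to $\partial_i\Gamma^d$ in place of $\Gamma^d$ (interpolating between $\mu\in(0,1)$ and $\mu=1$ via the fundamental theorem of calculus). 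Combining this with the lower bound \eqref{sigmaY} gives
\begin{equation*}
	|\E^s[f(Y_t(x)+\xi)-f(Y_t(x)+\eta)]|\le C\|f\|_{\C^\beta}\sigma^{\beta-\mu}|\xi-\eta|^\mu\le C\|f\|_{\C^\beta}(t-s)^{(\beta-\mu)\kappa}|\xi-\eta|^\mu,
\end{equation*}
which is precisely the second estimate in \eqref{Ycond}.

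There is no real obstacle here: the argument is mechanical once one recognises that \eqref{ind} turns the conditional expectation into an application of the Gaussian semigroup of the explicit variance $\sigma^2(s,t,x)$. The only point requiring mild care is the H\"older version of \eqref{gaussian} with $\mu$ up to $1$, but this is classical and does not use any property of $Y$ beyond what is stated.
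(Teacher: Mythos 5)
Your proof is correct and follows essentially the same route as the paper's: both arguments hinge on the conditional-Gaussian identity $\E^s f(Y_t(x)+\xi)=G^d_{\sigma^2(s,t,x)} f(\E^s Y_t(x)+\xi)$ (this is exactly \eqref{reprm}) together with the lower bound \eqref{sigmaY} and the smoothing estimate \eqref{gaussian}. The only cosmetic difference is in the second bound: the paper first forms the shifted difference $h(z):=f(z+\xi)-f(z+\eta)$ and uses the standard Besov shift estimate $\|h\|_{\C^{\beta-\mu}}\le C|\xi-\eta|^\mu\|f\|_{\C^\beta}$ before applying \eqref{gaussian} to $h$, whereas you apply the Hölder-seminorm variant $[G^d_{\sigma^2}f]_{\C^\mu}\le C\sigma^{\beta-\mu}\|f\|_{\C^\beta}$ directly to $G^d_{\sigma^2}f$. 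These are two equivalent rearrangements of the same heat-kernel/Besov regularization fact, and both give the identical conclusion, so the content is the same.
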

\begin{proof}
	Fix $\beta<0$, $\mu\in(0,1]$. 
	It follows from the assumptions of the lemma that for any bounded continuous function 
	$f\colon\R^d\to\R$, $(s,t)\in\Delta_{[0,T]}$, $x\in \D$, $\F_s$-measurable random vector $\xi$ we have
	\begin{equation}\label{reprm}
		\E^s f(Y_t(x)+\xi)=G^d_{\sigma^2(s,t,x)} f(\E^s Y_t(x)+\xi),
	\end{equation}		
	where we used that the vector $Y_t(x)-\E^s Y_t(x)$ is Gaussian and independent from $\F_s$.
	Therefore, using this identity, \eqref{sigmaY} and \eqref{gaussian}, we deduce for any $(s,t)\in\Delta_{[0,T]}$
	\begin{equation*}
		| \E^s f(Y_t(x)+\xi)|\le \|G^d_{\sigma^2(s,t,x)}f\|_{\C(\R^d)}\le C \|f\|_{\C^\beta}(t-s)^{\beta \kappa},
	\end{equation*}
	where $C=C(\beta)>0$.
	
	Similarly, if $\eta$ is another $\F_s$-measurable $d$-dimensional random vector, we denote $h(x):=f(x+\xi)-f(x+\eta)$. Using again \eqref{reprm}, we have
	\begin{align*}
		\bigl| \E^s [f(Y_t(x)+\xi)-f(Y_t(x)+\eta)]\bigr|&=\bigl| G^d_{\sigma^2(s,t,x)} f(Y_t(x)+\xi)-G^d_{\sigma^2(s,t,x)} f(Y_t(x)+\eta)|\\
		&=\bigl| G^d_{\sigma^2(s,t,x)} h(\E^s Y_t(x))\bigr|\le \| G^d_{\sigma^2(s,t,x)} h\|_{\C(\R^d)}\\\
		&\le C \|h\|_{\C^{\beta-\mu}}(t-s)^{(\beta-\mu) \kappa}\le C \|f\|_{\C^\beta}(t-s)^{(\beta-\mu) \kappa}|\xi-\eta|^\mu,
	\end{align*}
	which establishes  \eqref{Ycond}.
\end{proof}

Next, we obtain a bound showing that the integral involving the process $Y$ can be approximated by a certain Riemann sum.

\begin{proposition}\label{l:sappr}
	Let $f\colon\R^d\to\R$, $w\colon[0,T]\to\R_+$ be bounded continuous functions. Let $\phi\colon 
	[0, T]\times \D \times \Omega\to\R^d$ be a measurable process adapted to the filtration $(\F_t)_{t\in[0,T]}$. Suppose that for some $\eps>0$, $(s,t)\in\Delta_{[0,T]}$ one has 
	\begin{equation}\label{stechncond}
		\|f\|_{\C^\eps}<\infty;\qquad  \|w\|_{\C([s,t])}<\infty;\qquad  [\phi]_{\Ctimespaced{\eps}{1}{[s,t]}} <\infty.	
	\end{equation}
	Then for any  $x\in\D$, and any sequence of partitions $\Pi_N:=\{s=t^N_0,...,t^N_{k(N)}=t\}$ of $[s,t]$ with $\lim_{N\to\infty}\Di{\Pi_N}\to0$  one has
	\begin{align}\label{spde:s3}
		&\sum_{i=0}^{k(N)-1}\int_{t^N_i}^{t_{i+1}^N}\int_\D s_{t-r}(x,y)w_r \E^{t^N_i}[f(Y_r(y)+S_{r-t^N_i}\phi_{t^N_i}(y))]\,dydr\nn\\
		&\qquad \to \int_s^t\int_\D s_{t-r}(x,y)w_r f(Y_r(y)+\phi_r(y))\,dydr\quad\text{in probability as $N\to\infty$}.
	\end{align}
\end{proposition}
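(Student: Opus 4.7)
The plan is to subtract the two sides, split the difference over the subintervals $I_i := [t_i^N, t_{i+1}^N]$, and decompose the per-subinterval integrand into a conditional-centering piece and a time-discretization piece. Concretely, I would write the error as
\begin{equation*}
E_N := \sum_{i=0}^{k(N)-1}\int_{I_i}\int_\D s_{t-r}(x,y)\, w_r\,\bigl[\E^{t_i^N}f(Y_r(y)+S_{r-t_i^N}\phi_{t_i^N}(y)) - f(Y_r(y)+\phi_r(y))\bigr]\,dy\,dr,
\end{equation*}
and split the bracket as $(\mathrm{I})+(\mathrm{II})$, where $(\mathrm{I}) := \E^{t_i^N}[f(Y_r+S_{r-t_i^N}\phi_{t_i^N})] - f(Y_r+S_{r-t_i^N}\phi_{t_i^N})$ and $(\mathrm{II}):=f(Y_r+S_{r-t_i^N}\phi_{t_i^N})-f(Y_r+\phi_r)$. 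Call the resulting contributions $E_N^{(1)}$ and $E_N^{(2)}$ and show each converges to zero in probability.

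For $E_N^{(2)}$, I would use that $f\in\C^\eps$ is bounded and Hölder continuous (truncating $\eps$ to $\eps\wedge 1$ if needed) to obtain the pointwise bound $|(\mathrm{II})|\le \|f\|_{\C^\eps}\,|S_{r-t_i^N}\phi_{t_i^N}(y)-\phi_r(y)|^\eps$. Taking $L_1(\Omega)$-norm, applying Jensen's inequality to the concave map $u\mapsto u^\eps$, and invoking the regularity assumption
\begin{equation*}
\|\phi_r(y)-S_{r-t_i^N}\phi_{t_i^N}(y)\|_{L_1(\Omega)}\le [\phi]_{\Ctimespaced{\eps}{1}{[s,t]}}\,(r-t_i^N)^\eps\le C|\Pi_N|^\eps,
\end{equation*}
yields $\E|E_N^{(2)}|\le C\|f\|_{\C^\eps}\|w\|_{\C([s,t])}(t-s)|\Pi_N|^{\eps^2}\to 0$, so $E_N^{(2)}\to0$ in $L_1(\Omega)$.

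The main technical step is handling $E_N^{(1)}$. Set $Z_i(r,y):=f(Y_r(y)+S_{r-t_i^N}\phi_{t_i^N}(y))$ and
\begin{equation*}
M_i := \int_{I_i}\int_\D s_{t-r}(x,y)\,w_r\,\bigl[\E^{t_i^N}Z_i(r,y)-Z_i(r,y)\bigr]\,dy\,dr.
\end{equation*}
By Fubini and the tower property, $\E^{t_i^N}M_i=0$, so $\{M_i\}$ is a martingale-difference sequence for $(\F_{t_i^N})_i$; in particular, the $M_i$ are pairwise orthogonal in $L_2(\Omega)$. Since $f\in\C^\eps$ is bounded and $\int_\D s_{t-r}(x,y)\,dy=1$, we have the trivial pointwise bound $|M_i|\le 2\|f\|_{\C^\eps}\|w\|_{\C([s,t])}(t_{i+1}^N-t_i^N)$. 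Orthogonality then gives
\begin{equation*}
\E|E_N^{(1)}|^2 = \sum_{i}\E M_i^2 \le C\sum_i(t_{i+1}^N-t_i^N)^2 \le C|\Pi_N|(t-s)\xrightarrow[|\Pi_N|\to 0]{}0,
\end{equation*}
so $E_N^{(1)}\to 0$ in $L_2(\Omega)$. Combining the two contributions gives the claimed convergence in probability.

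The only real temptation to resist is bounding $E_N^{(1)}$ by $\sum_i|M_i|$ term-by-term, which would yield only an $O(1)$ estimate; exploiting the orthogonality of martingale differences is what supplies the extra $|\Pi_N|^{1/2}$ factor needed for convergence. No appeal to the stochastic sewing lemma itself is required here, because both candidate objects (the Riemann sum and the target integral) are already defined---we only need to compare them.
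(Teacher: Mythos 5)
Your proof is correct and follows essentially the same strategy as the paper's: split the per-interval error into a conditional-centering term (martingale difference in $i$) and a time-discretization term (controlled via the $\C^\eps$ Hölder modulus of $f$ and the $[\phi]_{\Ctimespaced{\eps}{1}{[s,t]}}$ seminorm). The only cosmetic differences are that you center the frozen argument $f(Y_r+S_{r-t_i^N}\phi_{t_i^N})$ while the paper centers $f(Y_r+\phi_r)$ (both are $\F_{t_{i+1}^N}$-measurable, so both give a martingale-difference array), and that you invoke pairwise $L_2$-orthogonality of martingale differences where the paper cites Burkholder--Davis--Gundy; for an $L_2$ estimate these are interchangeable. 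Both routes land on the same $O(|\Pi_N|^{1/2})$ and $O(|\Pi_N|^{\eps^2})$ rates.
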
	
The proof of this proposition is elementary though a bit technical. We place it in the Appendix.

Now we are ready to prove the first regularization lemma. 

\begin{lemma}\label{l:finfin}
	Let  $m\ge2$, $\beta\in(-\frac1{2\kappa},0)$, $\tau\in(0,1]$. Let $f\colon\R^d\to\R$, $w\colon[0,T]\to\R_+$ be bounded continuous functions.
	Let $\phi,\psi\colon [0, T]\times\D\times\Omega\to\R^d$ be measurable processes adapted to the filtration $(\F_t)_{t\in[0,T]}$.
	Assume that
	\begin{equation}\label{paramcond}
		\beta>1-\frac{\tau}\kappa.
	\end{equation}
	Then there exists a constant $C=C(m,\beta,\tau)$ such that for any $(s,t)\in\Delta_{[0,T]}$, $x\in\D$ we have 	
	\begin{align}\label{sb1}
		&\Bigl\|\int_s^t\int_\D s_{t-r}(x,y) w_rf(V_r(y)+\phi_r(y))\,dydr\Bigr\|_{L_m(\Omega)}\nn\\
		&\qquad\le
		C \|f\|_{\C^\beta}\|w\|_{\C([s,t])}
		(t-s)^{1+\beta\kappa}\bigl(1+ [\phi]_{\Ctimespaced{\tau}{m}{[s,t]}}(t-s)^{\tau-\kappa}\bigr).
	\end{align}
	
	If, additionally, for some $\mu\in(0,1]$ we have 
	\begin{equation}\label{paramcond2}
		\beta>\mu-\frac1{2\kappa},
	\end{equation}
	then there exists a constant $C=C(m,\beta,\mu,\tau)$ such that for any $(s,t)\in\Delta_{[0,T]}$, $x\in\D$ we have 
	\begin{align}
		&\Bigl\|\int_s^t\int_\D  s_{t-r}(x,y)w_r(f(V_r(y)+\phi_r(y))-f(V_r(y)+\psi_r(y)))\,dydr\Bigr\|_{L_m(\Omega)}\nn\\
		&\quad\le
		C \|f\|_{\C^\beta}\|w\|_{\C([s,t])}
		(t-s)^{1+(\beta-\mu)\kappa}\|\phi-\psi\|^\mu_{\Ctimespacedzerom{[s,t]}}\nn\\
		&\qquad+C \|f\|_{\C^\beta}\|w\|_{\C([s,t])}
		(t-s)^{1+(\beta-1)\kappa+\tau}\bigl([\phi]_{\Ctimespaced{\tau}{m}{[s,t]}}+[\psi]_{\Ctimespaced{\tau}{m}{[s,t]}}\bigr).\label{sb2}
	\end{align}
\end{lemma}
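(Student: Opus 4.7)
Both bounds will follow from the stochastic sewing lemma (\cref{T:SSLst}) applied to carefully chosen germs, with \cref{l:sappr} identifying the Riemann sums of the germ with the true integral being estimated. Fix $(s,t)\in\Delta_{[0,T]}$ and $x\in\D$. For \eqref{sb1} I would take, for $(s',t')\in\Delta_{[s,t]}$,
\begin{equation*}
A_{s',t'}(x):=\E^{s'}\int_{s'}^{t'}\int_\D s_{t-r}(x,y)\,w_r\,f(Y_r(y)+S_{r-s'}\phi_{s'}(y))\,dy\,dr.
\end{equation*}
The upper kernel argument is frozen at the outer endpoint $t$, so that the additive functional produced by \cref{T:SSLst} equals precisely $\int_s^t\int_\D s_{t-r}(x,y)\,w_r\,f(Y_r(y)+\phi_r(y))\,dy\,dr$; the Riemann-sum convergence needed in \eqref{con:s3} is the content of \cref{l:sappr} under our hypotheses on $f$ and $\phi$.

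To verify \eqref{con:s1}, the first line of \eqref{Ycond} applied with $\xi=S_{r-s'}\phi_{s'}(y)$ gives $|\E^{s'}f(Y_r(y)+\xi)|\le C\|f\|_{\C^\beta}(r-s')^{\beta\kappa}$; integrating and using $\int_\D s_{t-r}(x,y)\,dy=1$ yields $\|A_{s',t'}\|_{L_m}\le C\|f\|_{\C^\beta}\|w\|_{\C([s,t])}(t'-s')^{1+\beta\kappa}$, and the exponent exceeds $\tfrac12$ thanks to $\beta>-\tfrac{1}{2\kappa}$. For \eqref{con:s2}, only the portion $r\in[u,t']$ survives telescoping; the tower property $\E^{s'}\E^u=\E^{s'}$ places both shifts $S_{r-s'}\phi_{s'}(y)$ and $S_{r-u}\phi_u(y)$ into $\F_u$, so the second line of \eqref{Ycond} at $\mu=1$ bounds the integrand by $C\|f\|_{\C^\beta}(r-u)^{(\beta-1)\kappa}|S_{r-s'}\phi_{s'}(y)-S_{r-u}\phi_u(y)|$. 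Rewriting the increment as $S_{r-u}(S_{u-s'}\phi_{s'}-\phi_u)(y)$ and exploiting the $L_m$-contractivity of the convolution (Minkowski plus $\int s=1$) produces the factor $(u-s')^\tau [\phi]_{\Ctimespaced{\tau}{m}{[s,t]}}$. Integrating in $r$ then yields
\begin{equation*}
\|\E^{s'}\delta A_{s',u,t'}\|_{L_m}\le C\|f\|_{\C^\beta}\|w\|_{\C([s,t])}[\phi]_{\Ctimespaced{\tau}{m}{[s,t]}}(t'-s')^{1+\tau-\kappa+\beta\kappa},
\end{equation*}
and hypothesis \eqref{paramcond} is exactly the requirement $\tau-\kappa+\beta\kappa>0$. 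An application of \cref{T:SSLst} delivers \eqref{sb1}.

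The bound \eqref{sb2} follows from the same template with the modified germ
\begin{equation*}
\E^{s'}\int_{s'}^{t'}\int_\D s_{t-r}(x,y)w_r\bigl[f(Y_r(y)+S_{r-s'}\phi_{s'}(y))-f(Y_r(y)+S_{r-s'}\psi_{s'}(y))\bigr]dy\,dr.
\end{equation*}
Condition \eqref{con:s1} uses the second line of \eqref{Ycond} at the given exponent $\mu$, producing the prefactor $\|\phi-\psi\|^\mu_{\Ctimespacedzerom{[s,t]}}$ and the time exponent $1+(\beta-\mu)\kappa$ demanded by \eqref{paramcond2}. For \eqref{con:s2}, after $\E^{s'}$ and conditioning on $\F_u$ one must bound the second-order object $[f(Y+a)-f(Y+c)]-[f(Y+b)-f(Y+d)]$ with $(a,b,c,d)=(S_{r-s'}\phi_{s'},S_{r-s'}\psi_{s'},S_{r-u}\phi_u,S_{r-u}\psi_u)(y)$; I would split it into the two first-order differences $f(Y+a)-f(Y+c)$ and $f(Y+b)-f(Y+d)$ and apply \eqref{Ycond} with $\mu=1$ to each, giving the time exponent $(\beta-1)\kappa$ and increments $|a-c|$, $|b-d|$. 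These are handled exactly as in the first part and reduce to $(u-s')^\tau([\phi]_{\Ctimespaced{\tau}{m}{}}+[\psi]_{\Ctimespaced{\tau}{m}{}})$, producing the second summand of \eqref{sb2}.

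The main delicate point is the decision to treat $\delta A$ for \eqref{sb2} via two \emph{first-order} differences each at $\mu=1$, rather than applying \eqref{Ycond} once at the larger exponent $\mu$. The latter would shift the relevant time exponent to $(\beta-\mu)\kappa+\tau\mu$ and, crucially, would not reproduce the declared bound; the $\mu=1$ splitting is what preserves the exponent $(\beta-1)\kappa$ appearing in the second summand of \eqref{sb2} and aligns the sewing hypothesis precisely with \eqref{paramcond}. Beyond this bookkeeping, everything reduces to routine manipulation of \eqref{Ycond} and the convolution identity \eqref{convprop}.
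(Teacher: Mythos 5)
Your proposal is correct and follows essentially the same route as the paper. The germ you write (conditional expectation outside the time--space integral, with the kernel frozen at the outer endpoint $t$) is the paper's germ by Fubini; the verification of \eqref{con:s1} via the first line of \eqref{Ycond} and of \eqref{con:s2} via the $\mu=1$ version of \eqref{Ycond} together with the $L_m$-contractivity of $S$ is exactly the paper's computation, down to the exponent bookkeeping $1+(\beta-1)\kappa+\tau$. For \eqref{sb2}, your regrouping of the second-order object into the two first-order telescoping differences $f(Y+a)-f(Y+c)$ and $f(Y+b)-f(Y+d)$ is precisely the paper's decomposition $\delta A = \delta A^\phi - \delta A^\psi$, so your concluding remark correctly identifies the nontrivial design choice and is also what the paper does.

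The one step you gloss over: the Riemann-sum identification via \cref{l:sappr} requires $\|f\|_{\C^\eps}<\infty$ for some $\eps>0$ (condition \eqref{stechncond}), which a general bounded continuous $f$ need not satisfy. The paper handles this by first proving the estimate under the stronger assumption $\|f\|_{\C^1}<\infty$, then mollifying a general $f$ by $G^d_\eps f$, using $\|G^d_\eps f\|_{\C^\beta}\le\|f\|_{\C^\beta}$, pointwise convergence, and Fatou. You should add this standard approximation pass; otherwise the sewing lemma's hypothesis \eqref{con:s3} is not literally verified for the class of $f$ in the statement.
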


\begin{proof}
	We begin with the proof of \eqref{sb1}. Fix $0\le S\le T$, $x\in\D$. Without loss of generality, we assume that $ [\phi]_{\Ctimespaced{\tau}{m}{[S,T]}}<\infty$ and 
	$ \|w\|_{\C([S,T])}<\infty$. Otherwise, the right-hand side of \eqref{sb1} is infinite and 
	\eqref{sb1} automatically holds. 
	
	First, we suppose additionally that $f$ is smooth enough so that $\|f\|_{\C^1}<\infty$.	
	We apply the stochastic sewing lemma (\cref{T:SSLst}) to the processes
	\begin{align}\label{Aprocess}\begin{split}
			A^\phi_{s,t}&:=\int_s^t\int_\D s_{t-r}(x,y)w_r \E^sf(V_r(y)+S_{r-s}\phi_s(y))\,dydr;\\
			\A^\phi_{t}&:=\int_s^t\int_\D s_{t-r}(x,y)w_r f(V_r(y)+\phi_r(y))\,dydr.\end{split}
	\end{align}
	where $(s,t)\in\Delta_{[S,T]}$. 
	Since $\phi_s$ is $\F_s$-measurable, we can use \eqref{Ycond} with $\xi=S_{r-s}\phi_s(y)$ and  to derive	
	\begin{equation*}
		|A^\phi_{s,t}|\le  C \|f\|_{\C^{\beta}}\|w\|_{\C([S,T])}\int_s^t\int_\D s_{t-r}(x,y)	(t-s)^{\beta\kappa}\,drdy\le  C \|f\|_{\C^{\beta}}\|w\|_{\C([S,T])}(t-s)^{1+\beta\kappa},
	\end{equation*}	
	where $(s,t)\in \Delta_{[S,T]}$ and $C=C(\beta)$. Here we used that the kernel $s_{t-r}$ integrates to $1$ by \eqref{convprop}. This implies 
	\begin{equation}\label{astb1}
		\|A^\phi_{s,t}\|_{\lm}\le 	C \|f\|_{\C^{\beta}}\|w\|_{\C([S,T])}(t-s)^{1+\beta\kappa},\quad  (s,t)\in \Delta_{[S,T]}.
	\end{equation}	
	Hence condition \eqref{con:s1} holds thanks to the assumption $1+\beta \kappa >1/2$.
	
	In a similar manner, we derive 
	\begin{align}\label{asutb0}
		|\E^s \delta A^\phi_{s,u, t}|&=\Bigl|\E^s \int_u^t\int_\D s_{t-r}(x,y)w_r   \E^u[f(V_r(y)+S_{r-s}\phi_s(y))-f(V_r(y)+S_{r-u}\phi_u(y))]\,dydr\Bigr|\nn\\
		&\le C \|f\|_{\C^{\beta}}\|w\|_{\C([S,T])}\int_u^t\int_\D s_{t-r}(x,y) |S_{r-s}\phi_s(y)-S_{r-u}\phi_u(y)|	(r-u)^{(\beta-1)\kappa}\,dydr
	\end{align}
	for $C=C(\beta)$. Here we used that both random variables $S_{r-s}\phi_s(y)$ and $S_{r-u}\phi_u(y)$ are $\F_u$-measurable and applied \eqref{Ycond} with $\mu=1$. By Jensen's inequality and the convolution property \eqref{convprop}
	\begin{equation*}
		\|S_{r-s}\phi_s(y)-S_{r-u}\phi_u(y)\|_{\lm}\le\sup_{z\in\D}\|S_{u-s}\phi_s(z)-\phi_u(z)\|_{\lm}\le [\phi]_{\Ctimespaced{\tau}{m}{[S,T]}}(u-s)^{\tau}
	\end{equation*}
	for $S\le s\le u\le r\le T$  and $y\in\D$. Substituting this back into \eqref{asutb0} and applying Jensen's inequality once again, we get
	\begin{equation}\label{asutb1}
		\|\E^s \delta A^\phi_{s,u, t}\|_{L_m(\Omega)}\le  C \|f\|_{\C^{\beta}}\|w\|_{\C([S,T])}[\phi]_{\Ctimespaced{\tau}{m}{[S,T]}}	(t-s)^{1+(\beta-1)\kappa+\tau}.
	\end{equation}
	Here we used that the integral of the kernel over the domain $\D$ is $1$ and that singularity $(r-u)^{(\beta-1)\kappa}$ is integrable by \eqref{paramcond}. Applying \eqref{paramcond} once again, we get $1+(\beta-1) H+\tau>1$ and thus condition \eqref{con:s2} is satisfied.
	
	Finally, the last condition of the stochastic sewing lemma, condition \eqref{con:s3}, holds by \cref{l:sappr}; recall that we assumed  $\|f\|_{\C^1}\!<\!\infty$, $\|w\|_{\C([S,T])}\!<\infty$, and 
	${[\phi]_{ \Ctimespaced{\tau}{m}{[S,T]}}<\infty}$
	which yields \eqref{stechncond}.
	
	Thus, all the conditions of \cref{T:SSLst} are satisfied and \eqref{sb1} follows from \eqref{est:ssl1} and \eqref{astb1}, \eqref{asutb1}. 
	
	To obtain  \eqref{sb1} for general $f$, we note that for any $\eps>0$ the function 
	$f_\eps:=G^d_{\eps}f$, satisfies  $\|f_\eps\|_{\C^1}<\infty$. Therefore, \eqref{sb1} holds for $f_\eps$. Since $f_\eps$ converges pointwise to $f$ as $\eps\to0$  and $\|f_\eps\|_{\C^\beta}\le\|f\|_{\C^\beta}$, by Fatou's lemma we see  that \eqref{sb1} holds also for $f$.
	
	Now we move on to the proof of \eqref{sb2}. We use very similar ideas as in the proof above. 
	We also note that, without loss of generality, we can assume that $[\phi]_{ \Ctimespaced{\tau}{m}{[S,T]}}<\infty$, $[\psi]_{\Ctimespaced{\tau}{m}{[S,T]}}<\infty$ and $ \|w\|_{\C([S,T])}<\infty$. We suppose additionally that $\|f\|_{\C^1}<\infty$ and  apply \cref{T:SSLst} to the processes  
	\begin{equation*}
		A_{s,t}:=A^\phi_{s,t}-A^\psi_{s,t};\quad 	\A_{t}:=A^\phi_t-A^\psi_t,\qquad (s,t)\in \Delta_{[S,T]},
	\end{equation*}
	where the processes $A^\phi$, $\A^\phi$ were introduced in \eqref{Aprocess} and $A^\psi$, $\A^\psi$ are defined in exactly the same way with $\phi$ replaced by $\psi$. 
	
	We note that $S_{r-s}\phi_s(y)$ and $S_{r-s}\psi_s(y)$ are $\F_s$ measurable. Therefore, we use \eqref{Ycond} to derive for $(s,t)\in\Delta_{[S,T]}$
	\begin{equation*}
		|A_{s,t}|\le C\|f\|_{\C^\beta}\|w\|_{\C([S,T])}\int_s^t (r-s)^{(\beta-\mu)\kappa} \int_\D s_{t-r}(x,y)|S_{r-s}\phi_s(y)-S_{r-s}\psi_s(y)|^\mu\,dydr,
	\end{equation*}
	where $C=C(\beta,\mu)>0$. 
	Thanks  to \eqref{paramcond2}, the singularity $(r-s)^{(\beta-\mu)\kappa}$ is integrable. Hence, applying Jensen's inequality and using \eqref{paramcond}, we get for $m\ge2$
	\begin{equation}\label{s:spdec1}
		\|A_{s,t}\|_{L_m(\Omega)}\le  C\|f\|_{\C^\beta}\|w\|_{\C([S,T])}(t-s)^{1+(\beta-\mu)\kappa}\|\phi-\psi\|^\mu_{\Ctimespacedzerom{[S,T]}},
	\end{equation}
	where $C=C(\beta,\mu)>0$. We stress that $C$ does not depend on $x\in D$. By \eqref{paramcond2}, we have  $1+(\beta-\mu)\kappa>1/2$. Therefore condition \eqref{con:s1} of \cref{T:SSLst} holds.

	Next, using \eqref{asutb1}, we easily get for $(s,t)\in \Delta_{[S,T]}$, $u\in[s,t]$
	\begin{align}\label{asutb2}
		\|\E^s \delta A_{s,u, t}\|_{L_m(\Omega)}&\le \|\E^s \delta A^\phi_{s,u, t}\|_{L_m(\Omega)}+\|\E^s \delta A^\psi_{s,u, t}\|_{L_m(\Omega)}\nn\\
		&\le  C \|f\|_{\C^{\beta}}\|w\|_{\C([S,T])}	(t-s)^{1+(\beta-1)\kappa+\tau}([\phi]_{\Ctimespaced{\tau}{m}{[S,T]}}+[\psi]_{\Ctimespaced{\tau}{m}{[S,T]}}),
	\end{align}
	where $C=C(\beta,\mu)$. We see from \eqref{paramcond} $1+(\beta-1) \kappa+\tau>1$ and thus \eqref{con:s2} holds.

	Condition \eqref{con:s3} follows from the definitions of $A_{s,t}$, $\A_t$ and \cref{l:sappr}.
	
	Thus all the conditions of  \cref{T:SSLst} holds and the desired bound \eqref{sb2} follows from \eqref{est:ssl1} and \eqref{s:spdec1}, \eqref{asutb2}.
	
	The proof of \eqref{sb2}  for a general bounded continuous $f$ follows from the above argument by approximation, exactly as in the proof of \eqref{sb1}.
\end{proof}

In the paper we will use as a weight $w$ in the above lemma only two options: either the trivial weight $w\equiv1$ or an exponential weight $w_r:=\exp(\lambda(t-r))$, where $\lambda\gg1$ is large. The next lemma shows how in the latter case the right-hand sides of \eqref{sb1}, \eqref{sb2} can be modified to become small for large $\lambda$.

\begin{corollary}\label{c:finfin}
	Let  $m\ge2$, $\beta\in(-\frac1{2\kappa},0)$, $\tau\in[0,1]$, $\eps\in(0,1/2)$. Let $f\colon\R^d\to\R$ be a bounded continuous function.
	Let $\phi,\psi\colon [0, 1]\times\D\times\Omega\to\R^d$ be measurable processes adapted to the filtration $(\F_t)_{t\in[0,1]}$.
	Assume that \eqref{paramcond} holds. Then there exists a constant $C=C(m,\beta, \eps,\tau)>0$ such that for any $\lambda>1$, $(s,t)\in\Delta_{[0,1]}$ and $x\in \D$ we have
	\begin{align}\label{bound1terml}
		&\Bigl\|\int_s^t\int_\D e^{-\lambda(t-r)}s_{t-r}(x,y)f(Y_r(y)+\phi_r(y))\,dydr\Bigr\|_{L_m(\Omega)}\nn\\
		&\qquad\le
		C \|f\|_{\C^\beta}(t-s)^\eps\lambda^{-1-\beta \kappa+3\eps}\Bigl(1+[\phi]_{\Ctimespaced{\tau}{m}{[s,t]}}\lambda^{\kappa-\tau}\Bigr).
	\end{align}
	If, additionally, for $\mu\in[0,1]$ condition \eqref{paramcond2} is satisfied, 
	then there exists a constant $C=C(m,\beta,\eps, \mu,\tau)>0$ such that for any $(s,t)\in\Delta_{[0,1]}$,
	$x\in \D$ we have 
	\begin{align}\label{ourboundasregl}
		&\Bigl\|\int_s^t\int_\D e^{-\lambda(t-r)}s_{t-r}(x,y)(f(Y_r(y)+\phi_r(y))-f(Y_r(y)+\psi_r(y)))\,dydr\Bigr\|_{L_m(\Omega)}\nn\\
		&\quad\le
		C \|f\|_{\C^\beta}
		(t-s)^\eps\lambda^{-1-(\beta-\mu)\kappa+3\eps}\|\phi-\psi\|^\mu_{\Ctimespacedzerom{[s,t]}}\nn\\
		&\qquad+C\|f\|_{\C^\beta} 	(t-s)^\eps \lambda^{-1-(\beta-1)\kappa-\tau+3\eps}\bigl([\phi]_{ \Ctimespaced{\tau}{m}{[s,t]}}+[\psi]_{ \Ctimespaced{\tau}{m}{[s,t]}}\bigr).
	\end{align}
\end{corollary}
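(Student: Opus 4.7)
The plan is to reduce \eqref{bound1terml} and \eqref{ourboundasregl} to Lemma~\ref{l:finfin} by a dyadic decomposition of $[s,t]$ on the length scale $\lambda^{-1}$.

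A preliminary observation is that Lemma~\ref{l:finfin} remains applicable on any sub-interval $[a,b]\subset[s,t]$ even though the convolution kernel inside the integral is $s_{t-r}$ rather than $s_{b-r}$. Indeed, by the semigroup property \eqref{convprop}, $s_{t-r}(x,y)=\int_\D s_{t-b}(x,z)\,s_{b-r}(z,y)\,dz$, so Fubini, Minkowski's inequality, and $\int_\D s_{t-b}(x,z)\,dz=1$ together bound the $L_m(\Omega)$-norm of the original integral by $\sup_{z\in\D}$ of the same quantity with kernel $s_{b-r}(z,y)$. Lemma~\ref{l:finfin} then controls this supremum uniformly in $z$, with weight seminorm $\|w\|_{\C([a,b])}\le e^{-\lambda(t-b)}$ when $w_r=e^{-\lambda(t-r)}$.

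For \eqref{bound1terml} I would separate two cases. If $t-s\le\lambda^{-1}$, apply Lemma~\ref{l:finfin} directly on $[s,t]$ (with $\|w\|_\C\le 1$) and use $t-s\le\lambda^{-1}$ to trade any excess positive power $(t-s)^\delta$ against $\lambda^{-\delta}$; hypothesis \eqref{paramcond} guarantees that the relevant exponents $1+\beta\kappa-\eps$ and $1+\beta\kappa+\tau-\kappa-\eps$ are strictly positive, so this conversion produces the required $\lambda^{-1-\beta\kappa+3\eps}$ and $\lambda^{\kappa-\tau}$ factors. If $t-s>\lambda^{-1}$, set $r_i:=t-2^i\lambda^{-1}$ and let $N$ be the largest integer with $r_N\ge s$. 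On each dyadic piece $[r_{i+1},r_i]$ the weight satisfies $\|w\|_\C\le e^{-2^i}$ while the length equals $2^i\lambda^{-1}$; Lemma~\ref{l:finfin} applied on each such piece (and on the boundary pieces $[r_0,t]$ and $[s,r_N]$) yields contributions that, when summed, produce series of the form $\sum_i e^{-2^i}2^{i(1+\beta\kappa)}$ and $\sum_i e^{-2^i}2^{i(1+\beta\kappa+\tau-\kappa)}$. Both converge absolutely thanks to the super-exponential decay of $e^{-2^i}$, which gives the intermediate bound $\lesssim\|f\|_{\C^\beta}\lambda^{-1-\beta\kappa}(1+[\phi]_{\Ctimespaced{\tau}{m}{[s,t]}}\lambda^{\kappa-\tau})$. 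The extra factor $(t-s)^\eps\lambda^{3\eps}$ required in \eqref{bound1terml} is then recovered from $1\le(t-s)^\eps\lambda^\eps\le(t-s)^\eps\lambda^{3\eps}$, which holds because $\lambda(t-s)>1$.

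The bound \eqref{ourboundasregl} is proved by the same two-case dyadic argument applied now to Lemma~\ref{l:finfin}'s second estimate \eqref{sb2}; hypothesis \eqref{paramcond2} ensures $1+(\beta-\mu)\kappa>1/2$, so all the intermediate exponents remain strictly positive and the associated geometric-type sums continue to converge. There is no essential obstacle: the one genuine observation is that the super-exponential decay $e^{-2^i}$ beats every polynomial factor $2^{i\cdot(\text{power})}$ produced by Lemma~\ref{l:finfin}, so the infinite sums collapse to constants and the requested negative powers of $\lambda$ emerge from the common factor $\lambda^{-(\text{power})}$ pulled out of each piece.
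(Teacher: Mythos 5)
Your proof is correct, but it takes a genuinely different route from the paper's. The paper splits $[s,t]$ into just two pieces at $(t-\lambda^{-1+\eps})\vee s$: on the far piece the weight is uniformly $\le e^{-\lambda^\eps}$, which is a super-polynomial factor in $\lambda$ that kills the bounded contributions coming from Lemma~\ref{l:finfin}, while on the near piece the length $\le\lambda^{-1+\eps}$ is directly converted into the factors $\lambda^{-\gamma+3\eps}(t-s)^\eps$ through $(t-s')^{\gamma}\le(t-s)^\eps\lambda^{(-1+\eps)(\gamma-\eps)}$. You instead run a dyadic decomposition on the scale $\lambda^{-1}$: each piece $[r_{i+1},r_i]$ carries weight $\le e^{-2^i}$ and length $2^i\lambda^{-1}$, and the resulting series $\sum_i e^{-2^i}2^{i\gamma}$ converge because the Gaussian-in-$i$ decay beats any geometric growth. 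Both strategies use the same key input (Lemma~\ref{l:finfin}) and the same underlying principle (exponential decay of $w$ defeats polynomial loss); your approach is more brute-force (infinitely many applications versus two, and you recover the $(t-s)^\eps\lambda^{3\eps}$ factor at the end by the trivial inequality $1\le(\lambda(t-s))^\eps$), while the paper's is tighter and produces the $(t-s)^\eps$ factor intrinsically from the length of the near piece. Your preliminary observation — that the kernel $s_{t-r}$ can be pushed to $s_{b-r}$ on a subinterval $[a,b]$ via the semigroup property, Minkowski, and $\int_\D s_{t-b}(x,z)\,dz=1$ — is correct and in fact flags a point the paper glosses over, since the paper also applies \eqref{sb1} on $[s,(t-\lambda^{-1+\eps})\vee s]$ with the mismatched kernel $s_{t-r}$. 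One small misattribution: positivity of $1+\beta\kappa-\eps$ follows from the hypothesis $\beta>-\tfrac{1}{2\kappa}$ (hence $1+\beta\kappa>\tfrac12>\eps$), not from \eqref{paramcond}; only the second exponent $1+\beta\kappa+\tau-\kappa-\eps>0$ needs \eqref{paramcond}. This does not affect the argument.
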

\begin{proof}
	We deduce  only \eqref{bound1terml} from \eqref{sb1}. Bound \eqref{ourboundasregl} is derived from \eqref{sb2} using exactly the same argument.
	
	We fix $\eps\in(0,1/2)$, $\lambda>1$ and split the integral into two
	\begin{align}\label{difftwoguys}
		&\Bigl\|\int_s^t\int_\D e^{-\lambda(t-r)}s_{t-r}(x,y)f(Y_r(y)+\phi_r(y))\,dydr\Bigr\|_{L_m(\Omega)}\nn\\
		&\qquad\le
		\Bigl\|\int_s^{(t-\lambda^{-1+\eps})\vee s }\int_\D e^{-\lambda(t-r)}s_{t-r}(x,y)f(Y_r(y)+\phi_r(y))\,dydr\Bigr\|_{L_m(\Omega)}\nn\\
		&\qquad\phantom{\le}+\Bigl\|\int_{(t-\lambda^{-1+\eps})\vee s}^t \int_\D e^{-\lambda(t-r)}s_{t-r}(x,y)f(Y_r(y)+\phi_r(y))\,dydr\Bigr\|_{L_m(\Omega)}\nn\\
		&\qquad=:I_1+I_2.
	\end{align}
	If $r\le t-\lambda^{-1+\eps}$, then $e^{-\lambda(t-r)}\le e^{-\lambda^\eps}$ and $\|e^{-\lambda (t-\cdot)}\|_{\C([s,(t-\lambda^{-1+\eps})\vee s])}\le e^{-\lambda^\eps}$. Therefore, since $1+\beta \kappa>1/2>\eps$, \eqref{sb1} implies 
	\begin{equation}\label{ionel}
		I_1\le C e^{-\lambda^\eps} \|f\|_{\C^\beta}
		(t-s)^{\eps}(1+  [\phi]_{\Ctimespaced{\tau}{m}{[s,t]}}).
	\end{equation}
	for $C=C(m,\beta,\tau)$.
	
	To bound $I_2$, we use the obvious inequality  $\|e^{-\lambda (t-\cdot)}\|_{\C([t-\lambda^{-1+\eps},t])}\le1$. 
	Clearly, we also have for any $\gamma\in[\eps,2]$
	$$
	(t-((t-\lambda^{-1+\eps})\vee s))^{\gamma}\le (t-s)^\eps  \lambda^{(-1+\eps)(\gamma-\eps)}\le
	(t-s)^\eps  \lambda^{-\gamma+3\eps}.
	$$
	Applying this inequality with $\gamma=1+\beta\kappa$ and $\gamma=1+\beta\kappa+\tau-\kappa$, we derive from \eqref{sb1} 
	\begin{equation*}
		I_2\le C \|f\|_{\C^\beta}(t-s)^{\eps}\lambda^{-1-\beta \kappa+3\eps}\bigl(1+ [\phi]_{\Ctimespaced{\tau}{m}{[s,t]}}\lambda^{\kappa-\tau}\bigr)
	\end{equation*}
	for $C=C(m,\beta,\tau)$. Combining this with \eqref{ionel} and substituting into \eqref{difftwoguys}, we obtain \eqref{bound1terml}; here we used that the function $\lambda\mapsto e^{-\lambda^\eps}$ decreases to $0$ much faster than any negative polynomial function as $\lambda\to\infty$.
\end{proof}

The final technical bound is an a priori bound on the norm of a solution to an SDE with additional forcing.
\begin{lemma}\label{l:apriori}
Let $m\ge2$, $\beta\in(\frac12-\frac1{2\kappa},0)$. Let $z\colon \D\to\R^d$ be a bounded measurable function. Let $(f_n)_{n\in\Z_+}$ be a sequence of bounded continuous functions $\R^d\to\R^d$.  Let $Z,A, \rho\colon[0,1]\times \D\times\Omega\to\R$ be measurable processes adapted to the filtration $(\F_t)_{t\in[0,1]}$ which satisfy 
\begin{equation}\label{zequ}
Z(t,x)=S_t z(x)+ A(t,x) +\int_0^t \int_{\D} s_{t-r}(x,y) \rho(r,y)\,dydr + Y(t,x), 
\end{equation}
where $t\in[0,1]$, $x\in \D$ and
\begin{equation*}
	\sup_{t\in[0,T]}\sup_{x\in \D}\Bigl|A(t,x)- \int_0^t\int_\D p_{t-r}(x,y) f_n(Z_r(y))\,dy dr\Bigr|\to0 \quad\text{in probability as } n\to\infty.
\end{equation*}	
Suppose that
\begin{equation}\label{zcond}
[Z-Y]_{\Ctimespaced{\frac12+\frac{\kappa}2}{m}{[0,1]}}<\infty.
\end{equation} 
Then there exists $C=C(m,\beta)>0$ such that
\begin{equation}\label{claim}
[Z-Y]_{\Ctimespaced{1+\beta\kappa}{m}{[0,1]}}\le  C (1+\sup_n\|f_n\|_{\C^\beta}^2)(1+\|\rho\|_{ \Ctimespacedzerom{[0,1]}}).	
\end{equation}
\end{lemma}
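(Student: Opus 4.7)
The plan is to decompose $(Z-Y)_t - S_{t-s}(Z-Y)_s$ into the increment of $A$ plus a $\rho$-integral, bound the former via \cref{l:finfin} with shift $\phi=Z-Y$, and close a buckling argument that delivers the quadratic dependence on $R:=\sup_n\|f_n\|_{\C^\beta}$ stated in \eqref{claim} (we may assume $R<\infty$, else the claim is trivial). Using the semigroup identity $S_{t-s}S_s z = S_t z$ and the convolution property \eqref{convprop} for the $\rho$-term, the definition \eqref{zequ} gives, for every $(s,t)\in\Delta_{[0,1]}$ and $x\in\D$,
\begin{equation*}
(Z-Y)_t(x) - S_{t-s}(Z-Y)_s(x) = \bigl(A_t(x)-S_{t-s}A_s(x)\bigr) + \int_s^t\!\int_\D s_{t-r}(x,y)\rho(r,y)\,dy\,dr,
\end{equation*}
and the second term is bounded in $L_m(\Omega)$ by $(t-s)\|\rho\|_{\Ctimespacedzerom{[0,1]}}$ via Jensen's inequality and \eqref{convprop}.

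Writing $A^n_t(x):=\int_0^t\!\int_\D s_{t-r}(x,y)f_n(Z_r(y))\,dy\,dr$ for the $n$-th approximation, the same semigroup manipulation yields $A^n_t-S_{t-s}A^n_s = \int_s^t\!\int_\D s_{t-r}(x,y)f_n(Z_r(y))\,dy\,dr$. Expressing $f_n(Z_r)=f_n(Y_r+(Z-Y)_r)$ and applying \cref{l:finfin} with $f=f_n$, $w\equiv1$, shift $\phi=Z-Y$, and any admissible $\tau\in(\kappa(1-\beta),1]$, I obtain
\begin{equation*}
\|A^n_t-S_{t-s}A^n_s\|_{L_m(\Omega)} \le CR\,(t-s)^{1+\beta\kappa}\bigl(1+[Z-Y]_{\Ctimespaced{\tau}{m}{[s,t]}}(t-s)^{\tau-\kappa}\bigr).
\end{equation*}
A subsequence along which $A^n\to A$ almost surely (extractable from the hypothesized convergence in probability), combined with Fatou's lemma, transfers this bound to $\|A_t-S_{t-s}A_s\|_{L_m(\Omega)}$.

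A two-stage bootstrap closes the loop. In the first stage I take $\tau=\tfrac12+\tfrac{\kappa}{2}$, admissible precisely because $\beta>\tfrac12-\tfrac{1}{2\kappa}$; combined with the hypothesis \eqref{zcond}, dividing the preceding display by $(t-s)^{1+\beta\kappa}$ (using $(t-s)^{\tau-\kappa}\le 1$ and $(t-s)^{-\beta\kappa}\le 1$ since $t-s\le 1$) yields the qualitative finiteness $[Z-Y]_{\Ctimespaced{1+\beta\kappa}{m}{[0,1]}}<\infty$. In the second stage I exploit this finiteness and apply \cref{l:finfin} with the sharper exponent $\tau'=1+\beta\kappa$, for which $(t-s)^{\tau'-\kappa}=(t-s)^\gamma$ with $\gamma:=1+\kappa(\beta-1)>0$. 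Restricting to a sub-interval $[S,T]\subset[0,1]$ and taking the supremum over $(s,t)\in\Delta_{[S,T]}$ produces the buckling inequality
\begin{equation*}
[Z-Y]_{\Ctimespaced{1+\beta\kappa}{m}{[S,T]}} \le CR + CR(T-S)^\gamma\,[Z-Y]_{\Ctimespaced{1+\beta\kappa}{m}{[S,T]}} + \|\rho\|_{\Ctimespacedzerom{[0,1]}}.
\end{equation*}
Setting $\delta_0:=(2CR)^{-1/\gamma}\wedge 1$ and imposing $T-S\le\delta_0$ absorbs the middle term, producing the local bound $[Z-Y]_{\Ctimespaced{1+\beta\kappa}{m}{[S,T]}}\le C(R+\|\rho\|_{\Ctimespacedzerom{[0,1]}})$.

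The final step, and the main obstacle, is globalization via chaining. Partitioning $[0,1]$ into $N=\lceil1/\delta_0\rceil$ sub-intervals $[u_l,u_{l+1}]$ of length at most $\delta_0$, the telescoping identity
\begin{equation*}
(Z-Y)_t-S_{t-s}(Z-Y)_s \;=\; \sum_l S_{t-u_{l+1}}\bigl((Z-Y)_{u_{l+1}}-S_{u_{l+1}-u_l}(Z-Y)_{u_l}\bigr) \;+\; (\text{two boundary terms}),
\end{equation*}
combined with the contractivity $\|S_r g(x)\|_{L_m(\Omega)}\le\sup_{y\in\D}\|g(y)\|_{L_m(\Omega)}$ (Jensen applied to the probability kernel $s_r$), the local bound on each sub-interval, and the count $\lesssim (t-s)/\delta_0+2$ of crossed sub-intervals, gives after dividing by $(t-s)^{1+\beta\kappa}$
\begin{equation*}
[Z-Y]_{\Ctimespaced{1+\beta\kappa}{m}{[0,1]}} \le C(R+\|\rho\|_{\Ctimespacedzerom{[0,1]}})\,\delta_0^{\beta\kappa} \le C(R+\|\rho\|_{\Ctimespacedzerom{[0,1]}})\bigl(1+(2CR)^{|\beta\kappa|/\gamma}\bigr).
\end{equation*}
The crucial exponent check is that $|\beta\kappa|/\gamma\le 1$; a direct computation shows this is equivalent to $\beta\ge\tfrac12-\tfrac{1}{2\kappa}$, exactly the standing hypothesis. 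Hence $(2CR)^{|\beta\kappa|/\gamma}\le 1+2CR$, delivering the target bound $\le C(1+R^2)(1+\|\rho\|_{\Ctimespacedzerom{[0,1]}})$ of \eqref{claim}.
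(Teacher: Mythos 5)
Your proof is correct and follows essentially the same route as the paper's: decompose $(Z-Y)_t - S_{t-s}(Z-Y)_s$ into the $A$-increment plus the $\rho$-integral, invoke \cref{l:finfin} together with Fatou's lemma, close a buckling estimate on sub-intervals short enough that the $[Z-Y]$-term is absorbed, and then chain across $[0,1]$, accounting carefully for the chaining loss $\delta_0^{\beta\kappa}$ and verifying that its exponent in $R$ stays at most $1$ under the hypothesis $\beta>\tfrac12-\tfrac1{2\kappa}$. The one point of technical divergence is the buckling step: the paper applies \cref{l:finfin} only with $\tau=\tfrac12+\tfrac\kappa2$ and then uses the monotonicity of the H\"older seminorms ($\tfrac12+\tfrac\kappa2<1+\beta\kappa$ implies $[\phi]_{\Ctimespaced{\frac12+\frac\kappa2}{m}{[s,t]}}\le[\phi]_{\Ctimespaced{1+\beta\kappa}{m}{[s,t]}}$) to close the loop in a single stage, whereas you run a two-stage bootstrap, first using $\tau=\tfrac12+\tfrac\kappa2$ for qualitative finiteness and then calling \cref{l:finfin} again with the sharper $\tau'=1+\beta\kappa$ to obtain the absorption constant. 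Both routes are valid; your $\tau'=1+\beta\kappa$ gives a slightly larger $\gamma$ and hence a slightly smaller exponent $|\beta\kappa|/\gamma$ in the chaining cost (so an even more forgiving final inequality), but since both exponents are $\le 1$ the resulting quadratic dependence on $\sup_n\|f_n\|_{\C^\beta}$ is the same.
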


\begin{proof}
	Denote $\phi_t(x):=Z_t(x)-Y_t(x)$, $t\in[0,1]$, $x\in\D$. 
	It is clear that for any $(s,t)\in\Delta_{[0,1]}$, $x\in\D$ we have by Fatou's lemma
		\begin{align}\label{z1step}	
		&\|\phi_t(x)-S_{t-s}\phi_s(x)\|_{L_m(\Omega)}\nn\\
		&\qquad\le \|A(t,x)-S_{t-s}A(s,x)\|_{L_m(\Omega)} + \int_s^t\int_\D s_{t-r}(x,y)\|\rho(r,y)\|_{L_m(\Omega)}\,dydr\nn\\
		&\qquad \le \liminf_{n\to\infty}\Bigl\|\int_s^t\int_\D S_{t-r}(x,y) f_n(Z_r(y))\,dydr\Bigr\|_{L_m(\Omega)} + (t-s) \|\rho\|_{\Ctimespacedzerom{[0,1]}}.
	\end{align}
	To estimate the first term in the right-hand side of the above inequality, we use \cref{l:finfin}
	with $\tau=\frac12+\frac{\kappa}2$. We see that condition \eqref{paramcond} is satisfied since $\beta>\frac12-\frac1{2\kappa}$. We get from \eqref{sb1} for any $n\in\Z_+$
	\begin{equation*}
		\Bigl\|\int_s^t\int_\D s_{t-r}(x,y) f_n(Z_r(y))\,dydr\Bigr\|_{L_m(\Omega)}\le C F(t-s)^{1+\beta\kappa}(1+[\phi]_{\Ctimespaced{\frac12+\frac\kappa2}{m}{[s,t]}}(t-s)^{\frac12-\frac\kappa2})
	\end{equation*}		
	for $C=C(m,\beta)$ and $F:=\sup_n \|f\|_{\C^\beta}$. 	
	Substituting this into \eqref{z1step} we get for any $(s',t')\in\Delta_{[s,t]}$, $x\in\D$
	\begin{equation*}
	\|\phi_{t'}(x)-S_{t'-s'}\phi_s'(x)\|_{L_m(\Omega)} \le C(t'-s')^{1+\beta\kappa}\bigl(F+\|\rho\|_{\Ctimespacezerom{[0,1]}}+ F[\phi]_{\Ctimespaced{\frac12+\frac\kappa2}{m}{[s,t]}}(t-s)^{\frac12-\frac\kappa2}\bigr).
\end{equation*}

	Let $\ell>0$. Dividing both sides of the above inequality by $(t'-s')^{1+\beta \kappa}$ and taking supremum in the above inequality over all $s',t'\in \Delta_{[s,t]}$, we deduce for any $(s,t)\in\Delta_{[0,1]}$ such that $|t-s|\le \ell$
		\begin{align}\label{xminyhn}
		[\phi]_{\Ctimespaced{1+\beta\kappa}{m}{[s,t]}}&\le  C (F+\|\rho\|_{\Ctimespacezerom{[0,1]}})+C F \ell^{\frac12-\frac{\kappa}2}	[\phi ]_{\Ctimespaced{\frac12+\frac{\kappa}2}{m}{[s,t]}}\\
		&\le  C (F+\|\rho\|_{\Ctimespacezerom{[0,1]}})+C F \ell^{\frac12-\frac{\kappa}2}[\phi ]_{\Ctimespaced{1+\beta\kappa}{m}{[s,t]}},\label{xminyhn1}
	\end{align}	
	where $C=C(m,\beta)$ and we used the inequality $\frac12+\frac{\kappa}2<1+\beta\kappa$ which follows from the assumption $\beta>\frac12-\frac1{2\kappa}$. Since by assumptions of the lemma $[\phi ]_{\Ctimespaced{\frac12+\frac{\kappa}2}{m}{[s,t]}}<\infty$, it follows from \eqref{xminyhn}  that  
	\begin{equation}\label{ffff}
		[\phi]_{\Ctimespaced{1+\beta\kappa}{m}{[s,t]}}<\infty.
	\end{equation}
	
	Take now $\ell>0$ such that 
	\begin{equation*}
		C F \ell^{\frac12-\frac{\kappa}2}=\frac12.
	\end{equation*}
	Using \eqref{ffff}, we deduce from 
	\eqref{xminyhn1} that for any $(s,t)\in\Delta_{[0,1]}$ such that $|t-s|\le \ell$ we have 
	\begin{equation*}
		[\phi]_{\Ctimespaced{1+\beta\kappa}{m}{[s,t]}}\le  C (F+\|\rho\|_{\Ctimespacezerom{[0,1]}}).
	\end{equation*}	
	If $\ell>1$, then there is nothing more to prove, this inequality implies \eqref{claim}. Otherwise, if $\ell<1$, for a general $(s,t)\in\Delta_{[0,1]}$ we choose $N\ge2$ such that $1/N\le \ell < 1/(N-1)$
	and define $t_k:=s+k(t-s)/N$ for each $k=0,1,\hdots, N$. Then $t_{k+1}-t_k\le \ell$ and therefore by the above inequality and Jensen's inequality
	\begin{align*}
		\|\phi_t(x)-S_{t-s}\phi_s(x)\|_{L_m(\Omega)}&\le \sum_{i=0}^{N-1} \|S_{t-t_{i+1}}(\phi_{t_{i+1}}-S_{t_{i+1}-t_{i}}\phi_{t_{i}})(x)\|_{L_m(\Omega)}\\
		&\le \sum_{i=0}^{N-1} \sup_{z\in\D}\|\phi_{t_{i+1}}(z)-S_{t_{i+1}-t_{i}}\phi_{t_{i}}(z)\|_{L_m(\Omega)}\\
		&\le CN^{-\beta\kappa} (t-s)^{1+\beta\kappa}  (F+\|\rho\|_{\Ctimespacedzerom{[0,1]}}),
	\end{align*}	
	for $C=C(m,\beta)$. 
	Since $N^{-\beta \kappa}\le C(1+F^{\frac{-2\beta \kappa}{1-\kappa}})\le C(1+F)$, we get the desired bound \eqref{claim}.
\end{proof}	

We end this section with the following technical result. It shows that a certain auxiliary equation with a Lipschitz drift driven by a general noise has an adapted solution and its norm can be controlled uniformly. This is of course not surprising at all, the proof uses the standard Picard iterations argument and we place it  in the appendix.
\begin{lemma}\label{l:ebound}
	Let  $z\in \BB(\D,\R^d)$. Let $h$ be a continuous function $\R^d\to\R^d$ with $[h]_{\Lip(\R^d)}<\infty$.  Let $R\colon[0,1]\times \D\times\Omega\to\R$ be a measurable process  adapted to the filtration $(\F_t)_{t\in[0,1]}$.  Consider the following equation
	\begin{equation}\label{SPDEgeneric}
		Z(t,x)=S_t z(x) +\int_0^t\int_{\D} s_{t-r}(x,y) h( Z_r(y))\,dydr+R(t,x),\quad t\in[0,1],\,\, x\in \D.
	\end{equation}	 
	Suppose that  $\|R\|_{\Ctimespacedzerom{[0,1]}}<\infty$. Then this equation has an adapted solution on the same space $(\Omega,\F,(\F_t)_{t\in[0,1]},\P)$. Moreover, ${\|Z-R\|_{\Ctimespaced{1}{m}{[0,1]}}<\infty}$.
\end{lemma}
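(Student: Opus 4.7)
The strategy is a standard Picard iteration in the space $\Ctimespacedzerom{[0,1]}$, followed by a direct computation using the semigroup and convolution properties of $s_t$ to extract the $\C^{1,0}L_m$ regularity of $Z-R$.

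First I would set $U^{(0)} \equiv 0$ and define iteratively
\[
U^{(n+1)}(t,x) := \int_0^t \int_\D s_{t-r}(x,y)\, h\bigl(S_r z(y) + U^{(n)}(r,y) + R(r,y)\bigr)\, dy\, dr.
\]
The hypothesis $[h]_{\Lip(\R^d)}<\infty$ together with continuity of $h$ yields the linear growth bound $|h(y)| \le |h(0)| + [h]_{\Lip(\R^d)}|y|$. Combined with $z \in \BB(\D,\R^d)$ and $\|R\|_{\Ctimespacedzerom{[0,1]}} < \infty$, this shows inductively that each $U^{(n)}$ is well-defined and bounded in $\Ctimespacedzerom{[0,1]}$. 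Adaptedness of $U^{(n+1)}$ follows from adaptedness of $U^{(n)}$ and $R$, continuity of $h$, and the fact that the integrand is $\F_r$-measurable for each $r \le t$.

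Next, using $\int_\D s_{t-r}(x,y)\,dy = 1$ from \eqref{convprop} and the Lipschitz property of $h$, I would obtain
\[
\sup_{x\in\D}\|U^{(n+1)}(t,x) - U^{(n)}(t,x)\|_{L_m(\Omega)} \le [h]_{\Lip(\R^d)} \int_0^t \sup_{y\in\D} \|U^{(n)}(r,y) - U^{(n-1)}(r,y)\|_{L_m(\Omega)}\, dr.
\]
Iterating yields the classical Picard factorial bound on $[0,1]$, so $(U^{(n)})_n$ is Cauchy in $\Ctimespacedzerom{[0,1]}$; its limit $U$ is adapted, and setting $Z := Sz + U + R$ produces an adapted solution to \eqref{SPDEgeneric} with $\sup_{(t,x)}\|Z_t(x)\|_{L_m(\Omega)}<\infty$.

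Finally, for the claimed norm bound, I would compute directly. Using the semigroup identity $S_{t-s}S_s = S_t$ and the convolution property \eqref{convprop}, one gets, for any $(s,t) \in \Delta_{[0,1]}$ and $x\in\D$,
\[
(Z-R)(t,x) - S_{t-s}(Z-R)(s,x) = \int_s^t \int_\D s_{t-r}(x,y)\, h(Z_r(y))\,dy\,dr.
\]
Since $h$ has linear growth and $\sup_{(r,y)}\|Z_r(y)\|_{L_m(\Omega)} < \infty$, Minkowski's inequality bounds the $L_m(\Omega)$-norm of the right-hand side by a constant multiple of $(t-s)$, giving $[Z-R]_{\Ctimespaced{1}{m}{[0,1]}} < \infty$. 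Combined with the obvious bound $\|Z-R\|_{\Ctimespacedzerom{[0,1]}} \le \|z\|_{\BB(\D,\R^d)} + \|U\|_{\Ctimespacedzerom{[0,1]}} < \infty$, this yields $\|Z-R\|_{\Ctimespaced{1}{m}{[0,1]}} < \infty$. There is no serious obstacle here; the only mildly delicate point is preserving measurability and adaptedness through the iteration, which is automatic from joint measurability of the integrand in $(\omega,r,y)$.
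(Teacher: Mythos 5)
Your proof is correct and follows essentially the same route as the paper: Picard iteration in $\Ctimespacedzerom{[0,1]}$ with the factorial contraction bound coming from $\int_\D s_{t-r}(x,y)\,dy=1$ and the Lipschitz property of $h$, followed by the semigroup identity $(Z-R)(t,x)-S_{t-s}(Z-R)(s,x)=\int_s^t\int_\D s_{t-r}(x,y)h(Z_r(y))\,dy\,dr$ to get the $\C^{1,0}_\D L_m$ seminorm bound. The only cosmetic difference is that you iterate on $U^{(n)}$ with $U^{(0)}=0$ and write $Z=Sz+U+R$, whereas the paper iterates on $\phi^{(n)}=Z-R$ with $\phi^{(0)}=S_\cdot z$; these are the same iterations shifted by the harmonic part.
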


\subsection{Integral bounds for SHE}

In this subsection we provide bounds of integral functionals of a solution to the linear stochastic heat equation (process $V$ defined in \eqref{def.V}) which are necessary for the arguments of the weak uniqueness proof. 

We fix the length of the time interval $T>0$, and the filtration $(\F_t)_{t\in[0,T]}$ such that $W$ is an $(\F_t)$–space-time white noise. In this section we have $d=1$. First, we check that the process $V$ satisfies \eqref{Ycond}. Recall \cref{c:conv}.

\begin{lemma}\label{l:spde}
	The convolution kernel $s_t:=p_t$ for $t\in[0,T]$ defined on the domain $\D:=D$ satisfies condition \eqref{convprop}.  The process  $\{V_{t,x},t\in[0,T], x\in D\}$ satisfies \eqref{Ycond} with $\kappa:=1/4$.
\end{lemma}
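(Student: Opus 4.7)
My plan is to verify the two assertions in order. For the first statement, both properties of $p$ are classical consequences of the method of images. The identity $\int_D p_t(x,y)\,dy=1$ is mass conservation for the heat semigroup under periodic/Neumann boundary conditions: substituting the series for $p^\per$ (respectively $p^\neu$), interchanging sum and integral, and using $\int_\R \Gamma(t,z)\,dz=1$ delivers it immediately. The Chapman--Kolmogorov identity $\int_D p_r(x,y)p_{t-r}(y,z)\,dy=p_t(x,z)$ is the semigroup property of the associated heat semigroup on $L^2(D)$ and can be obtained in the same way or via the spectral expansion of $p$; I would dispose of this part in a sentence.

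For the second assertion I plan to apply \cref{l:gycond} with $\kappa=1/4$ (here $d=1$). Fix $(s,t)\in\Delta_{[0,T]}$ and $x\in D$, and split the defining Wiener integral at time $s$:
\[
V_t(x)=\int_0^s\!\int_D p_{t-r}(x,y)\,W(dr,dy)+\int_s^t\!\int_D p_{t-r}(x,y)\,W(dr,dy).
\]
The first summand is $\F_s$-measurable; the second, being a Wiener integral of a deterministic kernel over $(s,t]\times D$, is Gaussian and independent of $\F_s$ by the defining property of the $(\F_t)$-space-time white noise. Therefore $V_t(x)-\E^s V_t(x)$ equals the second summand, which verifies the Gaussianity/independence hypothesis \eqref{ind} of \cref{l:gycond}.

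The only remaining point is the lower variance bound \eqref{sigmaY}. By the It\^o isometry, followed by \eqref{convprop} and the symmetry $p_r(x,y)=p_r(y,x)$,
\[
\sigma^2(s,t,x)=\int_s^t\!\int_D p_{t-r}(x,y)^2\,dy\,dr=\int_s^t p_{2(t-r)}(x,x)\,dr.
\]
From the series representations of $p^\per$ and $p^\neu$, which are non-negative term-by-term, keeping only the $n=0$ contribution gives $p_u(x,x)\ge \Gamma(u,0)=(2\pi u)^{-1/2}$ uniformly in $x\in D$. Integrating yields
\[
\sigma^2(s,t,x)\ge \int_s^t(4\pi(t-r))^{-1/2}\,dr=\frac{(t-s)^{1/2}}{\sqrt{\pi}},
\]
which is exactly \eqref{sigmaY} with $\kappa=1/4$, and \cref{l:gycond} then delivers \eqref{Ycond}.

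The step I expect to require the most care is the lower bound on $p_u(x,x)$: once the series representation is invoked and term-by-term non-negativity is used, it is immediate, but this is the one place where I would double-check the Neumann case to be sure that no cancellation is introduced by the $\Gamma(t,x+y+2n)$ summands. Everything else is either standard or routine.
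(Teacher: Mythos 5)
Your proof is correct and follows essentially the same route as the paper: both apply \cref{l:gycond} with $\kappa=1/4$, both identify $V_t(x)-\E^s V_t(x)$ with the Wiener integral over $(s,t]\times D$ to obtain Gaussianity and independence of $\F_s$, and both reduce the matter to the variance lower bound \eqref{sigmaY}. The one place you diverge is that the paper simply cites \cite[Lemma~C.3]{ABLM} for $\Var(V_t(x)-\E^s V_t(x))\ge C(t-s)^{1/2}$, whereas you derive it from scratch via the It\^o isometry, the Chapman--Kolmogorov identity $\int_D p_{t-r}(x,y)^2\,dy=p_{2(t-r)}(x,x)$, and the term-by-term non-negativity of the image series giving $p_u(x,x)\ge\Gamma(u,0)$; your worry about the Neumann case is unfounded since the summands $\Gamma(t,x+y+2n)$ enter with a plus sign, so there is no cancellation. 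Your version is a bit more self-contained at the cost of a few extra lines, but there is no difference in substance.
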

\begin{proof} The first statement follows directly from the definition of $p$ in \cref{c:conv}. To verify that $V$ satisfies \eqref{Ycond}, we use \cref{l:gycond}. Let $(s,t)\in\Delta_{[0,T]}$, $x\in D$. Then, clearly
\begin{equation*}
V_t(x)-\E^s V_t(x)=\int_s^t\int_D p_{t-r}(x,y)W(dr,dy).
\end{equation*}	
We see that this random variable is Gaussian and independent from $\F_s$. Next, 
by \cite[Lemma~C.3]{ABLM}, $\Var (V_t(x)-\E^s V_t(x))\ge C(t-s)^{1/2}$, for $C>0$ independent of $t,x$. Hence, conditions \eqref{ind} and \eqref{sigmaY} holds with $\kappa=1/4$. Therefore,  \cref{l:gycond} implies now that $V$ satisfies \eqref{Ycond} with $\kappa=1/4$.
\end{proof}

We present now the key technical bound for the weak uniqueness proof of equation \Sref{u_0;b}.

\begin{corollary}\label{c:sfinfin}
	Let  $m\ge2$, $\beta\in(-2,0)$, $\tau\in[0,1]$, $\eps\in(0,1/2)$. Let $f\colon\R\to\R$ be a bounded continuous  function.
	Let $\phi,\psi\colon [0, 1]\times D\times \Omega\to\R$ be measurable processes adapted to the filtration $(\F_t)_{t\in[0,1]}$.
	Assume that
	\begin{equation}\label{sparam}
		\beta>1-4 \tau.
	\end{equation}
Then there exists a constant $C=C(m,\beta, \eps,\tau)>0$ such that for any $\lambda>1$ and $(s,t)\in\Delta_{[0,1]}$ we have
	\begin{align}\label{spdew}
		&\Bigl\|\int_s^t\int_D e^{-\lambda(t-r)}p_{t-r}(x,y) f(V_r(y)+\phi_r(y))\,dydr\Bigr\|_{L_m(\Omega)}\nn\\
		&\qquad\le
		C \|f\|_{\C^\beta}(t-s)^\eps\lambda^{-1-\frac\beta4+3\eps}\Bigl(1+[\phi]_{\C^{\tau,0}L_m ([s,t])}\lambda^{\frac14-\tau}\Bigr)
	\end{align}
	If, additionally, for $\mu\in[0,1]$ we have
	\begin{equation}\label{sparamcond2}
		\beta>\mu-2,
	\end{equation}
	then there exists a constant $C=C(m,\beta,\eps, \mu,\tau)>0$ such that for any $(s,t)\in\Delta_{[0,1]}$ we have 
	\begin{align}\label{spde2w}
		&\Bigl\|\int_s^t\int_D e^{-\lambda(t-r)} p_{t-r}(x,y) (f(V_r(y)+\phi_r(y))-f(V_r(y)+\psi_r(y)))\,dydr\Bigr\|_{L_m(\Omega)}\nn\\
		&\quad\le
		C \|f\|_{\C^\beta}
		(t-s)^\eps\lambda^{-1-\frac{\beta-\mu}4+3\eps}\|\phi-\psi\|^\mu_{\C^{0,0}L_m([s,t])}\nn\\
		&\qquad+C\|f\|_{\C^\beta} 	(t-s)^\eps \lambda^{-1-\frac{\beta-1}4-\tau+3\eps}\bigl([\phi]_{ \C^{\tau,0}L_m([s,t])}+[\psi]_{ \C^{\tau,0}L_m([s,t])}\bigr).
	\end{align}
\end{corollary}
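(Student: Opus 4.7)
The plan is very short: this corollary is the stochastic heat equation analogue of \cref{c:sde}, and just as that corollary followed at once from the general \cref{c:finfin} by verifying the abstract hypotheses via \cref{l:fbm}, the present statement should follow from \cref{c:finfin} via \cref{l:spde}.

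More specifically, I would invoke \cref{l:spde} to conclude that the convolution kernel $s_t := p_t$ on the domain $\D := D$ satisfies the semigroup property \eqref{convprop}, and that the stochastic convolution $Y_t(x) := V_t(x)$ satisfies the smoothing assumption \eqref{Ycond} with $\kappa = 1/4$. With these choices, the abstract $\Ctimespaced{\gamma}{m}{[S,T]}$-seminorm used in \cref{c:finfin} coincides (up to notation) with the $\Ctimespace{\gamma}{m}{[S,T]}$-seminorm defined in \eqref{fullsnorm}, since the semigroup $S$ is exactly the heat semigroup $P$.

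Next, I would check that the parameter constraints match. The general hypothesis \eqref{paramcond}, namely $\beta > 1 - \tau/\kappa$, becomes $\beta > 1 - 4\tau$ when $\kappa = 1/4$, which is precisely \eqref{sparam}. Similarly, \eqref{paramcond2}, namely $\beta > \mu - 1/(2\kappa)$, becomes $\beta > \mu - 2$, which is precisely \eqref{sparamcond2}. Substituting $\kappa = 1/4$ into the exponents appearing on the right-hand side of \eqref{bound1terml} gives $-1 - \beta\kappa + 3\eps = -1 - \beta/4 + 3\eps$ and $\kappa - \tau = 1/4 - \tau$, yielding \eqref{spdew}. The exponents in \eqref{ourboundasregl} likewise become $-1 - (\beta-\mu)/4 + 3\eps$ and $-1 - (\beta-1)/4 - \tau + 3\eps$, yielding \eqref{spde2w}.

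There is no real obstacle here; all the work has already been done in \cref{l:spde} (the Gaussian variance lower bound $\Var(V_t(x) - \E^s V_t(x)) \ge C(t-s)^{1/2}$ feeding \cref{l:gycond}) and in \cref{c:finfin} (the stochastic sewing argument with exponential weights). The proof is therefore essentially a one-line invocation of \cref{c:finfin} with the parameters $\kappa = 1/4$, $\D = D$, $s_t = p_t$, $Y = V$, and the verification that \eqref{sparam} and \eqref{sparamcond2} are the specializations of \eqref{paramcond} and \eqref{paramcond2} to $\kappa = 1/4$.
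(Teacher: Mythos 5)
Your proposal is correct and matches the paper's proof exactly: both invoke \cref{l:spde} to verify \eqref{Ycond} with $\kappa = 1/4$ for the process $V$, and then apply \cref{c:finfin}. The additional parameter-matching checks you spell out are implicit in the paper but are exactly the right things to verify.
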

\begin{proof}
By \cref{l:spde}, the process $V$ satisfies \eqref{Ycond} with $\kappa=1/4$. The result follows now from \cref{c:finfin}.	
\end{proof}

Next, we derive an a priori bound of the norm of the solution to equation \Sref{u_0;b}.
\begin{corollary}\label{l:apspde}
Let $u_0\in\BB(D)$, $\alpha\in(-\frac3{2},0)$, $b\in\C^\alpha$. Let $u$ be a weak solution to \Sref{u_0;b} and suppose that $u\in\Vscl$. Then for any $m\ge2$ there exists $C=C(m)>0$ such that
	\begin{equation}\label{claimspdesol}
		[u-V]_{\Ctimespace{1+\frac\alpha4}{m}{[0,1]}}\le  C (1+\|b\|_{\C^\alpha}^2).
	\end{equation}
\end{corollary}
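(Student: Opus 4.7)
The plan is to mirror the proof of \cref{l:apsde} and derive the bound directly from \cref{l:apriori}. The statement for SHE is the exact analogue of the one for SDE driven by fBM, with the role of fractional Brownian motion played by the stochastic convolution $V$, and the role of the Hurst parameter $H$ played by the smoothing exponent $\kappa = 1/4$ (which is what the heat semigroup produces for space-time white noise).

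Concretely, I would apply \cref{l:apriori} with the following identifications: $\beta := \alpha$, $\kappa := 1/4$, $\D := D$, convolution semigroup $S := P$ (with kernel $p$ per \cref{c:conv}), $z := u_0$, $Y := V$, $Z := u$, $\rho \equiv 0$, and $f_n := G_{1/n} b$ (the standard smoothing; note that $f_n \to b$ in $\C^{\alpha-}$ by \cite[Lemma~A.3]{ABLM} and $\sup_n\|f_n\|_{\C^\alpha} \le \|b\|_{\C^\alpha}$). I then need to verify the four hypotheses of \cref{l:apriori}: (i) that $V$ satisfies \eqref{Ycond} with $\kappa = 1/4$, which is precisely \cref{l:spde}; (ii) that $\beta = \alpha \in (1/2 - 1/(2\kappa), 0) = (-3/2, 0)$, which is the assumption of the corollary; (iii) that the pair $(u, V)$ satisfies the integral identity \eqref{zequ} together with the approximation in probability by $\int_0^t\int_D p_{t-r}(x,y) f_n(Z_r(y))\,dy\,dr$, which is exactly the content of \cref{def:ssol}; and (iv) that $[u-V]_{\Ctimespaced{1/2+\kappa/2}{m}{[0,1]}} = [u-V]_{\Ctimespace{5/8}{m}{[0,1]}} < \infty$, which holds because $u \in \Vscl = \VV(5/8)$ by assumption and $u-V = K$ in the notation of the definition of the class $\VV$.

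Having checked all four hypotheses, the conclusion \eqref{claim} of \cref{l:apriori} reads
\begin{equation*}
[u - V]_{\Ctimespace{1+\alpha/4}{m}{[0,1]}} \le C\bigl(1 + \sup_n\|f_n\|_{\C^\alpha}^2\bigr)\bigl(1 + \|\rho\|_{\Ctimespacedzerom{[0,1]}}\bigr) \le C\bigl(1 + \|b\|_{\C^\alpha}^2\bigr),
\end{equation*}
since $\rho \equiv 0$. This is exactly \eqref{claimspdesol}. There is no real obstacle in this proof: all the work has been done in the abstract framework of \cref{s:gen}, and the only nontrivial verification is matching the definition of a mild solution from \cref{def:ssol} with the abstract equation \eqref{zequ}, which is immediate from the two bullet points in \cref{def:ssol} upon setting $A(t,x) := K_t(x)$.
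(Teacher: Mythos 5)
Your proof is correct and matches the paper's argument step for step: both reduce the statement to \cref{l:apriori} with the identifications $\kappa=1/4$, $Y=V$, $S=P$, $Z=u$, $\rho\equiv0$, $f_n=G_{1/n}b$, check \eqref{Ycond} via \cref{l:spde}, match $\alpha\in(-3/2,0)$ with the hypothesis $\beta\in(1/2-1/(2\kappa),0)$, and use $u\in\Vscl$ for \eqref{zcond}. The only (cosmetic) difference is that you cite \cref{l:spde} where the paper's printed proof mistakenly cites \cref{l:fbm}; your citation is the correct one.
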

\begin{proof}
	We apply \cref{l:apriori} with $\beta=\alpha$, $\kappa=1/4$, $\rho\equiv0$, $\D=D$, $S=P$, $z=u_0$, $Y=V$, $Z=u$, $f_n=G_{1/n}b$. By \cref{l:fbm}, $V$ satisfies \eqref{Ycond} with $\kappa=1/4$. Therefore our assumption $\alpha>-3/2$ coincides with the  condition $\beta\in(1/2-1/2\kappa,0)$ of \cref{l:apriori}. Recalling the definition of a solution  to \Sref{u_0;b} in \cref{def:ssol}, we see that the pair $(u,V)$ satisfies \eqref{zequ}. We see also that \eqref{zcond} holds thanks to the assumption $u\in\Vscl$. Thus all the conditions of \cref{l:apriori} are satisfied and \eqref{claimspdesol} follows from  \eqref{claim}.
%
\end{proof}

Finally, we need to bound the norm of a solution to a stochastic heat equation with additional forcing.
\begin{corollary}\label{l:sapriori}
	Let $m\ge2$, $\alpha\in(-\frac3{2},0)$, $u_0\in\BB(D)$. Let $f$ be a bounded continuous function $\R\to\R$. Let $\rho,Z\colon[0,1]\times D\times \Omega\to\R$ be measurable process adapted to the filtration $(\F_t)_{t\in[0,1]}$. Suppose that 
	\begin{equation*}
		Z(t,x)=P_t u_0(x)+\int_0^t\int_D p_{t-r}(x,y) f(Z_r(y))\,dydr +\int_0^t \int_D p_{t-r}(x,y) \rho(r,y)\,dydr + V(t,x), 
	\end{equation*}	
	where $t\in[0,1]$, $x\in D$.
	Assume that
	\begin{equation}\label{zspdecond}
	[Z-V]_{\Ctimespace{\frac58}{m}{[0,1]}}<\infty.
	\end{equation} 
	
	Then there exists $C=C(m)>0$ such that
	\begin{equation}\label{claimspde}
		[Z-V]_{\Ctimespace{1+\frac\alpha4}{m}{[0,1]}}\le  C (1+\|f\|_{\C^\alpha}^2)(1+\|\rho\|_{ \Ctimespacezerom{[0,1]}}).	
	\end{equation}
\end{corollary}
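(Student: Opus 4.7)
The proof will proceed by direct application of the general a priori bound in \cref{l:apriori}, choosing parameters tailored to the stochastic heat equation setting. Specifically, I will take $\beta := \alpha$, $\kappa := 1/4$, $\D := D$, $S := P$ (the heat semigroup corresponding to \cref{c:conv}), $z := u_0$, $Y := V$, and $f_n := f$ for every $n \in \Z_+$ (a constant sequence, so the approximation condition in \cref{l:apriori} holds with equality).

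Next I need to verify each hypothesis of \cref{l:apriori}. The convolution kernel $p$ satisfies \eqref{convprop} by \cref{c:conv}, and \cref{l:spde} confirms that $V$ satisfies \eqref{Ycond} with $\kappa = 1/4$. The regularity range $\beta \in (\tfrac12 - \tfrac{1}{2\kappa}, 0)$ becomes $\alpha \in (-\tfrac32, 0)$, which is precisely our standing assumption. The identity \eqref{zequ} in \cref{l:apriori} is exactly the assumed mild formulation satisfied by $Z$, with the forcing term $A(t,x)=\int_0^t\int_D p_{t-r}(x,y) f(Z_r(y))\,dy\,dr$ and the convolution-with-$\rho$ term as given. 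Finally, the regularity assumption \eqref{zcond} of \cref{l:apriori} reads $[Z-V]_{\Ctimespaced{\frac12+\frac{\kappa}2}{m}{[0,1]}} < \infty$; with $\kappa = 1/4$ the exponent is $\tfrac12+\tfrac18=\tfrac58$, matching the assumed condition \eqref{zspdecond}.

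With all hypotheses verified, the conclusion \eqref{claim} of \cref{l:apriori} gives
\begin{equation*}
[Z-V]_{\Ctimespaced{1+\beta\kappa}{m}{[0,1]}} \le C(1+\sup_n \|f_n\|_{\C^\beta}^2)(1+\|\rho\|_{\Ctimespacedzerom{[0,1]}}).
\end{equation*}
Substituting $\beta\kappa = \alpha/4$ and $\sup_n \|f_n\|_{\C^\beta} = \|f\|_{\C^\alpha}$, and observing that $\Ctimespaced{\gamma}{m}{[0,1]}$ with $\D = D$ and $S = P$ coincides with the space $\Ctimespace{\gamma}{m}{[0,1]}$ appearing in \eqref{claimspde}, yields exactly \eqref{claimspde}. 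There is no real obstacle here: the result is essentially a specialization of \cref{l:apriori}, and the only point requiring attention is the careful bookkeeping to check that the parameter $\kappa = 1/4$ (characteristic of the space-time white noise regularity, as established in \cref{l:spde}) produces the correct H\"older exponent $1+\alpha/4$ on the left-hand side and the correct threshold $-3/2$ on the admissible range of $\alpha$.
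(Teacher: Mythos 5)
Your proof is correct and follows exactly the same route as the paper: a direct specialization of \cref{l:apriori} with $\beta=\alpha$, $\kappa=1/4$, $\D=D$, $S=P$, $Y=V$, and $f_n\equiv f$, with \cref{l:spde} providing the verification of \eqref{Ycond}. You even correctly cite \cref{l:spde} for the SHE setting, whereas the paper's own one-line proof mistakenly references \cref{l:fbm} (the fBM analogue); otherwise the two arguments are identical.
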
	
\begin{proof}
Inequality \eqref{claimspde} follows directly from \cref{l:apriori} and \cref{l:spde}.
\end{proof}

\subsection{Bounds for integrals of fractional Brownian motion}

Using general results from \cref{s:gen}, we derive bounds for  integral functionals of fractional Brownian motion  with a drift, which will be crucial for our weak and strong uniqueness arguments. 
We fix $d\in\N$, $H\in(0, 1)$, the length of the time interval $T>0$, and the filtration $(\F_t)_{t\in[0,T]}$ such that $B^H$ is an $(\F_t)$–fractional Brownian motion. 

First we show that fractional Brownian motion satisfies conditions of \cref{s:gen} with $\kappa:=H$.

\begin{lemma}\label{l:fbm}
	The convolution kernel $s_t(0):=1$ for $t\in[0,T]$ defined on the domain $\D:=\{0\}$ satisfies condition \eqref{convprop} (recall \cref{c:convdelta}). 
	The proces $Y_t(0):=B^H_t$, $t\in[0,T]$, satisfies \eqref{Ycond} with $\kappa:=H$.
\end{lemma}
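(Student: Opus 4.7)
The plan is to dispatch the two assertions separately. The convolution kernel property is essentially tautological once \cref{c:convdelta} is unpacked: since $\D=\{0\}$ is a single point and $dy$ denotes the Dirac mass at $0$, both identities in \eqref{convprop} reduce to $s_t(0,0)=1$ and $s_r(0,0)\,s_{t-r}(0,0)=s_t(0,0)=1$, which hold by definition of $s$.

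For \eqref{Ycond}, the strategy is to apply \cref{l:gycond} with $\kappa=H$. This requires three ingredients: (i) for every $(s,t)\in\Delta_{[0,T]}$ the vector $B^H_t-\E^s B^H_t$ is Gaussian and independent of $\F_s$; (ii) its $d$ components are i.i.d.; (iii) the common variance $\sigma^2(s,t,0)$ is bounded below by $c(t-s)^{2H}$. For (i) and (ii) I would use the Volterra representation \eqref{bhrepr} to split
\begin{equation*}
B^H_t=\int_0^s K_H(t,r)\,dW_r+\int_s^t K_H(t,r)\,dW_r.
\end{equation*}
Since $W$ is an $\F$-Brownian motion, the first summand is $\F_s$-measurable, while the second, being a Wiener integral of a deterministic kernel against the increments of $W$ on $[s,t]$, is Gaussian and independent of $\F_s$. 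Thus $\E^s B^H_t$ coincides with the first summand and $B^H_t-\E^s B^H_t=\int_s^t K_H(t,r)\,dW_r$. Independence of the $d$ components is then automatic because the $d$-dimensional fBM is built from $d$ independent one-dimensional fBMs driven by independent coordinates of $W$.

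For (iii), the variance of each component equals $\int_s^t K_H(t,r)^2\,dr$. Using that the natural filtrations of $W$ and of $B^H$ coincide (as recalled in the introduction), this quantity also equals $\Var(B^{H,1}_t\mid \F^{B^H}_s)$. The required bound $\int_s^t K_H(t,r)^2\,dr\ge c_H(t-s)^{2H}$ is the well-known local nondeterminism of fractional Brownian motion (see, e.g., \cite{Nu}); this is where I expect the main substance of the proof to lie, since the remaining steps are bookkeeping once the Volterra decomposition is in place. If a self-contained proof is preferred, one can carry out the self-similarity substitution $r=s+v(t-s)$ and reduce the estimate to the uniform positivity of $\inf_{a\ge 0}\int_0^1 K_H(1+a,u+a)^2\,du$, which follows from the explicit formula for $K_H$ together with its behaviour near the diagonal. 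Once (iii) is in hand, \cref{l:gycond} immediately delivers \eqref{Ycond} with $\kappa=H$, completing the proof.
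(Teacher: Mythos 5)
Your proof is correct and follows essentially the same route as the paper: both reduce the claim to \cref{l:gycond} via the Volterra representation \eqref{bhrepr}, identify $B^H_t-\E^sB^H_t=\int_s^t K_H(t,r)\,dW_r$ as Gaussian and independent of $\F_s$ with i.i.d.\ components, and invoke the lower bound $\int_s^t K_H(t,r)^2\,dr\ge c(t-s)^{2H}$ (the paper cites \cite[Proposition~B.2(iii)]{blm23}, you cite \cite{Nu} and sketch a valid self-similarity argument). The only difference is presentational: you spell out the $[0,s]$/$[s,t]$ splitting and the coordinatewise independence explicitly, which the paper leaves implicit.
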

\begin{proof} The first statement is obvious. To show the second statement we apply \cref{l:gycond}. We write $B^H=(B^{H,1},\hdots, B^{H,d})$. Recall the representation \eqref{bhrepr}.  For any $(s,t)\in\Delta_{[0,T]}$, 
	the vector $B_t^{H}-\E^s B_t^{H}=\int_s^t K_H(t,r)\,dW_r$ is Gaussian and independent from $\F_s$. Therefore \eqref{ind} holds. We also see that all of the components of this vector are i.i.d and 
	\begin{equation*}
		\Var (B_t^{H,i}-\E^s B_t^{H,i})=\int_s^t (K_H(t,r))^2\,dr\ge C(t-s)^{2H},\quad i=1,...,d,
	\end{equation*}	
	where the last inequality follows from a direct calculation, see, e.g., \cite[Proposition~B.2(iii)]{blm23}. Therefore condition \eqref{sigmaY} holds as well with $\kappa=H$.
	
	Thus, all the conditions of \cref{l:gycond} hold and therefore $B^H$ satisfies \eqref{Ycond} with $\kappa=H$.
\end{proof}

Now we are ready to bound of a weighted integral of a fractional Brownian motion with a drift. 	
\begin{corollary}\label{c:sde}
	Let  $m\ge2$, $\beta\in(-\frac1{2H},0)$, $\tau\in[0,1]$, $\eps\in(0,1/2)$. Let $f\colon\R^d\to\R$ be a bounded continuous function.
	Let $\phi,\psi\colon [0, 1]\times\Omega\to\R^d$ be measurable processes adapted to the filtration $(\F_t)_{t\in[0,1]}$.
	Assume that
	\begin{equation}\label{parsde}
		\beta>1-\frac{\tau}{H}.
	\end{equation} 
	Then there exists a constant $C=C(m,\beta, \eps,\tau)>0$ such that for any $\lambda>1$ and $(s,t)\in\Delta_{[0,1]}$ we have
	\begin{equation}\label{sdeb1}
		\Bigl\|\int_s^t e^{-\lambda(t-r)}f(B_r^H+\phi_r)\,dr\Bigr\|_{L_m(\Omega)}\le
		C \|f\|_{\C^\beta}(t-s)^\eps\lambda^{-1-\beta H+3\eps}\Bigl(1+[\phi]_{\C^{\tau}L_m ([s,t])}\lambda^{-\tau+H}\Bigr)
	\end{equation}
	If, additionally, for $\mu\in[0,1]$ we have
	\begin{equation}\label{parsde2}
		\beta>\mu-\frac{1}{2H},
	\end{equation} 
	then there exists a constant $C=C(m,\beta,\eps, \mu,\tau)>0$ such that for any $(s,t)\in\Delta_{[0,1]}$ we have 
	\begin{align}\label{sdeb2}
		&\Bigl\|\int_s^t e^{-\lambda(t-r)}(f(B_r^H+\phi_r)-f(B_r^H+\psi_r))\,dr\Bigr\|_{L_m(\Omega)}\nn\\
		&\quad\le
		C \|f\|_{\C^\beta}
		(t-s)^\eps\lambda^{-1-(\beta-\mu)H+3\eps}\|\phi-\psi\|^\mu_{\C^0L_m([s,t])}\nn\\
		&\qquad+C\|f\|_{\C^\beta} 	(t-s)^\eps \lambda^{-1-(\beta-1)H-\tau+3\eps}\bigl([\phi]_{ \C^{\tau}L_m([s,t])}+[\psi]_{ \C^{\tau}L_m([s,t])}\bigr).
	\end{align}
\end{corollary}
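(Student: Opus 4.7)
The plan is to recognize this corollary as an immediate specialization of Corollary \ref{c:finfin} to the setting where the spatial domain collapses to a single point and the abstract process $Y$ is taken to be fractional Brownian motion. All the hard work has already been done in Section \ref{s:gen}; only a change of notation is required.

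First, I would specialize the general framework. Set $\D := \{0\}$ with $dy$ interpreted as the Dirac measure at $0$ per \cref{c:convdelta}, and take the convolution kernel $s_t(0,0) := 1$, which trivially satisfies \eqref{convprop}. With this choice the associated semigroup $S_t$ is the identity, and every spatial integral $\int_\D s_{t-r}(x,y) g(y) \, dy$ collapses to $g(0)$. Under this identification any measurable process $\phi \colon [0,1] \times \Omega \to \R^d$ adapted to $(\F_t)$ fits the framework of \cref{s:gen}, and the seminorms satisfy
\begin{equation*}
\|\phi\|_{\Ctimespacedzerom{[s,t]}} = \|\phi\|_{\C^0 L_m([s,t])}, \qquad [\phi]_{\Ctimespaced{\tau}{m}{[s,t]}} = [\phi]_{\C^{\tau} L_m([s,t])}.
\end{equation*}

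Second, by \cref{l:fbm} the process $Y_t(0) := B_t^H$ satisfies condition \eqref{Ycond} with $\kappa = H$. Therefore the hypotheses of \cref{c:finfin} are met: substituting $\kappa = H$, condition \eqref{paramcond} becomes $\beta > 1 - \tau/H$, which is exactly \eqref{parsde}, and condition \eqref{paramcond2} becomes $\beta > \mu - 1/(2H)$, which is exactly \eqref{parsde2}. Applying \eqref{bound1terml} then yields \eqref{sdeb1}, and applying \eqref{ourboundasregl} yields \eqref{sdeb2}; in each case the right-hand sides transcribe verbatim (noting $\kappa - \tau = H - \tau$ in the first bound).

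The only potential obstacle is purely bookkeeping: one must verify that \cref{c:convdelta} is being applied consistently so that the kernel integrations really do reduce to pointwise evaluation and that the spatial suprema in the $\Ctimespaced{\cdot}{\cdot}{\cdot}$ seminorms (taken over the single point $\D = \{0\}$) drop out without loss. Once this identification is made, there is nothing further to prove.
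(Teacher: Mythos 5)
Your proposal is correct and matches the paper's proof exactly: both invoke \cref{l:fbm} to see that $B^H$ satisfies \eqref{Ycond} with $\kappa = H$, then apply \cref{c:finfin} with $\D = \{0\}$ and the trivial kernel. Your version just spells out the bookkeeping (collapse of the spatial integrals and identification of the seminorms) that the paper leaves implicit.
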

\begin{proof}
	Thanks to \cref{l:fbm}, fractional Brownian $B^H$ satisfies \eqref{Ycond} with $\kappa:=H$. \cref{c:sde} follows now immediately from \cref{c:finfin}.
\end{proof}	

The next bound is a version of \cref{l:finfin} for the case when the drift process $\phi$ is measurable but not necessarily adapted to the filtration $(\F_t)$. It will be used to show strong uniqueness when we will have to deal with non-adapted drifts, for reasons explained in \cref{s:oview}.

\begin{lemma}\label{l:noreg}
	Let  $\beta<0$, $p\in[1,2)$. Let $f\colon\R^d\to\R$ be a bounded continuous function.
	Let $\phi\colon [0, 1]\times\Omega\to\R^d$ be a continuous process.
	Assume that
	\begin{equation}\label{dparamcond}
		\beta>\frac{p}2-\frac1{2H}.
	\end{equation}
	Then there exists a nonnegative  random variable $\xi$ such that 
	\begin{equation}\label{bound1termnoreg}
		\sup_{t\in[0,1]}\Bigl| \int_0^t f(B_r^H+\phi_r)\,dr\Bigr| \le
		\xi (1+|\phi_0|)(1+[\phi]_{p-\var;[0;1]})
	\end{equation}
	and for any $m\ge 2$ there exists a constant $C=C(m,\beta,p)$ such that
	\begin{equation*}
		\|\xi\|_{L_m(\Omega)}\le C \|f\|_{\C^\beta}.
	\end{equation*}
\end{lemma}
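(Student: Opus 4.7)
The idea is to realize $\int_0^t f(B^H_r+\phi_r)\,dr$ pathwise as a nonlinear Young integral \`a la Catellier-Gubinelli~\cite{CG16}, driven by the random field
$$
\Theta^f_{s,t}(x):=\int_s^t f(B^H_r+x)\,dr,\qquad (s,t)\in\Delta_{[0,1]},\ x\in\R^d,
$$
paired with the continuous but non-adapted path $\phi$. Since $\phi$ is not adapted, stochastic sewing cannot be applied once $\phi_r$ has been substituted; instead I shall use the deterministic sewing lemma \cref{t:dSL} for each fixed $\omega$, with stochastic sewing reserved for the regularity of $\Theta^f$ in the spatial variable.

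\textbf{Step 1 (regularity of the field).} Specializing \cref{l:finfin} to the fBm case (\cref{l:fbm}: $\kappa=H$, $\D=\{0\}$, $w\equiv 1$) with constant drifts $\phi\equiv x$ and $\psi\equiv y$ produces, for any $\mu\in(0,1]$ with $\beta>\mu-1/(2H)$ and any $m\ge 2$,
\begin{align*}
\|\Theta^f_{s,t}(x)\|_{L_m(\Omega)}&\le C\|f\|_{\C^\beta}(t-s)^{1+\beta H},\\
\|\Theta^f_{s,t}(x)-\Theta^f_{s,t}(y)\|_{L_m(\Omega)}&\le C\|f\|_{\C^\beta}(t-s)^{1+(\beta-\mu)H}|x-y|^\mu,
\end{align*}
uniformly in $x,y\in\R^d$. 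A Kolmogorov-Chentsov argument applied jointly in $(s,t)$ and $x$, together with a weighted dyadic patching over balls $\bar B(0,2^n)$ (using the translation identity $\Theta^f_{s,t}(x+z)=\Theta^{f(\cdot+z)}_{s,t}(x)$ and the translation invariance $\|f(\cdot+z)\|_{\C^\beta}=\|f\|_{\C^\beta}$ to keep the Kolmogorov constants uniform in $z\in\R^d$), produces a single nonnegative random variable $\xi$, depending only on $B^H$ and $f$, with $\|\xi\|_{L_m(\Omega)}\le C(m,\beta,p)\|f\|_{\C^\beta}$ for every $m\ge 2$, and exponents $\gamma<1+(\beta-\mu)H$, $\mu'<\mu$, $\eta\in(0,1-\mu')$ (all losses arbitrarily small) such that pathwise
$$
|\Theta^f_{s,t}(x)|\le\xi(1+|x|)^\eta,\qquad |\Theta^f_{s,t}(x)-\Theta^f_{s,t}(y)|\le\xi(1+|x|+|y|)^\eta(t-s)^\gamma|x-y|^{\mu'}.
$$

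\textbf{Step 2 (Young integration and conclusion).} Fix $\omega$ and set $R:=|\phi_0|+[\phi]_{p-\var;[0,1]}$ so that $\phi_r\in\bar B(0,R)$ throughout. For a partition $\Pi$ of $[0,t]$ let $J^\Pi_t:=\sum_i\Theta^f_{t_i,t_{i+1}}(\phi_{t_i})$. Telescoping over dyadic refinements $\Pi_n$ of mesh $\le 2^{-n}$ and applying H\"older's inequality with exponents $1/(1-\mu'/p)$ and $p/\mu'$ to $\sum_i(t_{i+1}-t_i)^\gamma[\phi]^{\mu'}_{p-\var;[t_i,t_{i+1}]}$ yields
$$
|J^{\Pi_{n+1}}_t-J^{\Pi_n}_t|\le C\xi(1+R)^\eta\,|\Pi_n|^{\gamma+\mu'/p-1}\,t^{1-\mu'/p}\,[\phi]^{\mu'}_{p-\var;[0,t]}.
$$
The hypothesis $\beta>p/2-1/(2H)$ permits $\mu\in(p/2,1]$ satisfying both $\mu<\beta+1/(2H)$ and $\mu(1/p-H)>-\beta H$; a short algebraic check shows the latter is equivalent (modulo the arbitrarily small Kolmogorov losses) to $\gamma+\mu'/p>1$, so $(J^{\Pi_n}_t)$ is Cauchy as $n\to\infty$, uniformly in $t\in[0,1]$. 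The limit is identified with $\int_0^t f(B^H_r+\phi_r)\,dr$ by consistency with the classical Lebesgue integral for the smooth mollifications $f_n:=G^d_{1/n}f$, via the $L_m$-bound of Step 1 applied to $f_n-f_k$. Telescoping the Cauchy increments and combining with $|\Theta^f_{0,t}(\phi_0)|\le\xi(1+|\phi_0|)^\eta$ from Step 1 gives
$$
\sup_{t\in[0,1]}\Bigl|\int_0^tf(B^H_r+\phi_r)\,dr\Bigr|\le C\xi(1+|\phi_0|)^\eta+C\xi(1+R)^\eta[\phi]^{\mu'}_{p-\var;[0,1]}\le C'\xi(1+|\phi_0|)(1+[\phi]_{p-\var;[0,1]}),
$$
using $\eta+\mu'\le 1$ together with $(1+R)^\eta\le(1+|\phi_0|)^\eta(1+[\phi]_{p-\var;[0,1]})^\eta$ and $\eta\le 1$; absorbing $C'$ into a redefined $\xi$ yields the claim.

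\textbf{Main obstacle.} The technical heart of the proof is Step 1: one must balance the Kolmogorov-Chentsov dimensional growth exponent $\eta$ (which must exceed $d/m$ for $L_m$-integrability of the pathwise supremum) against the sewing condition $\eta+\mu'\le 1$ (needed to obtain linear-in-$(1+[\phi]_{p-\var})$ growth in the final bound). For $d\ge 2$, this forces a refined patching/Besov-type analysis of $\Theta^f(\cdot)$ on $\R^d$ via the translation argument, so that the polynomial factor $(1+|x|)^\eta$ can be kept below the threshold $1-\mu'$ while preserving all moments of $\xi$.
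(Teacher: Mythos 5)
Your proof follows essentially the same route as the paper's: obtain $L_m$ moment bounds for the spatially parametrized field $\Theta^f_{s,t}(x)=\int_s^t f(B^H_r+x)\,dr$ (you derive these from \cref{l:finfin} applied with constant drifts, whereas the paper cites \cite[Lemma~4.3, Corollary~4.5]{BDG}), upgrade them to pathwise H\"older-in-time, polynomially-weighted-in-space bounds with a single random constant $\xi$ via a Kolmogorov argument (you sketch the dyadic patching in space directly, the paper invokes \cite[Corollary~A.5]{GG22}), then perform a deterministic sewing/nonlinear Young argument against the non-adapted path $\phi$ using $[\phi]_{p-\var;[\cdot,\cdot]}^p$ as a control, and finally extend to general $f$ by mollification. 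Your dyadic-telescoping-plus-H\"older estimate in Step~2 is exactly the proof of \cref{t:dSL} with the control $(s,t)\mapsto(t-s)^\gamma[\phi]_{p-\var;[s,t]}^{\mu'}$, and your parameter choice ($\mu$ slightly above $p/2$, $\gamma$ slightly below $1+(\beta-\mu)H$) matches the paper's exponents $\beta+\frac1{2H}-\eps$ and $\frac12+\frac{H\eps}2$ after reparametrization, so the argument is sound and the two proofs differ only in which ingredients are cited versus re-derived.
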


\begin{remark}
	We stress once again that \cref{l:noreg} does not require $\phi$ to be adapted to the filtration $(\F_t)$. The price to pay is a more restrictive range of $\beta$ where this lemma is applicable compared with \cref{l:finfin}. Indeed, in the regime $H\in(0,1/2]$,  for $\phi\in\C^{p-\var}$, condition \eqref{paramcond} can be rewritten as $\beta>1-\frac{1}{pH}$ and $1-\frac{1}{pH}\le \frac{p}2-\frac1{2H}=\frac{p}{2}(1-\frac1{pH})$ because $p\le 2$ and $1-1/(pH)\le0$. This leads to the fact that not every weak solution to \Gref{x;b} can be rewritten as a non-linear Young integral, we refer to
	\cite[Chapter~8]{GG22} for further discussions.
\end{remark}

\begin{proof}[Proof of \cref{l:noreg}]
	Without loss of generality, we assume that $\beta\le 1-\frac1{2H}$. Indeed, we can always replace $\beta$ with $\beta':=\beta \wedge (1-\frac1{2H})$ and use the embedding $\C^\beta\subset \C^{\beta'}$ and note that, since $p<2$, if the main condition \eqref{dparamcond} is satisfied for $\beta$, it is also satisfied for $\beta'$.
	
	Fix  $\eps>0$ small enough so that 
	\begin{equation}\label{epscondgg}
		(\beta+\frac1{2H})\frac1p-\eps>\frac12;
	\end{equation}
	this is possible thanks to the main assumption \eqref{dparamcond}.
	
	First, suppose additionally that $f\in\C^\infty(\R^d)$. We apply 
	\cite[Lemma~4.3 and Corollary~4.5]{BDG}
	(see also \cite[Proposition~7.3]{LeSSL}).
	It follows that for any $m\ge1$ there exists a constant $C=C(m,\beta,\eps)$ such that for any $x,y\in\R^d$, $s,t\in[0,1]$ we have
	\begin{align*}
		&\Bigl\|\int_s^t (f(B_r^H+x)-f(B_r^H+y))\,dr\Bigr\|_{L_m(\Omega)}\le C \|f\|_{\C^\beta} |x-y|^{\beta+\frac1{2H}-\frac\eps{2}}|t-s|^{\frac12+\frac{H\eps}2};\\
		&\Bigl\|\int_s^t f(B_r^H)\,dr\Bigr\|_{L_m(\Omega)}\le C \|f\|_{\C^\beta} |t-s|^{1+\beta H}.
	\end{align*}
	Take now $m\ge \frac{8d}{H\eps}$ and apply the Kolmogorov continuity theorem in the form of  \cite[Corollary~A.5]{GG22}	with $w(s,t)\equiv1$, $\alpha=\beta+\frac1{2H}-\frac\eps2$, $\beta_1=\frac12+\frac{H\eps}2$, $\gamma=1/2$, $\eta=\beta+\frac1{2H}-\eps$,
	$\lambda=\eps$. We get that there exists a random variable $\xi$ such that for any $x,y\in\R^d$, $s,t\in\Delta_{[0,1]}$ one has 
	\begin{align}\label{GGbound}
		&\Bigl|\int_s^t (f(B_r^H+x)-f(B_r^H+y))\,dr\Bigr|\le \xi(\omega) |x-y|^{\beta+\frac1{2H}-\eps}(1+|x|^{\eps}+|y|^{\eps})(t-s)^{\frac12};\\
		&\Bigl|\int_s^t f(B_r^H)\,dr\Bigr|\le \xi(\omega) (t-s)^{1+\beta H-\eps}\label{GGbound2}
	\end{align}
	and 
	\begin{equation*}
		\|\xi\|_{L_m(\Omega)}\le C \|f\|_{\C^\beta}.
	\end{equation*}	
	for $C=C(m,\beta,\eps)$. Thanks to  assumption \eqref{epscondgg}, the exponent $\beta+\frac1{2H}-\eps$ is positive. Fix now $\omega\in\Omega$ and put
	\begin{equation*}
		A_{s,t}:=\int_s^t  f(B_r^H+\phi_s)\,dr,\qquad \A_t:=\int_s^t  f(B_r^H+\phi_r)\,dr,\quad(s,t)\in\Delta_{[0,1]}.
	\end{equation*}
	We apply deterministic sewing lemma (\cref{t:dSL}) to these processes. Let us check that the conditions of the lemma are satisfied. First, note that for any $s,t\in\Delta_{[0,1]}$, $u\in[s,t]$ we have
	by \eqref{GGbound} (we apply this bound with $x:=\phi_s(\omega)$, $y:=\phi_u(\omega)$)
	\begin{align*}
		|\delta A_{s,u, t}|&=\Bigl|\int_u^t  [f(B_r^H+\phi_s)-f(B_r^H+\phi_u)]\,dr\Bigr|\\
		&\le 2\xi(\omega)(t-s)^{\frac12}|\phi_s-\phi_u|^{\beta+\frac1{2H}-\eps}(1+\|\phi\|_{\C([0,1])}^\eps)	\\
		&\le 2 \xi(\omega)(t-s)^{\frac12}[\phi]_{p-\var;[s,t]}^{\beta+\frac1{2H}-\eps}(1+\|\phi\|_{\C([0,1])}^\eps).
	\end{align*}
	We see that the function $(s,t)\mapsto[\phi]_{p-\var;[s,t]}$, $(s,t)\in\Delta_{[0,1]}$ is a control to the power $1/p$. Therefore, by \cite[Exercise~1.10(iii)]{FV2010} the function 
	\begin{equation*}
		(s,t)\mapsto (t-s)^{\frac12}[\phi]_{p-\var;[s,t]}^{\beta+\frac1{2H}-\eps}	
	\end{equation*}	
	is a control to the power $\frac12+(\beta+\frac1{2H}-\eps)\frac1p$. 
	Condition \eqref{epscondgg} implies that $\frac12+(\beta+\frac1{2H})\frac1p-\eps>1$ and thus condition \eqref{con:ds2} is satisfied. It is also immediate to see, that by Lipschitz continuity of $f$ and uniform continuity of $\phi$ condition \eqref{conssl:ds3} is also satisfied. Thus, all the conditions of \cref{t:dSL} holds and we get from \eqref{est:dssl1} for any $t\in[0,1]$
	\begin{equation}\label{drez1}
		\Bigl|\int_0^t  f(B_r^H+\phi_r)\,dr\Bigr|\le |A_{0,t}|+C\xi(\omega)(1+\|\phi\|_{\C([0,1])}^{\eps})[\phi]_{p-\var;[0,1]}^{\beta+\frac1{2H}-\eps}, 
	\end{equation}
	for some $C=C(\beta,\eps,p)$. To bound $A_{0,t}$ we use \eqref{GGbound}--\eqref{GGbound2} once again. We get
	\begin{equation*}
		|A_{0,t}|\le \Bigl|\int_0^t  [f(B_r^H+\phi_0)-f(B_r^H)]\,dr\Bigr|+\Bigl|\int_0^t  f(B_r^H)\,dr\Bigr|\le \xi(\omega)(1+|\phi_0|).
	\end{equation*}	
	Combining this with \eqref{drez1} and taking supremum over $t\in[0,1]$ we obtain \eqref{bound1termnoreg} for $f\in\C^\infty(\R^d)$
	
	Now we show  \eqref{bound1termnoreg} for a general bounded continuous $f$. As above, we approximate $f$  by a sequence of smooth functions $f_n := G_{1/n}^d f$, which converge pointwise to $f$ as $n \to \infty$. We get 
	for any $t\in[0,1]$, $n\in\N$
	\begin{equation*}
		\Bigl| \int_0^t f_n(B_r^H+\phi_r)\,dr\Bigr|\le \xi_n(1+|\phi_0|)(1+[\phi]_{p-\var;[0;1]}), 
	\end{equation*}
	and $\|\xi_n\|_{L_m}\le C \|G^d_{1/n}f\|_{\C^\beta}\le C \|f\|_{\C^\beta}$. Hence, 
	\begin{equation}\label{pre-bound1termnoreg}
		\Bigl| \int_0^t f(B_r^H+\phi_r)\,dr\Bigr|=\liminf_{n\to\infty }\Bigl| \int_0^t f_n(B_r^H+\phi_r)\,dr\Bigr|\le \xi(1+|\phi_0|)(1+[\phi]_{p-\var;[0;1]}), 
	\end{equation}
	where we put $\xi:=\liminf \xi_n$. By Fatou's lemma, $\|\xi\|_{L_m}\le  C \|f\|_{\C^\beta}$. Taking supremum over $t\in[0,1]$ in \eqref{pre-bound1termnoreg} we get \eqref{bound1termnoreg}. 
\end{proof}	

Finally, we need a couple of bounds of the norm of the solution to \Gref{x;b} and its relatives.
\begin{corollary}\label{l:apsde}
	Let $x\in\R^d$, $\alpha<0$, $b\in\C^\alpha$. Suppose that \eqref{alphacond} holds. Let $X$ be a weak solution to \Gref{x;b} and suppose that $X\in\Vcl$. Then for any $m\ge2$ there exists $C=C(m)>0$ such that
	\begin{equation}\label{claimsdesol}
		[X-B^H]_{\C^{1+\alpha H}L_m ([0,1])}\le  C (1+\|b\|_{\C^\alpha}^2).	
	\end{equation}
\end{corollary}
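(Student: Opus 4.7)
The statement is essentially a specialization of the abstract a priori bound in \cref{l:apriori} to the SDE setting, so my plan is to verify that all hypotheses of that lemma match the situation of \Gref{x;b} and then to read off the conclusion.

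First, I would set up the data. Take the degenerate domain $\D=\{0\}$ with convolution kernel $s_t(0,0):=1$ and semigroup $S_t=\mathrm{Id}$; by the first statement of \cref{l:fbm}, this kernel satisfies \eqref{convprop}. Identify $Y_t:=B^H_t$, $Z_t:=X_t$, initial condition $z:=x$, and external forcing $\rho\equiv0$. By the second statement of \cref{l:fbm}, the process $Y=B^H$ satisfies \eqref{Ycond} with $\kappa=H$. Choose the approximating sequence $f_n:=G^d_{1/n}b$, which converges to $b$ in $\C^{\alpha-}$ by \cite[Lemma~A.3]{ABLM} and satisfies $\|f_n\|_{\C^\alpha}\le\|b\|_{\C^\alpha}$ by the standard contraction property of the Gaussian semigroup on Besov spaces.

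Next, I would identify $A(t):=\psi_t$ where $\psi_t:=X_t-x-B^H_t$ is the process from \cref{D:sol}. Writing this identity as
\begin{equation*}
X_t=S_t x + \psi_t + \int_0^t s_{t-r}(0,0)\,\rho_r\,dr + B^H_t,
\end{equation*}
we recover \eqref{zequ}. The defining property \ref{D:sol}\eqref{cond2md} of a solution, applied to the sequence $(f_n)$, yields exactly the required approximation
\begin{equation*}
\sup_{t\in[0,T]}\bigl|A(t)-\int_0^t f_n(X_r)\,dr\bigr|\to 0\quad\text{in probability.}
\end{equation*}
The regularity hypothesis \eqref{zcond} with $\kappa=H$ reads $[X-B^H]_{\C^{(1+H)/2}L_m([0,1])}<\infty$, which is precisely the statement that $X\in\Vcl=\V(\tfrac{1+H}{2})$. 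Finally, the condition $\beta\in(\tfrac12-\tfrac1{2\kappa},0)$ from \cref{l:apriori} becomes $\alpha\in(\tfrac12-\tfrac1{2H},0)$, which holds by hypothesis \eqref{alphacond} combined with $\alpha<0$.

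All assumptions of \cref{l:apriori} being verified, its conclusion \eqref{claim} with $\beta=\alpha$ gives
\begin{equation*}
[X-B^H]_{\C^{1+\alpha H}L_m([0,1])}\le C\bigl(1+\sup_n\|f_n\|_{\C^\alpha}^2\bigr)(1+\|\rho\|_{\Ctimespacedzerom{[0,1]}})\le C(1+\|b\|_{\C^\alpha}^2),
\end{equation*}
which is \eqref{claimsdesol}. There is no real obstacle here; the whole argument is a routine bookkeeping exercise of matching definitions, and the work has already been done in \cref{l:fbm,l:apriori}. The only minor points to check carefully are that the abstract framework of \cref{s:gen} correctly handles the degenerate case $\D=\{0\}$ (via \cref{c:convdelta}) and that the chosen mollifier sequence $f_n=G^d_{1/n}b$ does indeed converge in $\C^{\alpha-}$, so that \cref{D:sol}\eqref{cond2md} produces the approximation property required by \cref{l:apriori}.
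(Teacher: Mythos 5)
Your proof is correct and is essentially identical to the paper's: both apply \cref{l:apriori} with the same substitutions $\beta=\alpha$, $\kappa=H$, $\rho\equiv0$, $\D=\{0\}$, $S=\mathrm{Id}$, $z=x$, $Y=B^H$, $Z=X$, $f_n=G^d_{1/n}b$, verify \eqref{Ycond} via \cref{l:fbm}, read \eqref{zequ} and the approximation property off \cref{D:sol}, check \eqref{zcond} from $X\in\Vcl$, and verify the exponent range from \eqref{alphacond}. The one place you are slightly more explicit than the paper is in noting $\|G^d_{1/n}b\|_{\C^\alpha}\le\|b\|_{\C^\alpha}$, which is the step needed to replace $\sup_n\|f_n\|_{\C^\alpha}^2$ by $\|b\|_{\C^\alpha}^2$ in the final bound.
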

\begin{proof}
	We apply \cref{l:apriori} with $\beta=\alpha$, $\kappa=H$, $\rho\equiv0$, $\D=\{0\}$, $S=Id$, $z=x$, $Y=B^H$, $Z=X$, $f_n=G^d_{1/n}b$. We see from \cref{l:fbm} that $B^H$ satisfies \eqref{Ycond} with $\kappa=H$. Further we see from \eqref{alphacond} that condition $\beta\in(1/2-1/2\kappa,0)$ is also satisfied. The definition of a solution  to \Gref{x;b}, \cref{D:sol}, implies that the pair $(X,B^H)$ satisfies \eqref{zequ}. Finally \eqref{zcond} holds because $X\in\Vcl$. Thus all the conditions of \cref{l:apriori} are satisfied and \eqref{claimsdesol} follows from  \eqref{claim}.
\end{proof}

\begin{corollary}\label{l:apforced}
	Let $m\ge2$, $\alpha<0$, $z\in\R^d$. Suppose that \eqref{alphacond} holds. Let $f$ be a bounded continuous function  $\R^d\to\R^d$. Let $\rho,Z\colon[0,1]\times\Omega\to\R^d$ be measurable processes adapted to the filtration $(\F_t)_{t\in[0,1]}$ such that 
	\begin{equation*}
		Z_t=z+\int_0^t f(Z_r)\,dr +\int_0^t \rho_r dr + B_t^H,\quad t\in[0,1].
	\end{equation*}	
	Suppose that
	\begin{equation}\label{zcondm}
		[Z-B^H]_{\C^{\frac12+\frac{H}2}L_m ([0,1])}<\infty.
	\end{equation} 
	Then there exists $C=C(m)>0$ such that
	\begin{equation}\label{claimforced}
		[Z-B^H]_{\C^{1+\alpha H}L_m ([0,1])}\le  C (1+\|f\|_{\C^\alpha}^2)(1+\|\rho\|_{ \C^{0}L_m([0,1])}).	
	\end{equation}
\end{corollary}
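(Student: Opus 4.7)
The statement is a direct specialization of Lemma~\ref{l:apriori} to the finite-dimensional fBM setting, and my plan is to unpack the correspondence. I apply Lemma~\ref{l:apriori} with the single-point domain $\D=\{0\}$, the trivial convolution kernel $s_t(0,0)\equiv 1$ (so that $S_t$ is the identity and $\int_\D s_{t-r}(x,y)\,dy$ collapses to evaluation, cf.\ Convention~\ref{c:convdelta}), smoothing exponent $\kappa=H$, Besov exponent $\beta=\alpha$, driving noise $Y=B^H$, initial datum $z$, forcing $\rho$, process $Z$ as given, and approximating sequence $f_n:=G^d_{1/n}f$. The latter is smooth and satisfies $\sup_n\|f_n\|_{\C^\alpha}\le \|f\|_{\C^\alpha}$ by the standard bound for the Gaussian semigroup on Besov spaces. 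Note that bounded continuous $f$ automatically lies in $\C^\alpha$ (since $\alpha<0$), so the right-hand side of \eqref{claimforced} is finite.

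Next I verify the hypotheses of Lemma~\ref{l:apriori}. The normalization \eqref{convprop} holds trivially for $s_t(0,0)\equiv 1$. The smoothing condition \eqref{Ycond} with $\kappa=H$ is exactly Lemma~\ref{l:fbm}. The range requirement $\beta\in(\tfrac12-\tfrac1{2\kappa},0)$ becomes $\alpha\in(\tfrac12-\tfrac1{2H},0)$, which is precisely \eqref{alphacond} together with the standing assumption $\alpha<0$. Equation \eqref{zequ} holds with $A(t,0):=\int_0^tf(Z_r)\,dr$ (which is adapted since $Z$ is adapted and continuous): indeed the assumed integral equation for $Z$ reads exactly $Z_t=z+A(t,0)+\int_0^t s_{t-r}(0,0)\rho_r\,dr+B_t^H$. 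Hypothesis \eqref{zcond} is identical to the assumed \eqref{zcondm}.

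The only item requiring a small argument is the approximation condition $\sup_t|A(t,0)-\int_0^t f_n(Z_r)\,dr|\to 0$ in probability. Since $f$ is bounded continuous, $f_n=G^d_{1/n}f\to f$ uniformly on compact subsets of $\R^d$; because $r\mapsto Z_r(\omega)$ is a.s.\ continuous on $[0,1]$ and hence has compact image, we obtain $\sup_{r\in[0,1]}|f_n(Z_r)-f(Z_r)|\to 0$ a.s., which gives the required uniform convergence of the integrals. With all hypotheses in place, Lemma~\ref{l:apriori} yields
\begin{equation*}
[Z-B^H]_{\C^{1+\alpha H}L_m([0,1])}\le C\bigl(1+\sup_n\|f_n\|_{\C^\alpha}^2\bigr)\bigl(1+\|\rho\|_{\C^0 L_m([0,1])}\bigr),
\end{equation*}
and the bound $\sup_n\|f_n\|_{\C^\alpha}\le\|f\|_{\C^\alpha}$ delivers \eqref{claimforced}. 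I do not anticipate a genuine obstacle: the corollary is a bookkeeping exercise, with all analytic content absorbed into Lemma~\ref{l:apriori} and Lemma~\ref{l:fbm}; the only mild nuance is the adaptedness of $A$, which is immediate here because $f$ is an honest function rather than a distribution.
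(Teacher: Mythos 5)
Your proposal is correct and takes essentially the same route as the paper: specialize Lemma~\ref{l:apriori} via Lemma~\ref{l:fbm} with $\D=\{0\}$, $\kappa=H$, $\beta=\alpha$, $Y=B^H$. The only cosmetic difference is that you mollify via $f_n=G^d_{1/n}f$ and argue convergence on compacts, whereas the paper observes that, since $f$ is already bounded and continuous, one can take the constant sequence $f_n=f$ with $A(t,0):=\int_0^t f(Z_r)\,dr$, which makes the approximation hypothesis of Lemma~\ref{l:apriori} hold identically and saves the last paragraph of your argument.
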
	
\begin{proof}
	The result follows from  \cref{l:apriori} applied with $\beta=\alpha$, $\kappa=H$, $\D=\{0\}$, $S=Id$,  $Y=B^H$,  $f_n=f$. We see that all the conditions of this lemma are satisfied thanks to \cref{l:fbm}.
\end{proof}

\section{Proofs of the main results for SDEs}\label{s:sdeproofs}

In this section we fix $d\in\N$, $H\in(0, \frac12]$ and $\alpha<0$ which satisfies our standing assumption \eqref{alphacond}. Without loss of generality we assume that the length of the time interval $T=1$. 

\subsection{Weak uniqueness}\label{s:sde}

The key element in the proofs of \cref{t:wu,t:sb} is the following stability bound.

\begin{lemma}\label{l:ml}
Let $x,y\in\R^d$, $b\in\C^\alpha(\R^d)$, $g\in\Lip(\R^d,\R^d)$. Let $(\Omega,\F,(\F_t)_{t\in[0,1]},\P)$ be a probability space. Let $(X,B^H)$ be \textbf{any} weak solution  to \Gref{x;b} in the class $\Vcl$ on this space. Let $(Y,B^H)$ be a strong soluton to  \Gref{y;g} defined on the same space. Then there exist constants $C,\eps>0$ which depends only on $\alpha$, $H$, $d$ such that 
\begin{equation}\label{mainressde}
	\W_{\|\cdot\|\wedge1}(\law(X,B^H),\law(Y,B^H))\le C \Gamma (\|b-g\|_{\C^{\alpha-\eps}}^{\frac\eps8}+|x-y|^{\frac\eps8}).
\end{equation}
for 
$$
\Gamma:=(1+\|g\|_{\C^{\alpha}}^{\frac{20}{\eps}}+\|b\|_{\C^{\alpha}}^{\frac{20}{\eps}})
(1+|x|+|y|).
$$
\end{lemma}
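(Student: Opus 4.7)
The plan is to deploy the ``Control and Reimburse'' strategy outlined in \cref{s:oview}, fueled by the stochastic sewing bounds of \cref{c:sde}. First I would construct on the common probability space an auxiliary process $\wt Y$ satisfying
\begin{equation*}
	d\wt Y_t = g(\wt Y_t)\,dt + \lambda(X_t - \wt Y_t)\,dt + dB^H_t,\qquad \wt Y_0 = y,
\end{equation*}
for a parameter $\lambda>1$ to be optimized at the end. Adaptedness and existence follow from \cref{l:ebound} with $h(z) = g(z) - \lambda z$ and forcing $R_t = \int_0^t \lambda X_r\,dr + B^H_t$. Setting $\wt B^H_t := B^H_t + \int_0^t \lambda(X_r - \wt Y_r)\,dr$ makes $(\wt Y,\wt B^H)$ a solution of \Gref{y;g}.

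A triangle inequality then yields
\begin{equation*}
	\W_{\|\cdot\|\wedge 1}\bigl(\law(X,B^H),\law(Y,B^H)\bigr) \le \W_{\|\cdot\|\wedge 1}\bigl(\law(X,B^H),\law(\wt Y,\wt B^H)\bigr) + d_{TV}\bigl(\law(\wt Y,\wt B^H),\law(Y,B^H)\bigr).
\end{equation*}
Coupling $(X,B^H)$ and $(\wt Y,\wt B^H)$ on the same space and using $\|B^H - \wt B^H\|_{\C([0,1])}\le \lambda\|X-\wt Y\|_{\C([0,1])}$ controls the first summand by $(1+\lambda)\,\E[\|X-\wt Y\|_{\C([0,1])}\wedge 1]$. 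For the second, a Girsanov change of measure $\mathbb Q$ (whose density is well-defined since $H\le 1/2$ permits inverting the operator associated with $K_H$ on the absolutely continuous shift $\int_0^{\cdot}\lambda(X_r-\wt Y_r)\,dr$) turns $\wt B^H$ into a $\mathbb Q$-fBM, and strong uniqueness for \Gref{y;g} yields $\law_{\mathbb Q}(\wt Y,\wt B^H)=\law_{\P}(Y,B^H)$. Pinsker's inequality then produces the TV bound
\begin{equation*}
	d_{TV}\bigl(\law(\wt Y,\wt B^H),\law(Y,B^H)\bigr)\le d_{TV}(\P,\mathbb Q)\le C\lambda\,\bigl\|\|X-\wt Y\|_{\C([0,1])}\bigr\|_{L_2(\Omega)}.
\end{equation*}

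The heart of the proof is to show that $\sup_{t}\|X_t-\wt Y_t\|_{L_m(\Omega)}$ is a negative power of $\lambda$. Using the smooth approximation $b^n := G^d_{1/n}b$ and the integrating factor $e^{\lambda t}$ on the difference equation, I arrive at
\begin{equation*}
	X_t-\wt Y_t = e^{-\lambda t}(x-y) + \lim_{n\to\infty}\int_0^t e^{-\lambda(t-r)}\bigl[b^n(X_r)-g(\wt Y_r)\bigr]\,dr.
\end{equation*}
Decomposing $b^n(X_r)-g(\wt Y_r) = [b^n-g](X_r) + [g(X_r)-g(\wt Y_r)]$ and writing $X_r = B^H_r + \psi_r$, $\wt Y_r = B^H_r + \phi_r$ with $\psi,\phi$ adapted, I would apply bound \eqref{sdeb1} with $\beta = \alpha-\eps$ and $\tau = 1+\alpha H$ to the first piece and bound \eqref{sdeb2} with $\mu = \tfrac12$ (legal precisely under \eqref{alphacond}) to the second. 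The H\"older regularity of $\psi$ is provided by \cref{l:apsde}, that of $\phi$ by \cref{l:apforced} applied to $\wt Y$ with forcing $\rho = \lambda(X-\wt Y)$. These ingredients yield an inequality of the shape
\begin{equation*}
	\sup_t\|X_t-\wt Y_t\|_{L_m} \le C|x-y| + C\Gamma_1\lambda^{-1-(\alpha-\eps)H+3\eps}\|b-g\|_{\C^{\alpha-\eps}} + C\Gamma_2\lambda^{-1-(\alpha-\frac12-\eps)H+3\eps}\sup_t\|X_t-\wt Y_t\|_{L_m}^{1/2},
\end{equation*}
where $\Gamma_1,\Gamma_2$ are polynomial in the $\C^\alpha$-norms of $b$ and $g$. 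Young's inequality absorbs the self-referential $\tfrac12$-power into the left-hand side, yielding
\begin{equation*}
	\sup_t\|X_t-\wt Y_t\|_{L_m} \le C\Gamma\bigl(|x-y| + \|b-g\|_{\C^{\alpha-\eps}}\lambda^{-1-(\alpha-\eps)H+3\eps} + \lambda^{-2-(2\alpha-1)H+6\eps}\bigr).
\end{equation*}

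Substituting this into the Wasserstein/TV decomposition, the worst $\lambda$-exponent arises from the Girsanov TV factor multiplied by $\lambda^{-2-(2\alpha-1)H}$, whose combined exponent $-1-(2\alpha-1)H$ is strictly negative exactly in the regime \eqref{alphacond}. Choosing $\lambda$ as a suitable small negative power of $\|b-g\|_{\C^{\alpha-\eps}}\vee|x-y|$ then produces \eqref{mainressde}. The main obstacle I anticipate is the buckling: the H\"older seminorm of $\wt Y-B^H$ feeding the sewing bounds itself depends on the quantity $\|X-\wt Y\|_{L_m}$ one is trying to estimate (through the forcing $\rho = \lambda(X-\wt Y)$ in \cref{l:apforced}), so the polynomial factors $\Gamma_i$ and the various $\lambda$-exponents must be tracked carefully to ensure that Young's inequality closes cleanly and the final exponent of $\lambda$ remains negative for the optimal choice.
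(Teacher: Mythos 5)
Your proposal follows the paper's own strategy quite closely: the auxiliary process $\wt Y$ with the push $\lambda(X-\wt Y)$, the generalized-coupling decomposition of the Wasserstein distance into a Girsanov/Pinsker total-variation term and a pathwise term, the integrating-factor reformulation, the split $b^n(X)-g(\wt Y)=(b^n-g)(X)+(g(X)-g(\wt Y))$, the application of \cref{c:sde} with the a~priori H\"older bounds \cref{l:apsde} and \cref{l:apforced}, the Young-inequality bootstrap, and the final optimization over $\lambda$. The minor deviations --- for instance taking $\tau=1+\alpha H$ rather than the paper's $\tau=(1+H)/2$, or a slightly different bookkeeping of the $\eps$'s in the exponents --- do not change the substance and both choices close under \eqref{alphacond}.

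The one genuine gap lies in passing from a pointwise-in-time estimate to a bound on the running supremum. Your buckling argument is stated entirely for $s=0$, producing a bound on $\sup_{t\in[0,1]}\|X_t-\wt Y_t\|_{L_m(\Omega)}$. But the quantities that actually enter the decomposition are $\bigl\|\,\|X-\wt Y\|_{\C([0,1])}\,\bigr\|_{L_2(\Omega)}$ (in the Girsanov/Pinsker bound) and $\E\bigl[\|X-\wt Y\|_{\C([0,1])}\wedge 1\bigr]$ (in the coupling bound for the first summand), i.e.\ the $L_m$-norm of the time-supremum, which dominates $\sup_t\|X_t-\wt Y_t\|_{L_m}$ but is not implied by it. The paper's \cref{L:ks2} addresses exactly this: it derives the integrating-factor identity for \emph{general} $(s,t)\in\Delta_{[0,1]}$, deduces an increment bound $\|(X_t-\wt Y_t)-(X_s-\wt Y_s)\|_{L_m}\le C(t-s)^{\eps}(\cdots)$ after the bootstrap, and then invokes the Kolmogorov continuity theorem (choosing $m$ with $m\eps>1$) to convert the $\eps$-H\"older seminorm into a bound on $\|\sup_t|X_t-\wt Y_t|\,\|_{L_m(\Omega)}$. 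Without that step the substitution of your estimate into the Wasserstein/TV decomposition is not licensed, so the sketch as written does not close.
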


To obtain \cref{l:ml}, we fix $x,y\in\R^d$, $b\in\C^\alpha$, $g\in\Lip(\R^d,\R^d)$. Let $(X,B^H)$ be any weak solution to \Gref{x;b} on a space $(\Omega,\F,(\F_t)_{t\in[0,1]},\P)$ in the class $\Vcl$. 
Let $Y$ be the strong solution to \Gref{y;g}. Now we construct the generalized coupling process. For a parameter $\lambda\ge1$ to be specified later, consider the following equation:
\begin{equation}\label{SDEwty}
d\wt Y_t=g(\wt Y_t)dt+\lambda(X_t-\wt Y_t)dt + dB_t^H,\quad \wt Y_0=y.
\end{equation}	 
It is easy to see that this equation is well-defined.
\begin{lemma}\label{L:ks0}
SDE \eqref{SDEwty} has an adapted solution on the same space $(\Omega,\F,(\F_t)_{t\in[0,1]},\P)$. Moreover, ${\|\wt Y-B^H\|_{\C^1 L_m([0,1])}<\infty}$.
\end{lemma}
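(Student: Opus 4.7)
The plan is to recast \eqref{SDEwty} into a form directly covered by \cref{l:ebound}. Writing the equation in integral form and absorbing the linear control term into the drift, I would get
\begin{equation*}
\wt Y_t = y + \int_0^t h(\wt Y_r)\,dr + R_t,\quad t\in[0,1],
\end{equation*}
with $h(u):=g(u)-\lambda u$ and $R_t:=\lambda\int_0^t X_r\,dr+B_t^H$. The key observation is that $h$ remains Lipschitz, $[h]_{\Lip(\R^d)}\le [g]_{\Lip(\R^d)}+\lambda<\infty$, which is precisely the regularity assumed in \cref{l:ebound} (note that only the Lipschitz seminorm, not the sup norm, is required there).

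To apply \cref{l:ebound} in the SDE setting ($\D=\{0\}$, $s_t(0,0)=1$, $S=\Id$, see \cref{c:convdelta}) with initial value $z=y$, the only nontrivial verification is that $\|R\|_{\C^0 L_m([0,1])}<\infty$ for each $m\ge 2$. Since $X\in\Vcl$, setting $\psi:=X-B^H$ (so $\psi_0=0$) gives
\begin{equation*}
\|\psi_r\|_{L_m}=\|\psi_r-\psi_0\|_{L_m}\le C r^{(1+H)/2},
\end{equation*}
and combined with the standard bound $\|B^H_r\|_{L_m}\le Cr^H$ this yields $\sup_{r\in[0,1]}\|X_r\|_{L_m}\le |x|+C<\infty$. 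Hence $\sup_{t\in[0,1]}\|R_t\|_{L_m}\le \lambda\sup_r\|X_r\|_{L_m}+\sup_t\|B^H_t\|_{L_m}<\infty$, as required. \cref{l:ebound} then produces an $(\F_t)$-adapted solution $\wt Y$ to \eqref{SDEwty} on the same probability space, with the bound $\|\wt Y-R\|_{\C^1 L_m([0,1])}<\infty$.

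To derive the claimed bound in terms of $B^H$ rather than $R$, I would decompose
\begin{equation*}
\wt Y_t-B^H_t=(\wt Y_t-R_t)+\lambda\int_0^t X_r\,dr.
\end{equation*}
The first summand is in $\C^1 L_m$ by the previous step; for the second, the $L_m$ norm of its increment over $[s,t]$ is at most $\lambda(t-s)\sup_r\|X_r\|_{L_m}<\infty$, so it also lies in $\C^1 L_m$. Combining these two estimates yields $\|\wt Y-B^H\|_{\C^1 L_m([0,1])}<\infty$, completing the proof. I do not expect a serious obstacle here: the only subtlety is the trick of merging the drift $\lambda X_t-\lambda \wt Y_t$ into a Lipschitz drift plus a forcing term $R$, after which the $\Vcl$ regularity of $X$ immediately supplies the uniform $L_m$ bound on $R$ that \cref{l:ebound} demands.
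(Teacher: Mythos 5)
Your proof is correct and follows essentially the same route as the paper: rewrite \eqref{SDEwty} with the merged Lipschitz drift $h(u)=g(u)-\lambda u$ and forcing term $R_t=\lambda\int_0^t X_r\,dr+B_t^H$, apply \cref{l:ebound} (in the degenerate $\D=\{0\}$ setting) to get adaptedness and $\|\wt Y-R\|_{\C^1 L_m}<\infty$, then pass back to $\|\wt Y-B^H\|_{\C^1 L_m}<\infty$ via the triangle inequality. One small notational slip: with $\psi:=X-B^H$ you have $\psi_0=x$, not $0$ (the paper's \cref{D:sol} uses $\psi=X-x-B^H$ for that normalization); this does not affect the conclusion since your final bound correctly carries the $|x|$ term.
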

\begin{proof}
We apply \cref{l:ebound} with $\D=\{0\}$, $s_t(0)=1$ (recall \cref{c:convdelta}), $h(x)=g(x)-\lambda x$, $R(t)=B_t^H+\lambda \int_0^t X_r\,dr$, $Z=\wt Y$. We see that $h$ is Lipschitz and 
\begin{equation*}
\|R\|_{\C^0L_m([0,1])}\le (1+\lambda)\|B^H\|_{\C^0L_m([0,1])}+\lambda \|X-B^H\|_{\C^0L_m([0,1])}<\infty
\end{equation*}
since $X\in\Vcl$. Thus, all the conditions of \cref{l:ebound} are satisfied and applying this lemma we see that SDE \eqref{SDEwty} has an adapted solution $\wt Y$ and 
\begin{align*}
	\|\wt Y -B^H\|_{\C^1L_m([0,1])}&\le \|\wt Y -R\|_{\C^1L_m([0,1])}+\|R-B^H\|_{\C^1L_m([0,1])}\\
	&\le \|\wt Y -R\|_{\C^1L_m([0,1])}+ \lambda \|B^H\|_{\C^0L_m([0,1])}+
	\lambda \|X-B^H\|_{\C^0L_m([0,1])}<\infty.\qedhere
\end{align*}
\end{proof}

The next two lemmas are crucial for the whole generalized coupling strategy. We show that $\wt Y$ is close to $Y$ in law and to $X$ in space.

Define
\begin{equation}\label{bhtilde}
	\wt B^H_t:=B^H_t+\int_0^t \lambda (X_r-\wt Y_r)\,dr,\quad t\in[0,1].
\end{equation}
We see that $\wt Y$ solves the same equation as $Y$ with $\wt B^H$ in place of $B^H$. This allows to show that $\wt Y$ and $Y$ are close in law.

\begin{lemma}\label{l:ks1}
There exists a constant $C>0$ such that 
\begin{equation}\label{dtvbound}
\dtv(\law( Y,B^H),\law(\wt Y,\wt B^H))\le C \lambda \| \|X-\wt Y\|_{\C([0,1])} \|_{L_2(\Omega)}.
\end{equation}	
\end{lemma}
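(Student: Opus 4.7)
The plan is to apply Girsanov's theorem to the underlying Brownian motion $W$ appearing in the representation \eqref{bhrepr}, $B^H_t = \int_0^t K_H(t,s)\,dW_s$, in order to realize $\wt B^H$ as an $(\F_t)$-fractional Brownian motion under an equivalent measure $\wt\P$. Concretely, I seek an $(\F_t)$-adapted process $u$ on $[0,1]$ so that the added drift $\phi_t := \lambda \int_0^t (X_r - \wt Y_r)\,dr$ is reproduced as $\int_0^t K_H(t,s)u_s\,ds$. Since $\phi$ is absolutely continuous with $\phi_0 = 0$ and $H \le 1/2$, the Volterra operator $K_H$ is invertible on this class, and $u := K_H^{-1}\phi$ is given by an explicit formula involving the Riemann--Liouville fractional derivative of order $1/2 - H$ applied to $\phi$ (see \cite[Section~5.2]{Nu} or \cite{OuNu}). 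The essential pathwise estimate in the regime $H\le 1/2$ is
\begin{equation*}
\int_0^1 |u_s|^2 \,ds \le C \lambda^2 \|X - \wt Y\|^2_{\C([0,1])},
\end{equation*}
which follows from the $L^\infty$-bound on $\phi'_r = \lambda(X_r - \wt Y_r)$.

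Set $Z := \mathcal{E}(-u\cdot W)_1$. By a standard localization argument based on the $L_2(\Omega)$-integrability of $\|X - \wt Y\|_{\C([0,1])}$ (ensured by \cref{L:ks0} and the assumption $X \in \Vcl$), the process $Z$ is a true martingale with $\E^\P Z = 1$, and we set $d\wt\P/d\P := Z$. Under $\wt\P$ the process $\wh W_t := W_t + \int_0^t u_s\,ds$ is a Brownian motion, hence $\wt B^H_t = \int_0^t K_H(t,s)\,d\wh W_s$ is an $(\F_t)$-fractional Brownian motion. Consequently, under $\wt\P$, the pair $(\wt Y, \wt B^H)$ is a weak solution to $dY' = g(Y')\,dt + dB^H$ with $Y'_0 = y$; strong uniqueness for this Lipschitz SDE gives $\law_{\wt\P}(\wt Y, \wt B^H) = \law_\P(Y, B^H)$. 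The data-processing inequality for total variation then yields
\begin{equation*}
\dtv(\law_\P(Y, B^H), \law_\P(\wt Y, \wt B^H)) = \dtv\bigl(\law_{\wt\P}(\wt Y, \wt B^H), \law_\P(\wt Y, \wt B^H)\bigr) \le \dtv(\wt\P, \P).
\end{equation*}

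Finally, I would bound $\dtv(\wt\P, \P) \le \tfrac12\|Z - 1\|_{L_2(\P)}$, invoke the It\^o identity $\E^\P (Z-1)^2 = \E^\P \int_0^1 Z_s^2 u_s^2 \,ds$, and combine it with the pathwise bound on $\int_0^1 u_s^2\,ds$ and a Gronwall control of $\E^\P Z^2$ (obtained after truncating $\int u^2$ at a deterministic level and passing to the limit). This delivers the required inequality $\dtv(\wt\P, \P) \le C\lambda \|\|X - \wt Y\|_{\C([0,1])}\|_{L_2(\Omega)}$, completing the proof. The main obstacle is the very first step, namely the $L^2$-bound on $u = K_H^{-1}\phi$ by the sup norm of $\phi'$: for $H < 1/2$ this requires a careful treatment of the fractional derivative entering $K_H^{-1}$ and of its boundary singularities, and it is precisely where the hypothesis $H \le 1/2$ enters essentially (for $H > 1/2$ the inverse operator involves a fractional integral rather than a derivative, giving a bound of a different flavor that does not close the coupling argument in the required form).
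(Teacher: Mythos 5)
Your high-level strategy --- write the added drift via $K_H^{-1}$, change measure so that $\wt B^H$ becomes an $(\F_t)$-fBM, invoke uniqueness in law for the Lipschitz SDE, and conclude via data processing --- is the same as the paper's. The pointwise bound $|v_t|\le C\lambda\|X-\wt Y\|_{\C([0,1])}$ that both arguments need also appears in the paper, and from it the $L^2$ estimate you state follows. But the way you set up the change of measure contains a genuine gap.

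You claim that ``a standard localization argument based on the $L_2(\Omega)$-integrability of $\|X-\wt Y\|_{\C([0,1])}$'' makes $Z=\mathcal{E}(-u\cdot W)_1$ a true martingale with $\E Z=1$. This does not follow. $L_2$ integrability of $\int_0^1 u_s^2\,ds$ (or its square root) is far too weak: Novikov/Kazamaki require \emph{exponential} integrability of $\int_0^1 u_s^2\,ds$, i.e.\ $\E\exp\bigl(c\lambda^2\|X-\wt Y\|_{\C([0,1])}^2\bigr)<\infty$, and no such estimate is available --- $\wt Y$ solves an SDE whose drift contains the weak solution $X$, and nothing in the hypotheses controls exponential moments of the coupled supremum. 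Localization alone does not manufacture the missing uniform integrability: you would only obtain a local martingale and a sub-probability measure $\wt\P$, after which the uniqueness-in-law step and the data-processing inequality no longer make sense. The subsequent estimate $\dtv(\wt\P,\P)\le\tfrac12\|Z-1\|_{L_2(\P)}$ has the same defect in sharper form, since it needs the even stronger condition $\E Z^2<\infty$, and the proposed Gronwall bound after truncating $\int u^2$ gives nothing uniform as the truncation level tends to infinity.

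The paper removes both obstructions by truncating the \emph{drift}, not the Girsanov kernel: with $\beta_N:=\lambda(X-\wt Y)\1_{\{|X-\wt Y|\le N\}}$ the integrand in the Girsanov exponential is deterministically bounded by $C\lambda N$, so the density is trivially a true martingale and $\wt\P^N$ is a genuine probability measure. Then $\dtv(\law_\P(Y,B^H),\law_\P(\wt Y,\wt B^H))$ is split by the triangle inequality into two pieces: a Pinsker bound on $\dtv(\P,\wt\P^N)$ --- which only requires the first moment $\E\log\frac{d\P}{d\wt\P^N}$, not $L_2$ of the density, and for which the boundedness of $\beta_N$ makes the stochastic-integral term mean zero --- and a term $\dtv(\law_\P(\wt Y_N,\wt B^{H,N}),\law_\P(\wt Y,\wt B^H))$ where $\wt Y_N$ is the strong solution of the Lipschitz SDE driven by the truncated noise. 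This second term is controlled by the crucial pathwise observation that $\wt Y_N=\wt Y$ and $\wt B^{H,N}=\wt B^H$ on the event $\{\|X-\wt Y\|_{\C([0,1])}\le N\}$, giving the bound $\P(\|X-\wt Y\|_{\C([0,1])}>N)\to 0$. Your sketch has no analogue of this coincidence argument, and without it the truncation cannot be removed; with it, the unbounded-drift Girsanov you attempt is simply not needed.
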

\begin{proof}
We use a localization argument and the Pinsker inequality together with the Girsanov theorem extending the strategy of \cite[proof of~Theorem~A.2]{BKS18} to our setting.

For $N>0$, $t\in[0,1]$ define
\begin{equation*}
\beta_N(t):=\lambda (X_t-\wt Y_t)\1_{ |X_t-\wt Y_t|\le N},\qquad \wt B_t^{H,N}:=B_t^H+
\int_0^t \beta_N(r)\,dr.
\end{equation*}
Obviously, $|\beta_N|\le \lambda N$. Hence, by the Girsanov theorem for the fractional Brownian motion (recall that $H\le1/2$) \cite[Theorem~2]{OuNu} 
(see also \cite[Proposition~3.10(i)]{BDG}), there exists an equivalent probability measure $\wt \P^{N}$ such that the process $\wt B^{H,N}$ is an $(\F_t)_{t\in[0,1]}$-fractional Brownian motion under this measure and 
\begin{equation}\label{denratio}
	\frac{d \wt \P^{N}}{d \P}=\exp\Bigl(-\int_0^1 v^N_s \,d W_s-\frac12\int_0^1 |v^N_s|^2\,ds\Bigr),
\end{equation}	
where $W$ is a Brownian motion as in \eqref{bhrepr},  
\begin{equation*}
	v_t^N:=c_H t^{H-\frac12}\int_0^t (t-s)^{-H-\frac12}s^{\frac12-H}\beta_N(s)\,ds,\quad t\in[0,1],
\end{equation*}	
$c_H$ is a positive constant \cite[formula~(13)]{OuNu}. It is straightforward that
\begin{equation*} 
	|v_t^N|\le C \|\beta_N\|_{\C([0,1])}\le C \lambda ( \|X-\wt Y\|_{\C([0,1])}\wedge N),\quad t\in[0,1],
\end{equation*}	
for $C>0$.

Let $\wt Y_N$ be a strong solution to \Gref{y;g} on the space $(\Omega,\F,\wt \P^N)$. It exists since $g$ is Lipschitz. That is, $\wt Y_N$ solves 
\begin{equation}\label{Yneq}
\wt Y_N(t)=y+ \int_0^t g(\wt Y_N(t))\,dr +\wt B_t^{H,N},\quad t\in[0,1].
\end{equation}

Now we are ready to bound the left-hand side of \eqref{dtvbound}. 	
We clearly have
\begin{align}\label{treq}
	\dtv(\law_\P( Y,B^H),\law_\P(\wt Y,\wt B^H))&\le \dtv(\law_\P( Y,B^H),\law_\P(\wt Y_N,\wt B^{H,N}))\nn\\
	&\phantom{\le}+\dtv(\law_\P(\wt Y_N,\wt B^{H,N}),\law_\P(\wt Y,\wt B^H)).
\end{align}

We see that  $(\wt Y_N,\wt B^{H,N})$ is a strong solution to \Gref{y;g} on $(\Omega,\F,\wt \P^N)$. 
Clearly,  $(Y,B^{H})$ is a strong solution to \Gref{y;g} on $(\Omega,\F,\P)$ Therefore, by weak uniqueness for \Gref{y;g}  (recall that $g\in\Lip(\R^d,\R^d)$) we have $\law_{\P}((Y,B^H))=\law_{\wt \P^N}((\wt Y_N,\wt B^{H,N})) $

Thus, by Pinsker's inequality (see, e.g., \cite[Lemma 2.5.(i)]{Tsy}) and the explicit formula for the density \eqref{denratio}
\begin{align}\label{term1gir}
\dtv(\law_{\P}( Y,B^H),\law_{\P}(\wt Y_N,\wt B^{H,N}))&=
\dtv(\law_{\wt \P^N}(\wt Y_N,\wt B^{H,N}),\law_{\P}(\wt Y_N,\wt B^{H,N}))\nn\\
&\le \dtv(\P,\wt\P^N) \nn\\
&\le \frac1{\sqrt2}\Bigl(\E^{\P}\log \frac{d \P}{d \wt \P^{N}}\Bigr)^{1/2}=\frac12\Bigl(\int_0^1 \E^\P|v_s^N|^2\,ds\Bigr)^{1/2}\nn\\
&\le C \lambda \| \|X-\wt Y\|_{\C([0,1])} \|_{L_2(\Omega)}.
\end{align}	
Here we used the bound $|v_t^N|\le C \lambda N$ to conclude that the expected value of the stochastic integral is $0$. 

To bound the second term in the right-hand side of \eqref{treq}, we note that on the set $\{\|X-\wt Y\|_{\C([0,1])}\le N\}$
the process $\wt Y$ satisfies
\begin{equation*}
	\wt Y(t)=y+ \int_0^t g(\wt Y(t))\,dr +\wt B_t^{H,N},\quad t\in[0,1].
\end{equation*}
Comparing this with equation for $\wt Y_N$ \eqref{Yneq} and using Gronwall lemma (recall again that $g$ is Lipschitz),
we see that $\wt Y_N=\wt Y$ on $\{\|X-\wt Y\|_{\C([0,1])}\le N\}$. Obviously,  $\wt B^{H,N}=\wt B^{H}$ on $\{\|X-\wt Y\|_{\C([0,1])}\le N\}$. Therefore,
\begin{equation*}
	\dtv(\law_\P(\wt Y_N,\wt B^{H,N}),\law_\P(\wt Y,\wt B^H))\le \P(\|X-\wt Y\|_{\C([0,1])}>N).
\end{equation*}
Now we combine this with \eqref{term1gir} and substitute into \eqref{treq}. We get 
\begin{equation*}
	\dtv(\law_\P( Y,B^H),\law_\P(\wt Y,\wt B^H))\le  C \lambda \| \|X-\wt Y\|_{\C([0,1])} \|_{L_2(\Omega)}+
	\P(\|X-\wt Y\|_{\C[0,1]}>N), 
\end{equation*}
for $C>0$. Since $N$ was arbitrary, by passing to the limit as $N\to\infty$, we get \eqref{dtvbound}.
\end{proof}

Now we are ready to start working on the key element of the entire proof: we must show that $\wt Y$ is close to $X$. Heuristically, this occurs due to the presence of the term $\lambda (X-\wt Y)$, which pushes $\wt Y$ towards $X$. However, as explained in \cref{s:oview}, the strategy used in \cite[proof of Proposition~3.1]{Ksch} is insufficient to achieve this result when the drift is a distribution. Therefore, we have developed a different approach that allows us to obtain more delicate bounds.

\begin{lemma}\label{L:ks2}
For any $\eps>0$ such that 
\begin{equation}\label{epscond}
6\eps<\frac12+H(\alpha-\frac12)
\end{equation}
there exist constants $C=C(\eps),C_0=C_0(\eps)>1$ such that for any
\begin{equation}\label{lambdacond}
\lambda> C_0(1+\|g\|_{\C^\alpha}^{\frac6\eps}+ \|b\|_{\C^{\alpha}}^{\frac6\eps})
\end{equation}
one has 
\begin{equation}\label{mainboundxmy}
 \bigl\| \sup_{t\in[0,1]}|X_t-\wt Y_t |\bigr\|_{L_2(\Omega)}\le   C \lambda^{-1-\frac\eps2}
 +C\lambda^{2\eps} (\|b-g\|_{\C^{\alpha-\eps}}+|x-y|).
\end{equation}	
\end{lemma}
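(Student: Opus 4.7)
The plan is to derive a variation-of-constants representation for $Z_t := X_t - \wt Y_t$, bound the resulting integral via \cref{c:sde}, control the H\"older-in-time seminorms of $\psi = X - B^H$ and $\wt \psi = \wt Y - B^H$ using \cref{l:apsde,l:apforced}, and close a buckling estimate with Young's inequality for $\lambda$ satisfying \eqref{lambdacond}.

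Setting $b_n := G^d_{1/n} b$, I have $b_n \to b$ in $\C^{\alpha-}$. By \cref{D:sol}, $\psi_t = \lim_n \int_0^t b_n(X_r)\,dr$ uniformly in $t$ in probability, and \eqref{SDEwty} gives $\wt Y_t - y - B_t^H = \int_0^t \bigl(g(\wt Y_r) + \lambda(X_r - \wt Y_r)\bigr)\,dr$. Subtracting and solving the resulting scalar linear ODE with damping $\lambda$ yields
\begin{equation*}
X_t - \wt Y_t = e^{-\lambda t}(x - y) + \lim_{n\to\infty} \int_0^t e^{-\lambda(t-r)}\bigl[(b_n - g)(X_r) + g(X_r) - g(\wt Y_r)\bigr]\,dr
\end{equation*}
uniformly in $t$ in probability. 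Both $\psi$ and $\wt\psi$ are adapted (the latter by \cref{L:ks0}), so I will apply \eqref{sdeb1} to the first summand with $f = b_n - g$, $\beta = \alpha - \eps$, $\tau = 1 + \alpha H$ and \eqref{sdeb2} to the second with $f = g$, $\beta = \alpha$, $\mu = 1/2$, $\tau = 1 + \alpha H$; the structural conditions \eqref{parsde} and \eqref{parsde2} both reduce to \eqref{alphacond} for sufficiently small $\eps$.

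To bound the $\tau$-seminorms appearing on the right, \cref{l:apsde} gives $[\psi]_{\C^{1+\alpha H}L_m} \le C(1+\|b\|_{\C^\alpha}^2)$, and applying \cref{l:apforced} with $f = g$ and $\rho_r = \lambda(X_r - \wt Y_r)$ (the input regularity \eqref{zcondm} being guaranteed by \cref{L:ks0}) yields $[\wt\psi]_{\C^{1+\alpha H}L_m} \le C(1+\|g\|_{\C^\alpha}^2)(1 + \lambda \sup_s \|Z_s\|_{L_m})$. Setting $M_m := \sup_t \|Z_t\|_{L_m}$ and using $\psi - \wt\psi = Z$, these ingredients combine into a buckling inequality of the schematic form
\begin{equation*}
M_m \le |x - y| + C \lambda^{2\eps}\|b - g\|_{\C^{\alpha-\eps}} + C\|g\|_{\C^\alpha}\lambda^{-A_1}M_m^{1/2} + C P\,\lambda^{-A_2}M_m + C \lambda^{-2A_1} Q,
\end{equation*}
with $A_1 = 1 + (\alpha - 1/2)H - 3\eps$, some $A_2 > 0$ controlled by \eqref{epscond}, and polynomials $P, Q$ in $\|b\|_{\C^\alpha}$ and $\|g\|_{\C^\alpha}$. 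Under \eqref{lambdacond} the coefficient of $M_m$ in the fourth term is at most $1/4$, and the weighted Young inequality $\|g\|_{\C^\alpha}\lambda^{-A_1} M_m^{1/2} \le \tfrac14 M_m + C\|g\|_{\C^\alpha}^2 \lambda^{-2A_1}$ absorbs the remaining $M_m$-piece. A direct computation from \eqref{epscond} forces $-2A_1 \le -1 - \eps/2$, producing the desired bound on $\sup_t \|Z_t\|_{L_2}$.

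The most delicate step will be upgrading $\sup_t \|Z_t\|_{L_2}$ to $\|\sup_t |Z_t|\|_{L_2}$. I plan to apply the same scheme on a generic subinterval $[s,t]$ to extract an $L_m$-H\"older-in-time bound on $Z$ with exponent $\eps$, and then invoke Kolmogorov's continuity theorem (e.g.\ \cite[Corollary~A.5]{GG22}) for $m$ large enough that $\eps > 1/m$, moving the supremum inside at the cost of constants polynomial in $\|b\|_{\C^\alpha}$ and $\|g\|_{\C^\alpha}$ -- which \eqref{lambdacond} is designed to absorb. The bulk of the bookkeeping will consist of verifying that every small positive $\lambda$-power that appears combines into the prefactor $\lambda^{2\eps}$ while the dominant negative $\lambda$-power from $-2A_1$ stays below $-1 - \eps/2$, which is precisely what \eqref{epscond} encodes.
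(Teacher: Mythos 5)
Your proposal follows essentially the same route as the paper: variation of constants against the exponential damping $e^{-\lambda(t-r)}$, application of \cref{c:sde} to the two pieces $(b_n-g)(X_r)$ and $g(X_r)-g(\wt Y_r)$, a priori seminorm bounds via \cref{l:apsde,l:apforced}, a buckling estimate closed by a weighted Young inequality under \eqref{lambdacond}, and an upgrade to the pathwise supremum via an increment bound on $[s,t]$ plus Kolmogorov's continuity theorem. The only cosmetic difference is that you plug in $\tau=1+\alpha H$ directly, while the paper chooses $\tau=(1+H)/2$ and then uses the monotone embedding $[\,\cdot\,]_{\C^{(1+H)/2}L_m}\le[\,\cdot\,]_{\C^{1+\alpha H}L_m}$; both choices satisfy \eqref{parsde}--\eqref{parsde2} under \eqref{alphacond}, and the exponent bookkeeping works out either way.
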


We see that assumption \eqref{alphacond} implies that the set of all positive $\eps$ satisfying \eqref{epscond} is non-empty. 

\begin{proof}
Fix  $\eps>0$ satisfying \eqref{epscond}. Denote $m:=2/\eps\ge2$ and 
\begin{equation}\label{gmdef}
	\Theta:=1+\|g\|_{\C^\alpha}^3+ \|b\|_{\C^{\alpha}}^3.
\end{equation}
Put
\begin{equation*}
\phi:=X-B^H,\qquad \psi:=\wt Y- B^H.
\end{equation*}
Let $b_n:=G^d_{1/n}b$, $n\in\N$.  For $n\in\N$ define
\begin{equation*}
Z^n_t:=x+\int_0^t  b_n(X_t)\,dt +B_t^H, \quad t\in[0,1].
\end{equation*}
By definition of a solution to \Gref{x;b} and by passing to a subsequence if necessary, we have 
\begin{equation}\label{conv}
\|Z^n - X\|_{\C([0,1])}\to0\quad a.s.,\quad  \text{as $n\to\infty$}. 	
\end{equation}	

\textbf{Step~1}. 	It is immediate to see that 
	\begin{equation*}
		d(Z^n_t-\wt Y_t)=-\lambda (X_t-\wt Y_t)dt+ (b_n(X_t)-g(\wt Y_t))\,dt.
	\end{equation*}
	
	Hence, by the chain rule
	\begin{equation*}
		d[e^{\lambda t}(Z^n_t-\wt Y_t)]=e^{\lambda t} (b_n(X_t)-g(\wt Y_t))\,dt+
		e^{\lambda t} \lambda (Z_t^n-X_t)\,dt,
	\end{equation*}
	which implies for $(s,t)\in\Delta_{[0,1]}$
	\begin{align*}
		&|(Z^n_t-\wt Y_t)-e^{-\lambda(t-s)}(Z^n_s-\wt Y_s)|\\
		&\qquad\le\Bigl|\int^t_s e^{-\lambda (t-r)} (b_n(X_r)-g(\wt Y_r))\,dr\Bigr|+\lambda
		\int^t_s|Z_r^n-X_r|\,dr\\
		&\qquad\le \Bigl|\int^t_s e^{-\lambda (t-r)} (g(X_r)-g(\wt Y_r))\,dr\Bigr|		+\Bigl|\int_s^t e^{-\lambda (t-r)}(b_n(X_r)-g(X_r)) \, dr\Bigr|\\
		&\qquad\phantom{\le}+\lambda \|Z^n - X\|_{\C[0,1]}\\
				&\qquad=: I_{1}(s,t)+I_{2,n}(s,t)+\lambda \|Z^n - X\|_{\C[0,1]}.
	\end{align*}
	We pass to the limit in the above inequality as $n\to\infty$. Recalling \eqref{conv}, we get	
	\begin{equation*}
		|(X_t-\wt Y_t)-e^{-\lambda(t-s)}(X_s-\wt Y_s)|\le  I_{1}(s,t)+\liminf_{n\to\infty} I_{2,n}(s,t).
	\end{equation*}
	Therefore,
	\begin{align}\label{limeq}
		|(X_t-\wt Y_t)-(X_s-\wt Y_s)|&\le |(X_t-\wt Y_t)-e^{-\lambda(t-s)}(X_s-\wt Y_s)|+(1-e^{-\lambda(t-s)})|X_s-\wt Y_s|\nn\\
		&\le I_1(s,t)+\liminf_{n\to\infty} I_{2,n}(s,t)+\lambda^{\eps}|t-s|^{\eps}|X_s-\wt Y_s|,
	\end{align}
where we used the elementary inequality $1-e^{-a}\le a^\rho$ valid for any $a\ge0$, $\rho\in[0,1]$. 

	\textbf{Step~2}. 	Let us now analyze each of the terms in the above inequality. 	

	We begin with $I_{1}$. We apply  \cref{c:sde} with $\beta=\alpha$, $\tau=(1+ H)/2$, $\mu=\frac12$,  $f=g$. We see that conditions \eqref{parsde} and 
	\eqref{parsde2} hold thanks to our choice of parameters and the standing assumption \eqref{alphacond}. Therefore we get by \eqref{sdeb2}
	\begin{align}\label{ivanpred}
		\|I_{1}(s,t)\|_{L_m(\Omega)}&\le C \|g\|_{\C^\alpha}
	(t-s)^\eps\lambda^{-1-H(\alpha-\frac12)+3\eps}\|X-\wt Y\|_{\C^0L_m([0,1])}^{\frac12}\nn\\
		&\quad+C\|g\|_{\C^\alpha}(t-s)^\eps\lambda^{-\frac32-H(\alpha-\frac12)+3\eps}\bigl([\phi]_{ \C^{\frac{1+H}2}L_m([0,1])} +[\psi]_{ \C^{\frac{1+H}2}L_m([0,1])}\bigr )\nn\\
		&\le C \|g\|_{\C^\alpha}
		(t-s)^\eps\lambda^{-\frac12-2\eps}\|X-\wt Y\|_{\C^0L_m([0,1])}^{\frac12}\nn\\
		&\quad+C\|g\|_{\C^\alpha}(t-s)^\eps\lambda^{-1-2\eps}\bigl([\phi]_{ \C^{\frac{1+H}2}L_m([0,1])} +[\psi]_{ \C^{\frac{1+H}2}L_m([0,1])}\bigr ).
	\end{align}
	for $C=C(\eps)$ independent of $\lambda$, where in the last inequality we used \eqref{epscond}. To bound $[\phi]_{ \C^{\frac{1+H}2}L_m([0,1])}$
	in the right-hand side of \eqref{ivanpred} we apply \cref{l:apsde}. Recalling that $X\in\Vcl$ we get 
	\begin{equation}\label{phib}
	[\phi]_{ \C^{\frac{1+H}2}L_m([0,1])}\le [\phi]_{ \C^{1+\alpha H}L_m([0,1])}\le C(1+\|b\|^2_{\C^\alpha}).
\end{equation}	
	To bound $[\psi]_{ \C^{\frac{1+H}2}L_m([0,1])}$ we apply  \cref{l:apforced} 	 with $z=y$, $f=g$, $\rho=\lambda (X-\wt Y)$, $Z=\wt Y$. By \cref{L:ks0}, we have $[\wt Y-B^H]_{\C^1 L_m([0,1])}<\infty$ and hence condition \eqref{zcondm} holds. We get from \eqref{claimforced}
	\begin{equation}\label{psib}
		[\psi]_{ \C^{\frac{1+H}2}L_m([0,1])}\le
			[\psi]_{ \C^{1+\alpha H}L_m([0,1])}\le C(1+\|g\|^2_{\C^\alpha})(1+\lambda \|X-\wt Y\|_{\C^0 L_m([0,1])}).
	\end{equation}
	Now we substitute \eqref{phib}, \eqref{psib}  back into \eqref{ivanpred} and use the  inequality $ xy\le x^2+y^2$ valid for any $x,y\in\R$ for the first term there. Recalling the definition of $\Theta$ in \eqref{gmdef}, we obtain
		\begin{align}\label{ivanpred2}
		\|I_{1}(s,t)\|_{L_m(\Omega)}&\le C(t-s)^\eps \lambda^{-2\eps }\|X-\wt Y\|_{\C^0L_m([0,1])}+ C (t-s)^\eps\|g\|_{\C^\alpha}^2 \lambda^{-1-2\eps}\nn\\
		&\phantom{\le}+C \|g\|_{\C^\alpha}(1+\|b\|_{\C^\alpha}^2+\|g\|_{\C^\alpha}^2)(t-s)^\eps\lambda^{-1-2\eps}\nn\\
		&\phantom{\le}+C\|g\|_{\C^\alpha}(1+\|g\|_{\C^\alpha}^2)(t-s)^\eps\lambda^{-2\eps}\|X-\wt Y\|_{\C^0L_m([0,1])}\nn\\
		&\le C \Theta(t-s)^\eps \lambda^{-2\eps}\|X-\wt Y\|_{\C^0L_m([0,1])}+ C\Theta (t-s)^\eps \lambda^{-1-2\eps}.
	\end{align}
	
	Next, we move to $I_{2,n}$. Let us apply  \cref{c:sde} with $\beta=\alpha-\eps$, $\tau=(1+H)/2$, $f=b_n-g$. We see that $\alpha-\eps>\frac12-\frac1{2H}$ thanks to \eqref{epscond}, and thus condition \eqref{parsde2} holds. Therefore, we get  
	from \eqref{sdeb1} and \eqref{phib}
	\begin{align*}
		\|I_{2,n}(s,t)\|_{L_m(\Omega)}&\le 	C \|b_n-g\|_{\C^{\alpha-\eps}}(t-s)^\eps		\lambda^{-1-H\alpha+4\eps}\Bigl(1+[\phi]_{ \C^{\frac{1+H}2}L_m([0,1])}\lambda^{-\frac12+\frac{H}2}\Bigr)\nn\\
		&\le C \|b_n-g\|_{\C^{\alpha-\eps}}(t-s)^\eps	\nn\\
		&\phantom{\le}+C (\|b_n\|_{\C^{\alpha-\eps}}+\|g\|_{\C^{\alpha-\eps}})(t-s)^\eps(1+\|b\|^2_{\C^\alpha})\lambda^{-1-2\eps}.
	\end{align*}
	for $C=C(\eps)$ independent of $\lambda$ and $n$. Here we used \eqref{epscond} once again.

Now we substitute this together with \eqref{ivanpred2} into \eqref{limeq}. We note that $\|b_n -g\|_{\C^{\alpha-\eps}}\to \|b -g\|_{\C^{\alpha-\eps}}$ as $n\to\infty$. Therefore, by Fatou's lemma we derive
for any $(s,t)\in\Delta_{[0,1]}$
	\begin{align}\label{xmy1}
\!\!\!\!	\|(X_t-\wt Y_t)-(X_s-\wt Y_s)\|_{L_m (\Omega)}&\le C \Theta(t-s)^\eps \lambda^{-2\eps}\|X-\wt Y\|_{\C^0L_m([0,1])}\nn\\
	&\phantom{\le}+ C\Theta(t-s)^\eps  \lambda^{-1-2\eps}\nn\\
	&\phantom{\le}+C(t-s)^\eps \|b-g\|_{\C^{\alpha-\eps}}+\lambda^{\eps}|t-s|^{\eps}\|X_s-\wt Y_s\|_{L_m(\Omega)},
\end{align}
where $C=C(\eps)$ and we used again the definition of  constant $\Theta$ in \eqref{gmdef}.	

\textbf{Step~3: buckling for supremum norm}. 
 Choose now any $\lambda\ge1$ such that 
\begin{equation}\label{xmy2}
C\Theta\lambda^{-\eps/2}\le\frac12,
\end{equation}
where $C$ is as in \eqref{xmy1}.
By choosing  now $s=0$ in \eqref{xmy1} and taking supremum over all $t\in[0,1]$ we deduce with the help of \eqref{xmy2}
\begin{equation}\label{diffeq}
\|X-\wt Y\|_{\C^{0}L_m ([0,1])}\le \frac12\|X-\wt Y\|_{\C^{0}L_m ([0,1])}+ \lambda^{-1-\frac32\eps}+C \|b-g\|_{\C^{\alpha-\eps}}	+\lambda^\eps|x-y|.
\end{equation}	
Note that 	$\|X-B^H\|_{\C^{0}L_m ([0,1])}\le [X-B^H]_{\C^{\frac{1+H}2}L_m ([0,1])}+|x|<\infty$ because $X$ belongs to the class $\Vcl$. Furthermore, we have 	$\|\wt Y-B^H\|_{\C^{0}L_m ([0,1])}<\infty$ by \cref{L:ks0}. Hence ${\|X-\wt Y\|_{\C^{0}L_m ([0,1])}<\infty}$. 
Therefore, we can put the term $\frac12\|X-\wt Y\|_{\C^{0}L_m ([0,1])}$ in the left-hand side of \eqref{diffeq} and derive 	
	\begin{equation}\label{supfinal}
	\|X-\wt Y\|_{\C^{0}L_m ([0,1])}\le  C \lambda^{-1-\frac32\eps}+C \|b-g\|_{\C^{\alpha-\eps}}	+\lambda^\eps|x-y|.
\end{equation}
for $C=C(\eps)$.
	
\textbf{Step~4: bounding the H\"older norm}. 
Now we substitute \eqref{supfinal} into \eqref{xmy1}. We use \eqref{xmy2} to replace $\Theta$ by $\lambda^{\eps/2}$ and note that $\|X_s-\wt Y_s\|_{L_m(\Omega)}\le 	\|X-\wt Y\|_{\C^{0}L_m ([0,1])}$. We derive
 for any $(s,t)\in\Delta_{[0,1]}$
	\begin{equation*}
	\|(X_t-\wt Y_t)-(X_s-\wt Y_s)\|_{L_m (\Omega)}\le C  (t-s)^\eps \lambda^{-1-\eps/2}
	+C (t-s)^\eps\lambda^{2\eps}  (\|b-g\|_{\C^{\alpha-\eps}}+|x-y|)
\end{equation*}
for $C=C(\eps)$.
Dividing both sides of the above inequality by $(t-s)^\eps$ and taking the supremum over all $(s,t)\in\Delta_{[0,1]}$, we finally obtain
	\begin{equation*}
	[X-\wt Y]_{\C^{\eps}L_m ([0,1])}\le C\lambda^{-1-\eps/2}
	+C\lambda^{2\eps} (\|b-g\|_{\C^{\alpha-\eps}}+|x-y|).
\end{equation*}
Since $m\eps>1$ and the processes $X$ and $\wt Y$ are continuous, we can apply the Kolmogorov continuity theorem to derive	
	\begin{equation*}
	\|\sup_{t\in[0,1]}|X_t-\wt Y_t|\|_{L_m (\Omega)}\le C\lambda^{-1-\eps/2}
	+C\lambda^{2\eps} (\|b-g\|_{\C^{\alpha-\eps}}+|x-y|).
\end{equation*}
for $C=C(\eps)$, which is  the desired bound \eqref{mainboundxmy}.
\end{proof}

Now we are ready to prove \cref{l:ml}.

\begin{proof}[Proof of \cref{l:ml}] Recall the definition of $\wt B^H$ in \eqref{bhtilde}. 
Fix $\eps>0$ such that   \eqref{epscond}.	
Let $\lambda>1$ satisfy  \eqref{lambdacond}.
By the triangle inequality and \cref{l:ks1} we have
\begin{align}\label{mainproof1}
&\W_{\|\cdot\|\wedge1}(\law(X,B^H),\law(Y,B^H))\nn\\
&\quad\le \W_{\|\cdot\|\wedge 1}(\law(X,B^H),\law(\wt Y,\wt B^H))+
\W_{\|\cdot\|\wedge 1}(\law(\wt Y,\wt B^H),\law(Y,B^H))\nn\\
&\quad\le \bigl\|\|X-\wt Y\|_{\C([0,1])}\bigr\|_{L_1(\Omega)}+\bigl\|\|B^H-\wt B^H\|_{\C([0,1])}\bigr\|_{L_1(\Omega)}+ \dtv(\law(\wt Y,\wt B^H),\law(Y,B^H))\nn\\
&\quad\le C \lambda \bigl\|\|X-\wt Y\|_{\C([0,1])}\bigr\|_{L_2(\Omega)}\nn\\
&\quad\le C \lambda^{-\frac\eps2}
+C\lambda^{2} (\|b-g\|_{\C^{\alpha-\eps}}+|x-y|).
\end{align}
 Let $C_0=C_0(\eps)$ be as in \cref{L:ks2}.  Take now 
\begin{equation*}
\lambda:=(\|b-g\|_{\C^{\alpha-\eps}}+|x-y|)^{-1/4}+C_0 (1+\|g\|_{\C^\alpha}^{\frac6\eps}+ \|b\|_{\C^{\alpha}}^{\frac6\eps}).
\end{equation*}
Clearly such $\lambda$ satisfies \eqref{lambdacond}.  Substituting this $\lambda$ into \eqref{mainproof1} we get 
\begin{align*}
	\W_{\|\cdot\|\wedge1}(\law(X,B^H),\law(Y,B^H))&\le C       (\|b-g\|_{\C^{\alpha-\eps}}+|x-y|)^{\frac\eps8}\\
	&\phantom{\le} 	+C  (\|b-g\|_{\C^{\alpha-\eps}}+|x-y|)^{\frac12}\\
	&\phantom{\le} 	+C_0^2 (1+\|g\|_{\C^\alpha}^{\frac{12}\eps}+ \|b\|_{\C^{\alpha}}^{\frac{12}\eps})  (\|b-g\|_{\C^{\alpha-\eps}}+|x-y|),
\end{align*}
which implies \eqref{mainressde}.
\end{proof}	

With \cref{l:ml} in  hand, we are able to proof our main results concerning weak uniqueness.
\begin{proof}[Proof of \cref{t:wu}]
Fix $b\in\C^\alpha$, $x\in\R^d$ and let $(b^n)_{n\in\Z_+}$ be a sequence of $\C^\infty(\R^d,\R^d)$ functions converging to $b$ in $\C^{\alpha-}$, let $x_n$ be a sequence converging to $x$ in $\R^d$. 
Let $X^n,W^H)$ be the strong solution to \Gref{x_n;b^n}. Let $(X,B^H)$ be a weak solution to \Gref{x;b} in the class $\Vcl$.

By \cref{l:ml},  there exist constants  $C,\eps>0$ such that for any $n\in\Z_+$
\begin{align*}
	&\W_{\|\cdot\|\wedge1}(\law(X,B^H),\law(X^n,W^H))\\
	&\qquad\le C (1+\|b\|_{\C^\alpha}^{\frac{20}\eps}+\|b^n\|_{\C^\alpha}^{\frac{20}\eps})(1+|x|+|x_n|)(\|b-b^n\|_{\C^{\alpha-\eps}}^{\frac{\eps}8}+|x-x_n|^{\frac{\eps}8}).
\end{align*}
By the definition of convergence in $\C^{\alpha-}$, $\sup_n \|b^n\|_{\C^\alpha}<\infty$.  Hence, by passing to the limit as $n\to\infty$ in the above inequality, we get
\begin{equation}\label{eqconv}
\lim_{n\to\infty}\W_{\|\cdot\|\wedge1}(\law(X,B^H),\law(X^n,W^H))=0.
\end{equation}

(i). Now if  $(\overline{X},\overline{B}^H)$ is  another weak solution to \Gref{x;b},
then by above 
\begin{equation*}
	\lim_{n\to\infty}\W_{\|\cdot\|\wedge1}(\law(\overline{X},\overline{B}^H),\law(X^n,W^H))=0,
\end{equation*}
which implies that $\law(X,B^H)=\law(\overline{X},\overline{B}^H)$ and thus weak uniqueness holds for \Gref{x;b}.

(ii). By \eqref{eqconv} the sequence $\law(X^n,W^H)$ weakly converges in $\C([0,1],\R^{2d})$ and its limit is $\law(X,B^H)$, which is a unique weak solution to \Gref{x;b} in class $\Vcl$.
\end{proof}

\begin{proof}[Proof of \cref{t:sb}]
Fix $m\ge1$, $b_1,b_2\in\C^\alpha$, $x_1,x_2\in\R^d$ and let $b^{n,1}:=G^d_{1/n}b_1$, $n\in\N$. Let $(X^{n,1},W^H)$ be the strong solution to \Gref{x_1;b^{n,1}}. 
We apply \cref{l:ml} twice: first to the pairs $(X^1,B^{H,1})$ and $(X^{n,1},W^H)$, and then to the pairs 
$(X^2,B^{H,2})$ and $(X^{n,1},W^H)$. We get that there exists constants $C,\eps>0$
\begin{align*}
&\W_{\|\cdot\|\wedge1}(\law(X^1,B^{H,1}),\law(X^2,B^{H,2}))\\
&\qquad\le
\W_{\|\cdot\|\wedge1}(\law(X^1,B^{H,1}),\law(X^{n,1},W^H))+
\W_{\|\cdot\|\wedge1}(\law(X^2,B^{H,2}),\law(X^{n,1},W^H))
\\
&\qquad\le C (1+\|b_1\|_{\C^\alpha}^{\frac{20}\eps})(1+|x_1|)\|b_1-b^{n,1}\|_{\C^{\alpha-\eps}}^{\frac{\eps}8}\\
&\qquad\phantom{\le}+ C (1+\|b_1\|_{\C^\alpha}^{\frac{20}\eps}+\|b_2\|_{\C^\alpha}^{\frac{20}\eps})(1+|x_1|+|x_2|)(\|b_2-b^{n,1}\|_{\C^{\alpha-\eps}}^{\frac{\eps}8}+|x_1-x_2|^{\frac\eps8}).
\end{align*}	
By passing to the limit as $n\to\infty$, we get the desired bound \eqref{mmmssde}.
\end{proof}

\subsection{Strong existence and  uniqueness in the case $d=1$}

The proof of strong well-posedness of \Gref{x;b} in the case $d=1$ is based on the following key lemma.
\begin{lemma}\label{l:sud1}
Let $d=1$, $x\in\R$,  $b\in\C^\alpha$, $p\in[1,2]$. Suppose that
\begin{equation}\label{condalphap}
\alpha>\frac{p}2-\frac1{2H}.
\end{equation}
Let $(X^1,B^H)$, $(X^2,B^H)$ be two solutions to 
\Gref{x;b} defined on the same probability space and adapted to the same filtration $(\F_t)_{t\in[0,1]}$.  Suppose that $X^1,X^2\in\Vcl$ and 
\begin{equation}\label{pvarb}
[X^i-B^H]_{p-\var;[0,1]}<\infty,\quad a.s.,\,\,i=1,2.
\end{equation}
Then 
\begin{equation}\label{rezd1}
\P(X^1_t=X^2_t\, \text{ for any }\, t\in[0,1])=1.
\end{equation}
\end{lemma}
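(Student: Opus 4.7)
Following the heuristics of \cref{s:oview}, my plan is to introduce the candidate common solution $Y_t := X^1_t \wedge X^2_t$, verify that $Y$ is itself a weak solution to \Gref{x;b} in the class $\Vcl$ on the same filtered space, and then invoke weak uniqueness (\cref{t:wu}(i)) to conclude $Y = X^1 = X^2$. For a.e.\ $\omega$ the set $A(\omega) := \{s \in [0,1] : X^1_s(\omega) > X^2_s(\omega)\}$ is open by continuity, hence a countable disjoint union of intervals $\bigcup_k (s_k,t_k)$ on whose endpoints $X^1$ and $X^2$ must agree. Truncating to $A^K := \bigcup_{k \le K}(s_k,t_k)$, I will work with the continuous patched process
\begin{equation*}
	Y^K_t := X^1_t\,\1_{[0,1]\setminus A^K}(t) + X^2_t\,\1_{A^K}(t),
\end{equation*}
which satisfies $Y^K \to Y$ uniformly on $[0,1]$ as $K \to \infty$ because $t_k - s_k \to 0$ and the endpoint equality force $\sup_{(s_k,t_k)}|X^1 - X^2| \to 0$.

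For each fixed $K$, with the canonical mollification $b^m := G_{1/m} b$, the SDE property of $X^1$ and $X^2$ on every sub-interval $(s_k,t_k)$ together with the endpoint identities $X^1_{s_k}=X^2_{s_k}$, $X^1_{t_k}=X^2_{t_k}$ gives $\int_{s_k}^{t_k} b^m(X^1_r)\,dr - \int_{s_k}^{t_k} b^m(X^2_r)\,dr \to 0$ in probability as $m\to\infty$. Splitting $[0,t]$ according to the finitely many components of $A^K$ and their complement then yields
\begin{equation*}
	\sup_{t\in[0,1]}\Bigl|\int_0^t b^m(Y^K_r)\,dr - (Y^K_t - x - B^H_t)\Bigr|\xrightarrow{m\to\infty} 0\quad\text{in probability.}
\end{equation*}

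The main obstacle is that $Y^K$ is \emph{not} $(\F_t)$-adapted, since $A^K$ depends on the global paths of both $X^1$ and $X^2$; consequently the stochastic sewing bounds of \cref{c:sde} are unavailable for making this convergence uniform in $K$. This is the precise scenario \cref{l:noreg} was built for, and its admissible range \eqref{dparamcond} matches exactly the present hypothesis \eqref{condalphap}. A standard patching argument, using that $Y^K - B^H$ agrees piecewise with $X^1 - B^H$ or $X^2 - B^H$ on a finite subdivision of $[0,1]$ whose endpoints are continuity points of both, produces a bound $\sup_K [Y^K - B^H]_{p-\var;[0,1]} \le \Xi$ a.s.\ for some random $\Xi < \infty$ depending only on $X^1, X^2$. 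Picking $\eps > 0$ small enough that \eqref{condalphap} still holds for $\alpha - \eps$, I then apply \cref{l:noreg} to $f = b^m - b^{m'}$ and $\phi = Y^K - B^H$ to obtain a random variable $\xi^{m,m'}$ with $\|\xi^{m,m'}\|_{L_2} \le C \|b^m - b^{m'}\|_{\C^{\alpha-\eps}}$ and
\begin{equation*}
	\sup_K \sup_{t\in[0,1]}\Bigl|\int_0^t (b^m - b^{m'})(Y^K_r)\,dr\Bigr|\le \xi^{m,m'}(1+|x|)(1+\Xi),
\end{equation*}
whose right-hand side vanishes as $m, m' \to \infty$, uniformly in $K$.

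Combining this uniform Cauchy bound with the pathwise convergence for each fixed $K$ and the continuous dependence $\int_0^t b^m(Y^K_r)\,dr \to \int_0^t b^m(Y_r)\,dr$ as $K \to \infty$ (for fixed $m$, by uniform convergence of $Y^K$ and continuity of $b^m$), a standard diagonal swapping of limits gives
\begin{equation*}
	\sup_{t\in[0,1]}\Bigl|\int_0^t b^m(Y_r)\,dr - (Y_t - x - B^H_t)\Bigr|\to 0\quad\text{in probability as }m \to \infty,
\end{equation*}
and a further direct application of \cref{l:noreg} to $\phi = Y - B^H$ extends this identity to arbitrary sequences converging to $b$ in $\C^{\alpha-}$. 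Since $Y$ is continuous, $(\F_t)$-adapted as the minimum of two adapted processes, and lies in $\Vcl$ (using the 1-Lipschitz property of $\min$ to pass the Hölder bound from $\psi^i := X^i - B^H$ to $Y - B^H = \psi^1 \wedge \psi^2$), it is a weak solution to \Gref{x;b} in $\Vcl$. \cref{t:wu}(i) then yields $\law(Y_t) = \law(X^1_t) = \law(X^2_t)$ for every $t$; combined with $Y_t \le X^1_t$ a.s., comparing the distribution functions gives $\P(Y_t < X^1_t) = 0$, and symmetrically $\P(Y_t < X^2_t) = 0$. Path continuity of $X^1, X^2$ then upgrades to the almost-sure identity on all of $[0,1]$, establishing \eqref{rezd1}.
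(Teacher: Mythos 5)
Your proposal is correct and follows essentially the same route as the paper: introduce $Y = X^1\wedge X^2$, truncate the open set $\{X^1>X^2\}$ to the first $K$ component intervals $A^K$, control the non-adapted patched process $Y^K$ via the non-adaptive sewing bound of \cref{l:noreg} (whose admissible range matches \eqref{condalphap}, and whose input $[Y^K-B^H]_{p\text{-}\var}$ is bounded uniformly in $K$ by a constant times $[\psi^1]_{p\text{-}\var}+[\psi^2]_{p\text{-}\var}$), interchange the $K\to\infty$ and $m\to\infty$ limits, and then invoke weak uniqueness (\cref{t:wu}(i)). The paper packages the same ideas as a proof by contradiction with an arbitrary approximating sequence and an explicit three-term error decomposition $I_1+I_2+I_3$, whereas you argue directly with the canonical mollification and a Cauchy/Moore--Osgood swap; the key lemmas and the underlying decomposition are identical.
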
	

\begin{proof}
 Consider the process $Y_t:=X_t^1\wedge X_t^2$, $t\in[0,1]$, and put 
\begin{equation*}
\psi^i:=X^i-x-B^H,\,\,i=1,2;\qquad \phi:=\psi^1\wedge \psi^2=Y-x-B^H.
\end{equation*} 
We claim that $(Y,B^H)$ is also a weak solution to \Gref{x;b}. This would imply by weak uniqueness that $\law(X_t^1\wedge X_t^2)=\law(X_t^1)$ for $t\in[0,1]$, and thus $X_t^1=X_t^2$ a.s. 

For the case when $b$ is a continuous function this strategy was carried out in \cite[Proposition~1.1]{bib:ttw74}. However their proof method does not work for the case when $b$ is a distribution, as the integration by parts formula of \cite[Proposition~1.1]{bib:ttw74} does not hold. Therefore we come with a different technique.

Note that the process $Y$ is clearly continuous and adapted to the filtration $(\F_t)_{t\in[0,1]}$. Therefore, it remains only to verify that condition \ref{cond2md} of \cref{D:sol} holds.

Assume the contrary. Then there exist $\eps,\delta>0$ and a sequence $(b^n)_{n\in\Z_+}$ of $\C^\infty(\R^d,\R^d)$ functions converging to $b$ in $\C^{\alpha-}$ such that for any $n\in\Z_+$
\begin{equation}\label{contra}
\P\Bigl(\sup_{t\in[0,1]}\Bigl|\int_0^t b^n(Y_r)dr -\phi_t\Bigr|>\eps\Bigr)>\delta.
\end{equation}
Denote now for $n\in\Z_+$, $t\in[0,1]$
\begin{equation*}
\psi_t^{i,n}:=\int_0^t b^n(X^i_r)\,dr,\,\,i=1,2;\qquad\phi_t^{n}:=\int_0^t b^n(Y_r)\,dr.
\end{equation*}
Since $X^1$ and $X^2$ are solutions to \Gref{x;b}, by passing to a common 
subsequence, if necessary, we have 
\begin{equation}\label{convn}
\|\psi^{1,n}- \psi^1\|_{\C[0,1]}\to0,\,\,\, \|\psi^{2,n}-\psi^2\|_{\C[0,1]}\to0 \quad a.s.,\quad  \text{as $n\to\infty$}. 	
\end{equation}
Consider a set $\Omega'\subset\Omega$ 
\begin{equation*}
\Omega':=\{\omega\in\Omega\colon \text{relation \eqref{convn} holds};\,\,   \|\psi^i(\omega)\|_{\C^{\frac12}([0,1])}\!<\!\infty;\,\, [\psi^i(\omega)]_{p-\var;[0,1]}\!<\!\infty; i=1,2\}.
\end{equation*}	
Since $X^1,X^2\in\Vcl$ the Kolmogorov continuity theorem, \eqref{convn}, and the assumptions of the lemma  implies that $\P(\Omega')=1$.

 Let $\omega\in\Omega'$. Since $\psi^1$ and $\psi^2$ are continuous functions, the set 
$$
A_\omega:=\{t\in[0,1]:\psi^1_t(\omega)>\psi^2_t(\omega)\}=
\{t\in[0,1]:X^1_t(\omega)>X^2_t(\omega)\}
$$ is open. Hence, it can be written as an at most countable union of mutually disjoint open intervals $A_\omega=\cup (s_k(\omega),t_k(\omega))$. We fix $K\in\N$ and consider the set 
\begin{equation*}
A^K_\omega:=\bigcup_{k\le K}(s_k(\omega),t_k(\omega))\subset A_\omega.
\end{equation*}
Put 
\begin{equation*}
Y^K_t(\omega):=X_t^1\1(t\in [0,1]\setminus A^K_\omega)+X_t^2\1(t\in A^K_\omega);\quad \phi^K:=Y^K-x-B^H,
\end{equation*}
and for arbitrary $m\in\N$ decompose the desired approximation error as 
\begin{align}\label{appe}
\sup_{t\in[0,1]}\Bigl|\int_0^t b^n(Y_r)dr -\phi_t\Bigr|&\le
\sup_{t\in[0,1]}\Bigl|\int_0^t (b^n(Y_r)-b^n(Y_r^K))dr\Bigr|+
\sup_{t\in[0,1]}\Bigl|\int_0^t (b^n(Y_r^K)-b^m(Y_r^K))dr\Bigr|\nn\\
&\phantom{\le}+\sup_{t\in[0,1]}\Bigl|\int_0^t b^m(Y_r^K)dr-\phi_t^K\Bigr|+\sup_{t\in[0,1]}|\phi^K_t -\phi_t\Bigr|\nn\\
&\le (1+\|b^n\|_{\Lip(\R)})\sup_{t\in[0,1]}|Y^K_t -Y_t\Bigr|+
\sup_{t\in[0,1]}\Bigl|\int_0^t (b^n(Y_r^K)-b^m(Y_r^K))dr\Bigr|\nn\\
&\phantom{\le}+\sup_{t\in[0,1]}\Bigl|\int_0^t b^m(Y_r^K)dr-\phi_t^K\Bigr|\nn\\
&=:I_1(n,K)+I_2(n,m,K)+I_3(m,K).
\end{align}
We treat now consequently all the three terms on the right-hand side of the above equation.
It is easy to bound $I_1$. Note that for any $k\in\N$ we have by continuity $X^1_{s_k(\omega)}=X^2_{s_k(\omega)}$. Thus, if $t\in[s_k(\omega),t_k(\omega)]$, then
\begin{equation*}
|\psi^1_t-\psi^2_t|=|\psi^1_t-\psi^1_{s_k(\omega)}-(\psi^2_t-\psi^2_{s_k(\omega)})|\le |t_k(\omega)-s_k(\omega)|^{\frac12}(\|\psi^1\|_{\C^{\frac12}([0,1])}+\|\psi^2\|_{\C^{\frac12}([0,1])}).
\end{equation*}
Hence 
\begin{align}\label{i1ss1s}
I_1(n,K)&= (1+\|b^n\|_{\Lip(\R)})\sup_{t\in A_\omega\setminus A^K_\omega}|Y^K_t -Y_t\Bigr|=
(1+\|b^n\|_{\Lip(\R)})\sup_{t\in A_\omega\setminus A^K_\omega}|\psi_t^1 -\psi_t^2\Bigr|\nn\\
&\le  (1+\|b^n\|_{\Lip(\R)})(\|\psi^1(\omega)\|_{\C^{\frac12}([0,1])}+\|\psi^2(\omega)\|_{\C^{\frac12}([0,1])})\max_{k>K}|t_k(\omega)-s_k(\omega)|.
\end{align}
Since the intervals $(s_k(\omega), t_k(\omega))$ are mutually disjoint, we have ${\max_{k>K} |t_k(\omega) - s_k(\omega)| \to 0}$ as $K \to \infty$. Hence, we deduce from \eqref{i1ss1s} and the definition of $\Omega'$ that for any fixed $n\in\N$, $\omega\in\Omega'$
\begin{equation}\label{i1ss}
\lim_{K\to\infty}I_1(n,K)=0.
\end{equation}

Next, we move on to $I_2$. We note that the process $Y^K$ can be written as $Y^K=x+B^H+\phi^K$, where the process $\phi^K$ is not necessarily adapted to the filtration $(\F_t)$. Therefore instead of bound \eqref{sb1} which requires the drift to be adapted we apply  \eqref{bound1termnoreg} with $\beta=\alpha$, $f=b^n-b^m$ and $\phi^K+x$ in place of $\phi$. We see that condition \eqref{condalphap} implies \eqref{dparamcond}. Therefore, recalling the definition of $\phi^K$, we deduce
that there exists a random variable $\xi_{n,m,K}$ such that
\begin{align*}
I_2(n,m,K)&\le \xi_{n,m,K}(1+|x|)(1+[\phi^K]_{p-\var;[0,1]})\\
&\le \xi_{n,m,K}(1+|x|)(1+[\psi^1]_{p-\var;[0,1]}+[\psi^2]_{p-\var;[0,1]})
\end{align*}
and 
$$
\|\xi_{n,m,K}\|_{L_2(\Omega)}\le C \|b^n-b^m\|_{\C^\alpha}
$$
for $C=C(p)$.
This implies that 
\begin{equation}\label{i2ss}
\liminf_{K\to\infty}	\liminf_{m\to\infty} I_2(n,m,K)\le \xi_n (1+|x|)(1+[\psi^1]_{p-\var;[0,1]}+[\psi^2]_{p-\var;[0,1]}),
\end{equation}
where we put 
$$
\xi_n:=\liminf_{K\to\infty}	\liminf_{m\to\infty} \xi_{n,m,K}. 
$$
By Fatou's lemma 
\begin{equation}\label{i2ssmom}
\|\xi_{n}\|_{L_2(\Omega)}\le C \|b^n-b\|_{\C^\alpha}
\end{equation}
for $C=C(p)$.

To treat $I_3$ we consider two cases. If $t\notin A^K_\omega$, then 
\begin{equation}\label{phipsi}
\phi_t^K=\psi_t^1.
\end{equation}
 Note that for each $k\in\N$ we have $\psi^1_{t_k(\omega)}(\omega)=\psi^2_{t_k(\omega)}(\omega)$, $\psi^1_{s_k(\omega)}(\omega)=\psi^2_{s_k(\omega)}(\omega)$ thanks to  continuity of $\psi^1,\psi^2$. Therefore, for any $k,m\in\N$ we get 
\begin{align*}
\Bigl|\int_{[s_k(\omega),t_k(\omega)]}(b^m(X^1_r)-b^m(X^2_r))\, dr\Bigr|&\le
\Bigl|\int_{[s_k(\omega),t_k(\omega)]}(b^m(X^1_r)\, dr-(\psi^1_{t_k(\omega)}-\psi^1_{s_k(\omega)})\Bigr|\\
&\phantom{\le}
+ 
\Bigl|\int_{[s_k(\omega),t_k(\omega)]}(b^m(X^2_r)\, dr-(\psi^2_{t_k(\omega)}-\psi^2_{s_k(\omega)})\Bigr|\\
&\le2\|\psi^{1,m}-\psi^1\|_{\C([0,1])}+2\|\psi^{2,m}-\psi^2\|_{\C([0,1])}.
\end{align*}
Recall that $Y^K_r=X^1_r$ for $r\not\in A^K_\omega$ and $Y^K_r=X^2_r$ for $r\in A^K_\omega$. Since  the set $A^K_\omega\cap[0,t]$ is just a union of no more than $K$ intervals $(s_k(\omega),t_k(\omega))$, we deduce using \eqref{phipsi}
\begin{align}\label{abovebound}
\Bigl|\int_0^tb^m(Y^K_r)\,dr-\phi_t^K\Bigr|&\le 
\Bigl|\int_0^tb^m(X^1_r)\,dr-\psi^1_t\Bigr|+\Bigl|\int_{A^K_\omega\cap[0,t]}(b^m(X^1_r)-b^m(X^2_r))\, dr\Bigr|\nn\\
&\le (2K+1) \|\psi^{1,m}-\psi^1\|_{\C([0,1])}+2K \|\psi^{2,m}-\psi^2\|_{\C([0,1])}.
\end{align}
Very similarly, if $t\in A^K_\omega$, then $\phi_t^K=\psi_t^2$.  The  set $[0,t]\setminus A^K_\omega$ is also a union of no more than $K$ intervals $[s'_k(\omega),t'_k(\omega)]$ and for each of such intervals we have  $\psi^1_{t'_k(\omega)}(\omega)=\psi^2_{t'_k(\omega)}(\omega)$, $\psi^1_{s'_k(\omega)}(\omega)=\psi^2_{s'_k(\omega)}(\omega)$ and as above we have
\begin{equation*}
\Bigl|\int_{[s'_k(\omega),t'_k(\omega)]}(b^m(X^1_r)-b^m(X^2_r))\, dr\Bigr|\le
2\|\psi^{1,m}-\psi^1\|_{\C([0,1])}+2\|\psi^{2,m}-\psi^2\|_{\C([0,1])}.
\end{equation*}
Thus, 
\begin{align*}
\Bigl|\int_0^tb^m(Y^K_r)\,dr-\phi_t^K\Bigr|&\le 
\Bigl|\int_0^tb^m(X^2_r)\,dr-\psi^2_t\Bigr|+\Bigl|\int_{[0,t]\setminus A^K_\omega}(b^m(X^1_r)-b^m(X^2_r))\, dr\Bigr|\\
&\le 2K \|\psi^{1,m}-\psi^1\|_{\C([0,1])}+(2K+1) \|\psi^{2,m}-\psi^2\|_{\C([0,1])}.
\end{align*}
Combining this with the above bound \eqref{abovebound} and recalling \eqref{convn}, we get that for any fixed $K\in\N$
\begin{equation}\label{i3ss}
\lim_{m\to\infty} I_3(m,K)\le (K+1)\lim_{m\to\infty} (\|\psi^{1,m}-\psi^1\|_{\C([0,1])}+\|\psi^{2,m}-\psi^2\|_{\C([0,1])})=0
\end{equation}

Now it remains to combine \eqref{i1ss}, \eqref{i2ss}, \eqref{i3ss} and substitute them into \eqref{appe}. We pass to the limit first as $m\to\infty$, then as $K\to\infty$ to get that on the set of full measure $\Omega'$
\begin{equation*}
	\sup_{t\in[0,1]}\Bigl|\int_0^t b^n(Y_r)dr -\phi_t\Bigr|\le  \xi_n (1+|x|)(1+[\psi^1]_{p-\var;[0,1]}+[\psi^2]_{p-\var;[0,1]}).
\end{equation*}
Recalling  \eqref{i2ssmom}, we see that the right-hand side of the above inequality converges to $0$ in probability; hence its left-hand side also converges to 0 in probability.  This contradicts \eqref{contra}. Hence the process $Y:=X^1\wedge X^2$ is indeed a solution to \Gref{x;b}.

Now we apply \cref{t:wu}(i). We see  from definition of $Y$ that $Y\in\Vcl$. Hence $\law(Y)=\law(X^1)$. Then if $f$ is a bounded smooth strictly increasing function $\R\to\R$, $t\in[0,1]$ we have
$$
\E f (Y_t)=\E f (X^1_t\wedge X^2_t)=\E f (X^1_t),
$$ 
which implies that $X^1_t\wedge X^2_t=X^1_t$ a.s. and similarly $X^1_t\wedge X^2_t=X^2_t$ a.s. Thus $X^1_t=X^2_t$ a.s., which by continuity of $X^1$, $X^2$ implies \eqref{rezd1}.
 \end{proof}
 
The conditions of the above lemma are sufficient to establish strong uniqueness under the same conditions as in \cref{t:su}.
\begin{corollary}\label{c:sud1}
		Let $d=1$, $x\in\R$,  $b\in\C^\alpha$, $H\in(0,1/2]$. Suppose that \eqref{alphacond} is satisfied and either \eqref{bcond1} or \eqref{bcond2} holds.
		Let $(X^1,B^H)$, $(X^2,B^H)$ be two solutions to 
		\Gref{x;b} defined on the same probability space and adapted to the same filtration $(\F_t)_{t\in[0,1]}$.  Suppose that $X^1,X^2\in\Vcl$. 
		Then 
		\begin{equation*}
			\P(X^1_t=X^2_t\, \text{ for any }\, t\in[0,1])=1.
		\end{equation*}
\end{corollary}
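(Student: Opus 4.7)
The proof is a reduction to \cref{l:sud1}. Under the hypotheses of the corollary, both solutions $X^1, X^2 \in \Vcl$ live on a common filtered probability space and share the same fractional Brownian noise; so it suffices to exhibit some $p \in [1,2]$ with $\alpha > p/2 - 1/(2H)$ such that $[X^i - B^H]_{p-\var;[0,1]} < \infty$ almost surely for $i=1,2$. I will handle the two alternatives \eqref{bcond1} and \eqref{bcond2} separately.

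\textbf{Case \eqref{bcond2}.} Set $b_n := G^d_{1/n} b$. Since $b$ is a non-negative measure, each $b_n$ is a non-negative $\C^\infty$ function, and $b_n \to b$ in $\C^{\alpha-}$. Along a subsequence, \cref{D:sol} gives almost sure uniform convergence $\int_0^\cdot b_n(X^i_r)\,dr \to \psi^i := X^i - x - B^H$ on $[0,1]$. The approximants are non-decreasing in $t$, hence so is the continuous limit $\psi^i$, and therefore $[\psi^i]_{1-\var;[0,1]} = \psi^i_1 - \psi^i_0 < \infty$ a.s. The choice $p=1$ satisfies the constraint $\alpha > 1/2 - 1/(2H)$, which is exactly \eqref{alphacond}, so \cref{l:sud1} gives the conclusion.

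\textbf{Case \eqref{bcond1}.} Since $X^i \in \Vcl$, \cref{l:apsde} yields
\begin{equation*}
[X^i - B^H]_{\C^{1+\alpha H}L_m([0,1])} \le C(1 + \|b\|_{\C^\alpha}^2), \quad m \ge 2.
\end{equation*}
The Kolmogorov continuity theorem, applied with $m$ large, then produces a modification of $\psi^i := X^i - B^H$ which is almost surely $\gamma$-H\"older on $[0,1]$ for every $\gamma < (1 + \alpha H) \wedge 1$. As a $\gamma$-H\"older path has finite $p$-variation whenever $p \ge 1/\gamma$, one obtains $[\psi^i]_{p-\var;[0,1]} < \infty$ a.s.\ for every $p > 1/(1+\alpha H) \vee 1$. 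It only remains to pick
\begin{equation*}
p \in \bigl( \tfrac{1}{1 + \alpha H} \vee 1,\ 2\alpha + \tfrac{1}{H} \bigr) \cap [1,2],
\end{equation*}
and to check that this intersection is non-empty. The strict inequality $1/(1+\alpha H) < 2\alpha + 1/H$ rearranges, after clearing denominators, to $(1+\alpha H)(\alpha + 1/(2H)) > 1/2$, which is \eqref{bcond1}; the auxiliary bounds $2\alpha + 1/H > 1$ and $1/(1+\alpha H) < 2$ both follow directly from \eqref{alphacond}. With such $p$ fixed, \cref{l:sud1} concludes the argument.

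No genuine obstacle is anticipated, since the two substantive ingredients — the minimum-process weak-uniqueness trick that powers \cref{l:sud1}, and the a priori H\"older estimate \cref{l:apsde} — are already in hand. The only remaining work is elementary: the algebraic verification that \eqref{bcond1} is precisely the condition which makes the required $p$-interval non-empty, and the routine passage from $L_m$-H\"older regularity to pathwise $p$-variation via Kolmogorov.
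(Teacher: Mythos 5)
Your proof is correct and follows the same route as the paper's: both reduce to \cref{l:sud1}, taking $p=1$ under \eqref{bcond2} by exploiting monotonicity of $X^i-B^H$, and under \eqref{bcond1} deriving finite $p$-variation from the a priori Hölder bound of \cref{l:apsde} together with Kolmogorov's continuity theorem. The paper's explicit choice $p=\tfrac{1}{1+\alpha H-\eps}$ for small $\eps$ is one particular element of the admissible interval you describe, and you correctly identify that non-emptiness of that interval is precisely the algebraic content of \eqref{bcond1}.
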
	
\begin{proof}
\textbf{Case (i): condition \eqref{bcond1} holds}.  In this case we can fix $\eps>0$ such that 
\begin{equation}\label{condepsstr}
	1+\alpha H-\eps>\frac12,\qquad (1+\alpha H-\eps)(\alpha+\frac1{2H})>\frac12.
\end{equation}
We would like to apply \cref{l:sud1} with $p:=\frac{1}{1+\alpha H-\eps}\in[1,2]$. We see that  condition  \eqref{condepsstr} can be rewritten as $\alpha+\frac1{2H}>\frac{p}2$ and thus it implies \eqref{condalphap}. We also see that  $X^1,X^2\in\Vcl$ and thus  by \cref{l:apriori} for any $m\ge2$
$$
[ X^1-B^H]_{\C^{1+\alpha H}L_m([0,1])}\le C(1+\|b\|^2_{\C^\alpha}),\qquad
[X^2-B^H]_{\C^{1+\alpha H}L_m([0,1])}\le C(1+\|b\|^2_{\C^\alpha}).
$$
Therefore, by  the Kolmogorov continuity theorem, we have  
\begin{equation*}
	[X^1- B^H]_{p-\var;[0,1]}\le
	[X^1- B^H]_{\C^{1+\alpha H-\eps}([0;1])}<\infty,\quad a.s.,
\end{equation*}
and similarly $[X^2- B^H]_{p-\var;[0,1]}<\infty$ a.s.. Therefore, condition \eqref{pvarb} holds. Thus, all the conditions of  \cref{l:sud1} are satisfied and \eqref{rezd1} implies $X^1=X^2$ a.s.

\textbf{Case (ii): condition \eqref{bcond2} holds}. In this setting we would like to apply \cref{l:sud1} with $p=1$. Then  condition \eqref{condalphap} coincides with our standing assumption \eqref{alphacond}. Further, since the function $b$ is a nonnegative measure, the functions $X^1- B^H$ and $X^2- B^H$ are nondecreasing, which implies \eqref{pvarb}. 
Thus, all the conditions of \cref{l:sud1} are met and therefore $X^1=X^2$ a.s.
\end{proof}

We deduce the strong existence of solutions to \Gref{x;b} from the strong uniqueness just established in \cref{l:sud1} and from weak existence using Gy\"ongy-Krylov's method. We recall the main technical tool.

\begin{proposition}[{\cite[Lemma~1.1]{MR1392450}}]\label{L:GP}
Let $(Z_n)$ be a sequence of random elements in a Polish space $(E, \rho)$ equipped with the Borel $\sigma$-algebra. Assume that for every pair of subsequences
$(Z_{l_k})$ and $(Z_{m_k})$ there exists a further sub-subsequence  $(Z_{l_{k_r}},Z_{m_{k_r}})$ which converges weakly in the space $E\times E$ to a random element $w=(w^1,w^2)$ such that $w^1=w^2$ a.s. 

Then there exists an $E$-valued random element $Z$ such that $(Z_n)$ converges in probability
to $Z$.
\end{proposition}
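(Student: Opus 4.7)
The plan is a classic two-step argument: first show that $(Z_n)$ is Cauchy in probability via a Portmanteau-plus-contradiction argument using the hypothesis, and then upgrade Cauchy-in-probability to convergence in probability using completeness of the Polish space $(E,\rho)$.

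For the first step, I would argue by contradiction. Suppose $(Z_n)$ is not Cauchy in probability; then there exist $\varepsilon>0$ and subsequences $(l_k), (m_k)$ with $\P(\rho(Z_{l_k}, Z_{m_k}) \ge \varepsilon) \ge \varepsilon$ for all $k$. Applying the hypothesis to this particular pair of subsequences yields a further sub-subsequence $(k_r)$ along which the joint law of $(Z_{l_{k_r}}, Z_{m_{k_r}})$ converges weakly in $E\times E$ to the law of some $w=(w^1,w^2)$ with $w^1=w^2$ a.s. The diagonal-avoiding set $F:=\{(x,y)\in E\times E: \rho(x,y)\ge \varepsilon\}$ is closed in the product topology, so Portmanteau's theorem delivers
\begin{equation*}
\limsup_{r\to\infty} \P\bigl((Z_{l_{k_r}}, Z_{m_{k_r}})\in F\bigr) \le \P(\rho(w^1,w^2)\ge \varepsilon) = 0,
\end{equation*}
which contradicts the lower bound $\varepsilon$.

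For the second step, since $(E,\rho)$ is Polish, I would extract a fast subsequence $(n_k)$ with $\P(\rho(Z_{n_k}, Z_{n_{k+1}}) > 2^{-k}) < 2^{-k}$. The Borel--Cantelli lemma then forces $(Z_{n_k})$ to be a.s. Cauchy in $(E,\rho)$; completeness supplies an a.s. limit $Z$, whose measurability as an $E$-valued map follows from the Polish structure. A triangle-inequality sandwich of the form $\P(\rho(Z_n,Z)>\delta) \le \P(\rho(Z_n,Z_{n_k})>\delta/2) + \P(\rho(Z_{n_k},Z)>\delta/2)$, combined with Cauchy-in-probability (first term) and a.s. convergence along $(n_k)$ (second term), yields $Z_n\to Z$ in probability.

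The only substantive point is selecting the correct closed set in $E\times E$ so that Portmanteau produces the desired contradiction; everything else is routine. A minor technicality, verifying measurability of the a.s. limit $Z$, is standard for Polish-valued random elements. I do not expect a genuine obstacle here, as this proposition is essentially a soft analysis fact rather than a problem-specific estimate.
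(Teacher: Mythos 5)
The paper does not prove this proposition; it is cited as a known result from Gy\"ongy--Krylov (Lemma~1.1 of \cite{MR1392450}), and your argument reproduces the standard proof of that lemma essentially verbatim. The two steps are both correct: negating Cauchy-in-probability gives $\varepsilon>0$ and index sequences $(l_k),(m_k)$ with $\P(\rho(Z_{l_k},Z_{m_k})\ge\varepsilon)\ge\varepsilon$, and applying Portmanteau along the sub-subsequence to the closed set $\{(x,y)\in E\times E:\rho(x,y)\ge\varepsilon\}$ produces the contradiction since $\P(\rho(w^1,w^2)\ge\varepsilon)=0$; completeness then upgrades Cauchy-in-probability to convergence in probability via the fast-subsequence / Borel--Cantelli device. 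One small misattribution: the extraction of the fast subsequence with $\P(\rho(Z_{n_k},Z_{n_{k+1}})>2^{-k})<2^{-k}$ uses the Cauchy-in-probability property from Step~1, not Polishness; completeness and separability are needed only to obtain, and certify the measurability of, the a.s.\ limit of that fast subsequence. This is a phrasing quibble, not a gap.
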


\begin{proof}[Proof of \cref{t:su}]

\textbf{Strong existence}. We fix as usual $b\in\C^\alpha$, $x\in\R^d$ and put $b_n:=G^d_{1/n}b$. Let $X_n$ be a strong solution to \Gref{x;b_n}. We would like to apply \cref{L:GP} to the sequence $(X_n)_{n\in\N}$. Therefore we consider two arbitrary subsequences $(X'_n)_{n\in\N}$ and $(X''_n)_{n\in\N}$ of this sequence which correspond to functions $(b'_n)_{n\in\N}$ and $(b''_n)_{n\in\N}$.

We note that   by \cref{l:apriori} for any $m\ge2$
$$
[X'_n-B^H]_{\C^{1+\alpha H}L_m([0,1])}\le C(1+\|b\|^2_{\C^\alpha}),\qquad
[X''_n-B^H]_{\C^{1+\alpha H}L_m([0,1])}\le C(1+\|b\|^2_{\C^\alpha})
$$
where $C=C(m)$.
Therefore, Kolmogorov's continuity theorem implies tightness of $(X_n',X_n'',B^H,W)_{n\in\Z_+}$ in the space $\C([0,1],\R^{4d})$. Hence, by Prokhorov's theorem, this sequence has a weakly converging subsequence.  By passing to an appropriate subsequence and applying the Skorokhod representation theorem, we see that there exists a random element $(\wh X',\wh X'',\wh B^H, \wh W)$ and a sequence of random elements $(\wh X'_n,\wh X''_n,\wh B^H_n,\wh W_n)$ defined on a common probability space $(\wh\Omega, \wh\F, \wh P)$
such that $\law(\wh X'_n,\wh X''_n,\wh B^H_n,\wh W_n)=\law( X'_n, X''_n, B^H, W)$ and
\begin{equation*}
	\|(\wh X'_n,\wh X''_n,\wh B^H_n,\wh W_n)-(\wh X',\wh X'',\wh B^H,\wh W)\|_{\C([0,1])}\to0 \,\,\,\text{as $n\to\infty$ a.s.}
\end{equation*}
Consider the  filtration $\wh\F_t:=\sigma(\wh X'_s,\wh X''_s,\wh W_s, s\le t)$, $t\in[0,1]$. It is standard to check (see, e.g, \cite[proof of Theorem~8.2]{GG22}, \cite[proof of Corollary~5.4]{blm23}, \cite[proof of Theorem 2.5]{ART21}) that $\wh W$ is an $(\wh \F_t)$--Brownian motion and thus $B^H$ is $(\wh \F_t)$--fractional Brownian motion. By \cref{t:wu}(ii)
both $(\wh X',\wh B^H)$ and $(\wh X'',\wh B^H)$ are weak solutions to \Gref{x;b} and $\wh X',\wh X''\in\Vcl$. 
By construction,  both $(\wh X',\wh B^H)$ and $(\wh X'',\wh B^H)$ are adapted to the same filtration $(\wh \F_t)$. Thus, all conditions of \cref{c:sud1} hold and $\wh X' = \wh X''$.  

By Gy\"ongy-Krylov's lemma (\cref{L:GP}) we see that the initial sequence $(X_n)_{n\in\N}$ converges in probability to a random element $X$. Therefore, $X$ is adapted to the natural filtration of $B^H$ as a limit of adapted processes. By \cref{t:wu}(ii) $X$ is a solution to \Gref{x;b}, thus $X$ is a strong solution and $X\in\Vcl$.

\textbf{Pathwise uniqueness}. Let $(X^1,B^H)$, $(X^2,B^H)$ be two weak solutions to \Gref{x;b} constructed on the same probability space and adapted to possibly different filtrations $(\F^1_t)_{t\in[0,1]}$ and $(\F^2_t)_{t\in[0,1]}$. Suppose that $X^1,X^2\in\Vcl$. Let $X\in\Vcl$ be a strong solution to the same SDE constructed on the same space, which exists by above. Since $X$ is a strong solution, $X$ also adapted to $(\F^1_t)_{t\in[0,1]}$. Applying now \cref{c:sud1} to the pairs $(X^1,B^H)$, $(X,B^H)$ we see that $X=X^1$ a.s. Similarly, $X=X^2$ a.s.  and thus 
$$
\P(X^1_t=X^2_t \text{ for all $t\in[0,1]$})=1
$$
and pathwise uniqueness holds.
\end{proof}

\section{Proofs of the main results for SHE}\label{s:she}

We fix  $\alpha\in(-\frac32,0)$. Without loss of generality we assume that the length of the time interval $T=1$. Recall the definition of the process $V$ (space-time convolution of white noise with the heat kernel) in \eqref{def.V}.

The weak uniqueness proof for SHE uses the same ideas as the proof of weak uniqueness for SDEs given in \cref{s:sde}. However, a number of changes are needed in order to incorporate the infinite-dimensional nature of the problem. In particular, we have to work with the weighted norm in time and space, apply a different version of Girsanov's theorem, and use backward uniqueness for solutions to the heat equation. Therefore, for the convenience of the reader, we have decided to present these two week uniqueness proofs separately.

For a measurable function  $f\colon[0,1]\times D \to\R$ consider a weighted norm
\begin{equation}\label{stwd}
\|f\|_w:=\sup_{t\in[0,1]}\sup_{x\in D}|P_{2-t}f(x)|.
\end{equation}	
We note that if $P_s f(x)=0$ for some $s>0$ and all $x\in D$, then  $f(x)=0$ for all $x\in D$, see, e.g., \cite[Theorem~2.3.11, p.63]{Evans}. Therefore, $\|f\|_w=0$ if and only if $f\equiv0$ and we see that $\|\cdot\|_w$ is a norm. Let $W_{\|\cdot\|_w\wedge1}$ be the corresponding Wasserstein distance defined as in \eqref{wrho}.
The following stability bound is crucial for the proof.

\begin{lemma}\label{sl:ml}
	Let $u_0\in\BB(D)$, $b\in\C^\alpha(\R)$, $g\in\Lip(\R,\R)$. Let $(\Omega,\F,(\F_t)_{t\in[0,1]},\P)$ be a probability space. Let $(u,W)$ be \textbf{any} weak solution  to  \Sref{u_0;b} in the class $\Vscl$ on this space. Let $(v,W)$ be a strong soluton to  \Sref{u_0;g} defined on the same space. Then there exist constants $C,\eps>0$ which depends only on $\alpha$ and $D$ such that 
	\begin{equation}\label{smainressde}
		\W_{\|\cdot\|_w\wedge1}(\law(u),\law(v))\le C \Gamma \|b-g\|_{\C^{\alpha-\eps}}^{\frac\eps2}.
	\end{equation}
	for 
	$$
	\Gamma:=(1+\|g\|_{\C^{\alpha}}^{\frac{4}{\eps}}+\|b\|_{\C^{\alpha}}^{\frac{4}{\eps}}).
	$$
\end{lemma}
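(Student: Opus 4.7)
The proof plan follows the same control-and-reimburse strategy used in \cref{l:ml} for SDEs, with infinite-dimensional modifications. I will carry out four steps.

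\textbf{Step 1 (Generalized coupling construction).} For a large parameter $\lambda \ge 1$ to be chosen, define $\widetilde v$ as the adapted solution on $(\Omega,\F,(\F_t),\P)$ to the mild equation
\begin{equation*}
\widetilde v_t(x)=P_t u_0(x)+\int_0^t\!\!\int_D p_{t-r}(x,y)\bigl[g(\widetilde v_r(y))+\lambda(u_r(y)-\widetilde v_r(y))\bigr]\,dy\,dr+V_t(x).
\end{equation*}
Existence follows from \cref{l:ebound} (with $h(z):=g(z)-\lambda z$ and forcing $R_t(x):=V_t(x)+\lambda\int_0^t P_{t-r}u_r(x)dr$, which is bounded in $\Ctimespacezerom{[0,1]}$ since $u\in\Vscl$). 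The triangle inequality gives
\begin{equation*}
\W_{\|\cdot\|_w\wedge1}(\law(u),\law(v))\le \E\bigl[\|u-\widetilde v\|_w\wedge1\bigr]+\dtv(\law(\widetilde v),\law(v)).
\end{equation*}

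\textbf{Step 2 (TV bound via Girsanov).} The shifted noise $\widetilde W(dr,dy):=W(dr,dy)+\lambda(u_r(y)-\widetilde v_r(y))\,dr\,dy$ is, under an equivalent measure $\widetilde\P$, an $(\F_t)$–space-time white noise, and under $\widetilde\P$ the pair $(\widetilde v,\widetilde W)$ is a weak mild solution to \Sref{u_0;g}. Since $g\in\Lip$, weak uniqueness for \Sref{u_0;g} is classical, so $\law_{\widetilde\P}(\widetilde v)=\law_{\P}(v)$. Localizing $\lambda(u-\widetilde v)$ and applying Pinsker's inequality exactly as in \cref{l:ks1} yields
\begin{equation*}
\dtv(\law(\widetilde v),\law(v))\le C\lambda\bigl\|\|u-\widetilde v\|_{L_2([0,1]\times D)}\bigr\|_{L_2(\Omega)}\le C\lambda\,\|u-\widetilde v\|_{\Ctimespace{0}{2}{[0,1]}}.
\end{equation*}

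\textbf{Step 3 (Spatial closeness via stochastic sewing).} Writing $z:=u-\widetilde v$, Duhamel's formula with dissipation $-\lambda z$ gives, in the sense of approximations $b_n:=G_{1/n}b$ and the approximate drift identity from \cref{def:ssol},
\begin{equation*}
z_t(x)=\int_0^t\!\!\int_D e^{-\lambda(t-r)}p_{t-r}(x,y)\bigl[b(u_r(y))-g(\widetilde v_r(y))\bigr]\,dy\,dr,
\end{equation*}
understood as the limit of the smoothed expression. Splitting $b(u)-g(\widetilde v)=[b(u)-g(u)]+[g(u)-g(\widetilde v)]$ and applying \cref{c:sfinfin} (bound \eqref{spdew} with $f=b-g$, $\beta=\alpha-\eps$, and bound \eqref{spde2w} with $f=g$, $\beta=\alpha$, $\mu=\tfrac12$, $\tau=\tfrac58$), together with \cref{l:apspde} for $[u-V]_{\Ctimespace{5/8}{m}{[0,1]}}$ and \cref{l:sapriori} for $[\widetilde v-V]_{\Ctimespace{5/8}{m}{[0,1]}}$, leads to an estimate of the form
\begin{equation*}
\|z_t(x)\|_{L_m}\le C\Theta\lambda^{-2\eps}\|u-\widetilde v\|_{\Ctimespacezerom{[0,t]}}+C\Theta\lambda^{-1-\eps/2}+C\lambda^{2\eps}\|b-g\|_{\C^{\alpha-\eps}},
\end{equation*}
with $\Theta=1+\|b\|_{\C^\alpha}^3+\|g\|_{\C^\alpha}^3$. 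Choosing $\lambda$ large enough that $C\Theta\lambda^{-\eps/2}\le\tfrac12$ and buckling produces
\begin{equation*}
\|u-\widetilde v\|_{\Ctimespace{0}{m}{[0,1]}}\le C\lambda^{-1-\eps/2}+C\lambda^{2\eps}\|b-g\|_{\C^{\alpha-\eps}}.
\end{equation*}
Applying $P_{2-t}$ to both sides of the Duhamel representation and using the semigroup property $P_{2-t}p_{t-r}=p_{2-r}$ together with $\|P_sf\|_{\C(D)}\le\|f\|_{\C(D)}$, the same bound controls $\E\|u-\widetilde v\|_w$ (and the smoothing only helps absorb cases where the integrand is less regular).

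\textbf{Step 4 (Optimization).} Combining Steps 2 and 3,
\begin{equation*}
\W_{\|\cdot\|_w\wedge1}(\law(u),\law(v))\le C\lambda^{-\eps/2}+C\lambda^{1+2\eps}\|b-g\|_{\C^{\alpha-\eps}},
\end{equation*}
and choosing $\lambda=\|b-g\|_{\C^{\alpha-\eps}}^{-1/2}+C_0\Theta^{2/\eps}$ yields \eqref{smainressde} after adjusting $\eps$.

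\emph{The main obstacle} is Step 3: obtaining a pointwise-in-$(t,x)$ bound on $z_t(x)$ whose right-hand side involves $\|u-\widetilde v\|_{\Ctimespacezerom{[0,t]}}$ with the right exponent of $\lambda$, so that the buckling can be closed in the regime $\alpha>-\tfrac32$. The weighted norm $\|\cdot\|_w$ is used precisely because the Wasserstein side needs only a space-time-smoothed control of $u-\widetilde v$, which matches the output of \cref{c:sfinfin} after the additional $P_{2-t}$ smoothing; this plays the role of the Kolmogorov-continuity upgrade used in the SDE case to pass from $L_m$-bounds to supremum bounds.
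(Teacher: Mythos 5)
Your overall architecture is correct and follows the same control-and-reimburse-plus-stochastic-sewing strategy as the paper's proof (the auxiliary process $\widetilde v$, the Girsanov/Pinsker total-variation bound, the stochastic-sewing buckling, and the final optimization in $\lambda$), but there is a genuine gap in how you pass from pointwise $L_m$-bounds to the supremum bound $\E\bigl[\|u-\widetilde v\|_w\bigr]$.

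In Step~3 you claim that applying $P_{2-t}$ to the Duhamel representation and using $P_{2-t}p_{t-r}=p_{2-r}$ together with $\|P_s f\|_{\C(D)}\le\|f\|_{\C(D)}$ lets ``the same bound'' control $\E\|u-\widetilde v\|_w$, and you suggest in your final remark that the $P_{2-t}$ smoothing ``plays the role of the Kolmogorov-continuity upgrade.'' This is where the argument breaks down. The stochastic sewing bounds from \cref{c:sfinfin} give you $\sup_{t,x}\|P_{2-t}(u_t-\widetilde v_t)(x)\|_{L_m(\Omega)}$, not $\bigl\|\sup_{t,x}|P_{2-t}(u_t-\widetilde v_t)(x)|\bigr\|_{L_m(\Omega)}$; interchanging $\sup$ and $\E$ is exactly the hard part, and applying the contractive semigroup $P_{2-t}$ to a pointwise moment bound does not perform this interchange. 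The paper's own proof does not use $P_{2-t}$ as a substitute for Kolmogorov continuity --- it uses $P_{2-t}$ to make Kolmogorov continuity \emph{applicable}. Concretely: the only regularity available is the ``mild-type'' increment bound \eqref{holdspde} on $\|(u_t-\widetilde v_t)(x)-e^{-\lambda(t-s)}P_{t-s}(u_s-\widetilde v_s)(x)\|_{L_m}$, which is weaker than a genuine Hölder modulus in $(t,x)$; without the smoothing this is not enough for Kolmogorov. Introducing $\uu_t(x):=P_{2-t}u_t(x)$ and $\vv_t(x):=P_{2-t}\widetilde v_t(x)$ converts the mild increment bound into a genuine Hölder-in-time bound for $\uu-\vv$ (via Jensen and the semigroup property, as in \eqref{prevholder}), and additionally \cref{p:spacess} gives a Lipschitz-in-space bound $\|(\uu_t-\vv_t)(x)-(\uu_t-\vv_t)(y)\|_{L_m}\le C|x-y|(2-t)^{-1/2}\|u-\widetilde v\|_{\Ctimespacezerom{[0,1]}}$, exploiting that $2-t\ge 1$. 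Only with both the temporal Hölder and spatial Lipschitz bounds in hand can one invoke Kolmogorov's continuity theorem to obtain $\bigl\|\sup_{t,x}|\uu_t(x)-\vv_t(x)|\bigr\|_{L_m(\Omega)}$, which is the quantity $\E\|u-\widetilde v\|_w$ you need. You should replace your claim at the end of Step~3 with this two-sided regularity argument and an explicit invocation of Kolmogorov (noting $m\eps>1$), as is done in Steps~4--5 of the paper's proof of \cref{sl:ks2}.
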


In order to prove \cref{sl:ml}, we fix $u_0\in\BB(D)$, $b\in\C^\alpha(\R)$, $g\in\Lip(\R,\R)$. Let  $(u,W)$ be any weak solution to \Sref{u_0;b} on a space $(\Omega,\F,(\F_t)_{t\in[0,1]},\P)$ in the class $\Vscl$. 
Let $v$ be the strong solution to \Sref{u_0;g}. Let us now implement the generalized coupling strategy with stochastic sewing. We fix  a parameter $\lambda\ge1$ and let $\wt v$ be the solution to the following equation
\begin{equation}\label{SPDEwty}
	\wt v_t(x)=P_t u_0(x) +\int_0^t\int_D p_{t-r}(x,y) \bigl(g(\wt v_r(y))+\lambda(u_r(y)-\wt v_r(y)) \bigr)\,dydr+V_t(x),
\end{equation}	 
where $t\in[0,1]$, $x\in D$. First, we show that $\wt v$ is well-defined.
\begin{lemma}\label{sl:ks0}
Equation \eqref{SPDEwty} has an adapted solution on the same space $(\Omega,\F,(\F_t)_{t\in[0,1]},\P)$. Moreover, ${\|\wt v-V\|_{\Ctimespace{1}{m}{[0,1]}}<\infty}$.
\end{lemma}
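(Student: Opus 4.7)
The plan is to invoke \cref{l:ebound} after absorbing the coupling drift $\lambda(u_r - \wt v_r)$ into the forcing term, exactly as in the SDE analogue \cref{L:ks0}. Specifically, I would choose $\D = D$, $s_t = p_t$ (so $S = P$), $z = u_0$, $h(z) := g(z) - \lambda z$, $Z := \wt v$, and set
\begin{equation*}
R(t,x) := V_t(x) + \lambda \int_0^t \int_D p_{t-r}(x,y)\, u_r(y)\, dy\, dr.
\end{equation*}
With these identifications, \eqref{SPDEwty} rewrites as \eqref{SPDEgeneric}. Since $g$ is Lipschitz, so is $h$, and $R$ is measurable and adapted (because $V$ and $u$ are).

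The only nontrivial point to verify before applying the lemma is $\|R\|_{\Ctimespacedzerom{[0,1]}} < \infty$. A standard Gaussian calculation gives $\|V\|_{\Ctimespacedzerom{[0,1]}} < \infty$, and the remaining term of $R$ is bounded in $L_m(\Omega)$ by $\lambda \|u\|_{\Ctimespacedzerom{[0,1]}}$ using $\int_D p_{t-r}(x,y)\,dy = 1$; this last quantity is finite because $u \in \Vscl$. Applying \cref{l:ebound} then produces an adapted solution $\wt v$ to \eqref{SPDEwty} with $\|\wt v - R\|_{\Ctimespaced{1}{m}{[0,1]}} < \infty$.

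For the final statement, the triangle inequality gives
\begin{equation*}
\|\wt v - V\|_{\Ctimespace{1}{m}{[0,1]}} \le \|\wt v - R\|_{\Ctimespace{1}{m}{[0,1]}} + \|R - V\|_{\Ctimespace{1}{m}{[0,1]}}.
\end{equation*}
The first term is finite from the previous step (since $\Ctimespaced = \Ctimespace$ when $S = P$). For the second, the semigroup property simplifies the increment $(R-V)_t(x) - P_{t-s}(R-V)_s(x)$ to $\lambda \int_s^t \int_D p_{t-r}(x,y) u_r(y)\, dy\, dr$, whose $L_m(\Omega)$ norm is at most $\lambda(t-s) \|u\|_{\Ctimespacezerom{[0,1]}}$, and the $\|\cdot\|_{\Ctimespacezerom{[0,1]}}$ part is bounded identically (take $s = 0$).

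I do not anticipate a genuine obstacle here: this is a direct transcription of the SDE argument from \cref{L:ks0} to the SPDE setting. The only care needed is in checking that the absorbed drift term $\lambda \int p_{t-r}(x,y) u_r(y)\,dy\,dr$ has finite $L_m(\Omega)$ norm uniformly in $(t,x)$, which is built into the definition of $\Vscl$ via the uniform bound $\sup_{(t,x)} \|u_t(x)\|_{L_m(\Omega)} < \infty$.
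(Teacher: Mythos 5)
Your proof is correct and follows essentially the same route as the paper: both apply \cref{l:ebound} with $\D = D$, $s_t = p_t$, $z = u_0$, $h(z) = g(z) - \lambda z$, $Z = \wt v$, and $R(t,x) = V_t(x) + \lambda \int_0^t\int_D p_{t-r}(x,y)\,u_r(y)\,dy\,dr$, then bound $\|\wt v - V\|$ by triangle inequality via $R$. The only minor difference is that you spell out the semigroup-property computation for $\|R-V\|_{\Ctimespace{1}{m}{[0,1]}}$, which the paper states without derivation.
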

\begin{proof}
We apply \cref{l:ebound} with $\D=D$, $s_t=p_t$ (recall \cref{c:conv}), $h(x)=g(x)-\lambda x$, $R(t,x)=V_t(x)+\lambda \int_0^t\int_D p_{t-r}(x,y) u_r(y)\,dydr$, $Z=\wt v$. We see that $h$ is Lipschitz and 
\begin{equation*}
	\|R\|_{\Ctimespacezerom{[0,1]}}\le \|V\|_{\Ctimespacezerom{[0,1]}}+\lambda \|u\|_{\Ctimespacezerom{[0,1]}}<\infty
\end{equation*}
since $u\in\Vscl$. Thus, all the conditions of \cref{l:ebound} are satisfied and applying this lemma we see that equation  \eqref{SPDEwty} has an adapted solution $\wt v$ and 
\begin{align*}
	\|\wt v-V\|_{\C^1L_m([0,1])}&\le \|\wt v-R\|_{\C^1L_m([0,1])}+\|R-V\|_{\C^1L_m([0,1])}\\
	&\le \|\wt v -R\|_{\C^1L_m([0,1])}+ \lambda \|u\|_{\Ctimespacezerom{[0,1]}}<\infty.\qedhere
\end{align*}
\end{proof}

Similar to \cref{s:sde}, we will show that $\law(\wt v)$ is close to $\law(v)$ in the total variation norm, and for each $\omega \in \Omega$, the distance between $\wt v(\omega)$ and $u(\omega)$ is also small. This would imply that $\law(v)$ and $\law(u)$ are close in an appropriate Wasserstein distance, and thus \cref{sl:ml} holds.

Recall the definition of $\|\cdot\|_{\Ctimespacezerom{[0,1]}}$ norm in \eqref{fullsnorm}. Below we denote by $\Leb(D)$
the Lebesgue measure of the set $D$.
\begin{lemma}\label{sl:ks1}
	There exists a constant $C>0$ such that 
	\begin{equation}\label{sdtvbound}
		\dtv(\law( v),\law(\wt v))\le \frac12 \lambda \Leb(D)^{1/2} \|u-\wt v\|_{\Ctimespace{0}{2}{[0,1]}}.
	\end{equation}	
\end{lemma}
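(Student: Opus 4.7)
The plan is to follow the same Girsanov plus Pinsker plus localization strategy used in \cref{l:ks1} for the SDE case, adapted to the space-time white noise setting. The two key adjustments are: (a) Girsanov for $W$ changes the noise by an \emph{additive} $L_2(dr\,dy)$ drift rather than by a non-trivial kernel transform (so no fractional kernel $K_H$ appears), and (b) the KL/Cameron--Martin computation yields a space-time $L_2$ integral, which is what produces the $\Leb(D)^{1/2}\,\|u-\wt v\|_{\Ctimespace{0}{2}{[0,1]}}$ factor.

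First, I would define the tilted noise. Set $\wt W(dr,dy):=W(dr,dy)+\lambda(u_r(y)-\wt v_r(y))\,dr\,dy$ and check from \eqref{SPDEwty} that $\wt v$ is a mild solution of \Sref{u_0;g} driven by $\wt W$. To make the Girsanov transform rigorous without worrying about Novikov-type conditions, I localize: for $N>0$ put
\begin{equation*}
\beta_N(r,y):=\lambda(u_r(y)-\wt v_r(y))\,\1\bigl(\sup_{s\le r,\,z\in D}|u_s(z)-\wt v_s(z)|\le N\bigr),
\end{equation*}
which is $(\F_t)$-adapted and bounded by $\lambda N$. Define $\wt W^N:=W+\beta_N\,dr\,dy$; by the Girsanov theorem for space-time white noise (e.g., \cite[Proposition 1.6]{Dalang}, or as applied in \cite{ABLM}), there is an equivalent measure $\wt\P^N$ under which $\wt W^N$ is a space-time white noise and
\begin{equation*}
\frac{d\wt\P^N}{d\P}=\exp\Bigl(-\int_0^1\!\!\int_D \beta_N(r,y)\,W(dr,dy)-\tfrac12\int_0^1\!\!\int_D \beta_N(r,y)^2\,dr\,dy\Bigr).
\end{equation*}
Let $\wt v_N$ be the unique strong solution to \Sref{u_0;g} driven by $\wt W^N$ on $(\Omega,\F,\wt\P^N)$; this exists since $g$ is Lipschitz.

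Next, I split by the triangle inequality
\begin{equation*}
\dtv(\law_\P( v),\law_\P(\wt v))\le \dtv(\law_\P(v),\law_\P(\wt v_N))+\dtv(\law_\P(\wt v_N),\law_\P(\wt v)).
\end{equation*}
For the first term, weak uniqueness for SHE with Lipschitz drift gives $\law_\P(v)=\law_{\wt\P^N}(\wt v_N)$, hence this term is at most $\dtv(\P,\wt\P^N)$. Pinsker's inequality and the explicit formula for the density yield
\begin{equation*}
\dtv(\P,\wt\P^N)\le \tfrac{1}{\sqrt2}\Bigl(\E^{\P}\log\tfrac{d\P}{d\wt\P^N}\Bigr)^{1/2}=\tfrac{1}{2}\Bigl(\E^\P\!\!\int_0^1\!\!\int_D \beta_N(r,y)^2\,dr\,dy\Bigr)^{1/2};
\end{equation*}
the cross-term vanishes because $\beta_N$ is bounded, so the stochastic integral $\int \beta_N\,dW$ is a true $\P$-martingale. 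Since $\beta_N^2\le \lambda^2(u_r(y)-\wt v_r(y))^2$,
\begin{equation*}
\dtv(\P,\wt\P^N)\le \tfrac12\lambda\,\Leb(D)^{1/2}\,\|u-\wt v\|_{\Ctimespace{0}{2}{[0,1]}}.
\end{equation*}
For the second term, on the event $\{\sup_{s\le 1,\,z}|u_s(z)-\wt v_s(z)|\le N\}$ we have $\beta_N=\lambda(u-\wt v)$ and thus $\wt W^N=\wt W$; by pathwise uniqueness for \Sref{u_0;g} (Lipschitz drift) on this event $\wt v_N=\wt v$, so $\dtv(\law_\P(\wt v_N),\law_\P(\wt v))\le \P(\sup_{s\le 1,\,z}|u_s(z)-\wt v_s(z)|>N)$. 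Combining and sending $N\to\infty$ (using $\|u-\wt v\|_{\Ctimespace{0}{2}{[0,1]}}<\infty$, which follows from $u\in\Vscl$ and \cref{sl:ks0}) yields \eqref{sdtvbound}.

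The main obstacle is checking that the Girsanov transformation applies cleanly to space-time white noise in the form required here; this is by now standard but the localization step must be performed carefully to ensure boundedness of $\beta_N$ and the vanishing of the stochastic-integral expectation under $\P$, exactly as in the SDE analog but replacing the fractional kernel integrand $v_s^N$ by the plain integrand $\beta_N$ and replacing $L_2([0,1])$ by $L_2([0,1]\times D)$.
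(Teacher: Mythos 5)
Your proof matches the paper's argument essentially step for step: localized Girsanov for space-time white noise, weak uniqueness for the Lipschitz SHE to identify $\law_\P(v)$ with $\law_{\wt\P^N}(\wt v_N)$, Pinsker's inequality to bound the total variation by the space-time $L_2$ norm of the shift, and a complementary-event estimate on the second term before letting $N\to\infty$. The only difference is cosmetic: you cut off $\beta_N$ with a running-supremum indicator while the paper uses the pointwise cutoff $\1_{|u_t(x)-\wt v_t(x)|\le N}$; both reduce to $\beta_N=\lambda(u-\wt v)$ on $\{\|u-\wt v\|_{\C([0,1]\times D)}\le N\}$ and both identify $\wt v_N$ with $\wt v$ there (the paper via Gronwall, you via pathwise uniqueness for the Lipschitz SHE), so the estimates and conclusion coincide.
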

\begin{proof}
We argue similarly to the proof of \cref{l:ks1} and make appropriate modifications to extend that result to the infinite-dimensional setting.
	
For $N>0$, $t\in[0,1]$, $x\in D$ define
	\begin{align*}
		&\beta_N(t,x):=\lambda (u_t(x)-\wt v_t(x))\1_{ |u_t(x)-\wt v_t(x)|\le N};\\
		&\wt W_t(f):=W_t(f)+	\int_0^t\int_D f(y) \beta_N(r,y) dydr,\quad f\in L_2(D).
	\end{align*}	
	
We would like to apply  the Girsanov theorem for space-time white noise in the form of  \cite[Theorem~2.7.1, Proposition~2.7.4]{Marta}. We note that $|\beta_N|\le \lambda N$. Hence, the corresponding version of the Novikov condition for SPDEs (\cite[condition~(2.7.17)]{Marta}) holds. Therefore by \cite[Proposition~2.7.4]{Marta} we see that all the conditions of the Girsanov theorem for space-time white noise  \cite[Theorem~2.7.1]{Marta} are satisfied. Thus, there exists an equivalent probability measure $\wt \P^{N}$ such that the process $\wt W$ is an $(\F_t)_{t\in[0,1]}$-white noise  under this measure and by \cite[formula~(2.7.2)]{Marta}
\begin{equation}\label{sdenratio}
	\frac{d \wt \P^{N}}{d \P}=\exp\Bigl(-\int_0^1\int_D \beta_N(r,y) W(dr,dy)-\frac12\int_0^1\int_D \beta_N^2(r,y)\,dydr \Bigr).
\end{equation}	

Let  $(\wt v_N,\wt W)$ be a strong solution to \Sref{u_0;g} on $(\Omega,\F,\wt \P^N)$. That is, let $\wt v_N$ be the solution of the following equation
\begin{equation*}
\wt v_N(t,x)=P_t u_0(x) +\int_0^t\int_D p_{t-r}(x,y) g(\wt v_N(r,y))\,dydr+\int_{0}^t\int_D p_{t-r}(x,y)\wt W(dr,dy),
\end{equation*}
where $t\in[0,1],\,x\in D$.

To get the desired bound \eqref{sdtvbound}, we note first that by the triangle inequality,
\begin{equation}\label{streq}
	\dtv(\law_\P( v),\law_\P(\wt v))\le \dtv(\law_\P( v),\law_\P(\wt v_N))+\dtv(\law_\P(\wt v_N),\law_\P(\wt v)).
\end{equation}

We recall that $(v,W)$ is a strong solution to \Sref{u_0;g} on $(\Omega,\F,\P)$ Therefore, by weak uniqueness for \Sref{u_0;g}  (recall that $g\in\Lip(\R^d,\R^d)$) we have $\law_{\P}(v)=\law_{\wt \P^N}(\wt v_N)$.
Thus, using again Pinsker's inequality and the expression for the density \eqref{sdenratio}, we can bound the first term in the right-hand side of \eqref{streq} in the following way
\begin{align}\label{streq1}
	\dtv(\law_{\P}( v),\law_{\P}(\wt v_N))&=
	\dtv(\law_{\wt \P^N}(\wt v_N),\law_{\P}(\wt v_N))\le \dtv(\P,\wt\P^N) \nn\\
	&\le \frac1{\sqrt2}\Bigl(\E^{\P}\log \frac{d \P}{d \wt \P^{N}}\Bigr)^{1/2}=\frac12\Bigl(\int_0^1\int_D \E^\P\beta_N^2(r,y)\,dydr\Bigr)^{1/2}\nn\\
	&\le \frac12\lambda \Bigl(\int_0^1\int_D \E^\P (u_r(y)-\wt v_r(y))^2\,dydr\Bigr)^{1/2}.
\end{align}	
Here we used that $|\beta_t^N|\le C \lambda N$ to conclude that the expected value of the stochastic integral is $0$.
	
To bound the second term in \eqref{streq}, we use \cite[Proposition 2.7.3]{Marta} to rewrite equation for $\wt v_N$ on $(\Omega,\F,\wt \P^N)$ as
\begin{align*}
	 \wt v_N(t,x)=&P_t u_0 (x)+\int_0^t\int_D p_{t-r}(x,y) \bigl(g(\wt v_N(r,y))+\lambda(u_r(y)-\wt v_r(y))\1_{ |u_t(x)-\wt v_t(x)|\le N}\bigr)\,dydr\\
	 &+\int_{0}^t\int_D p_{t-r}(x,y)W(dr,dy),\quad t\in[0,1],\, x\in D.
\end{align*}
Recall equation \eqref{SPDEwty} for $\wt v$. We have on the event $\{\|u-\wt v\|_{\C([0,1]\times D)}\le N\}$ for $t\in[0,1]$
\begin{align*}
\|\wt v(t)-\wt v_N(t)\|_{\C(D)}&\le \|g\|_{\Lip(\R)}\int_0^t\int_D p_{t-r}(x,y) \|\wt v(r)-\wt v_N(r)\|_{\C(D)}\,dydr\\
&=
\|g\|_{\Lip(\R)}\int_0^t\|\wt v(r)-\wt v_N(r)\|_{\C(D)}\,dr.
\end{align*}
Therefore, by Gronwall's lemma, $\wt v_N=\wt v$ on  $\{\|u-\wt v\|_{\C([0,1]\times D)}\le N\}$. Hence we can bound the second term in \eqref{streq} as 
	\begin{equation*}
		\dtv(\law_\P(\wt v_N),\law_\P(\wt v))\le \P(\|u-\wt v\|_{\C([0,1]\times D)}>N).
	\end{equation*}

Now we combine this with \eqref{streq1} and substitute into \eqref{streq}. We get 
	\begin{equation*}
			\dtv(\law_\P( v),\law_\P(\wt v))\le  \frac12 \lambda \, \Leb(D)^{1/2} \|u-\wt v\|^2_{\Ctimespace{0}{2}{[0,1]}}+
	 \P(\|u-\wt v\|_{\C([0,1]\times D)}>N), 
	\end{equation*}
	for $C>0$. Since $N$ was arbitrary, by passing to the limit as $N\to\infty$, we get \eqref{sdtvbound}.
\end{proof}

The next lemma shows that $\wt v$ is close to $u$ in space. Before we continue, let us explain how the proof differs from the proof of \cref{L:ks2}, which establishes a bound of a similar type for the SDE case. In the first part of the proof (Steps 1--3), we bound
\begin{equation}\label{expl}
\sup_{t\in [0,1]}\sup_{x\in D}\|u_{t,x}-\wt v_{t,x}\|_{L_m(\Omega)}.
\end{equation}
This is done along more or less the same lines as in the proof of \cref{L:ks2}. However, the second part of the proof, which shows that the supremums can be squeezed inside the expectation, uses quite different arguments from the ones used in the proof of \cref{L:ks2}. Indeed, in the proof of \cref{L:ks2}, to estimate $\| \sup_{t\in [0,1]}|u_{t}-\wt v_{t}|\|_{L_m(\Omega)}$, we first bounded $\| (u_{t}-\wt v_{t})-(u_{s}-\wt v_{s})\|_{L_m(\Omega)}$ for $s,t\in[0,1]$ in terms of $|t-s|^\eps$ and then applied the Kolmogorov contniuity theorem. An analogue of this here would be to bound
$$
\| (u_{t,x}-\wt v_{t,x})-(u_{s,y}-\wt v_{s,y})\|_{L_m(\Omega)},
$$
where $s,t\in[0,1]$, $x,y\in D$, in terms of $|t-s|^\eps+|x-y|^\eps$ and then apply the Kolmogorov theorem.  However, such a bound seems quite tricky to obtain. All we have is a weaker bound on
\begin{equation}\label{expl2}
\| (u_{t,x}-\wt v_{t,x})-P_{t-s}(u_{s,x}-\wt v_{s,x})\|_{L_m(\Omega)},
\end{equation}
in terms of $|t-s|^\eps$, see \eqref{holdspde}. This is not sufficient to show that the operations of supremum and taking expectation in \eqref{expl} can be interchanged. Nevertheless, by introducing weighted processes in time and space, we show that the bound on \eqref{expl2} allows us to bound the supremum norm of these processes and eventually leads to \eqref{smainboundxmy}. This is the subject of Steps 4--5 of the proof.
\begin{lemma}\label{sl:ks2}
	For any $\eps>0$ such that 
	\begin{equation}\label{sepscond}
	\alpha+\frac32>24\eps,
	\end{equation}
	there exist constants $C=C(\eps),C_0=C_0(\eps)>1$ such that for any
	\begin{equation}\label{slambdacond}
	\lambda> C_0(1+\|g\|_{\C^\alpha}^{\frac3\eps}+ \|b\|_{\C^{\alpha}}^{\frac3\eps})
	\end{equation}
	one has 
	\begin{align}
			\label{spdeee} 
&\|u-\wt v\|_{\Ctimespace{0}{2}{[0,1]}}\le  C  \lambda^{-1-\eps}+C \|b-g\|_{\C^{\alpha-\eps}};\\
\label{smainboundxmy}
	&\bigl\| \sup_{\substack{t\in[0,1]\\ x\in D}}|P_{2-t}(u_t-\wt v_t)(x) |\bigr\|_{L_2(\Omega)}\le C\lambda^{-1}+C\lambda^\eps \|b-g\|_{\C^{\alpha-\eps}}.
	\end{align}
\end{lemma}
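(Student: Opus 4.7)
The proof will follow the five-step template of \cref{L:ks2} with two substantive modifications: (i) the sewing estimates come from \cref{c:sfinfin} with the SHE parameters $\kappa = 1/4$, $\tau = 5/8$, $\mu = 1/2$ (so that conditions \eqref{sparam} and \eqref{sparamcond2} both reduce to $\alpha > -3/2$), mirroring the way \cref{c:sde} was used in the SDE argument; and (ii) to push the supremum inside the $L_2(\Omega)$ norm for \eqref{smainboundxmy} we cannot apply Kolmogorov's theorem in time alone—we will introduce the weighted process $\Psi_t(x) := P_{2-t}\phi_t(x)$ and exploit the smoothing properties of the backward heat semigroup $P_{2-t}$.

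For Steps 1--3, I mimic \cref{L:ks2} in its mild-form analogue. Setting $b^n := G_{1/n} b$ and $Z^n_t(x) := P_t u_0(x) + \int_0^t\int_D p_{t-r}(x,y) b^n(u_r(y))\,dydr + V_t(x)$, the integrating-factor trick $e^{\lambda r}$ applied to the linear PDE satisfied by $Z^n - \wt v$, followed by the limit $n \to \infty$ (as in Step~1 of \cref{L:ks2}), yields for $\phi := u - \wt v$ and $(s,t) \in \Delta_{[0,1]}$
\begin{equation*}
\phi_t(x) - P_{t-s}\phi_s(x) = (e^{-\lambda(t-s)} - 1) P_{t-s}\phi_s(x) + I_1(s,t)(x) + I_2(s,t)(x),
\end{equation*}
where $I_1(s,t)(x) := \int_s^t \int_D e^{-\lambda(t-r)} p_{t-r}(x,y)(g(u_r(y)) - g(\wt v_r(y)))\,dydr$ and $I_2$ is the analogous integral for $b(u_r(y))-g(u_r(y))$, understood via the $b^n$ approximation. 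Applying \eqref{spde2w} with $\beta=\alpha$, $\tau=5/8$, $\mu=1/2$, $f=g$ to $I_1$, together with the a priori bounds \eqref{claimspdesol} on $[u-V]_{\Ctimespace{5/8}{m}{[0,1]}}$ (valid since $u \in \Vscl$) and \eqref{claimspde} on $[\wt v - V]_{\Ctimespace{5/8}{m}{[0,1]}}$ (valid thanks to \cref{sl:ks0}), and Young's inequality, produces
\begin{equation*}
\|I_1(s,t)\|_{L_m} \le C\Theta \lambda^{-2\eps}(t-s)^\eps \|u-\wt v\|_{\Ctimespace{0}{m}{[0,1]}} + C\Theta \lambda^{-1-2\eps}(t-s)^\eps,
\end{equation*}
with $\Theta := 1 + \|g\|_{\C^\alpha}^3 + \|b\|_{\C^\alpha}^3$; the bookkeeping of exponents rests on \eqref{sepscond} in the same way \eqref{epscond} was used in the SDE proof. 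Analogously \eqref{spdew} applied to $I_2$ gives $\|I_2(s,t)\|_{L_m} \le C \|b-g\|_{\C^{\alpha-\eps}}(t-s)^\eps + C\Theta(t-s)^\eps\lambda^{-1-2\eps}$. Setting $s=0$ (noting $\phi_0 \equiv 0$), taking supremum over $(t,x) \in [0,1]\times D$, and using that $\lambda \ge C_0(1+\|g\|_{\C^\alpha}^{3/\eps}+\|b\|_{\C^\alpha}^{3/\eps})$ makes $C\Theta\lambda^{-2\eps} \le 1/2$, I buckle to obtain \eqref{spdeee}. Substituting this back into Step~2 yields the space-time regularity bound $\|\phi_t(x) - P_{t-s}\phi_s(x)\|_{L_m} \le C(t-s)^\eps(\lambda^{-1} + \lambda^\eps\|b-g\|_{\C^{\alpha-\eps}})$ for all $(s,t) \in \Delta_{[0,1]}$, $x \in D$.

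For \eqref{smainboundxmy} I introduce $\Psi_t(x) := P_{2-t} \phi_t(x)$ on $[0,1]\times D$. The semigroup identity $P_{2-t}P_{t-s} = P_{2-s}$ gives $\Psi_t(x) - \Psi_s(x) = P_{2-t}(\phi_t - P_{t-s}\phi_s)(x)$, so Jensen's inequality and \eqref{convprop} produce
\begin{equation*}
\|\Psi_t(x) - \Psi_s(x)\|_{L_m} \le \sup_y \|\phi_t(y) - P_{t-s}\phi_s(y)\|_{L_m} \le C(t-s)^\eps\bigl(\lambda^{-1} + \lambda^\eps\|b-g\|_{\C^{\alpha-\eps}}\bigr).
\end{equation*}
For the spatial regularity I use that $p_{2-t}(\cdot,z)$ is uniformly Lipschitz in its first argument on $t \in [0,1]$ (since $2-t \ge 1$), giving $\|\Psi_t(x) - \Psi_t(y)\|_{L_m} \le C|x-y|\,\|\phi\|_{\Ctimespace{0}{m}{[0,1]}} \le C|x-y|(\lambda^{-1}+\lambda^\eps\|b-g\|_{\C^{\alpha-\eps}})$. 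Taking $m = 2/\eps$ and applying Kolmogorov's continuity theorem on the two-parameter domain $[0,1]\times D$ moves the supremum inside the $L_m \supset L_2$ norm and produces \eqref{smainboundxmy}. The main obstacle is this last step: $\phi$ possesses only parabolic regularity, so a direct Kolmogorov argument applied to $\phi$ itself fails; the weighted process $\Psi = P_{2-t}\phi_t$ is precisely the device that converts parabolic time regularity into genuine joint H\"older continuity in $(t,x)$, at the price of replacing $|\phi|$ by $|P_{2-t}\phi_t|$, which is the quantity that matches the Wasserstein distance $\W_{\|\cdot\|_w\wedge 1}$ used in \cref{sl:ml}.
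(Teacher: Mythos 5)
Your proposal follows the same route as the paper: the same integrating-factor derivation, the same sewing-lemma bounds via \cref{c:sfinfin} with $\beta=\alpha$, $\tau=5/8$, $\mu=1/2$, the same buckling step yielding \eqref{spdeee}, and the same key device of introducing the weighted processes $P_{2-t}u_t$, $P_{2-t}\wt v_t$ so that the parabolic bound $\|\phi_t(x)-P_{t-s}\phi_s(x)\|_{L_m}\lesssim(t-s)^\eps$ and the spatial smoothing of $P_{2-t}$ (the paper's \cref{p:spacess}) combine into joint H\"older estimates on $[0,1]\times D$ suitable for Kolmogorov's theorem. This matches the paper's proof in substance and structure.
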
	
We see that since $\alpha>-3/2$, one can choose $\eps>0$ such that condition \eqref{sepscond} is satisfied. 

\begin{proof}
Fix  $\eps>0$ satisfying \eqref{sepscond}. Denote $m:=2/\eps\ge2$ and let 
\begin{equation}\label{sgmdef}
\Theta:=1+\|g\|_{\C^\alpha}^3+ \|b\|_{\C^{\alpha}}^3.
\end{equation}
	Put
	\begin{equation*}
		\phi(t,x):=u(t,x)-V(t,x),\qquad \psi(t,x):=\wt v(t,x)- V(t,x).
	\end{equation*}
	Let $(b_n)_{n\in\Z_+}$ be a sequence of $\C^\infty(\R,\R)$ functions  converging to $b$ in $\C^{\alpha-}$.  For $n\in\Z_+$ define
	\begin{equation*}
	z^n_t=P_t u_0(x) +\int_0^t\int_D p_{t-r}(x,y) b_n(u_r(y))\,dydr+V_t(x).\quad t\in[0,1],\,x\in D.
	\end{equation*}
	It follows from the  definition of a solution to \Sref{u_0;b} (\cref{def:ssol}) that by passing to a subsequence if necessary, we have 
	\begin{equation}\label{spdeconv}
		\|z^n - u\|_{\C([0,1]\times D)}\to0\quad a.s.,\quad  \text{as $n\to\infty$}. 	
	\end{equation}	
	
	\textbf{Step~1}. 
	Let $t\in[0,1]$, $x\in D$. Then for any $T>t$ we have
	\begin{equation*}
	P_{T-t}(z_t^n-\wt v_t)(x)=\int_0^t\int_D p_{T-r}(x,y) \bigl(b_n(u_r(y))+\lambda(\wt v_r(y)-u_r(y)) -g(\wt v_r(y))\bigr)\,dydr.
	\end{equation*}
	Therefore, we derive 	
	\begin{align*}
		e^{\lambda t}P_{T-t}(z_t^n-\wt v_t)(x)&=
		\lambda\int_0^t e^{\lambda r } P_{T-r}(z_r^n-\wt v_r)(x)\,dr\\
		&\phantom{=}+
		\int_0^t\int_D e^{\lambda r } p_{T-r}(x,y) \bigl(b_n(u_r(y))+\lambda(\wt v_r(y)-u_r(y)) -g(\wt v_r(y))\bigr)\,dydr\\
		&= \int_0^t\int_D e^{\lambda r } p_{T-r}(x,y)\bigl(b_n(u_r(y))-g(\wt v_r(y))
		+\lambda(z_r^n(y)-u_r(y))\bigr)\,dy dr.
	\end{align*}
	Thus, by letting $T\searrow t$ and using continuity of $z^n$, $\wt v$ we get for $t\in[0,1]$, $x\in D$
	\begin{align*}
		z_t^n(x)-\wt v_t(x)&= \int_0^t\int_D e^{-\lambda (t- r) } p_{t-r}(x,y)\bigl(b_n(u_r(y))-g(\wt v_r(y))
		+\lambda(z_r^n(y)-u_r(y))\bigr)\,dy dr
	\end{align*}
	
This implies for $(s,t)\in\Delta_{[0,1]}$, $x\in D$
	\begin{align*}
		&|(z^n_t(x)-\wt v_t(x))-e^{-\lambda(t-s)}P_{t-s}(z^n_s-\wt v_s)(x)|\\
		&\qquad\le\Bigl|\int^t_s\int_D e^{-\lambda (t-r)}p_{t-r}(x,y) \bigl(b_n(u_r(y))-g(\wt v_r(y))\bigr)\,dydr\Bigr|+\lambda
		\Leb(D)\|z^n-u\|_{\C([0,1]\times D)} \\
		&\qquad\le \Bigl|\int^t_s\int_D e^{-\lambda (t-r)}p_{t-r}(x,y) \bigl(g(u_r(y))-g(\wt v_r(y))\bigr)\,dydr\Bigr|\\
	&\qquad\phantom{\le}		+\Bigl|\int_s^t\int_D e^{-\lambda (t-r)}p_{t-r}(x,y)\bigl(b_n(u_r(y))-g(u_r(y))\bigr) \, dydr\Bigr|\\
		&\qquad\phantom{\le}+\lambda\Leb(D)\|z^n-u\|_{\C([0,1]\times D)}\\
		&\qquad=: I_{1}(s,t,x)+I_{2,n}(s,t,x)+\lambda\Leb(D)\|z^n-u\|_{\C([0,1]\times D)}.
	\end{align*}
	We pass to the limit in the above inequality as $n\to\infty$. Recalling \eqref{spdeconv}, we get	
	\begin{equation}\label{slimeq}
		|(u_t(x)-\wt v_t(x))-e^{-\lambda(t-s)}P_{t-s}(u_s-\wt v_s)(x)|\le  I_{1}(s,t,x)+\liminf_{n\to\infty} I_{2,n}(s,t,x).
	\end{equation}

	\textbf{Step~2}. 	Let us now analyze each of the terms in the above inequality. 	
	
	We begin with $I_{1}$. We apply  \cref{c:sfinfin} with $\beta=\alpha$, $\tau=5/8$, $\mu=\frac12$,  $f=g$. We see that conditions \eqref{sparam} and 
	\eqref{sparamcond2} hold thanks to our choice of parameters and the standing assumption \eqref{spdecond}. Therefore,  we get by \eqref{spde2w}
	\begin{align}\label{sivanpred}
		\|I_{1}(s,t,x)\|_{L_m(\Omega)}&\le C \|g\|_{\C^\alpha}
		(t-s)^\eps\lambda^{-\frac78-\frac\alpha4+3\eps}\|u-\wt v\|_{\C^{0,0}L_m([0,1])}^{\frac12}\nn\\
		&\quad+C\|g\|_{\C^\alpha}(t-s)^\eps\lambda^{-\frac{11}8-\frac\alpha4+3\eps}\bigl([\phi]_{ \C^{\frac{5}8,0}L_m([0,1])} +[\psi]_{ \C^{\frac{5}8,0}L_m([0,1])}\bigr)\nn\\
		&\le C\|g\|_{\C^\alpha}
		(t-s)^\eps\lambda^{-\frac12-2\eps}\|u-\wt v\|_{\C^{0,0}L_m([0,1])}^{\frac12}\nn\\
		&\quad+C\|g\|_{\C^\alpha}(t-s)^\eps\lambda^{-1-2\eps}\bigl([\phi]_{ \C^{\frac{5}8,0}L_m([0,1])} +[\psi]_{ \C^{\frac{5}8,0}L_m([0,1])}\bigr),
 	\end{align}
	where the last inequality follows from \eqref{sepscond}. Here  $C=C(\eps)$ is independent of $\lambda$. We bound $[\phi]_{\Ctimespace{\frac58}{m}{[0,1]}}$ using 
	\cref{l:apspde}. Since $\alpha>-3/2$, we get
	\begin{equation}\label{phibspde}
		[\phi]_{\Ctimespace{\frac58}{m}{[0,1]}}\le
		[\phi]_{\Ctimespace{1+\frac\alpha4}{m}{[0,1]}}\le C(1+\|b\|^2_{\C^\alpha}).
	\end{equation}	
	Next, to bound $[\psi]_{\Ctimespace{\frac58}{m}{[0,1]}}$,  we apply \cref{l:sapriori}  with $f=g$, $\rho(t,x)=\lambda (u(t,x)-\wt v(t,x))$, $Z=\wt v$. By \cref{sl:ks0}, we have $[\wt v-V]_{\Ctimespace{1}{m}{[0,1]}}<\infty$ and hence condition \eqref{zspdecond} holds. We get from \eqref{claimspde}
	\begin{equation}\label{psibspde}
		[\psi]_{\Ctimespace{\frac58}{m}{[0,1]}}\le
		[\psi]_{\Ctimespace{1+\frac\alpha4}{m}{[0,1]}}\le C(1+\|g\|^2_{\C^\alpha})(1+\lambda \|u-\wt v\|_{\Ctimespacezerom{[0,1]}}).
	\end{equation}
	We combine now \eqref{phibspde} and \eqref{psibspde} with \eqref{sivanpred} and use again the  inequality $ xy\le x^2+y^2$ valid for any $x,y\in\R$ to bound the first term in \eqref{sivanpred}. Recalling the definition of $\Theta$ in \eqref{sgmdef}, we obtain
	\begin{align}\label{sivanpred2}
		\|I_{1}(s,t,x)\|_{L_m(\Omega)}&\le C(t-s)^\eps \lambda^{-2\eps }\|u-\wt v\|_{\Ctimespacezerom{[0,1]}}+ C (t-s)^\eps\|g\|_{\C^\alpha}^2 \lambda^{-1-2\eps}\nn\\
		&\phantom{\le}+C \|g\|_{\C^\alpha}(1+\|b\|_{\C^\alpha}^2+\|g\|_{\C^\alpha}^2)(t-s)^\eps\lambda^{-1-2\eps}\nn\\
		&\phantom{\le}+C\|g\|_{\C^\alpha}(1+\|g\|_{\C^\alpha}^2)(t-s)^\eps\lambda^{-2\eps}\|u-\wt v\|_{\Ctimespacezerom{[0,1]}}\nn\\
		&\le C \Theta(t-s)^\eps \lambda^{-2\eps}\|u-\wt v\|_{\Ctimespacezerom{[0,1]}}+ C\Theta (t-s)^\eps \lambda^{-1-2\eps}.
	\end{align}
	
	Now we continue with the analysis of $I_{2,n}$. We apply  \cref{c:sfinfin} with $\beta=\alpha-\eps$, $\tau=5/8$, $f=b_n-g$. We see that $\alpha-\eps>1-4\tau=-3/2$ thanks to \eqref{sepscond}, and thus condition \eqref{sparam} holds. Therefore, we get  
	from \eqref{spdew} and \eqref{phibspde}
	\begin{align*}
		\|I_{2,n}(s,t,x)\|_{L_m(\Omega)}&\le 	C \|b_n-g\|_{\C^{\alpha-\eps}}(t-s)^\eps		\lambda^{-1-\frac\alpha4+4\eps}\Bigl(1+[\phi]_{ \Ctimespace{\frac58}{m}{[0,1]}}\lambda^{-\frac38}\Bigr)\nn\\
		&\le C \|b_n-g\|_{\C^{\alpha-\eps}}(t-s)^\eps	\nn\\
		&\phantom{\le}+C (\|b_n\|_{\C^{\alpha-\eps}}+\|g\|_{\C^{\alpha-\eps}})(t-s)^\eps(1+\|b\|^2_{\C^\alpha})\lambda^{-1-2\eps}.
	\end{align*}
	for $C=C(\eps)$ independent of $\lambda$ and $n$.
	
	Now we substitute this together with \eqref{sivanpred2} into \eqref{slimeq}. We note that $\|b_n -g\|_{\C^{\alpha-\eps}}\to \|b -g\|_{\C^{\alpha-\eps}}$ as $n\to\infty$. Therefore, by Fatou's lemma we derive
	for any $(s,t)\in\Delta_{[0,1]}$, $x\in D$
	\begin{align}\label{sxmy1}
		&\|(u_t(x)-\wt v_t(x))-e^{-\lambda(t-s)}P_{t-s}(u_s-\wt v_s)(x)\|_{L_m (\Omega)}\nn\\
		&\quad\le C \Theta(t-s)^\eps \lambda^{-2\eps}\|u-\wt v\|_{\Ctimespacezerom{[0,1]}}+ C\Theta(t-s)^\eps  \lambda^{-1-2\eps}+C(t-s)^\eps \|b-g\|_{\C^{\alpha-\eps}},
	\end{align}
	where $C=C(\eps)$ and we used again the definition of  constant $\Theta$ in \eqref{sgmdef}.	
	
	\textbf{Step~3: buckling for the supremum in time-space norm}. 
	Choose now any $\lambda\ge1$ such that 
	\begin{equation}\label{sxmy2}
		C\Theta\lambda^{-\eps}\le\frac12,
	\end{equation}
	where $C$ is as in \eqref{sxmy1}. We choose $s=0$ in \eqref{sxmy1}, use that $u(0)=\wt v(0)=u_0$ take supremum over $t\in[0,1]$, $x\in D$ and derive
		\begin{equation*}
		\|u-\wt v\|_{\Ctimespacezerom{[0,1]}}\le \frac12\|u-\wt v\|_{\Ctimespacezerom{[0,1]}}+   \lambda^{-1-\eps}+C \|b-g\|_{\C^{\alpha-\eps}},
	\end{equation*}
	where we used that $C \Theta\le \frac12 \lambda^{\eps}$.
	We note that $\|u-V\|_{\Ctimespacezerom{[0,1]}}<\infty$ since $u\in\Vscl$. We see also that by \cref{sl:ks0} $\|\wt v-V\|_{\Ctimespacezerom{[0,1]}}<\infty$. Therefore $\|u-\wt v\|_{\Ctimespacezerom{[0,1]}}<\infty$, and we get from the above inequality 
		\begin{equation}\label{spdefinal} 
	\|u-\wt v\|_{\Ctimespacezerom{[0,1]}}\le  C  \lambda^{-1-\eps}+C \|b-g\|_{\C^{\alpha-\eps}}.
\end{equation}
	for $C=C(\eps)$. This shows \eqref{spdeee}.
	
	\textbf{Step~4: bounding the H\"older in time norm}. 
	Now we substitute \eqref{spdefinal} into \eqref{sxmy1}. We use again \eqref{sxmy2} to replace $\Theta$ by $\lambda^{\eps}$. We get 	for any $(s,t)\in\Delta_{[0,1]}$, $x\in D$
	\begin{align}\label{holdspde}
	&\|(u_t(x)-\wt v_t(x))-e^{-\lambda(t-s)}P_{t-s}(u_s-\wt v_s)(x)\|_{L_m (\Omega)}\nn\\
	&\qquad \le C(t-s)^\eps  \lambda^{-1-\eps}+C(t-s)^\eps \|b-g\|_{\C^{\alpha-\eps}}
	\end{align}
	for $C=C(\eps)$.
	
%
	\textbf{Step~5: derivation of the final bound.} 
	Now we have all the ingredients to derive \eqref{smainboundxmy}. We put 
	\begin{equation*}
	\uu(t,x):=P_{2-t}u_t(x),\qquad\wt{\textbf{v}}(t,x):=P_{2-t}\wt v_t(x),\qquad t\in[0,1], x\in D.
	\end{equation*}	
	Using Jensen's inequality, for  $s,t\in\Delta_{[0,1]}$, $x\in D$ we can  rewrite \eqref{holdspde} as 
		\begin{align}\label{prevholder}
		&\|(\uu_t(x)-\vv_t(x))-(\uu_s(x)-\vv_s(x))\|_{L_m (\Omega)}\nn\\
		&\quad\le \sup_{x\in D}\|(u_t(x)-\wt v_t(x))-P_{t-s}(u_s-\wt v_s)(x)\|_{L_m (\Omega)}\nn\\
		&\quad \le C(t-s)^\eps  \lambda^{-1-\eps}+C(t-s)^\eps \|b-g\|_{\C^{\alpha-\eps}}+
		(1-e^{-\lambda(t-s)})\sup_{x\in D}\|P_{t-s}(u_s-\wt v_s)(x)\|_{L_m (\Omega)}\nn\\
		&\quad \le C(t-s)^\eps  \lambda^{-1-\eps}+C(t-s)^\eps \|b-g\|_{\C^{\alpha-\eps}}+
		\lambda^\eps(t-s)^\eps\|u-\wt v\|_{\Ctimespacezerom{[0,1]}}\nn\\
		&\quad \le C(t-s)^\eps ( \lambda^{-1}+\lambda^\eps \|b-g\|_{\C^{\alpha-\eps}}),
	\end{align}
	where the last inequality follows from \eqref{spdefinal}.
	
	Next, we apply 	\cref{p:spacess}. We get for  $t\in\Delta_{[0,1]}$, $x,y\in D$
		\begin{align*}
		\|(\uu_t(x)-\vv_t(x))-(\uu_t(y)-\vv_t(y))\|_{L_m (\Omega)}&\le C (2-t)^{-1/2} |x-y| \|u-\wt v\|_{\Ctimespacezerom{[0,1]}}\\
		&\le C |x-y|( \lambda^{-1-\eps}+ \|b-g\|_{\C^{\alpha-\eps}}).
	\end{align*}
	where we used again  \eqref{spdefinal} and that $(2-t)^{-1/2}\le 1$ because $t\le1$. Combining this with \eqref{prevholder}, we get for any $s,t\in[0,1]$, $x,y\in D$	
	\begin{equation*}
		\|(\uu_t(x)-\vv_t(x))-(\uu_s(y)-\vv_s(y))\|_{L_m (\Omega)} \le C\bigl((t-s)^\eps+|x-y|\bigr) ( \lambda^{-1}+\lambda^\eps \|b-g\|_{\C^{\alpha-\eps}}).
	\end{equation*}
	
	Since $m\eps>1$ and the processes $\uu$ and $\vv$ are continuous, we can apply the Kolmogorov continuity theorem to derive	
	\begin{equation*}
		\|\sup_{t\in[0,1]}|\uu_t-\vv_t|\|_{L_m (\Omega)}\le C\lambda^{-1}+C\lambda^\eps \|b-g\|_{\C^{\alpha-\eps}}.
	\end{equation*}
	for $C=C(\eps)$, which is  the desired bound \eqref{smainboundxmy}. Here we also used that $\uu_0\equiv\vv_0\equiv0$.
\end{proof}

Now we can combine the bounds of \cref{sl:ks1,sl:ks2} in order to prove \cref{sl:ml}.

\begin{proof}[Proof of \cref{sl:ml}] 
Take $\eps>0$ satisfying \eqref{sepscond} and $\lambda>1$ satisfying  \eqref{slambdacond}. 	By the triangle inequality, \cref{sl:ks1} and \cref{sl:ks2} we have
	\begin{align}\label{spdemain}
		\W_{\|\cdot\|_w\wedge1}(\law(u),\law(v))&\le \W_{\|\cdot\|_w\wedge 1}(\law(u),\law(\wt v))+
		\W_{\|\cdot\|_w\wedge 1}(\law(\wt v),\law(v))\nn\\
		&\le \bigl\|\,\|u-\wt v\|_{w}\bigr\|_{L_1(\Omega)}+ \dtv(\law(\wt v),\law(v))\nn\\
		&\le C \lambda^{-1}+C\lambda^{\eps} \|b-g\|_{\C^{\alpha-\eps}}+C \lambda  \|u-\wt v\|_{\Ctimespace{0}{2}{[0,1]}}\nn\\
		&\le C \lambda^{-\eps}+C\lambda \|b-g\|_{\C^{\alpha-\eps}}.
	\end{align}
Let $C_0=C_0(\eps)$ be as in \eqref{slambdacond}.  We choose 
	\begin{equation*}
		\lambda:=\|b-g\|_{\C^{\alpha-\eps}}^{-1/2}+C_0 (1+\|g\|_{\C^\alpha}^{\frac3\eps}+ \|b\|_{\C^{\alpha}}^{\frac3\eps}).
	\end{equation*}
	Clearly such $\lambda$ satisfies \eqref{slambdacond}.  Substituting this $\lambda$ into \eqref{spdemain} we get 
	\begin{align*}
		\W_{\|\cdot\|_w\wedge1}(\law(u),\law(v))&\le C  \|b-g\|_{\C^{\alpha-\eps}}^{\frac\eps2} 	+C  \|b-g\|_{\C^{\alpha-\eps}}^{\frac12}\\
		&\phantom{\le} 	+C C_0 (1+\|g\|_{\C^\alpha}^{\frac3\eps}+ \|b\|_{\C^{\alpha}}^{\frac3\eps}) \|b-g\|_{\C^{\alpha-\eps}},
	\end{align*}
	which implies \eqref{smainressde}.
\end{proof}	

Now we are ready to complete the weak uniqueness proof.
\begin{proof}[Proof of \cref{t:swu}]
	(i). Fix $b\in\C^\alpha$, $u_0\in\BB(D)$ and let $b^n:=G_{1/n}b$, $n\in\N$. 
	Let $u^n$ be the strong solution to \Sref{u_0;b^n}. Let $(u,W)$ be a weak solution to \Sref{u_0;b} in the class $\Vscl$. 	
	By \cref{sl:ml},  there exist constants  $C,\eps>0$ such that for any $n\in\N$
	\begin{equation*}
\W_{\|\cdot\|_w\wedge1}(\law(u),\law(u^n))\le C (1+\|b\|_{\C^\alpha}^{\frac{4}\eps}+\|b^n\|_{\C^\alpha}^{\frac{4}\eps})\|b-b^n\|_{\C^{\alpha-\eps}}^{\frac{\eps}2}.
	\end{equation*}
	By passing to the limit as $n\to\infty$ in the above inequality, we get
	\begin{equation*}
			\lim_{n\to\infty}\W_{\|\cdot\|_w\wedge1}(\law(u),\law(u^n))=0.
	\end{equation*}	
If $(\overline{u},\overline{W})$ is  another weak solution to \Sref{u_0;b},
	then by above 
	\begin{equation*}
		\lim_{n\to\infty}\W_{\|\cdot\|_w\wedge1}(\law(\overline{u}),\law(u^n))=0,
	\end{equation*}
	which implies that $\W_{\|\cdot\|_w\wedge1}(\law(\overline{u}),\law(u))=0$ and thus 
	$\W_{\|\cdot\|\wedge1}(\law(\overline{u}),\law(u))=0$ (see again \cite[Theorem~2.3.11, p.63]{Evans}). Thus $\law(u)=\law(\overline{u})$ and thus weak uniqueness holds for \Sref{u_0;b}.

	(ii). By \cite[Proposition~3.3]{ABLM}, the sequence $\law(u^n)$ is tight in $\C([0,1]\times D,\R)$ and by \cite[Proposition~3.4]{ABLM} any of its limiting points is a weak solution to \Sref{u_0;b} and lies in $\Vscl$. By part (i) of the theorem all of the limiting points have the same law. Therefore the sequence $\law(u^n)$ weakly converges to the unique weak solution of \Sref{u_0;b}.
\end{proof}

\appendix \section{Proofs of technical results} \label{a:app}

\begin{proof}[Proof of \cref{l:sappr}]
Fix $N\in\N$, $(s,t)\in\Delta_{[0,T]}$, $x\in\D$. Denote the approximation error
\begin{align*}
Err_N:=&\int_s^t\int_\D s_{t-r}(x,y)w_r f(Y_r(y)+\phi_r(y))\,dydr	\\
&-\sum_{i=0}^{k(N)-1}\int_{t^N_i}^{t_{i+1}^N}\int_\D s_{t-r}(x,y)w_r \E^{t^N_i}[f(Y_r(y)+S_{r-t^N_i}\phi_{t^N_i}(y))]\,dydr.
\end{align*}	
Then, by the triangle inequality
\begin{align}\label{sconv}
		\E|Err_N|&\le \E\Bigl|\sum_{i=0}^{k(N)-1}\int_{t^N_i}^{t_{i+1}^N}\int_\D s_{t-r}(x,y)w_r\bigl(f(Y_r(y)+\phi_{r}(y))- \E^{t^N_i}[f(Y_r(y)+\phi_{r}(y))]\bigr)\,dydr\Bigr|\nn\\
		&\phantom{\le}+
		\sum_{i=0}^{k(N)-1}\int_{t^N_i}^{t_{i+1}^N}\int_\D s_{t-r}(x,y)w_r \E\Bigl| f(Y_r(y)+\phi_r(y))-f(Y_r(y)+S_{r-t^N_i}\phi_{t^N_i}(y))\Bigr|\,dydr\nn\\
		&=:\E\Bigl|\sum_{i=0}^{k(N)-1} I_{1,i}\Bigr|+\sum_{i=0}^{k(N)-1} I_{2,i}.
	\end{align}
We note that the sequence $(I_{1,i})$ is a martingale difference sequence with respect to the filtration 
$(\G_{i}):=(\F_{t_{i+1}^N})$. Indeed, each $I_{1,i}$ is $\G_i$ measurable, and $\E [I_{1,i}|\G_{i-1}]=0$. Therefore, by the Burkholder-Davis-Gundy inequality, we have 
	\begin{align}\label{si1n}
		\E\Bigl|\sum_{i=0}^{k(N)-1} I_{1,i}\Bigr|^2&\le C \sum_{i=0}^{k(N)-1} \E I_{1,i}^2\le C \|f\|^2_{\C(\R^d)}\|w\|_{\C([s,t])}^2\sum_{i=0}^{k(N)-1} (t_{i+1}^N-t_{i}^N)^2\nn\\
		&\le  C T \|f\|^2_{\C(\R)}\|w\|_{\C([s,t])}^2|\Pi_N|\to0\quad \text{as $N\to\infty$}.
	\end{align}
Here we used that the kernel $s$ integrates to $1$ by \eqref{convprop}. 
Next, using \eqref{stechncond}, we easily deduce for any $i=0,\hdots, k(N)-1$
	\begin{align*}
		I_{2,i}&\le \|w\|_{\C([s,t])} \|f\|_{\C^\eps(\R)}\int_{t^N_i}^{t_{i+1}^N}\int_\D s_{t-r}(x,y) \E[|\phi_{r}(y)-S_{r-t_i^N}\phi_{t_i^N}(y)|^\eps]\,dydr\\
		&\le \|w\|_{\C([s,t])} \|f\|_{\C^\eps(\R)}[\phi]_{\Ctimespaced{\eps}{1}{[s,t]}}\int_{t^N_i}^{t_{i+1}^N}\int_\D s_{t-r}(x,y) |r-t_i^N|^{\eps^2}\,dydr\\
		&\le \|w\|_{\C([s,t])} \|f\|_{\C^\eps(\R)}[\phi]_{\Ctimespaced{\eps}{1}{[s,t]}}(t^N_{i+1}-t_{i}^N)^{1+\eps^2},
	\end{align*}	
	where we used again that $s$ integrates on $\D$ to $1$. 
	This yields 
	\begin{equation*}
		\sum_{i=0}^{k(N)-1} I_{2,i}\le \|w\|_{\C([s,t])} \|f\|_{\C^\eps(\R)}[\phi]_{\Ctimespaced{\eps}{1}{[s,t]}} T |\Pi_N|^{\eps^2}\to0\quad \text{as $N\to\infty$}.
	\end{equation*}
	 Combining this with \eqref{si1n} and substituting into \eqref{sconv}, we get \eqref{spde:s3}.
\end{proof}	

\begin{proof}[Proof of \cref{l:ebound}]
	Fix $m\ge1$. 
	Define
	\begin{equation*}
		\phi:=Z-R.	
	\end{equation*}	
	Then \eqref{SPDEgeneric} can be rewritten as
	\begin{equation}\label{wtyeq}
		\phi(t,x)=S_tz(x) +\int_0^t\int_{\D} s_{t-r}(x,y) h(\phi(r,y)+R(r,y)) dydr,\quad t\in[0,1],\,\,x\in\D.
	\end{equation}
	Consider now a  sequence of Picard iterations: $\phi^{(0)}(t,x):=S_tz(x)$, and for $n\in\Z_+$ put
	\begin{equation*}
		\phi^{(n+1)}(t,x):=S_tz(x)+\int_0^t\int_{\D}s_{t-r}(x,y) h(\phi^{(n)}(r,y)+R(r,y))\,dydr, \,\,  t\in[0,1],\,\,x\in\D.
	\end{equation*}
	It is immediate that for $n\ge1$
	\begin{align*}
		\|\phi^{(n+1)}(t,x)-\phi^{(n)}(t,x)\|_{\lm}&\le [h]_{\Lip(\R^d)}\int_0^t\int_{\D}s_{t-r}(x,y) \|\phi^{(n)}(r,y)-\phi^{(n-1)}(r,y)\|_{\lm}\,dr\\
		&\le [h]_{\Lip(\R^d)}\int_0^t\sup_{y\in \D} \|\phi^{(n)}(r,y)-\phi^{(n-1)}(r,y)\|_{\lm}\,dr,
	\end{align*}	
	which implies 
	\begin{equation}\label{ndif}
		\sup_{x\in\D}\|\phi^{(n+1)}(t,x)-\phi^{(n)}(t,x)\|_{\lm}\le \frac1{n!}\|\phi^{(1)}-\phi^{(0)}\|_{\Ctimespacedzerom{[0,1]}}[h]_{\Lip(\R^d)}^n t^n.
	\end{equation}	
	Hence for each fixed $t\in[0,1]$, $x\in\D$ the sequence $(\phi^{(n)}(t,x))_{n\in\Z_+}$ converges in $\lm$. By continuity of $h$, its limit $\phi$ solves \eqref{wtyeq} and thus $Z$ solves \eqref{SPDEgeneric}. We also see that $Z$ is adapted to $(\F_t)$ as an $L_m(\Omega)$ limit of $(\F_t)$-adapted processes.
	
	Further, we also have from \eqref{ndif}
	\begin{align}\label{infnormpic}
		\|\phi\|_{\Ctimespacedzerom{[0,1]}}&\le \|\phi^{(0)}\|_{\Ctimespacedzerom{[0,1]}}+ \|\phi^{(1)}-\phi^{(0)}\|_{\Ctimespacedzerom{[0,1]}}e^{[h]_{\Lip(\R^d)}}\nn\\
		&\le \|z\|_{L_\infty(\D)}+(h(0)+[h]_{\Lip(\R^d)}\|z\|_{L_\infty(\D)} +[h]_{\Lip(\R^d)}\|R\|_{\Ctimespacedzerom{[0,1]}})e^{[h]_{\Lip(\R^d)}}.
	\end{align}	
	Recall that by assumptions we have $z\in\BB(\D)$ and $\|R\|_{\Ctimespacedzerom{[0,1]}}<\infty$. Therefore,   ${\|Z-R\|_{\Ctimespacedzerom{[0,1]}}<\infty}$.
	
	Next, we have
	\begin{align*}
		[\phi]_{\Ctimespaced{1}{m}{[0,1]}}&\le \|h(\phi(\cdot,\cdot)+R(\cdot,\cdot)) \|_{\Ctimespacedzerom{[0,1]}}\\
		&\le h(0)+[h]_{\Lip(\R^d)}(	\|\phi\|_{\Ctimespacedzerom{[0,1]}}+\|R\|_{\Ctimespacedzerom{[0,1]}})<\infty,
	\end{align*}
	where in the last line we used that $\|\phi\|_{\Ctimespacedzerom{[0,1]}}<\infty$ thanks to \eqref{infnormpic}. Using \eqref{infnormpic}, we get 
	$\|\phi\|_{\Ctimespaced{1}{m}{[0,1]}}<\infty$.
\end{proof}

\begin{proposition}\label{p:spacess}
Let $f\colon D\times\Omega\to\R$ be a bounded measurable function. Then there exists $C>0$ such that for any 
$t\in(0,1]$, $m\ge1$, $x,y\in D$ one has 
\begin{equation}\label{mainrezh}
	\sup_{x,y\in D}\|P_t f(x)-P_t f(y)\|_{\lm}\le  C |x-y|t^{-1/2} \sup_{z\in D}\| f(z)\|_{\lm}.
\end{equation}	
\end{proposition}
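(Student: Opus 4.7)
The plan is to reduce the inequality to a classical $L^{1}$ bound on the spatial difference of the Gaussian kernel. Since $P_tf(x)=\int_D p_t(x,z)f(z,\omega)\,dz$, Minkowski's integral inequality pulls the $L_m(\Omega)$ norm inside the spatial integral:
\begin{equation*}
\|P_tf(x)-P_tf(y)\|_{L_m(\Omega)}\le \sup_{z\in D}\|f(z)\|_{L_m(\Omega)}\cdot \int_D |p_t(x,z)-p_t(y,z)|\,dz.
\end{equation*}
So it suffices to show $\int_D |p_t(x,z)-p_t(y,z)|\,dz\le C|x-y|t^{-1/2}$ uniformly in $x,y\in D$, $t\in(0,1]$.

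The second step is to handle the two possible choices of kernel from \cref{c:conv}. Using the defining series
\begin{equation*}
p^{\per}_t(x,z)=\sum_{n\in\Z}\Gamma(t,x-z+n),\qquad p^{\neu}_t(x,z)=\sum_{n\in\Z}\bigl(\Gamma(t,x-z+2n)+\Gamma(t,x+z+2n)\bigr),
\end{equation*}
together with the triangle inequality inside the sum, substituting $w:=z-n$ (resp.\ $w:=z-2n$, $w:=-z-2n$) in the $n$-th summand and observing that the resulting translated copies of the interval $D$ are pairwise disjoint in $\R$, collapses everything to a sum of at most two full-line integrals of the form
\begin{equation*}
\int_\R |\Gamma(t,x-w)-\Gamma(t,y-w)|\,dw.
\end{equation*}

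The third and essential step is to bound this whole-space integral by $C|x-y|t^{-1/2}$. By the fundamental theorem of calculus and Fubini,
\begin{equation*}
\int_\R |\Gamma(t,x-w)-\Gamma(t,y-w)|\,dw \le \int_{x\wedge y}^{x\vee y}\int_\R |\partial_\xi \Gamma(t,\xi-w)|\,dw\,d\xi;
\end{equation*}
using $\partial_\xi\Gamma(t,\xi-w)=-(\xi-w)t^{-1}\Gamma(t,\xi-w)$, the inner integral equals $t^{-1}\E|Z_t|=Ct^{-1/2}$ where $Z_t\sim\mathcal{N}(0,t)$, and integrating in $\xi$ produces the required factor $|x-y|$. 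Combining the three steps yields \eqref{mainrezh}. No step here is a serious obstacle; the only care needed is in verifying the disjointness of the translated intervals in the Neumann sum so that the two series bound is a clean multiple of the full-line integral.
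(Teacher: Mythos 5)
Your argument is correct and follows the same top-level reduction as the paper: push the $L_m(\Omega)$ norm inside the $z$-integral (the paper calls this Jensen's inequality, you call it Minkowski's integral inequality — same step), which reduces \eqref{mainrezh} to the purely deterministic bound $\int_D |p_t(x,z)-p_t(y,z)|\,dz\le C|x-y|t^{-1/2}$. The only difference is in how that kernel estimate is handled: the paper simply cites \cite[Lemma~C.2]{ABLM}, whereas you prove it from scratch by unfolding the series definitions of $p^{\per}$ and $p^{\neu}$, collapsing each family of translated intervals (pairwise disjoint up to measure zero) into a full-line integral, and then bounding $\int_\R|\Gamma(t,x-w)-\Gamma(t,y-w)|\,dw$ via the fundamental theorem of calculus, Fubini, and the identity $\int_\R |\partial_\xi\Gamma(t,\xi-w)|\,dw = t^{-1}\E|Z_t| = \sqrt{2/(\pi t)}$. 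Your version is a bit longer but fully self-contained, which is a reasonable tradeoff; the paper's version is shorter at the cost of an external reference. Both are sound.
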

\begin{proof}
Let $t>0$, $x,y\in D$. Then, by Jensen's inequality
\begin{align*}
\|P_tf(x)-P_tf(y)\|_{\lm}&=\Bigl\|\int_D (p_t(x,z)-p_t(y,z))f(z)\,dz\Bigr\|_{\lm}\\
&\le 
\int_D |p_t(x,z)-p_t(y,z)|\,\|f(z)\|_{\lm}\,dz\\
&\le \int_D |p_t(x,z)-p_t(y,z)|\,dz\,  \sup_{z\in D} \|f(z)\|_{\lm}.
\end{align*}
By \cite[Lemma C.2]{ABLM}, we have 	$ \int_D |p_t(x,z)-p_t(y,z)|\,dz\le C |x-y|t^{-1/2}$ for some $C>0$, which yields \eqref{mainrezh}.
\end{proof}

\bibliographystyle{alpha1}
\bibliography{biblio}

\end{document}